\newcommand*\underdot[1]{\underaccent{\dot}{#1}}
\renewcommand*{\backref}[1]{}
\renewcommand*{\backrefalt}[4]{\tiny
  \ifcase #1 (\textbf{NOT CITED.})%
  \or    (Cited on p.~{#2}.)%
  \else   (Cited on pp.~{#2}.)%
  \fi}
\def\MRbibitem{\@ifnextchar[\my@lbibitem\my@bibitem}
\def\mybiblabel#1#2{\@biblabel{{\hyperref{http://www.ams.org/mathscinet-getitem?mr=#1}{}{}{#2}}}}
\def\myhyperanchor#1{\Hy@raisedlink{\hyper@anchorstart{cite.#1}\hyper@anchorend}}
\def\my@lbibitem[#1]#2#3#4\par{%
  \item[\mybiblabel{#2}{#1}\myhyperanchor{#3}\hfill]#4%
  \@ifundefined{ifbackrefparscan}{}{\BR@backref{#3}}%
  \if@filesw{\let\protect\noexpand\immediate
    \write\@auxout{\string\bibcite{#3}{#1}}}\fi\ignorespaces%
}
\def\my@bibitem#1#2#3\par{%
  \refstepcounter\@listctr
  \item[\mybiblabel{#1}{\the\value\@listctr}\myhyperanchor{#2}\hfill]#3%
  \@ifundefined{ifbackrefparscan}{}{\BR@backref{#2}}%
  \if@filesw\immediate\write\@auxout
    {\string\bibcite{#2}{\the\value\@listctr}}\fi\ignorespaces%
}
\let\OLDthebibliography\thebibliography
\renewcommand\thebibliography[1]{
  \OLDthebibliography{#1}
  \setlength{\parskip}{0pt}
  \setlength{\itemsep}{0pt plus 0.3ex}
}
\declaretheoremstyle[
headfont=\small\itshape,
bodyfont=\small
]{myremark}
\declaretheorem[numberwithin=section]{theorem}
\declaretheorem[sibling=theorem]{lemma}
\declaretheorem[sibling=theorem]{corollary}
\declaretheorem[sibling=theorem]{proposition}
\declaretheorem[sibling=theorem,style=myremark]{example}
\declaretheorem[sibling=theorem,style=myremark]{remark}
\declaretheorem[sibling=theorem]{question}
\declaretheorem[name=Acknowledgements, style=definition, numbered=no]{ack}
\numberwithin{equation}{section}     
\Crefname{section}{Section}{Sections}
\Crefname{subsection}{Subsection}{Subsections}
\setlist[enumerate,1]{label={\upshape(\alph*)},ref=\alph*}
\setlist[enumerate,2]{label={\upshape(\arabic*)},ref=\arabic*}
\newcommand{\tribar}[1]{\mathopen{| {\kern -1.5pt} | {\kern -1.5pt} |} {#1}
\mathclose{| {\kern -1.5pt} | {\kern -1.5pt} |}}
\newcommand{\R}{\mathbb{R}}
\newcommand{\Z}{\mathbb{Z}}
\newcommand{\N}{\mathbb{N}}
\newcommand{\E}{\mathbb{E}}
\newcommand{\F}{\mathbb{F}}
\newcommand{\G}{\mathbb{G}}
\newcommand{\K}{\mathbb{K}}
\newcommand{\M}{\mathbb{M}}
\renewcommand{\P}{\mathbb{P}}
\newcommand{\W}{\mathbb{W}}
\newcommand{\id}{\mathrm{id}}
\newcommand{\cB}{\mathcal{B}}
\newcommand{\cE}{\mathcal{E}}
\newcommand{\cG}{\mathcal{G}}\newcommand{\cI}{\mathcal{I}}
\newcommand{\cM}{\mathcal{M}}
\newcommand{\cR}{\mathcal{R}}
\newcommand{\cS}{\mathcal{S}}\newcommand{\cU}{\mathcal{U}}
\newcommand{\st}{\;\mathord{;}\;}
\DeclareMathOperator{\supp}{supp}
\newcommand*\circled[1]{\tikz[baseline=(char.base)]{
    \node[shape=circle,draw,inner sep=1pt] (char) {\footnotesize{#1}};}}
\newcommand{\GL}{\mathit{GL}}
\newcommand{\SL}{\mathit{SL}}
\DeclareMathOperator{\vol}{vol}
\newcommand{\WEDGE}{\mathsf{\Lambda}}  
\newcommand{\dd}{\mathrm{d}}   
\newcommand{\uu}{\mathrm{u}}   
\renewcommand{\ss}{\mathrm{s}}   
\newcommand{\Aut}{\mathrm{Aut}}
\newcommand{\End}{\mathrm{End}}
\DeclareMathOperator{\bol}{bol}
\newcommand{\Gr}{\mathcal{G}}   
\DeclareMathOperator*{\linf}{linf}
\newcommand{\arxiv}[1]{Preprint \href{http://arxiv.org/abs/#1}{arXiv:{#1}}}
\renewcommand{\epsilon}{\varepsilon}
\renewcommand{\phi}{\varphi}
\renewcommand{\setminus}{\smallsetminus}
\renewcommand{\emptyset}{\varnothing}
\newcommand{\subjclass}[2][1991]{%
  \let\@oldtitle\@title%
  \gdef\@title{\@oldtitle\footnotetext{#1 \emph{Mathematics subject classification.} #2}}%
}
\newcommand{\keywords}[1]{%
  \let\@@oldtitle\@title%
  \gdef\@title{\@@oldtitle\footnotetext{\emph{Key words and phrases.} #1.}}%
}
\newcommand{\ackno}[1]{%
  \let\@@oldtitle\@title%
  \gdef\@title{\@@oldtitle\footnotetext{\emph{Acknowledgements.} #1.}}%
}
\begin{document}

\title{Extremal Norms for Fiber-Bunched Cocycles}
\date{October 4, 2019}



\author{Jairo Bochi \and Eduardo Garibaldi}


\subjclass[2010]{37H15, 37D20, 37D30, 15A60, 93D30}

\ackno{Bochi was partially supported by projects Fondecyt 1180371 and Conicyt PIA ACT172001. Garibaldi was partially supported by FAPESP's Thematic Project 2012/18780-0}

\maketitle

\begin{abstract}
In traditional Ergodic Optimization, one seeks to maximize Birkhoff averages. The most useful tool in this area is the celebrated Ma\~n\'e Lemma, in its various forms. In this paper, we prove a non-commutative Ma\~n\'e Lemma, suited to the problem of maximization of Lyapunov exponents of linear cocycles or, more generally, vector bundle automorphisms. More precisely, we provide conditions that ensure the existence of an extremal norm, that is, a Finsler norm with respect to which no vector can be expanded in a single iterate by a factor bigger than the maximal asymptotic expansion rate. These conditions are essentially irreducibility and sufficiently strong fiber-bunching. Therefore we extend the classic concept of Barabanov norm, which is used in the study of the joint spectral radius. We obtain several consequences, including sufficient conditions for the existence of Lyapunov maximizing sets. 
\end{abstract}

\section{Introduction}

\subsection{Extremal norms}\label{ss.intro_extremal}

Let $\E$ be a $d$-dimensional real vector bundle over a compact metric space $X$, with projection map $\pi$.
Let $T \colon X \to X$ be a homeomorphism.
We say that $\Phi$ is an \emph{automorphism of $\E$ covering $T$} if
the diagram
$$
\begin{tikzcd}
\E	\arrow[r,"\Phi"] \arrow[d,swap,"\pi"]	& \E \arrow[d,"\pi"] \\
X	\arrow[r,swap,"T"]						& X
\end{tikzcd}
$$
commutes and moreover the restriction of $\Phi$ to each fiber $\E_x \coloneqq \pi^{-1}(x)$ is a linear isomorphism $\Phi_x$ onto the fiber $\E_{Tx}$.
The set of such automorphisms is denoted $\Aut(\E,T)$.
The simplest situation is when the vector bundle is trivial, say $\E = X \times \R^d$.
Then $\Phi$ takes the form
\begin{equation}\label{e.cocycle}
\Phi(x,u) = (T(x),F(x)u) \, ,
\end{equation}
for some continuous map $F \colon X \to \GL(d,\R)$.
The pair $(T,F)$ is called a \emph{(linear) cocycle}.

\medskip

A \emph{Finsler norm}\footnote{Beware that other definitions of Finsler norms appear in the literature; here the main point is that the norm is not necessarily induced by inner products (i.e.\ ``Riemannian'').}
on $\E$ is a continuous map $\| \mathord{\cdot} \| \colon \E \to \R$ whose restriction to each fiber $\E_x$ is a norm.
If $L$ is a linear map from a fiber $\E_x$ to another fiber $\E_y$, then we define the operator norm:
\begin{equation}\label{e.def_operator_norm}
\|L\|_{y \gets x} \coloneqq \sup_{\substack{u \in \E_x \\ u\neq 0}} \frac{\|L(u)\|}{\|u\|} \, .
\end{equation}
When no confusion is likely to arise we denote this simply by $\|L\|$.

Fix an automorphism $\Phi$ covering $T$ and a Finsler norm $\| \mathord{\cdot} \|$.
Given $x \in X$, the limit
$$
\chi_1(\Phi, x) \coloneqq \lim_{n \to +\infty} \frac{1}{n} \log \|\Phi^n_x\|
= \lim_{n \to +\infty} \frac{1}{n} \log \| \Phi_{T^{n-1}x} \circ \cdots \circ \Phi_{Tx} \circ \Phi_x\| \, ,
$$
if it exists, is called the \emph{(first) Lyapunov exponent} of $\Phi$ at the point $x$.
The Lyapunov exponent is obviously independent of the choice of the Finsler norm.
If $\mu$ is a $T$-invariant Borel probability measure for $T$, then the Lyapunov exponent $\chi_1(\Phi, x)$
exists for $\mu$-almost every $x\in X$; this is a well-known consequence of Kingman's subadditive ergodic theorem; see e.g.\ \cite{Krengel}.
Let us denote $\chi_1(\Phi, \mu) \coloneqq \int \chi_1(\Phi,\mathord{\cdot}) \, d\mu$.
If the measure $\mu$ is ergodic then $\chi_1(\Phi, x)  =  \chi_1(\Phi,\mu)$ for $\mu$-almost every $x\in X$.

In this paper we are interested in the \emph{maximal Lyapunov exponent}, defined as:
\begin{equation}\label{e.def_beta}
\beta(\Phi) \coloneqq \sup_{\mu \in \cM_T} \chi_1(\Phi, \mu) \, ,
\end{equation}
where $\cM_T$ denotes the set of all $T$-invariant Borel probability measures.
The supremum is always attained by an ergodic measure -- this follows from upper semicontinuity of $\chi_1(\Phi, \mathord{\cdot})$ with respect to the weak-star topology, and the fact that $\cM_T$ is a compact convex set whose extreme points are exactly the ergodic measures.
Let us mention that the maximal Lyapunov exponent can also be characterized in more elementary terms as follows:
\begin{equation}\label{e.beta_other}
\beta(\Phi) = \linf_{n \to \infty} \frac{1}{n} \sup_{x \in X} \log \| \Phi^n_x \|
= \sup_{x \in X} \limsup_{n \to \infty} \frac{1}{n} \log \| \Phi^n_x \| \, .
\end{equation}
(We use ``$\linf$'' to denote a limit that is also an infimum.)
These equalities follow from general results on ``subadditive ergodic optimization'': see \cite[Appendix~A]{Morris_Mather}.

A trivial upper bound for the maximal Lyapunov exponent, which depends on the chosen Finsler norm, is given by:
\begin{equation} \label{e.starting_point}
\beta(\Phi) \le  \log  \sup_{x \in X} \| \Phi_x \| \, .
\end{equation}
If equality holds then $\| \mathord{\cdot} \|$ is called an \emph{extremal norm} for $\Phi$.
More precisely, the norm is so ``tight'' that
there is \emph{no} vector $u \neq 0$ in $\E$ whose expansion factor $\|\Phi(u)\| / \|u\|$ exceeds the maximum asymptotic expansion rate $e^{\beta(\Phi)}$.
In particular, if $\beta(\Phi) \le 0$ then the extremal norm is a (non-strict) Lyapunov function for $\Phi$.

Extremal norms first appeared in the 1960 paper \cite{RS} by Rota and Strang, who considered the particular setting of one-step cocycles (details are given below), but apparently were not considered in our level of generality before.

The existence of an extremal norm is far from automatic\footnote{On the other hand, one can always construct ``almost-extremal'' norms, i.e., norms for which the inequality \eqref{e.starting_point} is an approximate equality, and such norms can be taken Riemannian. Furthermore, it is possible to find a Riemannian norm with respect to which all the singular values of the linear maps $\Phi_x$ (and not only the first) are suitably controlled: see \cite[Prop.~4.1]{Bochi_ICM}.}, and has strong consequences.
In this paper we construct extremal norms for a large and natural 
class of vector bundle automorphisms.

\subsection{Previous results}\label{ss.known}

Consider the case of a $1$-dimensional vector bundle $\E$, with an arbitrary Finsler norm $\| \mathord{\cdot} \|$.
Given $\Phi \in \Aut(\E,T)$, there exists a unique continuous function $f \colon X \to \R$ such that
\begin{equation}\label{e.operator}
u \in \E_x \quad \Rightarrow \quad \|\Phi(u)\|_{Tx} = e^{f(x)} \| u\|_x \, .
\end{equation}
Note that in this case the maximal Lyapunov exponent $\beta(\Phi)$ equals:
\begin{equation}\label{e.beta_f}
\beta (f) \coloneqq \sup_{\mu \in \cM_T} \int f \, d\mu \, .
\end{equation}
Any other Finsler norm $\tribar{\cdot}$ is of the form:
$$
\tribar{u}_x = e^{h(x)} \| u\|_x \, ,
$$
for some continuous function $h \colon X \to \R$.
Then $\tribar{\cdot}$ is a extremal norm if and only if $h$ satisfies the ``cohomological inequality'':
$$
f + h \circ T  - h \le \beta(f) \, .
$$
Such a function $h$ is called a \emph{subaction} for $(T,f)$.
Existence of subactions can fail dramatically: see e.g.\ \cite[{\S}3]{Bousch_Jenkinson} and \cite[Appendix]{Garibaldi_book}.
However, if the dynamics $T$ is in some sense hyperbolic (e.g., a shift) and the function $f$ is regular enough (e.g., H\"older) then subactions $h$ do exist. Results of this type are sometimes called \emph{Ma\~n\'e lemmas}; see \cite{CG,Sav,CLT,Bousch_Walters,Bousch_amphi} for various versions and approaches, and see \cite[Prop.~2.1]{Bochi_ICM} for a negative result.
Important applications include \cite{Bousch_Mairesse,Contreras}. The study of invariant measures that attain that supremum in \eqref{e.beta_f} is called \emph{ergodic optimization}; we refer the reader to  \cite{Jenkinson_survey,Jenkinson_survey_new,Garibaldi_book} for much more information. For a discussion of ergodic optimization in a more general context, including optimization of Lyapunov exponents, see \cite{Bochi_ICM}.

\medskip

When $\dim \E > 1$, commutativity is lost and much less is known.
The most studied situation is the following one.
Let $T \colon X \to X$ be the full shift on $N$ symbols, defined on the space $X \coloneqq \{0,1,\dots,N-1\}^\Z$.
Given a $N$-tuple $(A_0,\dots,A_{N-1})$ of invertible $d \times d$ matrices, let
$F \colon X \to \GL(d,\R)$ be given by $F(x) = A_{x_0}$.
We say that $(T,F)$ is a \emph{one-step cocycle}.
Let $\Phi$ the associated automorphism \eqref{e.cocycle}.
In that case, the quantity $e^{\beta(\Phi)}$ is known as the \emph{joint spectral radius} of the set $\{A_0,\dots,A_{N-1}\}$.\footnote{More generally, one could consider (possibly infinite) bounded sets of (possibly non-invertible) square matrices.}
It was introduced by Rota and Strang \cite{RS}.

If, for example, $N=1$ and $A_0 = \left( \begin{smallmatrix} 1 & 1 \\ 0 & 1 \end{smallmatrix} \right)$, then no extremal norm exists. However, if the set $\{A_0,\dots,A_{N-1}\}$ is \emph{irreducible}, in the sense that there is no common invariant non-trivial subspace, then extremal norms $\tribar{\cdot}$ do exist, and can be taken so that
$\tribar{u}_x$ is independent of $x \in X$. 
Actually, Barabanov~\cite{Barabanov} proved that there exists a norm $\tribar{\cdot}$ on $\R^d$ with the following stronger property:
\begin{equation}\label{e.Barabanov}
\forall u \in \R^d, \quad
\max_{i \in \{0,\dots,N-1\}} \tribar{A_i u} = e^{\beta(\Phi)} \tribar{u} \, .
\end{equation}
For more information on the joint spectral radius and Barabanov norms, see \cite{Wirth,Jungers}.
Further applications of extremal norms were obtained by Morris \cite{Morris_rapidly,Morris_Mather}.

Still in the setting of one-step cocycles, a modification of the concept of Barabanov norm was used in \cite{BR,BM} to study Lyapunov-maximizing and also Lyapunov-minimizing measures.

Extremal norms for certain locally constant cocycles over sofic shifts have been studied in the papers \cite{PEDJ,CGP}.

The main purpose of this paper is to establish existence of extremal norms in a far more general setting.

\subsection{The main result}

We now describe the hypotheses on the automorphism $\Phi$ and the underlying dynamics $T$ from which we will prove the existence of extremal norms. We first describe them informally, leaving the precise definitions for later sections. 

First, we assume that $T \colon X \to X$ is a transitive \emph{hyperbolic homeomorphism} of a compact metric space $X$. Hyperbolicity means that $T$ has local stable and unstable sets with uniform exponential bounds, which satisfy a local product property. Examples include subshifts of finite type and Anosov diffeomorphisms.

Second, we assume that the vector bundle $\E$ has a H\"older structure, and that the automorphism $\Phi$ respects this structure. In the case of trivial vector bundles, this means that the matrix function $F$ in formula \eqref{e.cocycle} is H\"older continuous.

Third, we assume that the automorphism $\Phi$ is \emph{fiber-bunched}.
In crude terms, this means that the non-conformality of the linear maps $\Phi_x$ is small when compared to the hyperbolicity rates of $T$. The precise condition involves the H\"older exponent of the automorphism, so that more regular automorphisms are allowed to be less conformal. In the case that $T$ and $\Phi$ are differentiable, fiber-bunching means that the projectivization of $\Phi$ is a \emph{partially hyperbolic} diffeomorphism.

Actually, for $d \ge 3$ we need to assume a stronger form of fiber-bunching.

Our last assumption is \emph{irreducibility}, meaning that $\Phi$ admits no nontrivial regular subbundle, where regular means as regular as the automorphism itself.
We remark that this condition is satisfied for typical fiber-bunched automorphisms: it holds on an open and dense subset of infinite codimension.

The main result of this paper is that \emph{under the conditions above, extremal norms exist}. See \cref{c.extremal} for a precise statement.

In the case where the base dynamics $T$ is a subshift of finite type, we are able to improve our main result and obtain an extremal norm with a further property akin to the Barabanov property: see \cref{ss.Barabanov}.

Classical Barabanov norms are usually non-Riemannian (that is, they do not come from inner products), and it is easy to produce examples\footnote{The pair of matrices \eqref{e.two_matrices} is one such example.}. On the other hand, in our setting, there is more flexibility as the norm is allowed to depend on the basepoint. So one could wonder if the Finsler extremal norms in our main result could be taken Riemannian. Unfortunately, that is not the case: we construct an explicit example in \cref{ss.Riemann}.

\subsection{Consequences}

As a consequence of our result on the existence of extremal norms, we can show that the maximal Lyapunov exponent is a locally Lipschitz function on the space of strongly bunched irreducible automorphisms (see \cref{p.Wirth} for a more precise statement), thus extending a result of Wirth~\cite{Wirth}.

We are also able to obtain several general properties of strongly bunched automorphisms $\Phi$ (not necessarily irreducible):
\begin{itemize}

\item Their growth obeys certain uniform bounds: see \cref{t.polynomial}.

\item They obey the \emph{subordination principle}: if $\mu$ and $\nu$ are invariant probability measures such that $\nu$ is Lyapunov maximizing in the sense that $\chi_1(\Phi,\nu) = \beta(\Phi)$, and $\supp \mu \subseteq \supp \nu$, then $\mu$ is Lyapunov maximizing as well: see \cref{t.subordination}.
This property is far from being tautological, even in the commutative setting; in fact it was introduced in this setting by Bousch \cite{Bousch_Walters}.

\item The maximal Lyapunov exponent $\beta(\Phi)$ can be approximated by Lyapunov exponents of measures supported on periodic orbits, and moreover the quality of this approximation is superpolynomial with respect to the period: see \cref{t.super_pol}.
This extends a result of Morris \cite{Morris_rapidly}, who gave a quantitative version of the celebrated theorem of Berger--Wang \cite{BWang}.

\end{itemize}

We also introduce \emph{Mather sets} in our context; these sets are the habitat of Lyapunov maximizing measures. We prove an important structural result on the existence of \emph{dominated splittings} on the Mather sets, namely \cref{t.dom}, which is an essential ingredient in the proof of the aforementioned \cref{t.super_pol}.

\subsection{Organization of the paper}

In \cref{s.setting} we introduce the setting for our results, providing the definitions and properties of fiber-bunched automorphisms and related concepts. 

In \cref{s.subbundles} we study irreducibility and related concepts. 

In \cref{s.bounded} we provide sufficient conditions for \emph{relative product boundedness}, an intermediate property which is required for the existence of extremal norms.

The construction of extremal norms is given in \cref{s.norms}, together with the construction of Barabanov-like norms for shifts and an application to the regularity of $\beta (\mathord{\cdot})$.

In \cref{s.Mather} we introduce Mather sets in a very general setting and, under the assumption of existence of an extremal norm, establish fine properties about them.

In \cref{s.app} we collect several applications of our results.

\cref{s.technical} contains the proofs of several subsidiary results, therefore making the paper self-contained.

In \cref{s.examples} we exhibit some ``pathological'' examples, including an example that fits in the setting of our main results, but where no Riemannian extremal norm exists.

\section{The fiber-bunched setting}\label{s.setting}

In this \lcnamecref{s.setting}, we fix the basic setting for our theorems. Namely, we define and state the basic properties of H\"older vector bundles, intrinsically hyperbolic homeomorphisms, fiber-bunching, holonomies, and irreducibility. Our approach is influenced by \cite{BGV,Viana,KalSad}, and we tried to make it as general as possible. 
We also obtain some new regularity results that are essential for the main theorems of the paper.
However, to make the presentation more fluid, we postpone most proofs to \cref{s.technical}.

\subsection{The H\"older exponent}\label{ss.theta}

From now on, assume that $(X,\dd)$  is a compact metric space.
We also fix $\theta > 0$ such that
the algebra of $\theta$-H\"older functions on $X$ is \emph{normal},
that is, given any two disjoint compact subsets of $X$, there exists a $\theta$-H\"older function that takes values in the interval $[0,1]$ and equals $0$ on one set and $1$ on the other. This assumption is automatically satisfied if $\theta \le 1$. If $X$ is a Cantor set, then the assumption holds for any $\theta>0$. 
Normality implies the existence of $\theta$-H\"older partitions of unity: see e.g.\ \cite[p.~221]{Katz}.

\subsection{H\"older vector bundles}

Let $\E$ be a $d$-dimensional vector bundle over $X$.
We recall the definition and fix the terminology. 
$\E$ is a topological space endowed with a continuous map ${\pi \colon \E \to X}$ (called the \emph{projection}),
a cover of $X$ by open sets $U_i$ (called \emph{coordinate neighborhoods}),
and a family of homeomorphisms (called \emph{coordinate maps})
$$
\psi_i \colon U_i \times \R^d \to \pi^{-1}(U_i) \quad
\text{such that $\pi( \psi_i(x, u) ) = x$ for all $(x,u) \in  U_i \times \R^d$,}
$$
which is required to have the following compatibility property:
whenever $x \in U_i \cap U_j$, the map
$$
g_{j \gets i}(x) \coloneqq \big[ \psi_j(x, \mathord{\cdot})\big]^{-1} \circ \psi_i(x, \mathord{\cdot}) \colon \R^d \to \R^d
$$
is linear.
Therefore we obtain a family of continuous maps:
\begin{equation}\label{e.g}
g_{j \gets i} \colon U_i \cap U_j \to \GL(d,\R),
\end{equation}
which are called \emph{coordinate transformations}.
Moreover, each \emph{fiber} $\E_x \coloneqq \pi^{-1}(x)$ has a unique structure of $d$-dimensional vector space
such that the maps
\begin{equation}\label{e.h_i}
h_i(x) \coloneqq \psi_i(x, \mathord{\cdot}) \colon \R^d \to \E_x
\end{equation}
become isomorphisms.
Since $X$ is assumed to be compact, we will from now on assume that the cover $\{U_i\}$ is finite.

\medskip

We say that $\E$ is a \emph{$\theta$-H\"older vector bundle} if the coordinate transformations \eqref{e.g} are locally $\theta$-H\"older. By compactness, we can reduce the coordinate neighborhoods so that the  coordinate transformations become (uniformly) $\theta$-H\"older.

As mentioned in \cref{ss.intro_extremal}, a \emph{Finsler norm}
is a continuous function $\| \mathord{\cdot} \|$ on $\E$ that restricts to a norm $\| \mathord{\cdot} \|_x$ on each fiber $\E_x$.
A Finsler norm $\| \mathord{\cdot} \|$ is called \emph{Riemannian} if each $\| \mathord{\cdot} \|_x$ is induced by an inner product $\langle \mathord{\cdot}, \mathord{\cdot} \rangle_x$.
A Finsler norm  $\| \mathord{\cdot} \|$  is called \emph{$\theta$-H\"older} if for every $u \in \R^d$ and every coordinate neighborhood, the function $x \in U_i \mapsto \| h_i(x) u \| $ is $\theta$-H\"older.
Every $\theta$-H\"older vector bundle $\E$ admits a $\theta$-H\"older Riemannian norm; the proof is straightforward using a $\theta$-H\"older partition of unity. 

\medskip

We will also need a way of ``transporting'' vectors from one fiber to another:

\begin{proposition}[Transport maps]\label{p.transport}
Let $\E$ be a $\theta$-H\"older vector bundle.
There exists a family of linear maps $I_{y \gets x} \colon \E_x \to \E_y$ with the following properties:
\begin{enumerate}
\item
For every point $x \in X$, the linear map $I_{x \gets x}$ equals the identity.
\item
For every pair of indices $i$, $j$, the matrix-valued map
$$
(x,y)  \in U_i \times U_j \mapsto [h_j(y)]^{-1} \circ I_{y \gets x} \circ h_i(x)
$$
is $\theta$-H\"older.
\end{enumerate}
\end{proposition}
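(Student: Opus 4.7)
The plan is to build $I_{y\gets x}$ as a normalised weighted average of the naive local transports $h_i(y)\circ[h_i(x)]^{-1}$, patched together by a $\theta$-Hölder partition of unity. Using the normality assumption on the $\theta$-Hölder algebra (see \cref{ss.theta}), fix a $\theta$-Hölder partition of unity $\{\rho_i\}$ subordinate to the finite cover $\{U_i\}$, so that $\supp\rho_i\subset U_i$. Let $N$ be the number of coordinate neighbourhoods and put $c(x)\coloneqq\sum_i\rho_i(x)^2$. Cauchy--Schwarz yields $1=(\sum_i\rho_i(x))^2\le N\,c(x)$, so $c$ is a strictly positive $\theta$-Hölder function and $1/c$ is $\theta$-Hölder as well.

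Now define
\[
I_{y\gets x}\coloneqq\frac{1}{c(y)}\sum_i\rho_i(x)\,\rho_i(y)\,h_i(y)\circ[h_i(x)]^{-1},
\]
with the convention that the $i$-th summand is zero when $x\notin U_i$ or $y\notin U_i$ (the prefactors already vanish there). This is clearly a linear map $\E_x\to\E_y$, and at $y=x$ each summand reduces to $\rho_i(x)^2\,\id_{\E_x}$, giving $I_{x\gets x}=c(x)^{-1}c(x)\,\id_{\E_x}=\id_{\E_x}$, which is property (a). Nothing forces $I_{y\gets x}$ to be an isomorphism for $x\ne y$, but the statement does not require this.

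For property (b), fix indices $i,j$ and compute, for $(x,y)\in U_i\times U_j$,
\[
[h_j(y)]^{-1}\circ I_{y\gets x}\circ h_i(x)=\frac{1}{c(y)}\sum_\ell\bigl[\rho_\ell(y)\,g_{j\gets\ell}(y)\bigr]\bigl[\rho_\ell(x)\,g_{\ell\gets i}(x)\bigr].
\]
Once one knows that each factor $\rho_\ell\,g_{\ell\gets i}$ is a $\theta$-Hölder $d\times d$ matrix-valued function on $U_i$ (and symmetrically $\rho_\ell\,g_{j\gets\ell}$ on $U_j$), the right-hand side is a finite sum of products of $\theta$-Hölder functions divided by the $\theta$-Hölder, strictly positive function $c(y)$, hence $\theta$-Hölder on $U_i\times U_j$.

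The only non-formal step, and the one I expect to be the main obstacle, is verifying that these extension-by-zero products are $\theta$-Hölder. On $U_i\cap U_\ell$ the product $\rho_\ell\,g_{\ell\gets i}$ is manifestly $\theta$-Hölder, and on the rest of $U_i$ it vanishes; the standard patching lemma then says this extension is $\theta$-Hölder on $U_i$ provided $\supp\rho_\ell$ and $X\setminus U_\ell$ lie at positive distance $\delta>0$, with Hölder constant at most $\max(C,\|\rho_\ell\,g_{\ell\gets i}\|_\infty\,\delta^{-\theta})$. Positivity of $\delta$ comes from compactness of $X$ together with $\supp\rho_\ell\subset U_\ell$, and finiteness of the cover is used to combine the finitely many contributions. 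This is the only place where the topological hypotheses on $X$ enter substantively.
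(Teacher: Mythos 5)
Your construction is correct, and it differs from the paper's in a genuine way. The paper builds $I_{y\gets x}=\sum_{(k,\ell)}\rho_{k,\ell}(x,y)\,h_\ell(y)\circ[h_k(x)]^{-1}$ from a $\theta$-H\"older partition of unity on $X\times X$ subordinate to a cover in which every off-diagonal product $U_k\times U_\ell$ ($k\neq\ell$) has the diagonal removed; this forces $\rho_{k,\ell}(x,x)=0$ for $k\neq\ell$, so the sum collapses to the identity on the diagonal with no renormalization, at the cost of invoking normality of the $\theta$-H\"older algebra on $X\times X$ (\cref{l.square}) and of mixing charts $k\neq\ell$. You instead use only the diagonal terms $h_\ell(y)\circ[h_\ell(x)]^{-1}$, weighted by a one-variable partition of unity, and restore $I_{x\gets x}=\id$ by dividing by $c(y)=\sum_\ell\rho_\ell(y)^2$, which your Cauchy--Schwarz bound $c\ge 1/N$ keeps uniformly away from zero so that $1/c$ stays $\theta$-H\"older. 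Both routes then reduce property (b) to the same elementary facts (products of bounded $\theta$-H\"older functions are $\theta$-H\"older, plus the extension-by-zero lemma using $\mathrm{dist}(\supp\rho_\ell,\,X\setminus U_\ell)>0$, which compactness guarantees); you spell out this last point, which the paper leaves implicit. The paper's version is marginally slicker in that no normalizing factor appears; yours is more elementary in that it never needs a partition of unity on the product space. Neither construction yields $(I_{y\gets x})^{-1}=I_{x\gets y}$, but, as you note, the statement does not ask for that.
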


See \cref{ss.basic} for the proof of \cref{p.transport}. The next propositions, also proved in \cref{ss.basic}, give additional quantitative properties of the transport maps that will be useful in subsequent calculations.
Recall that we agree to denote a norm and its induced operator norm by the same symbol, as in \eqref{e.def_operator_norm}.

\begin{proposition}\label{p.transport_groupoid}
Let $\E$ be a $\theta$-H\"older vector bundle, endowed with a Finsler norm.
Let $\{I_{y\gets x}\}$ be the family of transport maps provided by \cref{p.transport}.
Then there is $C>0$ such that for all points $x$, $y$, $z \in X$,
$$
\| I_{y \gets z} \circ I_{z \gets x} - I_{y \gets x} \| \le C \max\{ \dd(x,z)^\theta, \dd(y,z)^\theta \} \, ,
$$
\end{proposition}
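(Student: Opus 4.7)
The plan is to reduce the estimate to a computation in a single chart and exploit the joint $\theta$-H\"older regularity of the transport maps given by part (2) of \cref{p.transport}, together with the normalization $I_{p\gets p}=\id$ from part (1). First I would fix a Lebesgue number $\delta>0$ for the (finite) coordinate cover $\{U_i\}$ so that any ball of radius $2\delta$ in $X$ lies in some $U_i$. If $\max\{\dd(x,z),\dd(y,z)\}\ge \delta$, the inequality is trivial: continuity of the Finsler norm and of the transport maps, together with compactness of $X$, furnish a uniform bound $\|I_{y\gets x}\|\le M$, whence
$$
\|I_{y\gets z}\circ I_{z\gets x}-I_{y\gets x}\|\le M^2+M\le (M^2+M)\delta^{-\theta}\,\max\{\dd(x,z)^\theta,\dd(y,z)^\theta\}.
$$

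Next, suppose $\max\{\dd(x,z),\dd(y,z)\}<\delta$. By the triangle inequality the three points lie in a common neighborhood $U_i$. Set $T_{q\gets p}\coloneqq [h_i(q)]^{-1}\circ I_{q\gets p}\circ h_i(p)$; by \cref{p.transport} the map $(p,q)\mapsto T_{q\gets p}$ is jointly $\theta$-H\"older on $U_i\times U_i$ with some constant $C_1$, and $T_{p\gets p}=\id$. The identity
$$
I_{y\gets z}\circ I_{z\gets x}-I_{y\gets x}=h_i(y)\circ\bigl(T_{y\gets z}\circ T_{z\gets x}-T_{y\gets x}\bigr)\circ[h_i(x)]^{-1},
$$
combined with the uniform bounds on $\|h_i(y)\|$ and $\|[h_i(x)]^{-1}\|$ over the compact closures, reduces matters to estimating the middle factor.

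The key algebraic step is the decomposition
$$
T_{y\gets z}\circ T_{z\gets x}-T_{y\gets x}=(T_{y\gets z}-T_{y\gets x})\circ T_{z\gets x}+T_{y\gets x}\circ(T_{z\gets x}-T_{x\gets x}),
$$
which is legitimate precisely because $T_{x\gets x}=\id$. Joint $\theta$-H\"older continuity bounds each parenthesized difference by $C_1\dd(x,z)^\theta$, and $\|T_{z\gets x}\|$, $\|T_{y\gets x}\|$ are uniformly bounded on $U_i\times U_i$; this yields an estimate of the form $C_2\dd(x,z)^\theta\le C_2\max\{\dd(x,z)^\theta,\dd(y,z)^\theta\}$. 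Taking the largest of the resulting constants over the finitely many charts and combining with the trivial far-away estimate gives a single $C$ that works globally. I anticipate no genuine obstacle; the only bookkeeping is the uniform control of H\"older constants and chart norms across the finite cover, which is immediate from compactness. (In fact, a symmetric decomposition using $T_{y\gets y}=\id$ yields a bound by $\dd(y,z)^\theta$ as well, so one actually obtains the stronger $\min$-bound for free.)
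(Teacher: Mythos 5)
Your proof is correct and uses the same overall strategy as the paper (reduce to a single chart, exploit part (2) of \cref{p.transport} together with $I_{p\gets p}=\id$), but the organization of the key estimate is genuinely different. The paper first shows that each matrix $\tilde I_{q\gets p}=[h_i(q)]^{-1}\circ I_{q\gets p}\circ h_i(p)$ is $O(\dd(p,q)^\theta)$-close to the identity, and then expands
$\tilde I_{y\gets z}\tilde I_{z\gets x}-\tilde I_{y\gets x}$
as a sum of small matrices; this forces it to control the term $\tilde I_{y\gets x}-\mathrm{Id}=O(\dd(x,y)^\theta)$ via the subadditivity estimate $\dd(x,y)^\theta\le 2^\theta\max\{\dd(x,z)^\theta,\dd(y,z)^\theta\}$, which is where the $\max$ comes from. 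Your telescoping identity
$T_{y\gets z}\circ T_{z\gets x}-T_{y\gets x}=(T_{y\gets z}-T_{y\gets x})\circ T_{z\gets x}+T_{y\gets x}\circ(T_{z\gets x}-T_{x\gets x})$
avoids that step and bounds the error purely by $\dd(x,z)^\theta$ (and, by the symmetric telescoping anchored at $T_{y\gets y}$, by $\dd(y,z)^\theta$), so you in fact obtain the sharper $\min$-bound at no extra cost. You also make explicit the ``far'' case via a Lebesgue-number argument and the uniform boundedness of the $I_{y\gets x}$, which the paper disposes of with the one-line remark ``it is sufficient to consider triples of points \dots close enough.'' Both proofs are fine; yours is marginally cleaner and delivers a slightly stronger conclusion.
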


\begin{proposition}\label{p.norm_Holder}
Let $\E$ be a $\theta$-H\"older vector bundle, endowed with a Finsler norm $\| \mathord{\cdot}\|$.
Let $\{I_{y\gets x}\}$ be the family of transport maps provided by \cref{p.transport}.
Then the Finsler norm $\|\mathord{\cdot}\|$ is $\theta$-H\"older if and only if
there exists $C>0$ such that for all points $x$, $y \in X$,
$$
\big| \|I_{y \gets x}\| - 1 \big| \le C \dd(x,y) ^\theta \, .
$$
\end{proposition}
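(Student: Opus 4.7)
The proof works in local coordinates. Fix a coordinate neighborhood $U_i$ and define $\phi_x(v) := \|h_i(x)v\|_x$, a norm on $\R^d$ depending on $x \in U_i$. By continuity of the Finsler norm on $\E$ and compactness of $\bar U_i$, there are uniform constants $m, M > 0$ with $m\|v\| \le \phi_x(v) \le M\|v\|$; in particular $|\phi_x(u) - \phi_x(v)| \le \phi_x(u-v) \le M\|u-v\|$, so the family $\{\phi_x\}$ is uniformly $M$-Lipschitz in its argument. Setting $J(x,y) := h_i(y)^{-1} \circ I_{y\gets x} \circ h_i(x)$, \cref{p.transport} together with $J(x,x) = \id$ yields $\|J(x,y) - \id\|_{\R^d} \le C_1 \dd(x,y)^\theta$. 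For $u = h_i(x)v$ one has $\|u\|_x = \phi_x(v)$ and $\|I_{y\gets x}u\|_y = \phi_y(J(x,y)v)$, so
\[
\|I_{y\gets x}\|_{y\gets x} \;=\; \sup_{v \ne 0}\frac{\phi_y(J(x,y)v)}{\phi_x(v)}.
\]

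For the direction ($\Rightarrow$), the pointwise $\theta$-H\"older hypothesis on each $x \mapsto \phi_x(v)$, combined with the uniform Lipschitz dependence in $v$ and a standard equicontinuity argument on the compact sphere $S^{d-1}$, upgrades to a uniform estimate $|\phi_x(v) - \phi_y(v)| \le C_2\|v\|\dd(x,y)^\theta$. Together with the bound on $J-\id$ and the triangle inequality this gives
\[
|\phi_y(J(x,y)v) - \phi_x(v)| \;\le\; \phi_y((J(x,y)-\id)v) + |\phi_y(v)-\phi_x(v)| \;\le\; (MC_1 + C_2)\|v\|\dd(x,y)^\theta;
\]
dividing by $\phi_x(v) \ge m\|v\|$ and taking the supremum over $v$ produces $|\|I_{y\gets x}\|_{y\gets x} - 1| \le C_3\dd(x,y)^\theta$.

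For the direction ($\Leftarrow$), the hypothesis directly gives $\|I_{y\gets x}u\|_y \le (1 + C\dd(x,y)^\theta)\|u\|_x$. The swapped bound applied to $\|I_{x\gets y}\|$, combined with \cref{p.transport_groupoid} (which, with the intermediate point equal to $y$, shows $I_{x\gets y}\circ I_{y\gets x} = \id_{\E_x} + R$ with $\|R\|_{x\gets x} \le C\dd(x,y)^\theta$), yields the matching lower bound $\|I_{y\gets x}u\|_y \ge (1 - C'\dd(x,y)^\theta)\|u\|_x$ for $\dd(x,y)$ sufficiently small. Substituting $u = h_i(x)v$ and using
\[
\|h_i(y)v - I_{y\gets x}h_i(x)v\|_y \;=\; \|h_i(y)(J(x,y) - \id)v\|_y \;\le\; MC_1 \|v\|\dd(x,y)^\theta,
\]
a triangle-inequality chain comparing $\phi_x(v)$, $\|I_{y\gets x}h_i(x)v\|_y$, and $\phi_y(v)$ gives $|\phi_x(v) - \phi_y(v)| \le C''\|v\|\dd(x,y)^\theta$, so $x \mapsto \|h_i(x)v\|$ is $\theta$-H\"older as required.

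Pairs $(x,y)$ that do not lie in a common coordinate neighborhood are at distance bounded below by the Lebesgue number of the finite cover, so $|\|I_{y\gets x}\|-1|$ is automatically controlled by $\dd(x,y)^\theta$ up to a constant via continuity and compactness, and the $\theta$-H\"older property of the norm is a purely local one. The main technical point I anticipate is the uniformity in $v$ in the forward direction: upgrading the pointwise-in-$v$ H\"older condition to a uniform-in-$v$ bound relies on the convex-norm structure together with compactness of $S^{d-1}$, while the reverse direction hinges on extracting a two-sided comparison between $\|I_{y\gets x}u\|_y$ and $\|u\|_x$ from the one-sided operator-norm bound via \cref{p.transport_groupoid}.
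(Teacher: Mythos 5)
Your proof is correct and follows essentially the same route as the paper's: pass to local coordinates, use compactness of the unit sphere to make the pointwise H\"older condition uniform in the vector, and compare $\|I_{y\gets x}u\|$ with $\|u\|$ via the triangle inequality together with the estimate $\| \tilde I_{y\gets x}-\mathrm{Id}\| = O(\dd(x,y)^\theta)$ for the coordinate representation of the transport maps. The only substantive difference is that you spell out the converse (which the paper dismisses as ``entirely analogous''), correctly observing that the one-sided operator-norm hypothesis must be upgraded to a two-sided bound on $\|I_{y\gets x}u\|/\|u\|$ by bringing in $I_{x\gets y}$ and \cref{p.transport_groupoid}.
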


\subsection{\texorpdfstring{$\theta$}{theta}-H\"older bundle automorphisms}\label{ss.auto}

Assume that $\E$ is a $\theta$-H\"older vector bundle over the compact metric space~$X$.
Fix a $\theta$-H\"older Riemannian norm on $\E$.

A map $\Phi \colon \E \to \E$ is called an \emph{endomorphism} of $\E$ if
there exists a continuous map $T \colon X \to X$ such that $\pi \circ \Phi = T \circ \pi$
(we say that $\Phi$ \emph{covers} $T$) and for each $x \in X$, the restriction of $\Phi$ to the fiber $\E_x$ is a linear map $\Phi_x$ to the fiber $\E_{Tx}$. If $T$ is a homeomorphism and each $\Phi_x$ is a isomorphism then we say that $\Phi$ is an \emph{automorphism}.

We say that the endomorphism $\Phi$ covering $T$ is \emph{$\theta$-H\"older} if $T$ is Lipschitz and the maps
$$
x \in U_i \cap T^{-1}(U_j) \mapsto [h_j(Tx)]^{-1} \circ \Phi_x \circ h_i(x) \in \GL(d,\R)
$$
are $\theta$-H\"older.\footnote{This is similar to the definition of \emph{$\theta$-bounded vertical shear} in \cite{PSW2}.}
As an immediate consequence, the function $x \in X \mapsto \|\Phi_x\|$ is $\theta$-H\"older.

We can characterize $\theta$-H\"older automorphisms in terms of the transport maps from \cref{p.transport}:
\begin{proposition}\label{p.endo_Holder}
An endomorphism $\Phi \colon \E \to \E$ covering a Lipschitz map $T$ is $\theta$-H\"older if and only if there exists $K>0$ such that for all 
$x$, $y \in X$, we have 
$$
\big\| I_{Ty \gets Tx} \circ \Phi_x - \Phi_y \circ I_{y \gets x} \big\| \le K \dd(x,y)^\theta \, .
$$
\end{proposition}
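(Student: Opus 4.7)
The argument reduces to a calculation in local coordinates. For $x, y$ with $\dd(x,y)$ small enough, choose coordinate charts so that $x, y \in U_i$ and $Tx, Ty \in U_j$ (possible by Lipschitz continuity of $T$ together with compactness). Let $\tilde\Phi_{ij}(x) \coloneqq [h_j(Tx)]^{-1} \circ \Phi_x \circ h_i(x)$ denote the local representative of $\Phi$, and let $\tilde I_{ij}(x,y) \coloneqq [h_j(y)]^{-1} \circ I_{y\gets x} \circ h_i(x)$ denote the local representative of the transport. By definition, $\Phi$ is $\theta$-H\"older precisely when each $\tilde\Phi_{ij}$ is $\theta$-H\"older in $x$, while \cref{p.transport}(b) says that each $\tilde I_{ij}$ is $\theta$-H\"older in $(x,y)$. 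Crucially, $\tilde I_{ii}(x,x) = \id$ and $\tilde I_{jj}(z,z) = \id$, because $I_{z\gets z} = \id$ for every $z$. A direct expansion in coordinates gives the identity
\[
I_{Ty \gets Tx}\,\Phi_x - \Phi_y\,I_{y\gets x}
= h_j(Ty)\,\Bigl[\,\tilde I_{jj}(Tx,Ty)\,\tilde\Phi_{ij}(x) - \tilde\Phi_{ij}(y)\,\tilde I_{ii}(x,y)\,\Bigr]\,[h_i(x)]^{-1},
\]
and this one identity will drive both implications.

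Setting $A \coloneqq \tilde I_{jj}(Tx,Ty) - \id$ and $B \coloneqq \tilde I_{ii}(x,y) - \id$, the joint H\"older regularity of $\tilde I_{jj}$ and $\tilde I_{ii}$, together with the Lipschitz bound on $T$, yields $\|A\|, \|B\| = O(\dd(x,y)^\theta)$. The bracket then rearranges to
\[
\bigl(\tilde\Phi_{ij}(x) - \tilde\Phi_{ij}(y)\bigr) + A\,\tilde\Phi_{ij}(x) - \tilde\Phi_{ij}(y)\,B.
\]
For the forward direction, assuming $\tilde\Phi_{ij}$ is $\theta$-H\"older (hence uniformly bounded on the compact chart), all three summands are $O(\dd(x,y)^\theta)$, and conjugating by the uniformly bounded isomorphisms $h_j(Ty)$ and $[h_i(x)]^{-1}$ delivers the desired operator-norm inequality. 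For the reverse direction, solve the same identity for $\tilde\Phi_{ij}(x) - \tilde\Phi_{ij}(y)$: the hypothesis controls the bracketed expression, while $A\,\tilde\Phi_{ij}(x)$ and $\tilde\Phi_{ij}(y)\,B$ are each $O(\dd(x,y)^\theta)$ using only continuity (hence local boundedness) of $\tilde\Phi_{ij}$.

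For pairs $(x,y)$ that cannot be placed in a common pair of charts, the Lebesgue number lemma bounds $\dd(x,y)$ below by a positive constant, so both inequalities hold trivially after absorbing the uniform operator-norm bounds on $\Phi$ and on the transport maps into a larger constant $K$. \textbf{The main obstacle} is purely bookkeeping: one must check that the constants arising from the finite atlas (operator norms of $h_i$ and $[h_i]^{-1}$, H\"older constants of the $\tilde I_{ij}$, and suprema of $\|\tilde\Phi_{ij}\|$) can be chosen uniformly. All of this is immediate from compactness of $X$ and the continuity of the local representatives already in hand, so the proof will be short.
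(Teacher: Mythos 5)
Your proof is correct and takes essentially the same approach as the paper: reduce to local coordinates, observe that the transport matrices are $\theta$-H\"older in both variables and equal the identity on the diagonal, and conjugate the resulting matrix estimate by the uniformly bounded chart maps. The paper compresses your $A$/$B$ decomposition into the single observation that $(x,y) \mapsto \tilde I_{Ty \gets Tx} \circ \tilde\Phi_x - \tilde\Phi_y \circ \tilde I_{y \gets x}$ is $\theta$-H\"older and vanishes on the diagonal, but the underlying algebra is the same, and your handling of far-apart pairs via a Lebesgue number argument fills in a detail the paper leaves implicit.
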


A proof is provided in \cref{ss.basic}.

\medskip

Next, we want to topologize the set of $\theta$-H\"older automorphisms.

Let $\End^\theta(\E,T)$ denote the vector space of $\theta$-H\"older endomorphisms $\Phi \colon \E \to \E$ covering $T$. 
Define the \emph{$C^0$ norm}:
\begin{equation}\label{e.C0_norm}
\|\Phi\|_0 \coloneqq \sup_{x \in X} \|\Phi_x\| \, .
\end{equation}
The stronger \emph{$\theta$-H\"older norm} makes $\End^\theta(\E,T)$ a Banach space:
\begin{equation}\label{e.Holder_norm}
\|\Phi\|_\theta \coloneqq \max \left\{ \|\Phi\|_0 , \ \sup_{x \neq y} \frac{\|I_{Ty \gets Tx} \circ \Phi_x - \Phi_y \circ I_{y \gets x}\|}{\dd(x,y)^\theta} \right\} \, .
\end{equation}
The set $\Aut^\theta(\E,T)$ of $\theta$-H\"older automorphisms is a $C^0$-open subset of $\End^\theta(\E,T)$.
Given $K\ge 1$, let:
\begin{equation}\label{e.bounded_set}
\Aut^\theta_K(\E,T) \coloneqq \big\{ \Phi \in \Aut^\theta(\E,T) \st \|\Phi\|_\theta \le K, \ \|\Phi^{-1}\|_\theta \le K \big\} \, .
\end{equation}

\subsection{Hyperbolic homeomorphisms}\label{ss.hyp_homeo}

The concept of hyperbolicity in differentiable dynamical systems was introduced by Anosov \cite{Anosov} and Smale \cite{Smale}. Even without recourse to a differentiable structure, it is possible to define hyperbolicity (and to prove interesting theorems); this has been done in various ways: \cite{Bowen,Ruelle,AY,Akin,AH}. In this paper, we will use a minor variation of the definition of hyperbolic homeomorphism given by Sakai \cite{Sakai} (see \cref{r.hyperb} below).

\medskip

Recall that $X$ is a compact metric space.
Let $T \colon X \to X$ be a homeomorphism.
Given $x \in X$ and $\epsilon>0$, we define the following sets:
\begin{itemize}
\item \emph{local unstable set} $W^\uu_\epsilon(x) \coloneqq \big\{ y \in X \st \dd(T^{-n} y, T^{-n} x) \le \epsilon \text{ for all } n\ge 0\big\}$;
\item \emph{local stable set} $W^\ss_\epsilon(x) \coloneqq \big\{ y \in X \st \dd(T^n y, T^n x) \le \epsilon \text{ for all } n\ge 0\big\}$.
\end{itemize}
We say that $T$ is a \emph{hyperbolic homeomorphism} if the following axioms hold: 
\begin{enumerate}

\item\label{i.biLip}
$T$ is bi-Lipschitz, i.e., both $T$ and $T^{-1}$ are Lipschitz;

\item\label{i.lambdas} 
there exist a constant $\epsilon_0 > 0$ and a pair of continuous positive functions $\lambda_\uu$, $\lambda_\ss$ (called the \emph{hyperbolicity exponents}) such that:
\begin{alignat}{4}
\label{e.lambda_u}
x &\in X, &\  x', x'' &\in W^\uu_{\epsilon_0}(x) &\quad &\Rightarrow &\quad
\dd(T^{-1} x', T^{-1} x'') &\le e^{-\lambda_\uu(x)} \, \dd(x',x'') \, , \\
\label{e.lambda_s}
y &\in X, &\  y', y'' &\in W^\ss_{\epsilon_0}(y) &\quad &\Rightarrow &\quad
\dd(T y', T y'') &\le e^{-\lambda_\ss(y)} \, \dd(y', y'') \, ;
\end{alignat}

\item\label{i.bracket} 
there exists a constant $\epsilon_1 \in (0,\epsilon_0)$ such that for any pair of points $x$, $y \in X$ with $\dd(x,y) \le 2\epsilon_1$, the intersection $W^\uu_{\epsilon_0}(x) \cap W^\ss_{\epsilon_0}(y)$ contains exactly one point, denoted by $[x,y]$ and called \emph{the bracket of $x$ and $y$}, which depends continuously on $x$ and $y$;

\item\label{i.bounded_angles}  
there exists a constant $C>0$ such that: 
\begin{equation}\label{e.bounded_angles} 
x, y \in X, \ \dd(x,y) \le 2\epsilon_1 \ \Rightarrow \ 
\max \big\{ \dd([x,y],x) , \dd([x,y],y) \big\} \le C \dd(x,y) \, .
\end{equation}
\end{enumerate}

\begin{remark}\label{r.hyperb}
Sakai \cite{Sakai} uses the terminology \emph{$\mathcal{L}$-hyperbolic homeomorphism}, while Ruelle \cite{Ruelle} uses \emph{Smale spaces}. Modulo a change of metric, both definitions are equivalent to ours, and also to expansivity plus the shadowing property: see \cite{Sakai} and references cited there. 
\end{remark}

Let us also define other sets associated with $T$:
\begin{itemize}
\item \emph{unstable set} $W^\uu(x) \coloneqq \left\{ y \in X \st \dd(T^{-n} y, T^{-n} x) \to 0 \text{ as } n \to +\infty \right\}$;
\item \emph{stable set} $W^\ss(x) \coloneqq \left\{ y \in X \st \dd(T^{n} y, T^{n} x) \to 0 \text{ as } n \to +\infty \right\}$;
\end{itemize}
If $T$ is a hyperbolic homeomorphism then, as an immediate consequence of part~(\ref{i.lambdas}) of the definition, for every $\epsilon \in (0,\epsilon_0]$ we have the following set relations:
\begin{equation}\label{e.longW}
W^\uu(x) = \bigcup_{n\ge 0} T^n(W^\uu_\epsilon(T^{-n} x)) \, , \qquad
W^\ss(x) = \bigcup_{n\ge 0} T^{-n}(W^\ss_\epsilon(T^n x)) \, .
\end{equation}

\medskip

The transverse regularity of the unstable and stable sets is a classical subject, and fine results about hyperbolicity rely on it: see \cite[Chapter~19]{KH}.
Nevertheless, we could not find a reference for the following property for hyperbolic homeomorphisms:

\begin{proposition}\label{p.regularity_base}
Let $T$ be a hyperbolic homeomorphism.  
There exist constants $0 < \kappa_\ss \le 1$ and $C>0$ such that
if $x$, $x'$, $y$, $y' \in X$ satisfy  (see \cref{f.rectangle}):
\begin{equation}\label{e.rectangle}
x' \in W^\uu_{\epsilon_0}(x), \
y' \in W^\uu_{\epsilon_0}(y), \
y  \in W^\ss_{\epsilon_0}(x), \
y' \in W^\uu_{\epsilon_0}(x'),
\end{equation}
then:
$$
\dd(y,y') \le C \, \dd(x, x')^{\kappa_\ss} \, .
$$
\end{proposition}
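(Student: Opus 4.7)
The strategy is to compare $\dd(y,y')$ with $a := \dd(x,x')$ by iterating the rectangle forward under $T$ for a carefully chosen number of steps $N$, and then contracting back along the unstable leaf. Write $\lambda_{\ss}^{\ast} = \min_X \lambda_{\ss}$, $\lambda_{\uu}^{\ast} = \min_X \lambda_{\uu}$, $L = \mathrm{Lip}(T)$, and $M = \mathrm{Lip}(T^{-1})$.

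The first observation is that $y'$ is the bracket $[y, x']$, and the bracket relation is $T$-equivariant whenever the participating points remain within distance $2\epsilon_1$, so $T^k y' = [T^k y, T^k x']$ for every applicable $k \ge 0$. Iterating the stable contraction from axiom (b) yields $\dd(T^k x, T^k y) \le \epsilon_0 e^{-k \lambda_{\ss}^{\ast}}$ and likewise $\dd(T^k x', T^k y') \le \epsilon_0 e^{-k \lambda_{\ss}^{\ast}}$, while the bi-Lipschitz assumption gives $\dd(T^k x, T^k x') \le L^k a$. Combining these with the bracket-continuity estimate of axiom (d),
\[
\dd(T^k y, T^k y') \;\le\; C \, \dd(T^k y, T^k x') \;\le\; C\bigl( \epsilon_0 e^{-k \lambda_{\ss}^{\ast}} + L^k a \bigr).
\]

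Choose a universal integer $k_0$ with $C e^{-k_0 \lambda_{\ss}^{\ast}} \le 1/2$, and let $N = N(a)$ be the largest integer satisfying $C L^N a \le \epsilon_0/2$; for small $a$ one has $N \asymp \log(1/a)/\log L$. For every $k$ in the range $k_0 \le k \le N$, the previous display yields $\dd(T^k y, T^k y') \le \epsilon_0$, which is exactly the local-unstable condition $T^k y' \in W^{\uu}_{\epsilon_0}(T^k y)$. Hence the backward unstable contraction of axiom (b) applies one step at a time throughout this range, giving $\dd(T^{k_0} y, T^{k_0} y') \le e^{-(N - k_0) \lambda_{\uu}^{\ast}}\, \epsilon_0$. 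For the remaining (boundedly many) initial $k_0$ steps, where the local-unstable property is not guaranteed, we absorb the loss into the Lipschitz constant $M$, ending with
\[
\dd(y, y') \;\le\; M^{k_0} \, e^{-(N-k_0)\lambda_{\uu}^{\ast}}\, \epsilon_0 \;\le\; C'\, a^{\lambda_{\uu}^{\ast}/\log L}.
\]
Since $T$ expands unstable distances by a factor of at most $L$, we have $\lambda_{\uu}^{\ast} \le \log L$, so the resulting H\"older exponent $\kappa_{\ss} := \lambda_{\uu}^{\ast}/\log L$ lies in $(0,1]$ as required. The case of large $a$ is trivial from compactness of $X$.

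I expect the main obstacle to be not the chain of estimates itself but the verification that the backward unstable contraction of axiom (b) is indeed applicable at every intermediate step, i.e.\ that $T^k y'$ stays inside the local unstable set $W^{\uu}_{\epsilon_0}(T^k y)$. The two-stage choice of $k_0$ and $N$ is designed precisely for this: past time $k_0$ the stable-contraction term in the bracket bound has decayed below $\epsilon_0/2$, while stopping at $N$ keeps the unstable-drift term $C L^k a$ below $\epsilon_0/2$, so their sum never exceeds $\epsilon_0$ throughout the relevant range.
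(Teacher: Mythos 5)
Your strategy — iterate the rectangle forward for $N\approx\log(1/a)$ steps, bound the summit, then contract back along the unstable leaf — is the same ``there and back again'' argument the paper uses. Two remarks before the main one. First, the paper gets the summit bound by the plain triangle inequality $\dd(T^n y, T^n y')\le \dd(T^n x, T^n y)+\dd(T^n x,T^n x')+\dd(T^n x',T^n y')$, avoiding the bracket and axiom~(d) altogether (your bracket route needs the extra verification that $\dd(T^k y, T^k x')\le 2\epsilon_1$ throughout, which you leave to ``for every applicable $k$''). Second, the paper works with Birkhoff sums of the pointwise exponents $\lambda_\ss,\lambda_\uu$ and a pointwise unstable Lipschitz bound $\Lambda_\uu$ (not the global constants $\lambda^*_\ss,\lambda^*_\uu,\log L$), yielding the sharper exponent $\kappa_\ss<\inf_X\tfrac{\lambda_\ss+\lambda_\uu}{\lambda_\ss+\Lambda_\uu}$. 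Your cruder $\kappa_\ss=\lambda^*_\uu/\log L$ is smaller but still admissible for this proposition; however, the precise value is used again in \cref{l.needed_strength} to define the strong-bunching constant $\eta_0$, so the paper needs the better estimate.

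The genuine gap is in the sentence ``$\dd(T^k y, T^k y')\le\epsilon_0$, which is exactly the local-unstable condition $T^k y'\in W^\uu_{\epsilon_0}(T^k y)$.'' It is not. By definition, $T^k y'\in W^\uu_{\epsilon_0}(T^k y)$ means $\dd(T^{k-m}y,T^{k-m}y')\le\epsilon_0$ for \emph{every} $m\ge0$, i.e.\ the bound must hold along the entire past orbit $j\le k$. The hypothesis $y'\in W^\uu_{\epsilon_0}(y)$ gives this for $j\le0$, and your summit estimate gives it for $j\in[k_0,N]$, but the window $0<j<k_0$ is left uncontrolled. This cannot be ``absorbed into $M^{k_0}$'': the condition $T^k y'\in W^\uu_{\epsilon_0}(T^k y)$ is monotone (it implies the same condition at every smaller $k$), so a single failure in $(0,k_0)$ would invalidate it for \emph{all} $k\ge k_0$, and the unstable contraction of axiom~(b) that you apply step by step on $[k_0,N]$ would then have no justification. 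To repair this you must establish $\dd(T^j y, T^j y')\le\epsilon_0$ for all $j\in[0,N]$ — this is exactly what the paper's summit chain is set up to provide (possibly after absorbing a bounded initial transient into the constant $C$), and it is also what the bracket route gives once $T^j y'=[T^j y, T^j x']$ is verified on the full range. As written, your two-stage $(k_0,N)$ split conflates a pointwise distance estimate with membership in the local unstable set.
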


\begin{figure}[hbt]
	\begin{center}
		\begin{tikzpicture}[scale=.6]
			\draw(-1,0)--(2,0) node[right]{$W^\uu$};
			\draw(-1,5)--(2,5) node[right]{$W^\uu$};
			\draw(0,-1)--(0,6.5) node[midway,left]{$W^\ss$};
			\draw(1,-1)--(1,6.5) node[midway,right]{$W^\ss$};
			\fill(0,0) circle[radius=2pt] node[below left]{$x$};
			\fill(1,0) circle[radius=2pt] node[below right]{$x'$};  
			\fill(0,5) circle[radius=2pt] node[above left]{$y$}; 
			\fill(1,5) circle[radius=2pt] node[above right]{$y'$};  
		\end{tikzpicture}
	\caption{Four points in the configuration \eqref{e.rectangle}.}
	\label{f.rectangle}
	\end{center}
\end{figure}
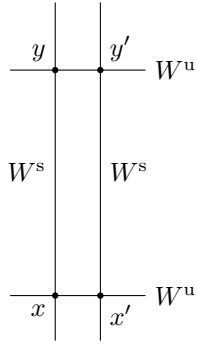

The proof, which includes an estimate for the constant $\kappa_\ss$, is given in \cref{s.technical}. 
If $T$ is the restriction of a $C^2$ diffeomorphism to a basic hyperbolic basic set then a better estimate for $\kappa_\ss$ is given in \cite{SS}. Even better regularity estimates can be obtained under various types of extra assumptions: see \cite{PintoRand} and references therein.

\subsection{Fiber-bunched automorphisms and their holonomies}

We now discuss the notion of \emph{fiber-bunching}. 
It was introduced in a setting very similar to ours by Bonatti, G\'omez-Mont, and Viana \cite{BGV}, though related concepts can be traced back to Brin and Pesin \cite{BrinP} and Hirsch, Pugh, and Shub \cite{HPS}.
Earlier papers \cite{BGV,BV} use a different terminology (``dominated cocycles''), but subsequently the term ``fiber-bunched cocycles'' prevailed: \cite{AV_Portugalia,AV_Inventiones,KalSad}.

\medskip

If $L$ is a linear isomorphism between inner product spaces, we define its \emph{bolicity}\footnote{The term and the notation come from \cite{Pugh,PSW1}. In numerical analysis, the bolicity is called \emph{condition number}.} as
\begin{equation} \label{e.def_bol}
\bol(L) \coloneqq \|L\| \, \|L^{-1}\| \, ,
\end{equation}
which measures the lack of conformality of $L$ (see \cref{p.Lip_bol}).

Let $\E$ be a $\theta$-H\"older $d$-dimensional vector bundle over $X$.
Assume that $T$ is a hyperbolic homeomorphism, and that $\Phi$ is a $\theta$-H\"older automorphism of $\E$ covering $T$.
We say that $\E$ is \emph{fiber-bunched} 
if there exists a Riemannian norm (sometimes called an \emph{adapted norm}) 
such that for all $x \in X$,
\begin{equation}\label{e.def_fiber_bunched}
\log \bol(\Phi_x)
<  \min \left\{ \theta \lambda_\uu(x), \theta \lambda_\ss(x)\right\} \, ,
\end{equation}
where $\lambda_\uu$, $\lambda_\ss$ are the hyperbolicity rates of $T$.
By perturbing the adapted norm if necessary, we can assume it is also $\theta$-H\"older.

Consider the subset of fiber-bunched automorphisms in the space $\Aut^\theta(\E,T)$ of $\theta$-H\"older automorphisms; then this set is open with respect to the $C^0$ norm \eqref{e.C0_norm}, and therefore also open with respect to the stronger $\theta$-H\"older norm \eqref{e.Holder_norm}.

Sometimes we need stronger bunching: we say that it is \emph{$(\eta_\uu,\eta_\ss)$-bunched} for certain constants $\eta_\uu$, $\eta_s \in (0,\theta]$ if, for some adapted norm, and all $x\in X$,
\begin{equation}\label{e.def_strongly_bunched}
\log \bol(\Phi_x)
<  \min \left\{ \eta_\uu \lambda_\uu(x), \eta_\ss \lambda_\ss(x)\right\} \, .
\end{equation}

\begin{remark}\label{r.pointwise_vs_absolute}
We have used the \emph{pointwise} definition of fiber-bunching;
the more stringent notion of \emph{absolute} fiber-bunching requires the same condition
with \emph{constant} hyperbolicity exponents $\lambda_\uu$, $\lambda_\ss$.
Furthermore, our definition of fiber-bunching is \emph{immediate} in the sense that it manifests itself in a single iterate; one can also define a notion of \emph{eventual} fiber-bunching.  
\end{remark}

The most basic and fruitful consequence of fiber-bunching is the existence of certain unstable and stable holonomy maps.
Like the transport maps from \cref{p.transport}, unstable and stable holonomy maps provide a way of linearly transporting vectors from a fiber $\E_x$ to another fiber $\E_y$ (as long as the points $x$, $y$ belong to the same unstable or stable set), but with several extra properties:

\begin{proposition}[Holonomy maps]\label{p.holonomies}
Let	$\Phi \in \Aut^\theta_K(\E,T)$ be a fiber-bunched automorphism.
For each $\star \in \{\uu,\ss\}$, there exist a unique family of linear maps $H^\star_{y \gets x} \colon \E_x \to \E_y$,
defined whenever $y \in W^\star(x)$,
such that the following properties hold:
\begin{enumerate}
\item\label{i.groupoid_1}
$H^\star_{x \gets x} = \id$.
\item\label{i.groupoid_2}
$H^\star_{z \gets y} \circ H^\star_{y \gets x} = H^\star_{z \gets x}$.
\item\label{i.equivariance} 
$\Phi_y \circ H^\star_{y \gets x} = H^\star_{Ty \gets Tx} \circ \Phi_x$.
\item\label{i.holonomy_Holder}
There exists a constant $C>0$ such that:
\begin{equation}\label{e.holonomy_Holder}
y \in W^\star_{\epsilon_0}(x) \quad \Rightarrow \quad
\| H^\star_{y \gets x} - I_{y \gets x} \| \le C \dd(x,y)^\theta \, .
\end{equation}
\item\label{i.holonomy_cont}
The following map is continuous:
\begin{equation}\label{e.joint_holonomy}
\begin{aligned}
\big\{ (u,y) \in \E \times X \st y \in W^\star_{\epsilon_0}(\pi(u)) \big\} &\to \E \\
(u,y) &\mapsto H^\star_{y \gets \pi(u)} (u)
\end{aligned}
\end{equation}
\end{enumerate}
Furthermore, the constant $C$ in \eqref{e.holonomy_Holder} works for all automorphisms in a $C^0$-neighborhood of $\Phi$ in $\Aut^\theta_K(\E,T)$, and the the right-hand side in \eqref{e.joint_holonomy} depends continuously on the automorphism in that neighborhood.
\end{proposition}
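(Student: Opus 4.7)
The plan is to define the holonomies as limits of compositions involving iterates of $\Phi$ and the transport maps $I_{\mathord{\cdot}\gets\mathord{\cdot}}$, and then bootstrap the five properties. I focus on the unstable case; the stable case follows by replacing $T$ with $T^{-1}$. The natural candidate, valid for $y \in W^\uu_{\epsilon_0}(x)$, is
\[
H^\uu_{y \gets x} \coloneqq \lim_{n \to +\infty} \Phi^n_{T^{-n}y} \circ I_{T^{-n}y \gets T^{-n}x} \circ \Phi^{-n}_x,
\]
which makes heuristic sense because $\dd(T^{-n}x, T^{-n}y)$ decays exponentially with rate controlled by $\lambda_\uu$, so the transport in the middle becomes close to an isomorphism that intertwines the two backward orbits.

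First I would establish that this limit exists via a Cauchy estimate. Writing $H_n$ for the $n$-th term, the difference $H_{n+1} - H_n$ factors as $\Phi^n_{T^{-n}y} \circ D_n \circ \Phi^{-n}_x$, where $D_n \coloneqq \Phi_{T^{-n-1}y} \circ I_{T^{-n-1}y \gets T^{-n-1}x} \circ \Phi^{-1}_{T^{-n-1}x} - I_{T^{-n}y \gets T^{-n}x}$. Applying \cref{p.endo_Holder} to $\Phi^{-1}$ bounds $\|D_n\|$ by a constant times $\dd(T^{-n-1}x, T^{-n-1}y)^\theta$, which by iterating \eqref{e.lambda_u} is at most $C \exp\bigl(-\theta \sum_{k=1}^{n+1} \lambda_\uu(T^{-k}x)\bigr)\,\dd(x,y)^\theta$. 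The conjugation norm $\|\Phi^n_{T^{-n}y}\| \cdot \|\Phi^{-n}_x\|$ is, up to a harmless multiplicative error coming from the Hölder closeness of the basepoints $T^{-n}x$ and $T^{-n}y$, bounded by $\bol(\Phi^n_{T^{-n}x}) \le \prod_{k=1}^{n} \bol(\Phi_{T^{-k}x})$. Taking logarithms and invoking the adapted pointwise bunching inequality \eqref{e.def_fiber_bunched}, the combined estimate for $\|H_{n+1} - H_n\|$ is summable in $n$ with geometric rate, and in fact yields the Hölder bound \eqref{e.holonomy_Holder} with a constant $C$ depending only on $K$ and on the bunching slack.

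Next I would verify the algebraic structure. Property~(\ref{i.groupoid_1}) is immediate from the definition at $x=y$. Equivariance (\ref{i.equivariance}) is a reindexing of the defining limit (note that $T(W^\uu(x)) = W^\uu(Tx)$). I then extend the definition globally to $y \in W^\uu(x)$ via~\eqref{e.longW}: for $y \in T^n(W^\uu_{\epsilon_0}(T^{-n}x))$, set $H^\uu_{y \gets x} \coloneqq \Phi^n_{T^{-n}y} \circ H^\uu_{T^{-n}y \gets T^{-n}x} \circ \Phi^{-n}_x$, and check that this is independent of $n$ by equivariance. The cocycle identity~(\ref{i.groupoid_2}) is obtained by writing both $H^\uu_{z \gets y}$ and $H^\uu_{y \gets x}$ through the limit, composing inside the limit, and using \cref{p.transport_groupoid} to absorb the transport error; the errors tend to zero because the rates in \cref{p.transport_groupoid} are again dominated by the same bunching margin. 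For continuity~(\ref{i.holonomy_cont}) and the uniform dependence statement, I would note that every step of the estimates above is locally uniform in $(\Phi, x, y)$ on $\Aut^\theta_K(\E, T)$: the bunching inequality~\eqref{e.def_fiber_bunched} is $C^0$-open inside $\Aut^\theta_K$, so the Cauchy estimates hold with a single constant on a $C^0$-neighborhood, making $H_n$ converge locally uniformly to a continuous limit.

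Finally, uniqueness follows from the combination of equivariance (\ref{i.equivariance}) and the Hölder bound (\ref{i.holonomy_Holder}): given any other family $\tilde H^\uu$ satisfying these properties, equivariance gives $\tilde H^\uu_{y \gets x} = \Phi^n_{T^{-n}y} \circ \tilde H^\uu_{T^{-n}y \gets T^{-n}x} \circ \Phi^{-n}_x$, and subtracting the analogous expression for $H^\uu$, the inner factor is $O(\dd(T^{-n}x, T^{-n}y)^\theta)$ by \eqref{e.holonomy_Holder}, while the conjugation grows by at most $\bol(\Phi^n_{T^{-n}x})$, so bunching forces the difference to vanish as $n \to \infty$. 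The main obstacle throughout is the careful bookkeeping in the Cauchy estimate: the pointwise non-conformality measured by bolicity, accumulated along the backward orbit, must be precisely compensated by the Hölder decay at rate $\theta\lambda_\uu$, and one must switch basepoint (from $T^{-n}y$ to $T^{-n}x$) without losing the balance, which is where the $\theta$-Hölder regularity of $\Phi$ together with \cref{p.transport_groupoid} is indispensable.
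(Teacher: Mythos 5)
Your proposal is correct and follows essentially the same route as the paper's own proof in \cref{ss.holonomies}: define the holonomy by the limit of $\Phi$-conjugated transport maps, establish a Cauchy estimate by bounding $H_{n+1}-H_n$ via a middle defect term of size $O(\dd(T^{\mp n}x,T^{\mp n}y)^\theta)$ (\cref{p.endo_Holder}) against a conjugation factor controlled by a bolicity product (\cref{l.bol}), and let fiber-bunching supply the geometric margin; then extend, sum the tail for \eqref{e.holonomy_Holder}, and use uniformity of every estimate over $\Aut^\theta_K$ for continuity and the perturbation claim. The only cosmetic difference is that you treat the unstable case directly whereas the paper proves the stable case and appeals to time-reversal symmetry.
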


The maps $H^\uu$ and $H^\ss$ are called \emph{unstable} and \emph{stable} holonomies, respectively.
Properties (\ref{i.groupoid_1}) and (\ref{i.groupoid_2}) are called \emph{groupoid properties}, and property (\ref{i.equivariance}) is called \emph{equivariance}.

The stable holonomies are actually defined by the following formula:
\begin{equation*} 
H^\ss_{y \gets x} \coloneqq \lim_{n \to + \infty} (\Phi_y^n)^{-1} \circ I_{T^n y \gets T^n x} \circ \Phi_x^n \, ,
\end{equation*}
and unstable holonomies are defined likewise, taking $n \to -\infty$ instead.
The proof of \cref{p.holonomies} consists essentially in proving uniform convergence in these formulas, and it turns out that fiber-bunching is the precise condition for this to work. 
Except for minor adjustments, the argument is the same as in \cite[{\S}1.4]{BGV}, \cite[{\S}4.1]{KalSad}, but for completeness and convenience of the reader we present the proof in \cref{ss.holonomies}.

\begin{remark}
Fiber-bunched automorphisms satisfy a non-commutative version of Walters' condition \cite{Bousch_Walters}, namely:
$$
\forall \epsilon>0 \ \exists \delta> 0 \ \text{such that } \sup_{i \in \ldbrack 0,n \rdbrack} \dd(T^i x, T^i y) < \delta  \ \Rightarrow \ \big\| \Phi_y^n \circ I_{y \gets x} - I_{T^n y \gets T^n x} \circ \Phi_x^n \big\| < \epsilon \, .
$$
Indeed, consider $z \coloneqq [x,y]$ and note the following identity:
$$
\Phi_y^n = H^\ss_{T^n x \gets T^n z} \circ H^\uu_{T^n z \gets T^n x} \circ \Phi^n_x \circ H^\uu_{x \gets z} \circ H^\ss_{z \gets y} \, .
$$
Then, using the continuity of the bracket and the regularity of the holonomies, it is straightforward to obtain the non-commutative Walters' condition.
\end{remark}

We use the holonomies to define certain subsets of $\E$.
For $\epsilon>0$, $u \in \E$, and $x = \pi(u)$, let:
\begin{align*}
\W^\uu_\epsilon(u)	&\coloneqq \big\{ H^\uu_{y \gets x}(u) \st y \in W^\uu_\epsilon(x) \big\} \, ,  \\
\W^\uu(u)			&\coloneqq \big\{ H^\uu_{y \gets x}(u) \st y \in W^\uu(x)      \big\}
= \bigcup_{n\ge 0} \Phi^n(\W^\uu_{\epsilon_0}(\Phi^{-n}(u)))\, ,
\end{align*}
Analogously we define $\W^\ss_\epsilon(u)$ and $\W^\ss(u)$. 
The sets $\W^\uu$ (resp.\ $\W^\ss$) form a $\Phi$-invariant partition of $\E$ and project by $\pi$ onto the sets $W^\uu$ (resp.\ $W^\ss$).

Part (\ref{i.holonomy_Holder}) of \cref{p.holonomies} basically says that the ``leaves'' $\W^\uu$, $\W^\ss$ are H\"older-continuous.
We will need the transverse regularity of the holonomies:

\begin{proposition}\label{p.regularity_above}
Let	$\Phi \in \Aut^\theta_K(\E,T)$ be a fiber-bunched automorphism.
There exist $\theta_\ss \in (0,\theta \kappa_\ss]$ and $C>0$ such that
if $x$, $x'$, $y$, $y' \in X$ satisfy conditions \eqref{e.rectangle} as in \cref{f.rectangle}
then:
$$
\left\| H^\uu_{y' \gets y} \circ H^\ss_{y \gets x} - H^\ss_{y' \gets x'} \circ H^\uu_{x'\gets x} \right\|
\le C \dd(x,x')^{\theta_\ss} \, .
$$
Furthermore, the same constants $\theta_\ss$ and $C$ work for every automorphism in a $C^0$-neighborhood of $\Phi$ in $\Aut^\theta_K(\E,T)$.
\end{proposition}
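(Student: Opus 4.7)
My plan is to introduce the holonomy commutator
\[
\Xi \coloneqq (H^\ss_{y \gets x})^{-1} \circ (H^\uu_{y' \gets y})^{-1} \circ H^\ss_{y' \gets x'} \circ H^\uu_{x' \gets x} \in \End(\E_x),
\]
prove a bound of the form $\|\Xi - \id\| \lesssim \dd(x, x')^{\theta_\ss}$, and deduce the conclusion from the identity $B = A \circ \Xi$, where $A \coloneqq H^\uu_{y' \gets y} \circ H^\ss_{y \gets x}$ and $B \coloneqq H^\ss_{y' \gets x'} \circ H^\uu_{x' \gets x}$. The reduction works because $A$ has uniformly bounded norm on $\Aut^\theta_K(\E, T)$, thanks to part~(\ref{i.holonomy_Holder}) of \cref{p.holonomies} applied locally.

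The key tool is the equivariance property, part~(\ref{i.equivariance}) of \cref{p.holonomies}: applied to each of the four holonomies in $\Xi$, and after the telescopic cancellation of the iterates $\Phi^n_{x'}$ and $\Phi^n_{y'}$ with their inverses at the join points, one obtains the conjugation relation $\Xi = (\Phi_x^n)^{-1} \circ \Xi_n \circ \Phi_x^n$ for every $n \ge 0$, where $\Xi_n$ is the analogous commutator built from the iterated points $T^n x$, $T^n x'$, $T^n y$, $T^n y'$. Hence $\|\Xi - \id\| \le \bol(\Phi_x^n) \cdot \|\Xi_n - \id\|$. Writing $r \coloneqq \dd(x, x')$, from \cref{p.regularity_base} we have $\dd(y, y') \le C r^{\kappa_\ss}$, and the four sides of the iterated rectangle have approximate lengths $r^{\kappa_\ss} e^{n \lambda_\uu}$ (the larger unstable side) and $\epsilon_0 e^{-n \lambda_\ss}$ (the stable sides). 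I would choose $n$ of order $\frac{\kappa_\ss}{\lambda_\uu + \lambda_\ss} \log(1/r)$ so as to balance these two scales at the common value $\rho_n \sim r^{\kappa_\ss \lambda_\uu / (\lambda_\uu + \lambda_\ss)}$.

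With all four sides of the iterated rectangle small, I would bound $\|\Xi_n - \id\| \lesssim \rho_n^\theta$ by first replacing each holonomy in $\Xi_n$ with the corresponding transport map, each step incurring an error $\lesssim \rho_n^\theta$ by the estimate \eqref{e.holonomy_Holder}, and then applying \cref{p.transport_groupoid} repeatedly to collapse the resulting loop of four transport maps around the rectangle down to $I_{T^n x \gets T^n x} = \id$, with total error again $\lesssim \rho_n^\theta$. For the amplification, strict inequality in \eqref{e.def_fiber_bunched} together with compactness of $X$ provides a uniform slack $\epsilon' > 0$ such that $\log \bol(\Phi_z) \le \theta \min(\lambda_\uu(z), \lambda_\ss(z)) - \epsilon'$ for all $z \in X$; submultiplicativity of the bolicity then gives $\bol(\Phi_x^n) \le e^{n(\theta \lambda_{\min} - \epsilon')} \lesssim r^{-\kappa_\ss(\theta \lambda_{\min} - \epsilon')/(\lambda_\uu + \lambda_\ss)}$, with $\lambda_{\min} \coloneqq \min(\lambda_\uu, \lambda_\ss)$. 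Multiplying yields $\|\Xi - \id\| \le C r^{\theta_\ss}$ for
\[
\theta_\ss = \frac{\kappa_\ss \bigl(\theta(\lambda_\uu - \lambda_{\min}) + \epsilon'\bigr)}{\lambda_\uu + \lambda_\ss},
\]
which, after possibly shrinking $\epsilon'$, lies in $(0, \theta \kappa_\ss]$ as claimed. The $C^0$-uniformity in a neighborhood in $\Aut^\theta_K(\E, T)$ is then inherited from the uniformity already built into \cref{p.holonomies} and from the openness of the fiber-bunching inequality.

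The main obstacle is the trade-off just described: shrinking $\|\Xi_n - \id\|$ forces $n$ large, while $\bol(\Phi_x^n)$ grows exponentially in $n$. The fiber-bunching hypothesis supplies exactly the strict positive slack $\epsilon'$ separating these two competing exponential rates, and this is what guarantees that the optimal balance produces a positive Hölder exponent $\theta_\ss$; without fiber-bunching the two effects would cancel and no useful estimate would survive.
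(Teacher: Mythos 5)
Your overall strategy --- form the holonomy commutator, conjugate it by $\Phi_x^n$ to push the quadrilateral to a small scale, estimate the commutator there using H\"olderness of holonomies \eqref{e.holonomy_Holder} and \cref{p.transport_groupoid}, then pay a bolicity price on the way back --- is the same one the paper uses, with $\Xi_n$ playing the role of the paper's $\Gamma_n$ and the fiber-bunching slack producing the positive exponent. But there are two genuine issues with the estimates as you have written them. First, $\lambda_\uu$ is by \eqref{e.lambda_u} a \emph{lower} bound on the forward expansion rate along unstable sets (it is the contraction rate of $T^{-1}$), so $\dd(T^n y, T^n y')\sim r^{\kappa_\ss}e^{n\lambda_\uu}$ is a lower bound, not an upper bound; you need the Lipschitz-type constant the paper calls $\Lambda_\uu$ (see \eqref{e.Lambda_u}) in its place, and then the same $\Lambda_\uu$ should appear in the denominator of $\theta_\ss$. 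Second, even granting a constant rate, the balance point is wrong: setting $r^{\kappa_\ss}e^{n\Lambda_\uu}=\epsilon_0 e^{-n\lambda_\ss}$ gives $\rho_n\sim r^{\kappa_\ss\lambda_\ss/(\Lambda_\uu+\lambda_\ss)}$, with $\lambda_\ss$ in the numerator (not $\lambda_\uu$), and consequently the $\lambda_\uu-\lambda_{\min}$ in your formula for $\theta_\ss$ should be $\lambda_\ss-\lambda_{\min}$. With these two corrections your argument does close and yields a valid positive $\theta_\ss$.

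Beyond these fixable slips, your route is quantitatively weaker than the paper's, and it is worth seeing why. You treat \cref{p.regularity_base} as a black box: you take $\dd(y,y')\le Cr^{\kappa_\ss}$ at time $0$ and push it forward, and the growth of this initially-larger summit is what caps the admissible $n$. The paper instead estimates the time-$n$ summit directly from the triangle inequality $\dd(T^ny,T^ny')\le\dd(T^nx,T^ny)+\dd(T^nx,T^nx')+\dd(T^nx',T^ny')$, where every term \emph{decays} like $e^{-a\lambda_\ss^{(n)}(x)}$ once the base is kept small. This allows a larger choice of $N$ (constrained only by the base, not by the summit), and the resulting exponent is roughly $1/\kappa_\ss$ times yours. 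Since the proposition asks only for some $\theta_\ss\in(0,\theta\kappa_\ss]$, both routes prove the statement; the paper's is just sharper and, as a bonus, proves \cref{p.regularity_base} and \cref{p.regularity_above} with a single set of iterate estimates rather than invoking the former as an external input.
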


We were not able to find such a statement in the literature,
so we provide a proof in \cref{ss.regularity}.

\section{Invariant subbundles}\label{s.subbundles}

\subsection{Subbundles and rigidity}\label{ss.rigidity}

Let $E$ be an inner product space of dimension $d$, and let $p \in \ldbrack 1, d \rdbrack$.
We denote by $\Gr_p(E)$ the \emph{$p$-th Grassmannian} of $E$, i.e., the set of all $p$-dimensional subspaces of $E$.
There are many metrics on this set that are ``natural'' in the sense that they are preserved by the action of orthogonal linear maps: see \cite{QZL}.
As shown in \cref{ss.Grass}, we can find one such metric $\dd$ with the useful properties stated in the following \lcnamecrefs{p.Lip_bol}:

\begin{proposition}\label{p.Lip_bol}
If $L \colon E \to F$ is a linear isomorphism between $d$-dimensional inner product spaces and $p<d$,
then the induced map $\Gr_p(E) \to \Gr_p(F)$ is Lipschitz with a constant equal to the bolicity of $L$ \eqref{e.def_bol}. 
\end{proposition}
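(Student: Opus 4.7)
The plan is to work with the ``gap'' metric on the Grassmannian, which I anticipate is the natural choice made in \cref{ss.Grass}: namely, for $V$, $W \in \Gr_p(E)$,
$$
\dd(V, W) \;=\; \sup_{\substack{v \in V \\ \|v\| = 1}} \operatorname{dist}(v, W).
$$
On $p$-dimensional subspaces this expression is known to be symmetric in $V$ and $W$ (it equals $\|\pi_V^\perp \pi_W\| = \|\pi_W^\perp \pi_V\| = \sin\theta_{\max}$, where $\theta_{\max}$ is the largest principal angle), hence defines a genuine metric, and it is manifestly invariant under orthogonal isomorphisms between inner product spaces.

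Granted this description, the Lipschitz estimate is a direct computation. Fix $V$, $W \in \Gr_p(E)$ and a unit vector $u \in L(V)$. Setting $v' \coloneqq L^{-1}(u) \in V$, we have $\|v'\| \le \|L^{-1}\|$ and $v' \neq 0$; let $v \coloneqq v'/\|v'\|$, a unit vector in $V$. For arbitrary $\varepsilon > 0$, choose $w \in W$ with $\|v - w\| \le \dd(V,W) + \varepsilon$. Then $\|v'\|\, L w$ lies in $L(W)$ and, using $v' = \|v'\| v$,
$$
\bigl\| u - \|v'\|\, L w \bigr\| \;=\; \|v'\|\cdot \|L(v-w)\| \;\le\; \|L^{-1}\| \cdot \|L\| \cdot (\dd(V,W) + \varepsilon) \;=\; \bol(L)\,(\dd(V,W) + \varepsilon).
$$
Letting $\varepsilon \to 0$ gives $\operatorname{dist}(u, L(W)) \le \bol(L)\,\dd(V,W)$; taking the supremum over unit $u \in L(V)$ yields $\dd(L(V), L(W)) \le \bol(L)\,\dd(V,W)$, which is the desired Lipschitz bound.

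The only real subtlety is the choice of metric on $\Gr_p$: one must pick a metric whose one-sided defining expression distorts cleanly under composition with $L$, so that the two factors $\|L\|$ and $\|L^{-1}\|$ appear separately and multiply to give exactly $\bol(L)$. The gap metric above is ideally suited for this purpose, and its symmetry (specific to subspaces of the same dimension) then promotes the one-sided estimate into a true Lipschitz constant. No additional obstacle is apparent beyond confirming, in \cref{ss.Grass}, the standard properties of this metric.
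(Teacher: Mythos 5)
Your argument is correct, but it takes a genuinely different route from the paper, because it commits to a different metric on $\Gr_p$. The paper does not use the gap metric: in \cref{ss.Grass} it defines $\dd(V_1,V_2) \coloneqq \inf_{F_1,F_2}\|F_1-F_2\|$, the infimum over linear isomorphisms $F_i \colon \R^p \to V_i$ with $\|F_i^{-1}\|\le 1$, and the proof of \cref{p.Lip_bol} is then a one-line renormalization: given admissible $F_i$ for $V_i$, the maps $G_i \coloneqq \|L^{-1}\|\, L\circ F_i$ are admissible for $L(V_i)$, and $\|G_1-G_2\| = \|L^{-1}\|\,\|L\circ(F_1-F_2)\| \le \bol(L)\,\|F_1-F_2\|$. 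That metric is designed precisely so that the two factors $\|L\|$ and $\|L^{-1}\|$ fall out with no further input. Your gap-metric computation reaches the same constant and is sound — the renormalization $v'/\|v'\|$ plays the same role as the paper's $\|L^{-1}\|$ factor — but it leans on the standard yet nontrivial fact that the one-sided gap is symmetric for equidimensional subspaces (needed both for $\dd$ to be a metric and to promote your one-sided estimate to the two-sided one), whereas the paper's non-degeneracy check (\cref{p.metric}) is elementary. The one thing to flag is that whichever metric is adopted must also be the one for which \cref{p.close_id,p.span_Lip} and the H\"older estimates on subbundles are verified; the gap metric does satisfy these (it is bi-Lipschitz equivalent to the paper's metric, both being comparable to $\|P_{V_1}-P_{V_2}\|$), but under your choice those verifications would need to be redone for the gap metric rather than for \eqref{e.def_metric}.
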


\begin{proposition}\label{p.close_id}
If $L \colon E \to E$ is a linear isomorphism of a inner product space such that $\|L-\id\| \le \delta$ then the induced map on $\Gr_p(E)$ is $O(\delta)$-close to the identity, provided $\delta$ is sufficiently small.
\end{proposition}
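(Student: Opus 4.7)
The plan is to estimate $d(V, LV)$ uniformly in $V$ via an explicit computation with the orthogonal projection operators onto $V$ and $LV$. The key structural input is that the metric on $\Gr_p(E)$ constructed in Section~\ref{ss.Grass} is $O(d)$-invariant, which will make the estimate independent of the chosen $V$ and reduce the problem to a coordinate-free computation in a fixed inner product space.

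For $\delta < 1$, the operator $L$ is invertible with $\|L^{-1}\| \le (1-\delta)^{-1}$, so $L$ maps $V$ to a subspace $LV$ of the same dimension. Fixing an orthonormal basis $v_1,\dots,v_p$ of $V$, we have $\|Lv_i - v_i\| \le \delta$, and applying Gram--Schmidt to $(Lv_1,\dots,Lv_p)$ produces an orthonormal basis $(w_1,\dots,w_p)$ of $LV$ satisfying $\|w_i - v_i\| \le C_p \,\delta$ for a constant $C_p$ depending only on $p$ (the Gram--Schmidt procedure is locally Lipschitz near an orthonormal input). The orthogonal projections $P_V$ and $P_{LV}$, computed from these bases by $P_V = \sum_i \langle \cdot, v_i\rangle v_i$ and similarly for $P_{LV}$, then differ by
$$
\|P_{LV} - P_V\| \le 2p\, C_p \, \delta = O(\delta).
$$

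Since any smooth $O(d)$-invariant metric on the compact homogeneous space $\Gr_p(E)$ is bi-Lipschitz equivalent to the projection metric $(V,W) \mapsto \|P_V - P_W\|$, we conclude that $d(V, LV) \le C\delta$ with $C$ independent of $V$. The main obstacle is verifying that the specific metric from Section~\ref{ss.Grass} is indeed bi-Lipschitz to the projection metric; this is a routine consequence of $O(d)$-invariance and compactness (both metrics are continuous $O(d)$-invariant functions on the compact quotient space $O(d)\backslash(O(d)\times O(d))/(O(p)\times O(d-p))$ and vanish to the same order on the diagonal), so if the metric in Section~\ref{ss.Grass} is constructed from first principles rather than taken to be the projection metric directly, only a short verification is needed. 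Uniformity in $V$ follows from the same $O(d)$-invariance.
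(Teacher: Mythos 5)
Your argument estimates the projection metric $\|P_{LV}-P_V\|$ and then tries to transfer the bound to the metric $\dd$ of \cref{ss.Grass} by claiming that the two metrics are bi-Lipschitz equivalent "as a routine consequence of $O(d)$-invariance and compactness." That justification does not work: two continuous $O(d)$-invariant metrics on a compact homogeneous space need not be bi-Lipschitz equivalent (e.g.\ $\dd$ and $\dd^{1/2}$ are both such metrics), so invariance and compactness give you nothing here. The phrase "vanish to the same order on the diagonal" is exactly the non-trivial content you would have to prove, and for the metric actually defined in the paper, namely $\dd(V_1,V_2)=\inf\|F_1-F_2\|$ over framings $F_i\colon\R^p\to V_i$ with $\|F_i^{-1}\|\le 1$, that comparison with the projection metric is a separate lemma of roughly the same difficulty as the proposition itself. (For the proposition one only needs the one-sided inequality $\dd \lesssim \|P_{V_1}-P_{V_2}\|$, which is true but still requires exhibiting explicit framings; it is not supplied.) So as written the proof has a genuine gap at its final step, even though the Gram--Schmidt estimate $\|P_{LV}-P_V\|=O(\delta)$ is itself correct.

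The paper's proof bypasses all of this by working directly with the definition of $\dd$: take $F_1\colon\R^p\to V$ an isometry and $F_2\coloneqq(1-\delta)^{-1}L\circ F_1$; then $\|F_2^{-1}\|\le\|F_1^{-1}\|\,\|L^{-1}\|\,(1-\delta)\le 1$, so the pair $(F_1,F_2)$ is admissible in the infimum, and
$$
\dd(V,LV)\le\|F_1-F_2\|=\|\id-(1-\delta)^{-1}L\|\le\|\id-L\|+\tfrac{\delta}{1-\delta}\|L\|\le 4\delta
$$
for $\delta\le\tfrac12$. If you want to keep your projection-based route, you must replace the invariance appeal by an explicit proof that $\dd(V_1,V_2)=O(\|P_{V_1}-P_{V_2}\|)$ (for instance by producing framings from $P_{V_2}$ restricted to $V_1$ when the projections are close); but the direct choice of framings above is both shorter and already uniform in $V$.
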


\begin{proposition}\label{p.span_Lip}
The map that associates a $p$-tuple of linearly independent vectors to its span is locally Lipschitz.
\end{proposition}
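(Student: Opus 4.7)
The plan is to express the orthogonal projection onto the span as an explicit smooth function of the tuple, and then transfer the conclusion to the metric of \cref{ss.Grass} by equivalence. Let $U \subset E^p$ denote the open set of linearly independent $p$-tuples. Given $\mathbf{v} = (v_1,\dots,v_p) \in U$, let $V$ denote the $d \times p$ matrix whose columns are the $v_i$ expressed in some fixed orthonormal basis of $E$. The Gram matrix $V^\top V$ is invertible on $U$, and the orthogonal projection onto $\mathrm{span}(\mathbf{v})$ is given by the explicit formula
\[
P_{\mathbf{v}} = V (V^\top V)^{-1} V^\top.
\]
This exhibits $\mathbf{v} \mapsto P_{\mathbf{v}}$ as a real-analytic map from $U$ into $\End(E)$ (the denominator $\det(V^\top V)$ is nonzero on $U$), hence it is locally Lipschitz with respect to the operator norm on $\End(E)$.

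One natural $O(E)$-invariant metric on $\Gr_p(E)$ is $d_{\mathrm{op}}(W_1, W_2) \coloneqq \|P_{W_1} - P_{W_2}\|$, the operator-norm distance between the associated orthogonal projections. With this particular choice, the span map factors as $\mathbf{v} \mapsto P_{\mathbf{v}} \mapsto \mathrm{Image}(P_{\mathbf{v}})$, where the second step is a tautological isometry onto $(\Gr_p(E), d_{\mathrm{op}})$. Thus the local Lipschitz property of $\mathbf{v} \mapsto P_{\mathbf{v}}$ proved above yields the proposition for $d_{\mathrm{op}}$. To transfer to the metric constructed in \cref{ss.Grass}, one invokes bi-Lipschitz equivalence: both metrics are smooth $O(E)$-invariant metrics on the compact smooth manifold $\Gr_p(E)$, and any two such metrics on a compact manifold are bi-Lipschitz equivalent.

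No step is substantially difficult. An alternative route that avoids any appeal to equivalence of Grassmannian metrics is to first apply Gram--Schmidt orthonormalization, which is real-analytic on $U$ and therefore locally Lipschitz, reducing to the case of orthonormal $p$-frames, and then verify directly from the construction of the metric in \cref{ss.Grass} that the distance between two spans is controlled by the Euclidean distance between any two orthonormal frames for them. The mildly delicate point, under either approach, is merely keeping track of which specific metric is being used on $\Gr_p(E)$; no serious obstacle arises.
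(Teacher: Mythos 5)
Your primary route has a gap at the transfer step. The general principle you invoke --- that any two $O(E)$-invariant metrics inducing the standard topology on the compact manifold $\Gr_p(E)$ are bi-Lipschitz equivalent --- is false as stated: if $\dd$ is such a metric then so is $\dd^{1/2}$, and the two are not bi-Lipschitz equivalent. The qualifier ``smooth'' would rescue the claim only if both metrics were known to be induced by Riemannian structures; but the metric of \cref{ss.Grass} is defined by the infimum \eqref{e.def_metric} over admissible frames and is not presented as Riemannian, so its bi-Lipschitz equivalence with your $d_{\mathrm{op}}$ is itself something that must be proved --- and proving it amounts to exactly the kind of frame-versus-subspace estimate the proposition is about. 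As written, the projection-matrix computation establishes Lipschitz continuity for the wrong metric and then waves at the comparison.

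Your alternative route is the correct one, and it is essentially what the paper does (its proof is one line: the statement is ``an easy consequence of the definition \eqref{e.def_metric}''). You do not even need Gram--Schmidt. Given linearly independent $p$-tuples $(v_1,\dots,v_p)$ and $(w_1,\dots,w_p)$ spanning $V_1$ and $V_2$, let $F_1,F_2\colon\R^p\to E$ send the canonical basis to the respective tuples; then $\|F_1-F_2\|\le C\max_i\|v_i-w_i\|$ for a dimensional constant $C$, while $\|F_1^{-1}\|$ and $\|F_2^{-1}\|$ are bounded by some $c$ on a neighborhood of any fixed linearly independent tuple, since $F\mapsto\|F^{-1}\|$ is continuous on the open set of injective maps. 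Rescaling both frames by $c$ makes them admissible in \eqref{e.def_metric}, giving $\dd(V_1,V_2)\le c\,\|F_1-F_2\|\le cC\max_i\|v_i-w_i\|$, which is the local Lipschitz bound. I would discard the projection formula and the metric-equivalence appeal entirely and keep only this direct argument.
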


In particular, the metric $\dd$ induces the usual topology on the Grassmannian.

\medskip

Now consider a $\theta$-H\"older $d$-dimensional vector bundle $\E$ over $X$.
For each $p \in \ldbrack 1 , d-1 \rdbrack$, let $\Gr_p(\E)$ denote the fiber bundle whose fiber over $x\in X$ is $\Gr_p(\E_x)$.
As just explained, the fixed Riemannian norm on $\E$ induces a distance on each fiber of this bundle.

Let $\F$ be a continuous 
$p$-dimensional subbundle of $\E$.
We say that $\F$ is \emph{$\eta$-H\"older}, for some $\eta \in (0,\theta]$, if
there exists $C>0$ such that for all sufficiently close points $x$, $y \in X$ we have:
\begin{equation}\label{e.Holder_subbundle}
\dd \big( \F_y, I_{y \gets x} (\F_x) \big) \le C \, \dd(x,y)^{\eta} \, ,
\end{equation}
where the $\dd$ in the left hand side is the distance in $\cG_p(\E_y)$.
(Recall that $I_{y \gets x}$ is an  isomorphism when $x$ and $y$ are close enough.)

Let $\Phi$ be a fiber-bunched automorphism of $\E$. 
We say that a subbundle $\F \subseteq \E$ is \emph{$\Phi$-invariant} if for all $x \in X$, we have 
$$
\Phi_x (\F_x) = \F_{Tx} \, .
$$
We say that $\F$ is \emph{$H^\uu$-invariant} (or \emph{$\W^\uu$-saturated})
if for all $x \in X$ and all $y \in W^\uu(x)$, we have
$$
H_{y \gets x}^\uu (\F_x) = \F_y \, .
$$
We say that $\F$ is \emph{$\eta$-H\"older along unstable sets}, for some $\eta \in (0,\theta]$, if
there exists $C>0$ and $\epsilon>0$ such that the estimate \eqref{e.Holder_subbundle} holds whenever $y \in W^\uu_{\epsilon}(x)$.
Equivalently, 
there exists $C \ge 0$ such that for all $x \in X$ and all $y \in W^\uu_{\epsilon_0}(x)$, we have:
\begin{equation}\label{e.Holder_section}
\dd \big( \F_y, H^\uu_{y \gets x} (\F_x) \big) \le C \, \dd(x,y)^{\eta} \, ;
\end{equation}
to see the equivalence, use $\theta$-H\"olderness of the holonomy \eqref{e.holonomy_Holder} and \cref{p.close_id}.
$H^\ss$-invariance and $\eta$-H\"olderness along stable sets are defined analogously.

\begin{proposition}[Rigidity] \label{p.rigidity}
Let $\eta \in (0,\theta]$.
Suppose that $\Phi$ is $(\eta,\theta)$-bunched.
Let $\F \subseteq \E$ be a continuous $\Phi$-invariant subbundle.
If $\F$ is $\eta$-H\"older along unstable sets then
$\F$ is $H^\uu$-invariant, and in particular
$\F$ is actually $\theta$-H\"older along unstable sets.
\end{proposition}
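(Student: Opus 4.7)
The plan is to prove that $\F_y = H^\uu_{y \gets x}(\F_x)$ for every $x \in X$ and every $y \in W^\uu_{\epsilon_0}(x)$; invariance along the whole (global) unstable set then follows by equivariance of $H^\uu$ (part (\ref{i.equivariance}) of \cref{p.holonomies}) together with the expression \eqref{e.longW} of $W^\uu(x)$ as a union of forward iterates of local unstable sets.

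Set $x_n \coloneqq T^{-n}x$ and $y_n \coloneqq T^{-n}y$; since local unstable sets are preserved by $T^{-1}$, we have $y_n \in W^\uu_{\epsilon_0}(x_n)$ for all $n \ge 0$. By $\Phi$-invariance of $\F$, $\F_y = \Phi^n_{y_n}(\F_{y_n})$; by equivariance of $H^\uu$, $H^\uu_{y \gets x}(\F_x) = \Phi^n_{y_n}\bigl(H^\uu_{y_n \gets x_n}(\F_{x_n})\bigr)$. So the two target subspaces in $\E_y$ are the images under $\Phi^n_{y_n}$ of the two corresponding subspaces in $\E_{y_n}$, and pushing forward increases Grassmannian distance by at most $\bol(\Phi^n_{y_n}) \le \prod_{k=1}^n \bol(\Phi_{y_k})$ by \cref{p.Lip_bol}.

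Now I would combine three estimates. The $\eta$-Hölder hypothesis \eqref{e.Holder_section} bounds the Grassmannian distance at level~$n$ by $C\,\dd(x_n,y_n)^\eta$. Iterating \eqref{e.lambda_u} centered at the points $y_k$ gives $\dd(x_n, y_n) \le \exp\!\bigl(-\sum_{k=0}^{n-1} \lambda_\uu(y_k)\bigr)\, \dd(x,y)$. Fixing an adapted Riemannian norm for the $(\eta,\theta)$-bunching hypothesis, the continuous function $\eta\lambda_\uu - \log\bol(\Phi_{\cdot})$ is strictly positive on the compact space $X$, hence bounded below by some $\rho_0 > 0$; thus $\log\bol(\Phi_{y_k}) \le \eta\lambda_\uu(y_k) - \rho_0$ for every $k$. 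Assembling these,
\[
\log \dd\bigl(\F_y,\, H^\uu_{y\gets x}(\F_x)\bigr) \le \log C + \eta\log\dd(x,y) + \sum_{k=1}^n \log\bol(\Phi_{y_k}) - \eta\sum_{k=0}^{n-1}\lambda_\uu(y_k),
\]
and a one-step index shift shows that the right-hand side is bounded above by $\log C + \eta\log\dd(x,y) + \eta[\lambda_\uu(y_n) - \lambda_\uu(y_0)] - n\rho_0$. Since $\lambda_\uu$ is bounded on $X$, this tends to $-\infty$ as $n \to \infty$, forcing $\dd\bigl(\F_y, H^\uu_{y\gets x}(\F_x)\bigr)=0$, which establishes $H^\uu$-invariance.

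The main obstacle I anticipate is carrying out this exponential bookkeeping cleanly with the pointwise (not uniform) hyperbolicity rates $\lambda_\uu$; the trick that makes it go through is to evaluate the rate at the same orbit $\{y_k\}$ both for the contraction estimate and for the bunching estimate, so that the two Birkhoff-like sums differ only by telescoping boundary terms controlled by $\|\lambda_\uu\|_\infty$. For the final assertion, $H^\uu$-invariance lets me replace $H^\uu_{y\gets x}(\F_x)$ by $I_{y\gets x}(\F_x)$ in the Grassmannian, and part (\ref{i.holonomy_Holder}) of \cref{p.holonomies} together with \cref{p.close_id} bounds the resulting error by $O(\dd(x,y)^\theta)$, which yields $\theta$-Hölder regularity along unstable sets.
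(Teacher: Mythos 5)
Your proof is correct and follows essentially the same route as the paper's: pull the two subspaces back to $T^{-n}x$ and $T^{-n}y$ via $\Phi$-invariance and $H^\uu$-equivariance, use \cref{p.Lip_bol} to control the distortion by $\bol(\Phi^n_{y_n})$, apply the $\eta$-H\"older hypothesis and the iterated contraction \eqref{e.lambda_u}, and let the pointwise $(\eta,\theta)$-bunching produce exponential decay as $n\to\infty$. The only cosmetic difference is how you absorb the boundary terms: you telescope the Birkhoff sums to leave $\eta[\lambda_\uu(y_n)-\lambda_\uu(y_0)] - n\rho_0$, while the paper absorbs two stray bolicity factors into $B^2 r^{n-2}$; these are the same estimate.
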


\begin{proof}
Since $\Phi$ is $(\eta,\theta)$-bunched, there is a constant $r \in (0,1)$ such that $\bol(\Phi_x) < r e^{\eta \lambda_\uu(x)}$ for every $x \in X$.
Now fix $x \in X$ and $y \in W^\uu_{\epsilon_0}(x)$.
For each $n \ge 0$, let $x_n \coloneqq T^{-n} x$ and $y_n \coloneqq T^{-n} y$.
Then:
\begin{alignat*}{2}
\dd \big (\F_y, \, H^\uu_{y \gets x} (\F_x) \big)
&= \dd \big (\Phi^n_{y_n}(\F_{y_n}), \, \Phi^n_{y_n} (H^\uu_{y_n \gets x_n} (\F_{x_n}) \big) &\quad&\text{(by $\Phi$-invariance of $\F$)}\\
&\le \bol(\Phi^n_{y_n}) \, \dd \big (\F_{y_n}, \, H^\uu_{y_n \gets x_n} (\F_{x_n}) \big) &\quad&\text{(by \cref{p.Lip_bol})}\\
&\le C \, \bol(\Phi^n_{y_n})  \, \dd(x_n,y_n)^{\eta}
&\quad&\text{(by $\eta$-H\"olderness of $\F$).}
\end{alignat*}
On one hand, by submultiplicativity of bolicity, $\bol(\Phi^n_{y_n}) \le \bol(\Phi_{y_1}) \cdots \bol(\Phi_{y_n})$.
On the other hand, using \eqref{e.lambda_u} recursively,
\begin{multline*}
\dd(x_n,y_n) = \dd(T^{-1} x_{n-1}, T^{-1} y_{n-1}) \le e^{-\lambda_\uu(y_{n-1})} \dd(x_{n-1}, y_{n-1}) \le \cdots  \\ \le e^{-\lambda_\uu(y_0) - \cdots -\lambda_\uu(y_{n-1})} \dd(x,y) \, .
\end{multline*}
Combining these estimates, we have:
\begin{align*}
\dd \big (\F_y, \, H^\uu_{y \gets x} (\F_x) \big)
&\le C \, \left[ \prod_{j=1}^{n} \bol(\Phi_{y_j})\right] \left[ \prod_{j=0}^{n-1} e^{-\eta \lambda_\uu(y_j)} \right] \dd(x,y)^{\eta} \\
&\le  C B^2 r^{n-2} \, \dd(x,y)^{\eta} \, ,
\end{align*}
where $B \ge 1$ is the maximal bolicity.
As $n \to \infty$, the right hand side tends to zero.
So $\F_y = H^\uu_{y \gets x} (\F_x)$, proving that the subbundle $\F$ is $H^\uu$-invariant.
Since \eqref{e.Holder_section} holds with $C=0$, the subbundle $\F$ is $\theta$-H\"older along unstable sets.
\end{proof}

\begin{corollary}\label{c.irred}
Let $\Phi$ be a fiber-bunched automorphism of $\E$. 
Let $\F \subseteq \E$ be a $\Phi$-invariant continuous subbundle.
Then the following conditions are equivalent:
\begin{enumerate}
\item\label{i.irred1} 
$\F$ is a $\theta$-H\"older subbundle;
\item\label{i.irred2} 
$\F$ is both $H^\uu$- and $H^\ss$-invariant.
\end{enumerate}
\end{corollary}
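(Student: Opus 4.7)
The plan is to treat the two implications separately. The direction (\ref{i.irred1})$\Rightarrow$(\ref{i.irred2}) follows almost immediately from \cref{p.rigidity}; the bulk of the work lies in the converse, which I would establish by a direct estimate based on the bracket operation of the hyperbolic homeomorphism.

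For (\ref{i.irred1})$\Rightarrow$(\ref{i.irred2}), I would observe that fiber-bunching is exactly $(\theta,\theta)$-bunching, so \cref{p.rigidity} applies with $\eta=\theta$. A globally $\theta$-H\"older subbundle is trivially $\theta$-H\"older along unstable (resp.\ stable) sets, so the rigidity yields $H^\uu$-invariance at once. The $H^\ss$-invariance comes from the symmetric version of \cref{p.rigidity}, obtained by applying it to $(T^{-1},\Phi^{-1})$: this swaps the roles of unstable and stable holonomies while preserving the bunching hypothesis, which is symmetric in $\lambda_\uu$ and $\lambda_\ss$.

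For the converse (\ref{i.irred2})$\Rightarrow$(\ref{i.irred1}), given nearby $x,y\in X$ with $\dd(x,y)\le 2\epsilon_1$, I would interpose the bracket $z\coloneqq[x,y]$. By axioms (\ref{i.bracket}) and (\ref{i.bounded_angles}) of a hyperbolic homeomorphism, $z\in W^\uu_{\epsilon_0}(x)\cap W^\ss_{\epsilon_0}(y)$ (so in particular $y\in W^\ss_{\epsilon_0}(z)$) and $\max\{\dd(x,z),\dd(y,z)\}\le C_0\,\dd(x,y)$. The two invariance hypotheses then combine to give
$$
\F_y \,=\, H^\ss_{y\gets z}\circ H^\uu_{z\gets x}(\F_x).
$$
The core calculation is to show that $L\coloneqq H^\ss_{y\gets z}\circ H^\uu_{z\gets x}$ is close to the transport $I_{y\gets x}$ in operator norm. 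Inserting $I_{y\gets z}\circ I_{z\gets x}$ and applying the triangle inequality, this estimate splits into three pieces controlled respectively by part~(\ref{i.holonomy_Holder}) of \cref{p.holonomies} applied to $H^\uu_{z\gets x}$ and to $H^\ss_{y\gets z}$, and by \cref{p.transport_groupoid}. Combined with the bounded-angle bound, this yields $\|L-I_{y\gets x}\|\le C_1\,\dd(x,y)^\theta$.

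The final step is to translate this operator-norm bound into the Grassmannian H\"older estimate \eqref{e.Holder_subbundle}. I would factor $I_{y\gets x}^{-1}\circ L=\id+O(\dd(x,y)^\theta)$, apply \cref{p.close_id} to conclude that $\dd(I_{y\gets x}^{-1}(\F_y),\F_x)=O(\dd(x,y)^\theta)$ in $\Gr_p(\E_x)$, and then push forward by $I_{y\gets x}$ using \cref{p.Lip_bol}. The most delicate point will be verifying that the operator norms, inverse norms, and bolicities of the transport maps $I_{y\gets x}$ are uniformly bounded for close $x,y$; this follows from $I_{x\gets x}=\id$ together with the H\"older continuity of the family in \cref{p.transport}, and ensures that the constants coming from \cref{p.Lip_bol,p.close_id} do not blow up as $y\to x$.
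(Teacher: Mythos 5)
Your proposal is correct and follows essentially the same route as the paper: the forward implication via \cref{p.rigidity} with $\eta=\theta$ (plus symmetry for the stable side), and the converse via the bracket $z=[x,y]$, the estimate $\|H^\ss_{y\gets z}\circ H^\uu_{z\gets x}-I_{y\gets x}\|=O(\dd(x,y)^\theta)$ from \cref{p.transport_groupoid} and \eqref{e.holonomy_Holder}, and then \cref{p.close_id}. Your final paragraph merely spells out in more detail the transfer of the operator-norm bound to the Grassmannian metric, which the paper compresses into a single invocation of \cref{p.close_id}.
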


\begin{proof}
If condition (\ref{i.irred1}) holds then $\F$ is $\theta$-H\"older along unstable sets, and so \cref{p.rigidity} (with $\eta = \theta$) guarantees that $\F$ is $H^\uu$-invariant. By symmetry, $\F$ is also $H^\ss$-invariant. That is, condition (\ref{i.irred2}) holds.
Conversely, assume that condition (\ref{i.irred2}) holds, and consider a pair $x$, $y$ of nearby points. 
Then the bracket $z \coloneqq [x,y]$ is well-defined, and by property \eqref{e.bounded_angles}, it is $O(\dd(x,y))$-close to either $x$ or $y$.
By hypothesis, $\F_y = H^\ss_{y\gets z} \circ H^\uu_{z \gets x} (\F_x)$.
Using \cref{p.transport_groupoid} and $\theta$-H\"olderness of the holonomies \eqref{e.holonomy_Holder}, we see that $\|I_{y \gets x} - H^\ss_{y\gets z} \circ H^\uu_{z \gets x}\| = O(\dd(x,y)^\theta)$.
It follows from \cref{p.close_id} that  
$\dd ( \F_y, I_{y \gets x} (\F_x)) = O(\dd(x,y)^\theta)$, i.e., condition (\ref{i.irred1}) holds.
\end{proof}

\subsection{Irreducibility}

The \emph{trivial} subbundles of $\E$ are the zero section and $\E$ itself.
A fiber-bunched automorphism $\Phi$ is called \emph{reducible} if it has a nontrivial $\Phi$-invariant subbundle $\F$ satisfying either of the equivalent conditions of \cref{c.irred}, and \emph{irreducible} otherwise.

While the existence of continuous $\Phi$-invariant subbundles is common, the existence of $\theta$-H\"older ones is not. For example, if the automorphism admits a dominated splitting\footnote{See \cref{ss.Mather_dom} for the definition and properties of dominated splittings}, then the subbundles that form the splitting are $\Phi$-invariant, continuous, and actually H\"older, but usually with smaller H\"older exponent. Actually, the dominating bundle is $H^\uu$-invariant and so $\theta$-H\"older along unstable sets, but usually not so well behaved along stable sets.

A precise formulation of the fact that reducibility is uncommon among fiber-bunched automorphisms is provided by \cref{p.irred_typical}.


\subsection{The strong bunching hypothesis}

If $d=2$ then ordinary fiber-bunching suffices for our main results, while if $d\ge 3$ we need $\Phi$ to be not only fiber-bunched, but $(\eta_0,\theta)$-bunched, where $\eta_0$ is given by the following:

\begin{lemma}\label{l.needed_strength}
There exists $\eta_0 \in (0,\theta]$ that depends only on the hyperbolic homeomorphism $T$ (or, more precisely, on its hyperbolicity exponents) and on the H\"older exponent $\theta$
such that if $\Phi$ is a $(\eta_0,\theta)$-bunched automorphism
then the associated regularity exponent $\theta_\ss$ from \cref{p.regularity_above} satisfies:
$$
\theta_\ss \ge \eta_0 \, .
$$
\end{lemma}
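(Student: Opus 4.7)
The plan is to revisit the iteration argument that produces the exponent $\theta_\ss$ in the proof of Proposition~\ref{p.regularity_above} and to track how it depends on the bunching parameters. The key observation is that stronger bunching improves $\theta_\ss$, so a suitably small threshold $\eta_0$ will automatically satisfy $\theta_\ss \ge \eta_0$.

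First, I would revisit the proof of Proposition~\ref{p.regularity_above} (placed in \cref{s.technical}). Schematically, one compares the two compositions in the statement by writing the commutator at a long iterate $T^n$; using equivariance, the conjugation accumulates a bolicity factor bounded by $\prod_{j} \bol(\Phi_{T^j z}) \le e^{\eta_\uu \sum \lambda_\uu}$ in the $(\eta_\uu,\theta)$-bunched regime. The H\"older estimate on the holonomies \eqref{e.holonomy_Holder} and the transverse estimate from Proposition~\ref{p.regularity_base} (with exponent $\kappa_\ss$) contribute factors of the form $\dd(T^n x, T^n x')^\theta$ or $\dd(T^n y, T^n y')^{\theta \kappa_\ss}$, which shrink thanks to the contraction of stable distances along $T^n$ governed by $\lambda_\ss$. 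Balancing these competing powers and optimizing the length of the iteration $n$ as a function of $\dd(x,x')$ yields an exponent $\theta_\ss$ that is a continuous function of $\eta_\uu$, with all other data fixed.

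Second, I would verify that as $\eta_\uu \to 0^+$, the bolicity product becomes inert in the balance and the resulting $\theta_\ss$ tends to a strictly positive limit depending only on $\theta$, $\kappa_\ss$, and the hyperbolicity exponents of $T$ (the upper extreme of the window $(0,\theta\kappa_\ss]$ allowed by Proposition~\ref{p.regularity_above}). Concretely, there is a function $\psi \colon (0,\theta] \to (0,\theta\kappa_\ss]$, determined by $T$ and $\theta$ alone, such that every $(\eta,\theta)$-bunched automorphism admits a regularity exponent at least $\psi(\eta)$, and $\liminf_{\eta \to 0^+} \psi(\eta) > 0$. Choosing $\eta_0 \in (0,\theta]$ small enough that $\psi(\eta_0) \ge \eta_0$---possible since $\psi$ is bounded away from $0$ near the origin---settles the lemma.

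The main obstacle lies in the careful bookkeeping: one must verify that the single exponent $\theta_\ss$ extracted from the balance of several competing H\"older and geometric estimates depends continuously on $\eta_\uu$ and stays bounded away from zero as $\eta_\uu \to 0$. Once this monotone dependence is made explicit, the existence of $\eta_0$ reduces to a one-variable comparison between $\psi(\eta)$ and $\eta$ near the origin, together with the elementary fact that a positive continuous function on $(0,\theta]$ that does not vanish as $\eta \to 0^+$ exceeds the identity in some neighborhood of $0$.
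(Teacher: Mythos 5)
Your proposal is correct and follows essentially the same route as the paper: the proof of \cref{p.regularity_above} already records the explicit admissible range $\theta_\ss < \inf_X (\theta\lambda_\ss - \log\bol(\Phi))/(\lambda_\ss + \Lambda_\uu)$ (see \eqref{e.regularity_estimate_above}), which is precisely your function $\psi$ once one bounds $\log\bol(\Phi) < \eta\,\lambda_\uu$ using the $(\eta,\theta)$-bunching, and the comparison $\psi(\eta_0)\ge\eta_0$ is then solved explicitly to give $\eta_0 \le \inf_X \theta\lambda_\ss/(\lambda_\ss+\lambda_\uu+\Lambda_\uu)$ as in \eqref{e.needed_strength}. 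One harmless inaccuracy: as $\eta\to 0^+$ the achievable exponent tends to $\inf_X \theta\lambda_\ss/(\lambda_\ss+\Lambda_\uu)$, which is strictly below the endpoint $\theta\kappa_\ss$ of the a priori window rather than equal to it, but since only its positivity matters your argument goes through unchanged.
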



For the proof (and an explicit value for $\eta_0$), see \cref{ss.regularity}.

\medskip

Let us say that a $\theta$-H\"older automorphism $\Phi \colon \E \to \E$ covering $T$ is \emph{strongly bunched} if:
\begin{itemize}
	\item the vector bundle $\E$ has fibers of dimension $d \le 2$, and $\Phi$ is fiber-bunched; \textbf{or}
	\item $\Phi$ is a $(\eta_0,\theta)$-bunched automorphism. 
\end{itemize}

The precise point of our proofs where we need strong bunching is for the validity of \cref{t.irr_to_span}, explained in the next \lcnamecref{ss.span}.

\subsection{Spannability} \label{ss.span}

The following concept of \emph{spannability} will play an important role in this paper; it is vaguely similar to the concept of accessibility in partially hyperbolic dynamics (see e.g.\ \cite[{\S}8.1]{Pesin_book}).

Let us say that a fiber-bunched automorphism $\Phi$ is \emph{spannable} if for all $x$, $y \in X$, and all nonzero $u \in \E_x$, there exist:
\begin{itemize}
	\item points $x_1, \dots, x_d \in W^\uu(x)$;
	\item integers $n_1, \dots, n_d \ge 0$ such that the points $y_i \coloneqq T^{n_i} x_i$ all belong to $W^\ss(y)$;
\end{itemize}
with the property that the vectors $v_1, \dots, v_d \in \E_y$ defined by
\begin{equation}\label{e.spanners}
v_i \coloneqq H^\ss_{y \gets y_i} \circ \Phi^{n_i}_{x_i} \circ H^\uu_{x_i \gets x} (u)
\end{equation}
form a basis for $\E_y$.

It is clear that every spannable automorphism is irreducible. 
The following important result provides a converse under extra assumptions:

\begin{theorem}[Sufficient conditions for spannability]\label{t.irr_to_span}
Let $T$ be a transitive hyperbolic homeomorphism.
Let $\Phi$ be a strongly bunched irreducible automorphism covering $T$.
Then $\Phi$ is spannable.
\end{theorem}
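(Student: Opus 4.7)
My plan is to argue by contradiction and construct a proper nontrivial $\theta$-H\"older $\Phi$-invariant subbundle, contradicting irreducibility. Suppose $\Phi$ is strongly bunched and irreducible but not spannable: then there exist $x_0,y_0 \in X$ and $u_0 \in \E_{x_0}\setminus\{0\}$ such that
\[
\cS_{y_0} := \bigl\{H^\ss_{y_0\gets y'}\, \Phi^n_{x'}\, H^\uu_{x'\gets x_0}(u_0) : x'\in W^\uu(x_0),\ n\ge 0,\ y' = T^n x'\in W^\ss(y_0)\bigr\}
\]
fails to span $\E_{y_0}$. Set $V_y := \mathrm{span}(\cS_y) \subseteq \E_y$ for each $y$. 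A shadowing/bracket argument using transitivity of $T$ produces, for every $y$, some $x'\in W^\uu(x_0)$ and $n\ge 0$ with $T^n x' \in W^\ss(y)$, so $V_y \neq \{0\}$ for every $y$. The easy invariances are immediate from the equivariance $\Phi_y\circ H^\star_{y\gets x} = H^\star_{Ty\gets Tx}\circ\Phi_x$ and the groupoid property: one-line rewrites show $\Phi_y(v)\in\cS_{Ty}$ and, for $y''\in W^\ss(y)$, $H^\ss_{y''\gets y}(v)\in\cS_{y''}$ for any $v\in\cS_y$; hence $V$ is forward $\Phi$-invariant and exactly $H^\ss$-invariant.

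The heart of the proof will be an approximate $H^\uu$-invariance obtained from \cref{p.regularity_above}. Given $y\in X$, $y''\in W^\uu_{\epsilon_0}(y)$, and $v = H^\ss_{y\gets y'}\Phi^n_{x'}H^\uu_{x'\gets x_0}(u_0) \in \cS_y$ whose parameterization satisfies $y'\in W^\ss_{\epsilon_0}(y)$ (I will argue such ``controlled'' parameterizations already span $V_y$, using that the forward $T$-orbit of $x'\in W^\uu(x_0)$ enters $W^\ss_{\epsilon_0}(y)$ infinitely often by transitivity), set $\tilde y' := [y',y''] \in W^\uu(y')\cap W^\ss(y'')$ and $\tilde x' := T^{-n}\tilde y' \in W^\uu(x')\subseteq W^\uu(x_0)$, producing the matched reachable vector
\[
\tilde v := H^\ss_{y''\gets \tilde y'}\, \Phi^n_{\tilde x'}\, H^\uu_{\tilde x'\gets x_0}(u_0) \in \cS_{y''}.
\]
Using the equivariance $\Phi^n_{\tilde x'}\circ H^\uu_{\tilde x'\gets x'} = H^\uu_{\tilde y'\gets y'}\circ\Phi^n_{x'}$, the difference $H^\uu_{y''\gets y}(v)-\tilde v$ becomes the commutator defect between $H^\uu$ and $H^\ss$ applied to $w := \Phi^n_{x'}H^\uu_{x'\gets x_0}(u_0)$. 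Then \cref{p.regularity_above} applied to the rectangle $(y',y,\tilde y',y'')$, combined with \eqref{e.bounded_angles} to bound $\dd(y',\tilde y')\lesssim\dd(y,y'')$ and with \eqref{e.holonomy_Holder} to get $\|H^\ss_{y\gets y'}\|=1+O(\dd(y,y')^\theta)$ (which forces $\|v\|\asymp\|w\|$ when $y'\in W^\ss_{\epsilon_0}(y)$), yields
\[
\|H^\uu_{y''\gets y}(v)-\tilde v\|\le C\,\dd(y',\tilde y')^{\theta_\ss}\,\|w\| \le C'\,\dd(y,y'')^{\theta_\ss}\,\|v\|.
\]
Applied to a basis of $V_y$ via \cref{p.span_Lip}, this gives both the H\"older bound $\dd\bigl(V_{y''},H^\uu_{y''\gets y}(V_y)\bigr)\lesssim\dd(y,y'')^{\theta_\ss}$ and, since the $\tilde v_i$ remain linearly independent, $\dim V_{y''}\ge\dim V_y$ for $\dd(y,y'')$ small.

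To finish, let $p_{\max}:=\max_z\dim V_z$ and $A:=\{y:\dim V_y=p_{\max}\}$. By the preceding, $A$ is open, forward $T$-invariant (from the easy invariances), $W^\ss$-saturated, and locally $W^\uu_{\epsilon_0}$-saturated. A bracket argument using transitivity and the local product structure then forces $A=X$: for any $y_*\in A^c$ and $z\in A$ close, $[y_*,z]\in W^\ss(z)\subseteq A$ would also lie in $W^\uu_{\epsilon_0}(y_*)\subseteq A^c$, contradiction. Hence $V$ is a continuous subbundle of constant dimension $p_{\max}<d$, $\theta_\ss$-H\"older along unstables. Strong bunching and \cref{l.needed_strength} give $\theta_\ss\ge\eta_0$, and \cref{p.rigidity} (applicable as $\Phi$ is $(\eta_0,\theta)$-bunched) promotes $V$ to being $H^\uu$-invariant. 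Combined with $H^\ss$-invariance, \cref{c.irred} makes $V$ a proper nontrivial $\theta$-H\"older $\Phi$-invariant subbundle of $\E$, contradicting irreducibility. The main obstacle will be the H\"older estimate in the second paragraph: picking a parameterization so that the bracket $[y',y'']$ is defined, matching it via $H^\uu$-equivariance so that \cref{p.regularity_above} applies cleanly, and showing that the exponential growth of $\|\Phi^n\|$ is absorbed into $\|v\|$, which is precisely what demands $\|H^\ss_{y\gets y'}\|=O(1)$, i.e., $y'\in W^\ss_{\epsilon_0}(y)$, together with the ancillary claim that these controlled parameterizations are already enough to span $V_y$.
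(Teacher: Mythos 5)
Your construction is the same as the paper's: your $\cS_y$ equals the paper's $\mathbb{S}_y$ (via the equivariance identity $\Phi^n_{x'}H^\uu_{x'\gets x_0}=H^\uu_{T^nx'\gets T^nx_0}\Phi^n_{x_0}$), so $V=\F$, and the overall plan (build the accessible subbundle, prove constant dimension, prove H\"older regularity along unstables, invoke rigidity) matches. However, there are two genuine gaps.

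First, the justification for ``controlled parameterizations already span $V_y$'' is incorrect: transitivity does \emph{not} imply that the forward $T$-orbit of a fixed $x'\in W^\uu(x_0)$ enters $W^\ss_{\epsilon_0}(y)$ infinitely often (forward orbits need not be dense, and this is not what transitivity gives even for one orbit). Worse, the quantitative estimate that requires this control is being used to prove both lower semicontinuity and the H\"older bound, which produces a circularity. The paper breaks the circle by first proving lower semicontinuity with a \emph{qualitative} argument (holonomies are continuous, so the matched vectors $v_i'$ are merely close to $v_i$ with no uniform rate needed), concluding constant dimension and hence full $\Phi$-invariance; only \emph{then} does it prove the quantitative H\"older bound, working at $T^ky$ (where $T^ky_i\in W^\ss_{\epsilon_1}(T^ky)$) and transferring back to $y$ by $\Phi^{-k}$-equivariance, using the already-established fact $\Phi^k(\F_y)=\F_{T^ky}$. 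Your argument does not perform this transfer and thus never legitimately reaches the stated H\"older estimate.

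Second, the ending does not cover $d=2$: there, ``strongly bunched'' means only fiber-bunched, not $(\eta_0,\theta)$-bunched, so \cref{p.rigidity} with $\eta=\eta_0$ is not applicable. The paper has a separate argument for $d=2$: the 1-dimensional subbundle is spanned by the already $\W^\uu$-saturated set $\mathbb{U}$ over the dense set $\Lambda$, hence is $\W^\uu$-saturated by continuity and \cref{c.irred} applies directly. Two lesser points: in the distance estimate you cite \eqref{e.bounded_angles} to get $\dd(y',\tilde y')\lesssim\dd(y,y'')$, but the relevant transverse bound is \cref{p.regularity_base} and gives only $\dd(y',\tilde y')\lesssim\dd(y,y'')^{\kappa_\ss}$, costing an extra factor of $\kappa_\ss$ in the exponent unless you relabel the rectangle so that the ``base'' in \cref{p.regularity_above} is $(y,y'')$; and in the $A=X$ argument, ``$[y_*,z]$ would also lie in $W^\uu_{\epsilon_0}(y_*)\subseteq A^c$'' is not justified --- the correct step is that $[y_*,z]\in A$ and $y_*\in W^\uu_{\epsilon_0}([y_*,z])$ is close, so local $W^\uu$-saturation of $A$ forces $y_*\in A$.
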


In particular (see \cref{p.irred_typical}), typical strongly bunched automorphisms are spannable (provided $T$ is transitive). 

It would be interesting to know whether or not strong bunching is really necessary for the validity of \cref{t.irr_to_span}; see \cref{r.Clark} below for a possible approach to this question.

In order to prove the \lcnamecref{t.irr_to_span}, we need the following easy property of the unstable and stable sets for the base dynamics:

\begin{lemma}\label{l.transitive}
For every $x \in X$, the sets $\bigcup_{n \ge 0} W^\uu(T^n x)$ and $\bigcup_{n \ge 0} W^\ss(T^{-n} x)$ are dense in $X$.
\end{lemma}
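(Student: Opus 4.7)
The plan is to exploit the local product structure of a hyperbolic homeomorphism to push density from a forward-dense orbit of $T$ onto $\bigcup_{n \ge 0} W^\uu(T^n x)$. By transitivity, the set of points with dense forward orbit is a dense $G_\delta$ in $X$ (a standard Baire argument on a compact metric space), so I pick such a point $z$ with $\dd(x,z) \le 2\epsilon_1$. The bracket $w \coloneqq [x,z]$ is then well-defined and lies in $W^\uu_{\epsilon_0}(x) \cap W^\ss_{\epsilon_0}(z)$.

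Given any target $y \in X$ and any $\epsilon > 0$, I claim that some forward iterate $T^n w$ is $\epsilon$-close to $y$. Since $w \in W^\uu(x)$, we have $T^n w \in W^\uu(T^n x)$ for every $n \ge 0$. Applying the stable contraction \eqref{e.lambda_s} recursively to the pair $w, z \in W^\ss_{\epsilon_0}(z)$ yields $\dd(T^n w, T^n z) \to 0$, so I fix $N$ such that this distance stays below $\epsilon/2$ for all $n \ge N$. The point $T^N z$ also has dense forward orbit (the residual set of forward-dense points is $T$-invariant), so there exists $k \ge 0$ with $\dd(T^{N+k} z, y) < \epsilon/2$; the triangle inequality then gives $\dd(T^{N+k} w, y) < \epsilon$, establishing density of $\bigcup_{n \ge 0} W^\uu(T^n x)$.

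For $\bigcup_{n \ge 0} W^\ss(T^{-n} x)$ the argument is symmetric: $T^{-1}$ is again a transitive hyperbolic homeomorphism, since topological transitivity is time-symmetric and the hyperbolicity axioms of \cref{ss.hyp_homeo} interchange $\uu$ and $\ss$ when $T$ is replaced by $T^{-1}$. Applying the first half of the proof to $T^{-1}$ and translating $\uu \leftrightarrow \ss$ yields the desired density. No serious obstacle is anticipated; the only verifications requiring a moment's thought are that residual forward-dense points exist (Baire) and that the stable-contraction estimate iterates cleanly, both of which are routine.
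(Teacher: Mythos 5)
Your proposal is correct and follows essentially the same route as the paper: both arguments bracket $x$ against a nearby forward-dense point $z$ to produce a point $w=[x,z]\in W^\uu(x)\cap W^\ss(z)$, then use stable contraction to transfer $z$'s dense forward orbit to $w$'s, which lives in $\bigcup_{n\ge 0}W^\uu(T^nx)$; the paper merely packages your explicit estimate $\dd(T^nw,T^nz)\to 0$ as the abstract statement that the set of forward-dense points is $W^\ss$-saturated.

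One tiny point worth flagging: you assert that the set of forward-dense points is $T$-invariant, which is what lets you replace $z$ by $T^Nz$. Strictly speaking this can fail if $X$ has isolated points, but for a transitive hyperbolic (hence expansive) homeomorphism an isolated point forces $X$ to be a single finite periodic orbit, in which case the lemma is trivial; so the claim is harmless here, though it deserves a word.
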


\begin{proof}
Let $D$ be the set of points whose forward orbits are dense.
Since $T$ is transitive, $D$ is itself dense.
Moreover, $D$ is $W^\ss$-saturated (i.e., it is a union of stable sets).
By definition of hyperbolic homeomorphism, local stable and unstable sets whose basepoints are sufficiently close always intersect.
It follows that $D$ intersects all unstable sets.
This implies that for every $x \in X$, the set $\bigcup_{n \ge 0} W^\uu(T^n x)$ is dense.
Applying this to $T^{-1}$ we obtain that $\bigcup_{n \ge 0} W^\ss(T^{-n} x)$ is also dense.
\end{proof}

\begin{proof}[Proof of \cref{t.irr_to_span}]
Fix a point $x \in X$ and a nonzero vector $u \in \E_x$.
Let $\Lambda \coloneqq \bigcup_{n \ge 0} W^\uu(T^n x)$, which by \cref{l.transitive} is a dense subset of $X$.
Define the following subsets of the vector bundle $\E$:
$$
\mathbb{U} \coloneqq \bigcup_{n \ge 0} \W^\uu(\Phi^n (u)) \, ,
\qquad
\mathbb{S} \coloneqq \bigcup_{v \in \mathbb{U}} \W^\ss(v) \, ,
\qquad
\F \coloneqq \mathrm{span}(\mathbb{S}) \, ,
$$
where the latter equation means that for each $y \in X$, the fiber $\F_y \coloneqq \E_y \cap \F$ is the vector space spanned by $\mathbb{S}_y \coloneqq \E_y \cap \mathbb{S}$.
In order to prove the \lcnamecref{t.irr_to_span}, we need to show that $\F = \E$.
Clearly,
\begin{itemize}
\item $\mathbb{U}$ projects onto $\Lambda$, and is both forward-$\Phi$-invariant (i.e., $\Phi(\mathbb{U}) \subseteq \mathbb{U}$) and $\W^\uu$-saturated (i.e., it is a union of $\W^\uu$ sets);
\item $\mathbb{S}$ projects onto $X$, and is both forward-$\Phi$-invariant and $\W^\ss$-saturated; therefore $\F$ has the same properties.
\end{itemize}

We claim that the function $y \in X \mapsto \dim \F_y$ has the following properties:
\begin{enumerate}
\item\label{i.dim1}
it is non-decreasing along orbits of $T$ (i.e., $\dim \F_{Ty} \ge \dim \F_y$); 
\item\label{i.dim2}
it is constant along $W^\ss$ sets;
\item\label{i.dim3} 
it is lower semicontinuous. 
\end{enumerate}
Indeed, properties (\ref{i.dim1}) and (\ref{i.dim2}) follow from the facts that $\F$ is forward-invariant and $\W^\ss$-saturated, respectively.
In order to check property (\ref{i.dim3}), fix an arbitrary point $y \in X$ and let $p \coloneqq \dim \F_y$.
Then there exist points $x_1, \dots, x_p \in W^\uu(x)$ and integers $n_1, \dots, n_p \ge 0$ such that the points $y_i \coloneqq T^{n_i} x_i$ all belong to $W^\ss(y)$, and the vectors $v_i$ given by formula \eqref{e.spanners} span $\F_y$.
If $y'$ is sufficiently close to $y$, then for each $i$ we can find $y_i' \in W^\uu(y_i)\cap W^\ss(y')$ such that the holonomies $H^\uu_{y_i' \gets y_i}$ and $H^\ss_{y' \gets y_i'}$ are respectively close to the identity and $H^\ss_{y \gets y_i}$. Then each vector $v_i' \coloneqq H^\ss_{y' \gets y'_i} \circ H^\uu_{y'_i \gets y_i} \circ H^\ss_{y_i \gets y} (v_i)$ is close to $v_i$, and so the span of $\{v_1', \dots, v_p'\}$ has dimension $p$. Since each $v_i'$ belongs to $\F_{y'}$, we conclude that $\dim \F_{y'} \ge p$, therefore proving the semicontinuity property (\ref{i.dim3}).

Let $C$ be the set of points $y \in X$ where $\dim \F_y$ attains its minimum.
By the properties (\ref{i.dim1}), (\ref{i.dim2}), and (\ref{i.dim3}) that we have just proved, 
the set $C$ is nonempty, backwards-invariant (i.e., $T^{-1}(C) \subseteq C$), $W^\ss$-saturated, and closed.
It follows from \cref{l.transitive} that $C=X$.
In other words, $\F$ has constant dimension, say $p$.
So $\F$ is not only forward-$\Phi$-invariant, but $\Phi$-invariant.

\medskip

Let $\theta_\ss$ be given by \cref{p.regularity_above}.
We claim that $\F$ is $\theta_\ss$-H\"older along unstable sets, in the sense defined in \cref{ss.rigidity}.
By compactness, it suffices to prove this claim on a neighborhood of each point $y \in X$.
Take points $x_1, \dots, x_p \in W^\uu(x)$ and integers $n_1, \dots, n_p \ge 0$ such that the points $y_i \coloneqq T^{n_i} x_i$ all belong to $W^\ss(y)$, and the vectors $v_i$ given by formula \eqref{e.spanners} span $\F_y$.
Take $k \ge 0$ large enough so that the points $T^k y_i$ all belong to $W^\ss_{\epsilon_1}(T^k y)$, where $\epsilon_1$ is constant from condition (\ref{i.bracket}) in the definition of hyperbolic homeomorphism.
If we prove that $\F$ is $\theta_\ss$-H\"older along unstable sets on a neighborhood of $T^k y$ then, by invariance, it will follow that $\F$ is $\theta_\ss$-H\"older along unstable sets on a neighborhood of $y$. So let us assume that $k=0$, for simplicity of notation.
Let $y' \in W^\uu_{\epsilon_1}(y)$ be close to $y$.
Then the brackets $y'_i \coloneqq [y_i, y']$ are well-defined; see \cref{f.spanning}.
We need to compare the following two subspaces of~$\E_{y'}$:
\begin{align*}
\F_{y'} &= \mathrm{span} \big\{ H^\ss_{y' \gets y'_i} \circ \underbrace{H^\uu_{y'_i \gets y_i} \circ H^\ss_{y_i \gets y}}_{\circled{1}} (v_i) \big\}_{i=1}^p \, , \\
H^\uu_{y' \gets y} (\F_y) &= \mathrm{span} \big\{H^\ss_{y' \gets y'_i} \circ \underbrace{H^\ss_{y'_i \gets y'} \circ H^\uu_{y' \gets y}}_{\circled{2}} (v_i) \big\}_{i=1}^p \, .
\end{align*}
By \cref{p.regularity_above}, $\|\circled{1} - \circled{2}\| = O(\dd(y,y')^{\theta_\ss})$;
moreover $\| H^\ss_{y' \gets y'_i}\| = O(1)$.
So, by \cref{p.span_Lip}, we conclude that $\dd \big( \F_{y'} , H^\uu_{y' \gets y} (\F_y) \big) = O(\dd(y,y')^{\theta_\ss})$.
This concludes the proof that $\F$ is $\theta_\ss$-H\"older along unstable sets.
A fortiori, $\F$ is continuous (since it is invariant under stable holonomies).

\begin{figure}[hbt]
	\begin{center}
		\begin{tikzpicture}[scale=.6]
			\draw(-7,0) node[left]{$W^\uu$}--(-3,0);
			\fill(-6,0) circle[radius=2pt] node[below]{$x_1$};
			\fill(-5,0) circle[radius=2pt] node[below]{$x$};
			\fill(-4,0) circle[radius=2pt] node[below]{$x_2$}; 		
			\draw[->](-6,.5) arc(180:90:4.5) node[midway, left]{$T^{n_1}$};	
			\draw[->](-4,.5) arc(180:90:2.5) node[midway, left]{$T^{n_2}$};	
			\draw(-1,0)--(2.5,0) node[right]{$W^\uu$};
			\draw(-1,3)--(2.5,3);
			\draw(-1,5)--(2.5,5);
			\draw(0,-1)--(0,6.5) node[above]{$W^\ss$};
			\draw(1,-1)--(1,6.5);
			\fill(0,0) circle[radius=2pt] node[below left]{$y$};
			\fill(1,0) circle[radius=2pt] node[below right]{$y'$};
			\fill(0,3) circle[radius=2pt] node[above left]{$y_2$};
			\fill(1,3) circle[radius=2pt] node[above right]{$y_2'$};
			\fill(0,5) circle[radius=2pt] node[above left]{$y_1$};
			\fill(1,5) circle[radius=2pt] node[above right]{$y_1'$};
		\end{tikzpicture}
	\caption{Proof of \cref{t.irr_to_span}}\label{f.spanning}
	\end{center}
\end{figure}

\medskip

The proof ends differently according to the dimension of $\E$.
If $d=1$ then $\F = \E$ and we are done.

Next, consider the case $d=2$.
Assume for a contradiction that $\F \neq \E$, i.e., that $\F$ has $1$-dimensional fibers.
For each $y \in \Lambda$, the set $\mathbb{U}_y$ contains a nonzero vector and therefore spans $\F_y$.
Since $\Lambda$ is dense in $X$ and $\F$ is continuous, we conclude that $\F$ is the closure of $\mathrm{span}(\mathbb{U})$.
In particular, $\F$ is $\W^\uu$-saturated.
Recalling that  $\F$ is also $\W^\ss$-saturated, we contradict irreducibility.
This concludes the proof in the case $d=2$.

\medskip

Now consider the case $d\ge 3$.
Then, by definition of strong bunching, $\Phi$ is $(\eta_0,\theta)$-bunched.
Recall from \cref{l.needed_strength} that $\eta_0 \le \theta_\ss$.
So \cref{p.rigidity} (rigidity) applies and the regularity of the subbundle is upgraded: it is actually $\theta$-H\"older along unstable sets.
Irreducibility implies that $\F = \E$, thus concluding the proof.
\end{proof}

We will use an apparently stronger, but equivalent form of spannability.
Recall that $\epsilon_1>0$ is one of the constants that appear in the definition of hyperbolic homeomorphism (\cref{ss.hyp_homeo}).

\begin{proposition}[Uniform spannability]\label{p.unif_span}
Suppose $\Phi$ is a spannable automorphism.
Then there exist constants $\bar{n} \ge 0$ and $C_0 > 0$ with the following properties:
For all points $x$, $y \in X$, and all unit vectors $u \in \E_x$, there exist:
\begin{itemize}
	\item points $x_1, \dots, x_d \in W^\uu_{\epsilon_1}(x)$;
	\item integers $n_1, \dots, n_d \in \ldbrack 0, \bar{n} \rdbrack$ such that the points $y_i \coloneqq T^{n_i} x_i$ all belong to $W^\ss_{\epsilon_1}(y)$;
\end{itemize}
with the property that the vectors $v_1, \dots, v_d \in \E_y$ defined by \eqref{e.spanners} form a basis for $\E_y$;
moreover, if $L \colon \E_y \to \E_y$ is a linear map that sends this basis to an orthonormal basis then $\|L\| < C_0$.
\end{proposition}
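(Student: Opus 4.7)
The plan is a compactness argument: for each triple $(x_0, y_0, u_0)$ with $u_0 \in \E_{x_0}$ a unit vector, I would first produce a \emph{local, bounded} spanning configuration, and then uniformize the constants $\bar{n}$ and $C_0$ via continuity and compactness of the unit sphere bundle over $X \times X$.

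For the pointwise step, spannability provides some configuration $(\check{x}_i, \check{n}_i)_{i=1}^d$ with $\check{x}_i \in W^\uu(x_0)$ (possibly far from $x_0$ along the leaf) and $\check{n}_i \ge 0$ (possibly large) such that $\check{y}_i := T^{\check{n}_i} \check{x}_i \in W^\ss(y_0)$ and the corresponding $\check{v}_i$ form a basis of $\E_{y_0}$. I would then localize this configuration using \eqref{e.longW}. Transitivity of $T$ (via \cref{l.transitive}), combined with compactness of $X$, yields uniform geometric reachability: for each sufficiently small $\delta > 0$ there is $N_\delta$ such that $\bigcup_{n=0}^{N_\delta} T^n(W^\uu_\delta(x))$ is $\epsilon_1$-dense in $X$ for every $x \in X$. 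Taking $\delta$ small enough and using the bracket, one obtains points $x_i \in W^\uu_{\epsilon_1}(x_0)$ and integers $n_i \le N_\delta$ with $T^{n_i} x_i \in W^\ss_{\epsilon_1}(y_0)$. Using the equivariance and groupoid properties of \cref{p.holonomies} to compare the induced spanners with the original $\check{v}_i$, one shows that $d$ such local choices can be arranged to produce linearly independent images.

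For the uniformization step, by part~(v) of \cref{p.holonomies} and continuity of the finite iterates $\Phi^{n_i}$, the spanner map $(x, y, u) \mapsto v_i$ depends continuously on $(x, y, u)$ provided the configuration is \emph{adapted} to nearby basepoints via the bracket, e.g.\ by setting $x_i(x) := [x, \hat{x}_i]$ and $y_i(y) := [\hat{y}_i, y]$ whenever $(x,y)$ is close to the reference $(\hat{x}, \hat{y})$; note that this adaptation keeps $x_i \in W^\uu_{\epsilon_0}(x)$ and $y_i \in W^\ss_{\epsilon_0}(y)$. The conditions ``$v_i$ form a basis'' and ``$\|L\| < C_0$'' are open, so each triple $(x_0, y_0, u_0)$ admits an open neighborhood on which the adapted configuration still works. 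Compactness of the unit sphere bundle $\{(x,y,u): u \in \E_x,\ \|u\|=1\}$ reduces to a finite subcover, yielding the uniform constants $\bar{n} := \max_k \max_i n_i^{(k)}$ and $C_0$.

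The main obstacle I anticipate is the pointwise localization step. Spannability as stated allows $\check{x}_i$ to be arbitrarily far along the unstable leaf through $x_0$ and $\check{n}_i$ to be arbitrarily large, whereas one must exhibit an alternative configuration with $x_i \in W^\uu_{\epsilon_1}(x_0)$ and $n_i$ bounded whose spanners still form a basis. Verifying linear independence for the localized choices---rather than merely their geometric existence---is the heart of the matter, and it requires combining spannability with density of forward orbits of local unstable sets, likely through a dimension-counting argument analogous to the one in the proof of \cref{t.irr_to_span}, applied to the span of vectors reachable by \emph{local bounded} paths.
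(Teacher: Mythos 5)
Your uniformization step (continuity of the spannability data in $([u],y)$, openness of the basis condition, compactness of the projective bundle times $X$) is essentially the second half of the paper's proof and is fine. The genuine gap is exactly where you flag it: the localization step. Spannability hands you one particular configuration $(\check x_i,\check n_i)$ whose spanners form a basis; the "uniform geometric reachability" argument you sketch produces \emph{different} points $x_i\in W^\uu_{\epsilon_1}(x_0)$ with $T^{n_i}x_i\in W^\ss_{\epsilon_1}(y_0)$, but there is no reason the spanners of an arbitrary local configuration are linearly independent --- that is the whole content of spannability, not a consequence of the geometry. Your suggestion to "compare the induced spanners with the original $\check v_i$ via equivariance and the groupoid properties" does not work as stated, because the local configuration follows genuinely different holonomy paths; and your fallback (rerunning the dimension-counting argument of \cref{t.irr_to_span} with local bounded paths) is not carried out and would amount to reproving that theorem in a stronger form. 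As written, the proposal does not establish the proposition.

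The paper closes this gap with a much simpler observation that you missed: one does not need a new configuration at all. If $(x_i,n_i,y_i,v_i)$ is valid spannability data for $([u],y)$, then $(T^{-k}x_i,\,n_i+2k,\,T^k y_i,\,\Phi^k(v_i))$ is valid data for the pair $([\Phi^{-k}(u)],\,T^k y)$ --- linear independence is automatic since $\Phi^k$ is a linear isomorphism --- and by \eqref{e.longW} the points $T^{-k}x_i$ eventually enter $W^\uu_{\epsilon_1}(T^{-k}x)$ and the $T^k y_i$ enter $W^\ss_{\epsilon_1}(T^k y)$ as $k\to\infty$. By continuity of the data on each compact set $U_j$ of the finite cover, one $k$ works uniformly on $U_j$; taking $k$ larger than all the finitely many $k_j$, the images $V_j$ of the $U_j$ under the homeomorphism $([u],y)\mapsto([\Phi^{-k}(u)],T^k y)$ still cover $\hat\E\times X$ and carry localized data with times bounded by $\bar n=\max_{j,i} n_i^{(j)}+2k$. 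In short: you localize by flowing the \emph{basepoints}, not by rebuilding the configuration.
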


\begin{proof}
If $u \in \E$ is a nonzero vector, let $[u]$ denote its class in the projective bundle $\hat{\E} \coloneqq  \Gr_1(\E)$.
Let $\Phi$ be a spannable automorphism.
Given $([u], y) \in \hat{\E} \times X$, consider $x = \pi(u)$ and let $x_i$, $n_i$, $y_i$, and $v_i$, where $i \in \ldbrack 1,d \rdbrack$, be as in the definition of spannability. 
Note that if $([\tilde u], \tilde y)$ belongs to a sufficiently small neighborhood of $([u],y)$ then we can find the corresponding data $(\tilde x_i, \tilde n_i, \tilde y_i, [\tilde v_i])$ close to $(x_i, n_i, y_i, [v_i])$ (so $\tilde n_i = n_i$) and actually depending continuously on $([\tilde u], \tilde y)$.
Since the space $\hat{\E} \times X$ is compact, we can cover it by finitely many such neighborhoods $U_j$.
We can also assume that the sets $U_j$ are compact.

Fix any set $U_j$ and an element $([u],y) \in U_j$. Let  $x = \pi(u)$ and let $(x_i, n_i, y_i, [v_i])$, $i \in \ldbrack 1,d \rdbrack$ be the corresponding spannability data. 
For each $k \ge 0$, the pair $([\Phi^{-k}(u)], T^k y) \in \hat{\E} \times X$ has $(T^{-k} x_i, n_i+2k, T^k y_i, [\Phi^k(v_i)])$, $i \in \ldbrack 1,d \rdbrack$ as valid spannability data. 
By \eqref{e.longW}, if $k$ is large enough then 
$$
T^{-k} x_i \in W^\uu_{\epsilon_1}(T^{-k} x) \quad\text{and}\quad T^k y_i \in W^\ss_{\epsilon_1}(T^k y) \quad \text{for each $i \in \ldbrack 1,d \rdbrack$.}
$$
By continuity of the spannability data on the compact set $U_j$, this conclusion holds provided $k$ is bigger than some $k_j$.
There are finitely many indices $j$ to consider, so let us fix a definitive $k$ bigger than all $k_j$'s.
The compact sets $V_j \coloneqq \{([\Phi^{-k}(u)], T^k y) \st ([u],y) \in U_j\}$ also cover the space $\hat{\E} \times X$.
They provide the spannability data with the required uniformity properties.
This proves the \lcnamecref{p.unif_span}.
\end{proof}

\begin{corollary}\label{c.open_span}
Given a spannable automorphism $\Phi \in \Aut^\theta_K(\E,T)$, we can choose $\bar{n} \ge 0$ and $C_0 > 0$ such that the the statement of \cref{p.unif_span} holds for all automorphisms in a $C^0$-neighborhood of $\Phi$ in the space $\Aut^\theta_K(\E,T)$.

In particular, spannable automorphisms form a $C^0$-open subset of $\Aut^\theta_K(\E,T)$.
\end{corollary}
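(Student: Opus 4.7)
The plan is to rerun the proof of \cref{p.unif_span} for the fixed spannable $\Phi \in \Aut^\theta_K(\E,T)$ while tracking continuous dependence on the automorphism. First I would apply \cref{p.unif_span} to $\Phi$ itself in order to obtain constants $\bar n$, $C_0$, together with a finite cover $\{V_j\}$ of $\hat\E \times X$ and continuous spannability data $(x_i(u,y), n_i, y_i(u,y), v_i(u,y))$ on each $V_j$, with $n_i \le \bar n$ constant on $V_j$.

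The decisive observation is that the geometric part of the data, namely the points $x_i \in W^\uu_{\epsilon_1}(x)$, the integers $n_i$, and the points $y_i = T^{n_i} x_i \in W^\ss_{\epsilon_1}(y)$, is determined by $T$, $x$, and $y$ alone, not by the automorphism. Only the vectors $v_i$ defined by \eqref{e.spanners} involve $\Phi$, and they do so through the holonomies $H^\uu, H^\ss$ and the finite iterate $\Phi^{n_i}_{x_i}$. By the last sentence of \cref{p.holonomies}, the stable and unstable holonomies vary continuously in the $C^0$ topology as the automorphism varies in a $C^0$-neighborhood of $\Phi$; combined with the uniform bound $n_i \le \bar n$, which makes $\Phi' \mapsto (\Phi')^{n_i}_{x_i}$ $C^0$-continuous, one concludes that the vectors
\[
v_i^{\Phi'}(u,y) \coloneqq (\text{stable holonomy of }\Phi') \circ (\Phi')^{n_i}_{x_i} \circ (\text{unstable holonomy of }\Phi')(u)
\]
depend jointly continuously on $(([u],y), \Phi') \in V_j \times \Aut^\theta_K(\E,T)$.

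Since the condition ``$(v_1, \dots, v_d)$ forms a basis of $\E_y$ with $\|L\| < 2C_0$'' is open on $d$-tuples of vectors, and each $V_j$ is compact, for each $j$ there is a $C^0$-neighborhood $\cV_j$ of $\Phi$ in $\Aut^\theta_K(\E,T)$ on which this condition holds for the tuple $(v_1^{\Phi'}, \dots, v_d^{\Phi'})$ at every $([u], y) \in V_j$. Setting $\cV \coloneqq \bigcap_j \cV_j$, which is a finite intersection of $C^0$-neighborhoods and hence a $C^0$-neighborhood, yields the first assertion with $\bar n$ unchanged and $C_0$ replaced by $2C_0$. The ``in particular'' openness statement is then immediate, since we have exhibited valid spannability data for each $\Phi' \in \cV$.

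There is no substantial obstacle, because the heavy lifting is already done in \cref{p.holonomies,p.unif_span}. The step that does all the real work is the joint continuity claim, which rests entirely on the continuous dependence of holonomies on the automorphism stated at the end of \cref{p.holonomies}; after that, compactness of each $V_j$ and finiteness of the cover let us pass from pointwise openness in $([u],y)$ to a uniform $C^0$-neighborhood of $\Phi$.
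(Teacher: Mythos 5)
Your argument is correct and follows essentially the same route as the paper: fix the geometric spannability data from \cref{p.unif_span}, observe that the vectors $v_i$ depend $C^0$-continuously on the automorphism via \cref{p.holonomies}, and use compactness to pass to a uniform $C^0$-neighborhood. The paper states this in two sentences; you have supplied the bookkeeping (a finite cover of compact pieces, joint continuity, replacing $C_0$ by $2C_0$) that the paper leaves implicit, but the substance is identical.
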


\begin{proof}
By part~(\ref{i.holonomy_cont}) of \cref{p.holonomies}, holonomies depend continuously on the fiber-bunched automorphism $\Phi$, with respect to the $C^0$-norm.
So, in the situation of \cref{p.unif_span}, if we make a $C^0$-perturbation of $\Phi$ (among $\theta$-H\"older automorphisms) then the vectors $v_1$, \dots, $v_d$  change little and therefore stay linearly independent.
\end{proof}

\begin{remark}
Let us say that a automorphism is \emph{topologically irreducible} if it admits no \emph{continuous} proper invariant subbundle. As the proof of \cref{t.irr_to_span} shows, if a fiber-bunched automorphism over a transitive hyperbolic homeomorphism is topologically irreducible then it is spannable.
\end{remark}

\begin{remark}\label{r.Clark}
As Clark Butler has pointed out to us, if a fiber-bunched automorphism satisfies the \emph{pinching-and-twisting} condition from \cite[Def.~1.3]{BV},  \cite[Def.~1.2]{AV_Portugalia} then it is spannable.
In other words, one can remove the strong bunching hypothesis from \cref{t.irr_to_span}, provided one replaces irreducibility with the (strictly stronger) pinching-and-twisting condition.

Let us sketch the proof.
Let $\mathbb{U} \subseteq \F$ be as in proof of \cref{t.irr_to_span}.
Let $\mathbb{V}$ be the closure of the span of $\bigcup_{n\ge 0} \Phi^{-n}(\mathbb{U})$; then $\mathbb{V}$ is $\Phi$-invariant, $\W^\uu$-saturated,  projects down on $X$, and is contained in $\F$.
Let $\mu$ be the $T$-invariant probability measure on $X$ with maximal entropy (other choices are possible). 
Let $\P\Phi$ be the projectivization of the automorphism $\Phi$.
Then $\P\Phi$ admits an \emph{invariant $\mathrm{u}$-state}, that is, an invariant measure $\hat m$ that projects on $\mu$ and whose disintegration w.r.t.\ to this projection is $\mu$-a.e.\  
invariant under unstable holonomies: see \cite[\S~4.1]{AV_Portugalia}.
By adapting the construction, we can ensure that the invariant $\mathrm{u}$-state $\hat m$ gives full weight to $\P \mathbb{V}$, and in particular to $\P\F$, which is a continuous $\W^\ss$-saturated invariant subbundle of $\P\E$.
Under the pinching-and-twisting assumption, \cite[Prop.~5.1]{BV} or  \cite[Prop.~5.1]{AV_Portugalia} say that such a situation is impossible unless $\F = \E$. (Actually in these papers $T$ is a shift, but the proofs can be adapted to the general situation, or we can use a Markov partition.) Therefore $\Phi$ is spannable.

It is not clear how to relax the pinching-and-twisting hypothesis in the arguments from \cite{BV,AV_Portugalia}. 
Therefore we still lack an optimal criterion for spannability.
\end{remark}

\section{Bounding the growth}\label{s.bounded}

\subsection{Relative product boundedness}\label{ss.RPB}

A vector bundle automorphism $\Phi$ is called \emph{product bounded} if
$$
\sup_{n \ge 0} \, \sup_{x\in X} \|\Phi^n_x\| < \infty
$$
for some and hence any Finsler norm on $\E$.
This condition evidently implies that $\beta(\Phi)\le 0$, i.e.\ the maximal Lyapunov exponent \eqref{e.def_beta} is nonpositive.
On the other hand, we say that $\Phi$ is \emph{relatively product bounded} if $e^{-\beta(\Phi)} \Phi$ is product bounded, that is,
$$
\sup_{n \ge 0} e^{-\beta(\Phi) n} \sup_{x\in X} \|(\Phi^n)_x\| < \infty \, .
$$
Of course, if $\Phi$ has an extremal norm then it is relatively product bounded.
The converse is true in the $1$-step case, as noted by Rota and Strang \cite{RS}.
But the converse is not true in general\footnote{The naive attempt of defining an extremal norm by $\tribar{u} \coloneqq \sup_{n\ge 0} e^{-\beta(\Phi) n}\|\Phi^n(u)\|$ does not necessarily work because continuity may fail.}; in fact it may fail even in dimension $1$, as shown by Morris \cite[Proposition 2]{Morris_sufficient}.
In Morris' example, the dynamics is uniformly hyperbolic (actually a full shift), but the function is not H\"older.

In this paper, we need to prove relative product boundedness as an essential preliminary step in the construction of extremal norms.
We will show the following:

\begin{proposition}\label{p.bounded}
Every spannable automorphism is relatively product bounded.
\end{proposition}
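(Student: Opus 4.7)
The plan is to show that the sequence
$$
a_n \coloneqq \sup_{x \in X} \|\Phi^n_x\|
$$
satisfies a supermultiplicative inequality $a_{n+m} \ge c' a_n a_m$ for some constant $c' > 0$ and all $n, m \ge 0$. Combined with the elementary submultiplicativity $a_{n+m} \le a_n a_m$, this will suffice: setting $b_n \coloneqq \log(c' a_n)$ makes $(b_n)$ superadditive, and since $b_n/n \to \beta(\Phi)$ by \eqref{e.beta_other}, Fekete's lemma gives $\sup_n b_n/n = \beta(\Phi)$, so $b_n \le \beta(\Phi) n$ for every $n$, which unwinds to $a_n \le (1/c') e^{\beta(\Phi) n}$ — exactly relative product boundedness.

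To produce the supermultiplicative inequality, fix $n, m \ge 0$. By compactness I pick $x_0 \in X$ and a unit vector $u_0 \in \E_{x_0}$ attaining $\|\Phi^n_{x_0}(u_0)\| = a_n$, and likewise $y_0 \in X$ and a unit $w_0 \in \E_{y_0}$ with $\|\Phi^m_{y_0}(w_0)\| = a_m$. Set $\tilde u \coloneqq \Phi^n_{x_0}(u_0)$, so $\|\tilde u\| = a_n$. I apply uniform spannability (\cref{p.unif_span}) at the point $T^n x_0$ with unit vector $\tilde u/\|\tilde u\|$ and target $y_0$, obtaining points $\tilde x_i \in W^\uu_{\epsilon_1}(T^n x_0)$, times $n'_i \in \ldbrack 0, \bar n \rdbrack$ with $y_i \coloneqq T^{n'_i}\tilde x_i \in W^\ss_{\epsilon_1}(y_0)$, and a basis $\tilde v_1, \dots, \tilde v_d$ of $\E_{y_0}$ whose condition number is bounded by $C_0$. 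Expanding $w_0 = \sum_i c_i \tilde v_i$, the condition-number bound forces $|c|_1 \le \sqrt{d}\,C_0$, so some index $i_0$ satisfies
$$
\|\Phi^m_{y_0}(\tilde v_{i_0})\| \ge \frac{a_m}{\sqrt{d}\, C_0}.
$$

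The key identity, obtained by chaining the equivariance of stable and unstable holonomies from \cref{p.holonomies}, is
$$
\Phi^m_{y_0}(\tilde v_i) = \frac{1}{\|\tilde u\|}\, H^\ss_{T^m y_0 \gets T^m y_i} \circ H^\uu_{T^m y_i \gets T^{m+n+n'_i} x_0} \circ \Phi^{m+n+n'_i}_{x_0}(u_0).
$$
Local holonomies restricted to $W^\uu_{\epsilon_1}$ or $W^\ss_{\epsilon_1}$ admit a uniform operator-norm bound $C_1$ (by the continuity statement in \cref{p.holonomies} and compactness), so this yields $\|\Phi^m_{y_0}(\tilde v_{i_0})\| \le C_1^2\, a_{m+n+n'_{i_0}}/a_n$. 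Combining the two bounds and absorbing the shift $n'_{i_0} \in \ldbrack 0, \bar n \rdbrack$ via submultiplicativity $a_{m+n+n'_{i_0}} \le a_{m+n}\, \tilde C$ with $\tilde C \coloneqq \max_{0 \le j \le \bar n} a_j$, I arrive at
$$
a_{n+m} \ge c'\, a_n a_m, \qquad c' \coloneqq \bigl(\sqrt{d}\, C_0 C_1^2 \tilde C\bigr)^{-1},
$$
which closes the argument.

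The main obstacle is the careful bookkeeping required to verify the displayed holonomy identity: one must push $\Phi^m_{y_0}$ past a stable and then an unstable holonomy using equivariance, and separately rewrite $\Phi^{n'_i}_{\tilde x_i} \circ H^\uu_{\tilde x_i \gets T^n x_0}$ as an unstable holonomy composed with $\Phi^{n+n'_i}_{x_0}$, all while keeping the basepoints straight. Once this algebra is carried out, the only remaining ingredient is the observation that uniform spannability delivers not only a spanning basis but one of controlled condition number — without the latter, one could not translate the lower bound on $\|\Phi^m_{y_0}(w_0)\|$ into a lower bound on some $\|\Phi^m_{y_0}(\tilde v_i)\|$.
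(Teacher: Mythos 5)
Your reduction of relative product boundedness to the supermultiplicative inequality $a_{n+m}\ge c'\,a_n a_m$ is sound (Fekete's lemma applied to $b_n=\log(c'a_n)$, together with the fact that $\frac1n\log a_n\to\beta(\Phi)$ is a limit-infimum, does yield $a_n\le (1/c')e^{n\beta(\Phi)}$), and the displayed holonomy identity is correct. The gap is the claim that both holonomy factors are bounded by $C_1$. The stable factor $H^\ss_{T^m y_0\gets T^m y_{i_0}}$ is indeed local, since $y_{i_0}\in W^\ss_{\epsilon_1}(y_0)$ implies $T^m y_{i_0}\in W^\ss_{\epsilon_1}(T^m y_0)$. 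But the unstable factor $H^\uu_{T^m y_{i_0}\gets T^{m+n+n'_{i_0}}x_0}$ joins $T^{m+n'_{i_0}}\tilde x_{i_0}$ to $T^{m+n'_{i_0}}(T^n x_0)$, i.e.\ the forward iterates of two points lying in a common \emph{local} unstable set; since $T$ expands unstable sets forward, these images are in general far apart along the global unstable leaf, and the uniform bound coming from compactness and \cref{p.holonomies} does not apply. By equivariance this holonomy equals $\Phi^{m+n'_{i_0}}_{\tilde x_{i_0}}\circ H^\uu_{\tilde x_{i_0}\gets T^n x_0}\circ\big(\Phi^{m+n'_{i_0}}_{T^n x_0}\big)^{-1}$, whose norm may a priori grow exponentially in $m$.

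A uniform bound on such pushed-forward unstable holonomies is essentially equivalent to what you are trying to prove: it says that $e^{-n\beta(\Phi)}\|\Phi^n(v)\|$ stays uniformly comparable as $v$ ranges over a local unstable plaque, for all forward time. That is precisely the content of the paper's \cref{l.anti_claim,l.input}, which obtain it (for one carefully chosen plaque, not all of them) via a compactness-and-contradiction argument; spannability enters only afterwards, in \cref{l.key}, to propagate the bound to the whole bundle. So your argument is circular exactly at the step marked $C_1^2$. Note that rewriting the identity so as to keep all holonomies local, namely $\Phi^m_{y_0}(\tilde v_i)=\frac1{a_n}H^\ss_{T^m y_0\gets T^m y_i}\circ\Phi^{m+n'_i}_{\tilde x_i}\circ H^\uu_{\tilde x_i\gets T^n x_0}(\tilde u)$, only gives $\|\Phi^m_{y_0}(\tilde v_{i_0})\|\le C_1^2\,a_{m+n'_{i_0}}$, which decouples $a_n$ from $a_m$ and yields nothing. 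The missing (and hard) ingredient is an a priori bound on $\sup_{k}\sup_{v\in\W^\uu_{\epsilon_1}(u)}e^{-k\beta(\Phi)}\|\Phi^k(v)\|/\|u\|$ for at least one suitable $u$, which cannot be extracted from \cref{p.unif_span} alone.
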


The proof, which will occupy \cref{ss.local_RPB,ss.bounded_proof},
is roughly as follows:
first, we find pieces of $\W^\uu$ sets of uniform size that stay relatively product bounded for a long time (\cref{l.anti_claim}), then we use a compactness argument to find small pieces of $\W^\uu$ sets that stay relatively product bounded forever (\cref{l.input}), and finally we use spannability to spread this property to the whole bundle (\cref{l.key}).

\medskip

Let us close this \lcnamecref{ss.RPB} with some remarks about product boundedness and relative product boundedness.

It was shown by Blondel and Tsitsiklis \cite{BT} that the product boundedness of a pair of rational matrices is algorithmically undecidable.   

A result of Coronel, Navas, and Ponce \cite{CNP} states if $T$ is a \emph{minimal} homeomorphism (i.e.\ all its orbits are dense) and  $\Phi$ and $\Phi^{-1}$ are both product bounded then there exists an \emph{invariant} Riemannian norm.

It is easy to give examples of regular (e.g.\ H\"older) automorphisms that are not relatively product bounded: any cocycle constant equal to $\left( \begin{smallmatrix} 1 & 1 \\ 0 & 1 \end{smallmatrix} \right)$ will do. 
Here is a more interesting example: 

\begin{example}\label{ex.Herman}
Let the base dynamics $T$ be an irrational rotation 
of the circle $\R/\Z$, and consider the matrix-valued map $F(x) \coloneqq \left( \begin{smallmatrix} 2 & 0 \\ 0 & 1/2 \end{smallmatrix} \right) R_{2\pi x}$, where $R_\theta$ denotes the rotation matrix by angle~$\theta$. As shown by Herman \cite[p.~471--473]{Herman}, the $\SL(2,\R)$-cocycle $(T,F)$ has a positive Lyapunov exponent, but it is not uniformly hyperbolic. Therefore it cannot be relatively product bounded, because otherwise it would contradict a result of Morris \cite[Theorem~2.1]{Morris_rapidly}.
\end{example}

\subsection{Existence of local unstable sets with relatively bounded orbits}\label{ss.local_RPB}

Let $\Phi$ be a fiber-bunched automorphism in the set $\Aut^\theta_K(\E,T)$.
By the definition \eqref{e.bounded_set} of this set, 
\begin{equation}\label{e.bound_Phi}
\|\Phi^{\pm 1}_x\| \le K  \quad \text{for all } x \in X.
\end{equation}
By equicontinuity of local holonomies, there exists a constant $C_1 > 1$ such that:
\begin{equation}\label{e.bound_H}
\|H^\star_{y \gets x}\| < C_1 
\quad \text{for all } x \in X, \ \star \in \{\uu,\ss\}, \  y \in W^\star_{\epsilon_0}(x).
\end{equation}
Moreover, by \cref{p.holonomies}, it is possible to choose a constant $C_1$ that works for all automorphisms in a $C^0$-neighborhood of $\Phi$ in $\Aut^\theta_K(\E,T)$.

Let $\E^\times$ denote the complement of the zero section in $\E$.
Recall that $\epsilon_0$ comes from the definition of hyperbolic homeomorphism.

\begin{lemma}\label{l.anti_claim}
Let $\Phi$ 
be a fiber-bunched automorphism.
Then there exists $\epsilon_2 \in (0,\epsilon_0)$, depending only on $T$, such that for every integer $m > 0$ there exists $u \in \E^\times$ with the following property:
$$
\sup_{n \in \ldbrack 1, m \rdbrack} \, \sup_{v \in \W^\uu_{\epsilon_2} (u)}  e^{-\beta(\Phi) n} \| \Phi^n (v) \| \le 2  \|u\| \, .
$$
\end{lemma}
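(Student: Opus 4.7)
The plan is to first locate, for each $m$, a unit vector $w$ whose forward orbit grows at most like $\sqrt{2}\cdot e^{\beta(\Phi) n}$ up to time $m$, and then to promote this bound (losing another factor of $\sqrt{2}$) to the small unstable plaque $\W^\uu_{\epsilon_2}(w)$.

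For the first step, let $A_n := e^{-\beta(\Phi) n}\sup_{x\in X}\|\Phi^n_x\|$; by the $\linf$ formula \eqref{e.beta_other}, $A_n \ge 1$ and $\tfrac{1}{n}\log A_n \to 0$. Fix $m$, set $\delta := (\log 2)/(3m)$, and choose $M \ge 3m$ large enough that $A_n \le e^{\delta n}$ for all $n \ge M/2$. Pick a unit vector $u_0$ with $\|\Phi^M(u_0)\| \ge e^{\beta(\Phi) M}$, and let $\psi_n := e^{-\beta(\Phi) n}\|\Phi^n(u_0)\|$, so that $\psi_0 = 1$ and $\psi_M \ge 1$. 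A greedy pigeonhole argument produces an index $j \in \{0,1,\ldots,M-m\}$ with $\psi_{j+n}/\psi_j \le \sqrt{2}$ for every $n \in \ldbrack 1,m\rdbrack$: if not, then setting $j_0 := 0$ and iteratively $j_{k+1} := j_k + n_k$ with $n_k \in \ldbrack 1,m\rdbrack$ and $\psi_{j_{k+1}}/\psi_{j_k} > \sqrt{2}$, one reaches at some step $I$ a value with $j_{I-1} \le M - m < j_I \le M$ and $I \ge (M/m) - 1$, yielding $\psi_{j_I} > 2^{I/2}$, which for $M$ large contradicts $\psi_{j_I} \le A_{j_I} \le e^{\delta M} = 2^{M/(3m)}$. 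The unit vector $w := \Phi^j(u_0)/\|\Phi^j(u_0)\|$ then satisfies $\|\Phi^n(w)\| \le \sqrt{2}\, e^{\beta(\Phi) n}$ for every $n \in \ldbrack 1,m\rdbrack$.

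For the second step, take $u := w$. Given $v \in \W^\uu_{\epsilon_2}(w)$, write $v = H^\uu_{y \gets x}(w)$ with $x = \pi(w)$ and $y \in W^\uu_{\epsilon_2}(x)$; by equivariance, $\Phi^n(v) = H^\uu_{T^n y \gets T^n x}(\Phi^n(w))$. The task reduces to a uniform-in-$n$ bound $\|H^\uu_{T^n y \gets T^n x}\| \le \sqrt{2}$ once $\epsilon_2$ is small enough in terms of constants of $T$ only. The plan is to control this operator norm from the defining limit formula for the unstable holonomy by combining (i) the regularity $\|I_{z' \gets z} - \id\| \le C\,\dd(z,z')^\theta$ coming from \cref{p.transport_groupoid}; (ii) the backward contraction $\dd(T^{n-k}y, T^{n-k}x) \le \epsilon_2\, e^{-(k-n)\lambda_\uu}$ for $k \ge n$; and (iii) the fiber-bunching estimate $\bol(\Phi^k) < e^{k\theta\lambda_\uu}$. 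The bolicity growth in (iii) is offset precisely by the $\theta$-H\"older decay of (i) evaluated at the contracted distance of (ii), so the Cauchy-type series for the limit defining $H^\uu_{T^n y \gets T^n x}$ converges to an operator with $\|H^\uu_{T^n y \gets T^n x}\| \le 1 + C'\epsilon_2^\theta$, where $C'$ depends only on $T$ and on the class $\Aut^\theta_K(\E,T)$. Choosing $\epsilon_2$ with $C'\epsilon_2^\theta \le \sqrt{2}-1$ gives $\|\Phi^n(v)\| \le \sqrt{2}\cdot\sqrt{2}\, e^{\beta(\Phi) n} = 2\|u\|\, e^{\beta(\Phi) n}$, as required.

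The main obstacle is precisely the uniform holonomy bound in step 2: naively decomposing $H^\uu_{T^n y \gets T^n x}$ into one-step local holonomies along the forward orbit gives estimates that grow exponentially in $n$, because the unstable plaque $T^n W^\uu_{\epsilon_2}(x)$ spreads out. The correct route must work backward along the unstable direction, where distances contract, and let fiber-bunching absorb the bolicity blow-up against that contraction --- this balancing is the whole content of the convergence of the defining limit for $H^\uu$, and it is what lets $\epsilon_2$ be chosen purely from the hyperbolicity data of $T$ (and the fiber-bunching gap), independently of $m$ and of the particular fiber-bunched $\Phi$.
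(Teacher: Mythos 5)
Your Step 1 (the pigeonhole extraction of a unit vector $w$ with $\|\Phi^n(w)\|\le \sqrt2\,e^{\beta(\Phi)n}$ for $n\in\ldbrack 1,m\rdbrack$) is essentially correct, and it is a genuinely different device from the paper's argument. But Step 2 contains a fatal gap: the claimed uniform bound $\|H^\uu_{T^ny\gets T^nx}\|\le 1+C'\epsilon_2^\theta$ is false. The points $T^nx$ and $T^ny$ do \emph{not} lie in a common local unstable set once $n$ is large — forward iteration expands unstable plaques — so the bound \eqref{e.bound_H} does not apply, and by equivariance $H^\uu_{T^ny\gets T^nx}=\Phi^n_y\circ H^\uu_{y\gets x}\circ(\Phi^n_x)^{-1}$ is a conjugation by $n$-step cocycle products taken along two orbits that have already separated. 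For a one-step cocycle with trivial local holonomies this operator is exactly $F^{(n)}(y)\,(F^{(n)}(x))^{-1}$ with $x$, $y$ differing in some positive coordinates, which generically has exponentially large norm. Your series argument only controls the tail $k\ge n$ of the defining limit for $H^\uu_{T^ny\gets T^nx}$, where the distances $\dd(T^{n-k}x,T^{n-k}y)$ are indeed contracted; the first $n$ terms see distances of order $\mathrm{diam}(X)$, and fiber-bunching has nothing to cancel them against. (A secondary issue: even if the bound held, your $\epsilon_2$ would depend on the holonomy constants of the class $\Aut^\theta_K(\E,T)$, whereas the statement requires $\epsilon_2$ to depend only on $T$.)

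The deeper problem is strategic: controlling the forward orbit of the single vector $w$ cannot, by itself, control the forward orbits of the other vectors in $\W^\uu_{\epsilon_2}(w)$, because after the plaque spreads out those vectors live over different base points and are multiplied by different matrices; their growth over the window $\ldbrack 1,m\rdbrack$ need not match that of $w$ within any fixed factor. This is precisely why the paper runs its argument directly on plaques rather than on single orbits: assuming the conclusion fails for every $u$, it picks a bad vector $v_k\in\W^\uu_{\epsilon_2}(u_k)$ with $\|\Phi^{n_k}(v_k)\|>2\|u_k\|$ and sets $u_{k+1}:=\Phi^{n_k}(v_k)$, and the key geometric point is that the backward images $w_k=\Phi^{-\ell_k}(v_k)$ all stay inside the single local plaque $\W^\uu_{\epsilon_0}(u_0)$, because $\epsilon_2=(1-a)\epsilon_0$ makes the displacements sum to a geometric series $\le\epsilon_0$. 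Only \emph{local} holonomies (hence the uniform constant $C_1$ of \eqref{e.bound_H}) are ever invoked, and the accumulated factor $2^k$ over time $\ell_k\le mk$ contradicts $\beta(\Phi)=0$. If you want to keep your pigeonhole idea, you must run it on the quantity $\sup_{v\in\W^\uu_{\epsilon_2}(u)}\|\Phi^n(v)\|$ itself (or adopt the paper's recursion), not on $\|\Phi^n(w)\|$ alone.
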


\begin{proof}
Multiplying $\Phi$ by a nonzero constant, we can assume that $\beta(\Phi) = 0$.
Let $\lambda_\uu$ be the hyperbolicity exponent of $T$ along unstable sets, and let
\begin{equation}\label{e.hyp_rate}
a \coloneqq \sup_{x \in X} e^{-\lambda_\uu(x)}  < 1 \, .
\end{equation}
Hyperbolicity implies:
\begin{equation}\label{e.W_contraction}
\forall x \in X , \ \forall \epsilon \in (0, \epsilon_0], \quad
T^{-1}(W^\uu_\epsilon(x)) \subseteq W^\uu_{a\epsilon}(T^{-1} x) \, .
\end{equation}
Let $\epsilon_2 \coloneqq (1-a)\epsilon_0$.
In order to show that the conclusion of the \lcnamecref{l.anti_claim} holds for this $\epsilon_2$,
let us assume for a contradiction that there exists an integer $m > 0$
such that:
\begin{equation}\label{e.old_claim}
\forall u \in \E^\times \
\exists n=n(u) \in \ldbrack 1, m \rdbrack \
\exists v=v(u) \in \W^\uu_{\epsilon_2} (u) \text{ s.t.\ }
\| \Phi^n (v) \| > 2 \|u\| \, .
\end{equation}

We recursively define sequences $(u_k)$, $(v_k)$ in $\E^\times$ and $(n_k)$ in $\ldbrack 1, m \rdbrack$ as follows:
We choose $u_0 \in \E^\times$ arbitrarily.
Assuming $u_k$ was already defined, we let $n_k \coloneqq n(u_k)$ and $v_k \coloneqq v(u_k)$ be given by \eqref{e.old_claim}, and let $u_{k+1} \coloneqq \Phi^{n_k}(v_k)$.
Note that for each $k\ge 0$ we have
$\|u_{k+1}\| > 2 \|u_k\|$ and so $\| u_k \| \ge 2^k \|u_0\|$.

Now let $\ell_k \coloneqq n_0 + n_1 + \cdots + n_{k-1}$ (so $\ell_0 \coloneqq 0$), and $w_k \coloneqq \Phi^{-\ell_k}(v_k)$.
We claim that each $w_k$ belongs to $\W^\uu_{\epsilon_0} (u_0)$;
indeed:
\begin{alignat*}{2}
v_k &\in \W^\uu_{\epsilon_2}(u_k) &&\Rightarrow \\
\Phi^{-n_{k-1}}(v_k) &\in \W^\uu_{a \epsilon_2}(v_{k-1}) \subseteq \W^\uu_{(1+a) \epsilon_2}(u_{k-1}) &&\Rightarrow \\
\Phi^{-n_{k-2}-n_{k-1}}(v_k) &\in \W^\uu_{(a+a^2) \epsilon_2}(v_{k-2}) \subseteq \W^\uu_{(1 + a+a^2) \epsilon_2}(u_{k-2}) &&\Rightarrow \\
&\ \vdots \\
w_k = \Phi^{-n_{0} - \cdots -n_{k-1}}(v_k) &\in \W^\uu_{(a+\cdots+a^k) \epsilon_2}(v_0) \subseteq \W^\uu_{(1 + a + \cdots + a^k) \epsilon_2}(u_0) \, ,
\end{alignat*}
proving the claim.
In particular, by \eqref{e.bound_H} we obtain $\|w_k\| \le C_1 \|u_0\|$. 
Since $v_k \in \W^\uu_{\epsilon_2}(u_k)$, using \eqref{e.bound_H} again we have $\|u_k\| \le C_1 \|v_k\|$. 
Therefore:
$$
\frac{\|\Phi^{\ell_k} (w_k)\|}{\|w_k\|} 
= \frac{\|v_k\|}{\|w_k\|} 
\ge \frac{C_1^{-1} \|u_k\|}{C_1 \|u_0\|}
\ge C_1^{-2} 2^k 
\, .
$$
Since $\ell_k \le mk$, using \eqref{e.beta_other} we obtain 
$$
\beta(\Phi) = \lim_{k \to \infty} \sup_{x\in X} \frac{ \log \|\Phi^{\ell_k}_x \|}{\ell_k}
\ge \limsup_{k \to \infty} \frac{ \log (\|\Phi^{\ell_k} w_k \|/\|w_k\|)}{\ell_k} \ge \frac{\log 2}{m} > 0 \, .
$$
This contradiction concludes the proof.
\end{proof}

The next \lcnamecref{l.input} supersedes the previous one:

\begin{lemma}\label{l.input}
Let $\Phi \in \Aut^\theta_K(\E,T)$ be a fiber-bunched automorphism. 
Then there exist a constant $C_2>1$ and a vector $u_* \in \E^\times$ such that
$$
\sup_{n \ge 0} \, \sup_{v \in \W^\uu_{\epsilon_1}(u_*)} e^{-\beta(\Phi) n} \| \Phi^n (v) \| \le C_2 \|u_*\|  \, .
$$
Moreover, the same constant $C_2$ works for all automorphisms in a $C^0$-neighborhood of $\Phi$ in $\Aut^\theta_K(\E,T)$.
\end{lemma}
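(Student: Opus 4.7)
The plan is to produce $u_*$ as a limit of the vectors $u_m$ furnished by \cref{l.anti_claim} as $m \to \infty$, exploiting compactness of the unit sphere bundle of $\E$. At the outset we may assume $\epsilon_1 \le \epsilon_2$ (by shrinking $\epsilon_1$ in the setup of the hyperbolic homeomorphism if necessary), so that $\W^\uu_{\epsilon_1}(u) \subseteq \W^\uu_{\epsilon_2}(u)$ and the conclusion of \cref{l.anti_claim} applies. For each integer $m \ge 1$, take the vector $u_m$ given by \cref{l.anti_claim} and normalize so that $\|u_m\| = 1$; this is legitimate because $\W^\uu_{\epsilon_1}(\lambda u) = \lambda \cdot \W^\uu_{\epsilon_1}(u)$, a consequence of the linearity of holonomies on fibers, and the estimate is $1$-homogeneous in $u_m$. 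Incorporating also the case $n=0$ via the uniform bound $\|H^\uu_{y \gets x}\| \le C_1$ from \eqref{e.bound_H}, we obtain
$$
e^{-\beta(\Phi) n}\|\Phi^n(v)\| \le C \quad \text{for all } v \in \W^\uu_{\epsilon_1}(u_m) \text{ and all } n \in \ldbrack 0, m \rdbrack,
$$
where $C := \max\{2, C_1\} > 1$.

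The compactness step extracts a subsequence $u_{m_k} \to u_*$ in the compact set $\{u \in \E : \|u\| = 1\}$. Writing $x_k := \pi(u_{m_k})$ and $x_* := \pi(u_*)$, I claim that $u_*$ satisfies the desired estimate with $C_2 := C$. The key is to show that every $v \in \W^\uu_{\epsilon_1}(u_*)$ arises as the limit of a sequence $v_k \in \W^\uu_{\epsilon_1}(u_{m_k})$: granted this, for fixed $n \ge 0$ and any such $v$,
$$
e^{-\beta(\Phi) n}\|\Phi^n(v)\| = \lim_{k \to \infty} e^{-\beta(\Phi) n}\|\Phi^n(v_k)\| \le C,
$$
since $n \le m_k$ eventually. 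Taking the supremum over the compact set $\W^\uu_{\epsilon_1}(u_*)$ and over $n \ge 0$ delivers the required bound. To construct $v_k$, write $v = H^\uu_{y \gets x_*}(u_*)$ for some $y \in W^\uu_{\epsilon_1}(x_*)$, set $y_k := [x_k, y]$ (the bracket, which is defined for $k$ large since $\dd(x_k, y) \le 2\epsilon_1$ eventually), and put $v_k := H^\uu_{y_k \gets x_k}(u_{m_k})$. Continuity of the bracket gives $y_k \to [x_*, y] = y$, and the joint continuity of holonomies recorded in part~\ref{i.holonomy_cont} of \cref{p.holonomies} then yields $v_k \to v$.

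Uniformity over a $C^0$-neighborhood of $\Phi$ in $\Aut^\theta_K(\E,T)$ is essentially built into the argument: the constant $C_1$ may be taken uniform in such a neighborhood by the final clause of \cref{p.holonomies}, and the constant ``$2$'' in \cref{l.anti_claim} is absolute (the proof reduces to $\beta(\Phi) = 0$ by scaling and depends on $\Phi$ only through $K$ and $C_1$). Hence $C_2 = \max\{2, C_1\}$ works uniformly; only the vector $u_*$ depends on the particular automorphism. The main technical obstacle is that $W^\uu_{\epsilon_1}(x_k)$ does \emph{not} converge Hausdorff-continuously to $W^\uu_{\epsilon_1}(x_*)$ as $x_k \to x_*$, since nearby basepoints generally lie on transverse unstable leaves; the bracket combined with the local product structure bridges this gap, and one must handle the boundary of $W^\uu_{\epsilon_1}(x_*)$ with some care (either by choosing $\epsilon_1$ small enough relative to the constant $C$ of axiom~\ref{i.bounded_angles} so that $y_k$ automatically lies in $W^\uu_{\epsilon_1}(x_k)$, or by first establishing the estimate on the strict interior of $\W^\uu_{\epsilon_1}(u_*)$ and extending by continuity).
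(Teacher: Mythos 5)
Your overall strategy -- extracting a limit $u_*$ of the unit vectors supplied by \cref{l.anti_claim} and transferring the bound to $\W^\uu(u_*)$ via the bracket and the joint continuity of holonomies -- is the same as the paper's, and the core limiting argument is sound. (Your shortcut of invoking part~(\ref{i.holonomy_cont}) of \cref{p.holonomies} directly to get $v_k \to v$ in the total space is legitimate and slightly cleaner than the paper's auxiliary transport $w_m := H^\ss_{\bar y \gets y_m}(v_m)$.) However, there is one genuine gap, precisely at the point you flag but do not resolve: your argument only controls $\W^\uu_{\epsilon'}(u_*)$ for a radius $\epsilon'$ that must satisfy roughly $2C\epsilon' \le \epsilon_2$, where $C$ is the constant of axiom~(\ref{i.bounded_angles}) -- since $\dd(y_k, x_k) \le C\,\dd(x_k, y)$ and you need $y_k \in W^\uu_{\epsilon_2}(x_k)$ for \cref{l.anti_claim} to apply to $v_k$. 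Neither of your proposed remedies closes this. Shrinking $\epsilon_1$ changes the statement: $\epsilon_1$ is the global constant also appearing in \cref{p.unif_span}, \cref{l.key} and formula \eqref{e.formula_extremal}, so the lemma must be proved for the radius already fixed. And ``establishing the estimate on the strict interior and extending by continuity'' does not work, because the defect is quantitative rather than a boundary phenomenon: even for $y$ with $\dd(x_*,y) = \epsilon_1/2$ the bracket point $y_k$ can fall outside $W^\uu_{\epsilon_2}(x_k)$ when $C\epsilon_1 > \epsilon_2$.

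The missing step is the paper's final move: having obtained the bound $2C_1$ on $\W^\uu_{\epsilon_3}(\bar u)$ for a suitable small $\epsilon_3$ (chosen so that $\dd(z_1,z_2) < 2\epsilon_3$ forces $\dd([z_1,z_2],z_i) \le \epsilon_2$), one does \emph{not} take $u_* = \bar u$ but rather $u_* := \Phi^\ell(\bar u)$, where $\ell$ depends only on $T$ and is chosen so that $W^\uu_{\epsilon_1}(T^\ell \bar x) \subseteq T^\ell\bigl(W^\uu_{\epsilon_3}(\bar x)\bigr)$; the local unstable set of the prescribed radius $\epsilon_1$ around $u_*$ then pulls back into the controlled set, and the price is only the harmless factor $K^\ell$ from $\|u_*\| \ge K^{-\ell}\|\bar u\|$, absorbed into $C_2 = 2K^\ell C_1$. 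With this expansion step appended, your proof is complete and uniform in a $C^0$-neighborhood exactly as claimed.
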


\begin{proof}
Again, multiplying $\Phi$ by a nonzero constant (and increasing $K$ if necessary), we can assume that $\beta(\Phi) = 0$.

Let $\epsilon_2$ be given by \cref{l.anti_claim}.
By the continuity of the bracket, there exists  $\epsilon_3 \in (0,\epsilon_1)$ such that:
$$
z_1,z_2 \in X, \ \dd(z_1,z_2) < 2\epsilon_3 \quad \Rightarrow \quad \dd([z_1,z_2], z_i) \le \epsilon_2 \, .
$$

For each integer $m \ge 1$, \cref{l.input} provides $u_m \in \E^\times$, say with $\|u_m\|=1$, such that for every $n \in \ldbrack 1, m \rdbrack$ and every $v \in \W^\uu_{\epsilon_2}(u_m)$
we have $\| \Phi^n(v) \| \le 2$.
Passing to a subsequence if necessary, we assume that $(u_m)$ converges to some $\bar{u}$, which has $\|\bar{u}\|=1$.
Let $x_m \coloneqq \pi(u_m)$ and $\bar{x} \coloneqq \pi(\bar{u})$.

We claim that
\begin{equation}\label{e.almost_there}
\sup_{n \ge 0} \, \sup_{\bar{v} \in \W^\uu_{\epsilon_3}(\bar{u})} \| \Phi^n (\bar{v}) \|  \le 2C_1 \, .
\end{equation}
Indeed, given $\bar{v} \in \W^\uu_{\epsilon_3}(\bar{u})$ and $n \ge 0$,
consider $\bar{y} \coloneqq \pi(\bar{v})$.
Since $x_m \to \bar{x}$, for every sufficiently large $m \ge n$ we have $\dd(x_m,\bar{y}) < 2\epsilon_3$, and in particular $y_m \coloneqq [x_m,\bar{y}]$ is well-defined and belongs to $W^\uu_{\epsilon_2}(x_m)$.
Let $v_m \coloneqq H^\uu_{y_m \gets x_m}(u_m)$ and $w_m \coloneqq H^\ss_{\bar{y} \gets y_m}(v_m)$
(see \cref{f.input}).

\begin{figure}[hbt]
	\begin{center}
		\begin{tikzpicture}[scale=.8]
			\draw(0,0)--(8,0) node[right]{$W^\uu$};
			\fill(2,0) circle[radius=2pt] node[above left]{$x_m$};
			\fill(6,0) circle[radius=2pt] node[above left]{$y_m$};
			\draw plot [smooth, tension=1] coordinates { (0,4.3) (2,4) (4,3.8) (6,4) (8,3.65)} node[right]{$\W^\uu$};
			\draw[dashed] (2,0)--(2,4);
			\fill(2,4) circle[radius=2pt] node[below left]{$u_m$};
			\fill(6,4) circle[radius=2pt] node[above left]{$v_m$};
			\draw[dashed] (6,0)--(6,4);

			\draw(1,1)--(9,1) node[right]{$W^\uu$};
			\fill(3.25,1) circle[radius=2pt] node[below right]{$\bar{x}$};
			\fill(7,1) circle[radius=2pt] node[below right]{$\bar y$};
			\draw plot [smooth, tension=1] coordinates { (1,5.4) (3.25,5) (5,4.8) (7,4.9) (9,4.75)} node[right]{$\W^\uu$};
			\draw[dashed] (3.25,1)--(3.25,5);
			\fill(3.25,5) circle[radius=2pt] node[below left]{$\bar{u}$};

			\draw(1.5,-.5)--(3.5,1.5);
			\draw(1.75,-.5)--(3.75,1.5) node[right]{$W^\ss$};
			\draw(5.5,-.5)--(7.5,1.5) node[right]{$W^\ss$};
			\draw plot [smooth, tension=1] coordinates { (5.5,3.4) (6,4) (7,5.5) (7.5,6.1)} node[right]{$\W^\ss$};

			\fill(7,4.9) circle[radius=2pt] node[below right]{$\bar{v}$};
			\fill(7,5.5) circle[radius=2pt] node[right]{$w_m$};
			\draw[dashed] (7,1)--(7,5.5);
		\end{tikzpicture}
	\caption{Proof of \cref{l.input}.}\label{f.input}
	\end{center}
\end{figure}

Then:
$$
\|\Phi^n(w_m) \|
=   \left\| H^\ss_{T^m \bar{y} \gets T^m y_m} (\Phi^n(v_m)) \right\|
\le C_1 \left\| \Phi^n(v_m) \right\|
\le 2C_1 \, .
$$
As $m \to \infty$ (recall that $n$ is fixed), we have $y_m \to [\bar{x},\bar{y}] = \bar{y}$
and so:
$$
w_m = H^\ss_{\bar{y} \gets y_m} \circ H^\uu_{y_m \gets x_m}(u_m) \to H^\uu_{\bar{y} \gets \bar{x}}(\bar{u}) = \bar{v} \, ,
$$
by continuity of holonomies.
It follows that $\|\Phi^n(\bar{v}) \| \le 2 C_1$, completing the proof of the claim \eqref{e.almost_there}.

Fix a constant $\ell > 0$ depending only on $T$ such that $W^\uu_{\epsilon_1}(T^\ell \bar{x}) \subseteq T^\ell(W^\uu_{\epsilon_3}(\bar{x}))$.
Let $u_* \coloneqq \Phi^\ell(\bar{u})$.
Then
$$
\sup_{n \ge 0} \, \sup_{v \in \W^\uu_{\epsilon_1}(u_*)} \| \Phi^n (v) \| \le 
\sup_{n \ge 0} \, \sup_{\bar{v} \in \W^\uu_{\epsilon_3}(\bar{u})} \| \Phi^{n+\ell} (\bar{v}) \|  \le 2C_1 \, , 
$$
by \eqref{e.almost_there}.
On the other hand, recalling \eqref{e.bound_Phi}, we have $\|u_*\| \ge K^{-\ell} \|u\| = K^{-\ell}$.
So the vector $u_*$ has the desired property with $C_2 \coloneqq 2 K^\ell C_1$, completing the proof of the \lcnamecref{l.input}.
\end{proof}

\subsection{Proof of relative product boundedness}\label{ss.bounded_proof}

The next lemma uses spannability to spread local product boundedness from a local unstable set to the whole space:

\begin{lemma}\label{l.key}
Let $\Phi \in \Aut^\theta_K(\E,T)$ be a spannable automorphism.
There exists $C_3 > 1$ with the following properties. 
Suppose $u \in \E$ is a nonzero vector such the following quantity is finite:
$$
r \coloneqq \frac{1}{\|u\|}
\limsup_{n \to \infty} e^{-n \beta(\Phi)}\sup_{v \in \W^\uu_{\epsilon_1}(u)} \|\Phi^n(v)\| \, .
$$
Then 
$$
\limsup_{n \to \infty} e^{-n \beta(\Phi)} \sup_{y \in X} \|\Phi^n_y \| \le C_3 r \, .
$$
Furthermore, the same constant $C_3$ works for all automorphisms in a $C^0$-neighborhood of $\Phi$ in $\Aut^\theta_K(\E,T)$.
\end{lemma}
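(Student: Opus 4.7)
The plan is to exploit uniform spannability (Proposition~\ref{p.unif_span}) to transport the bound along $\W^\uu_{\epsilon_1}(u)$ to a bound at every fiber. First, since both sides of the desired inequality are invariant under rescaling $u$, I may assume $\|u\| = 1$, so the hypothesis reads $\limsup_{n\to\infty} e^{-n\beta(\Phi)}\sup_{v\in \W^\uu_{\epsilon_1}(u)}\|\Phi^n(v)\| = r$. Let $x \coloneqq \pi(u)$ and fix an arbitrary $y \in X$ and a unit vector $w \in \E_y$; the goal is to bound $\|\Phi^n_y(w)\|$ in terms of $r$.

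The key step is to apply uniform spannability to the pair $(u,y)$: this produces points $x_i \in W^\uu_{\epsilon_1}(x)$, integers $n_i \in \ldbrack 0,\bar n\rdbrack$, and basis vectors $v_i = H^\ss_{y\gets y_i}\circ \Phi^{n_i}_{x_i}\circ H^\uu_{x_i\gets x}(u) \in \E_y$ with $y_i = T^{n_i}x_i \in W^\ss_{\epsilon_1}(y)$, such that the unit vector $w$ can be written as $w = \sum_{i=1}^d c_i v_i$ with $|c_i| \le C_0$ (this last estimate follows from the uniform bound $\|L\| < C_0$ on the change-of-basis to an orthonormal basis). Now I use equivariance of the stable holonomy (Proposition~\ref{p.holonomies}\,(\ref{i.equivariance})) to rewrite
\[
\Phi^n_y(v_i) \;=\; H^\ss_{T^n y \gets T^n y_i} \circ \Phi^{\,n+n_i}_{x_i} \circ H^\uu_{x_i \gets x}(u).
\]
The vector $H^\uu_{x_i \gets x}(u)$ lies in $\W^\uu_{\epsilon_1}(u)$, so the hypothesis gives, for every $\epsilon > 0$ and all sufficiently large $n$,
\[
\bigl\|\Phi^{\,n+n_i}_{x_i} \circ H^\uu_{x_i \gets x}(u)\bigr\| \;\le\; (r+\epsilon)\, e^{(n+n_i)\beta(\Phi)}.
\]
Since $n_i \le \bar n$ and $\|H^\ss_{T^n y \gets T^n y_i}\| \le C_1$ by the uniform bound on local holonomies, this yields $\|\Phi^n_y(v_i)\| \le C_1\, e^{\bar n|\beta(\Phi)|}\,(r+\epsilon)\, e^{n\beta(\Phi)}$.

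Summing the decomposition $w = \sum c_i v_i$ then gives
\[
\|\Phi^n_y(w)\| \;\le\; d\, C_0\, C_1\, e^{\bar n|\beta(\Phi)|}\, (r+\epsilon)\, e^{n\beta(\Phi)},
\]
and since $y$ and the unit vector $w \in \E_y$ were arbitrary, taking the supremum over $y$, then the limsup in $n$, and finally letting $\epsilon \to 0$ produces the stated bound with $C_3 \coloneqq d\, C_0\, C_1\, e^{\bar n|\beta(\Phi)|}$. For the uniformity claim, note that $\bar n$ and $C_0$ can be chosen to work on a whole $C^0$-neighborhood by Corollary~\ref{c.open_span}, $C_1$ is uniform on a neighborhood by Proposition~\ref{p.holonomies}, and $|\beta(\Phi)|$ is uniformly bounded on $\Aut^\theta_K(\E,T)$ (e.g.\ by $\log K$), so $C_3$ can be taken independent of $\Phi$ in the neighborhood.

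There is no real obstacle here beyond bookkeeping: the whole argument is a direct unpacking of uniform spannability together with equivariance and boundedness of stable holonomies. The only subtle point is to ensure that the bound from the hypothesis, which is a statement about $n \to \infty$, applies to $\Phi^{n+n_i}$ rather than $\Phi^n$; this is harmless because $n_i$ is bounded by the fixed constant $\bar n$, so we merely absorb a factor $e^{\bar n|\beta(\Phi)|}$ into~$C_3$.
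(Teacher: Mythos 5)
Your argument is correct and follows essentially the same route as the paper's proof: apply uniform spannability to $(u,y)$, expand an arbitrary unit $w\in\E_y$ in the spanning basis with coefficients bounded by $C_0$, use equivariance of stable holonomy to rewrite $\Phi^n_y(v_i)$ as a holonomy composed with $\Phi^{n+n_i}_{x_i}\circ H^\uu_{x_i\gets x}(u)$, and invoke the hypothesis on $u$ plus the uniform bound on local holonomies. The only cosmetic difference is that the paper first normalizes $\beta(\Phi)=0$, which makes the factor $e^{\bar n|\beta(\Phi)|}$ unnecessary; you instead absorb it into $C_3$ and justify its uniformity via $|\beta(\Phi)|\le\log K$ on $\Aut^\theta_K(\E,T)$, which is fine.
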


Note that the hypothesis of \cref{l.key} is non-void by \cref{l.input}, and that its conclusion implies that $\Phi$ is relatively product bounded. So \cref{l.key} implies \cref{p.bounded}. However, the more technical statement of \cref{l.key} is necessary for the construction of an extremal norm in the next section.

\begin{proof}[Proof of \cref{l.key}]
It is sufficient to consider $\beta(\Phi)=0$.
Let $\bar{n}$ and $C_0$ be the uniform spannability constants provided by \cref{p.unif_span}.
Fix a nonzero vector $u$ for which the associated quantity $r$ is finite, and without loss of generality, let us assume that $\|u\| = 1$.
Let $r'>r$ be arbitrary.
Then there exists $n_*$ such that 
$$
\sup_{n \ge n_*} \sup_{v \in \W^\uu_{\epsilon_1}(u)} \|\Phi^n(v)\| \le r'  \, .
$$
Consider arbitrary $y \in X$ and $w \in \E_y$.
Apply \cref{p.unif_span} to the points $x \coloneqq \pi(u)$ and $y$ and the vector $u$, obtaining
points $x_1, \dots, x_d \in W^\uu_{\epsilon_1}(x)$ and times $n_1, \dots, n_d \in \ldbrack 0, \bar{n} \rdbrack$ such that each point $y_i \coloneqq T^{n_i} x_i$ belongs to $W^\ss_{\epsilon_1}(y)$ and the vectors $v_i$ defined by \eqref{e.spanners} form a basis for $\E_y$.
Moreover, if we express $w$ as a linear combination $a_1 v_1 + \dots + a_d v_d$, then the \lcnamecref{p.unif_span} also yields that $(\sum_i a_i^2)^{1/2} \le C_0 \|w\|$. So each $|a_i| \le C_0\|w\|$.
For each $i$ and $n \ge n_*$,
we have
$$
\Phi^n(v_i) = \underbrace{H^\ss_{T^n y \gets T^n y_i}}_{\circled{1}} \circ \underbrace{\Phi^{n_i+n}_{x_i} \circ H^\uu_{x_i \gets x} (u)}_{\circled{2}} \, .
$$
We have $\| \circled{1} \| \le C_1$ by \eqref{e.bound_H}, and $\| \circled{2} \| \le r'$ by definition. 
Combining these estimates, we obtain:
$$
\|\Phi^n(w) \| \le d C_0 C_1 r' \|w\| \, ,
$$
that is, $\|\Phi^n_y\| \le C_3 r'$, where $C_3 \coloneqq d C_0 C_1$.
So $\limsup_{n \to \infty} \sup_{y \in X} \|\Phi^n_y \|$ is bounded by $C_3 r'$,
and actually by $C_3 r$, since $r'>r$ is arbitrary.
This proves the desired inequality.

Now consider a $C^0$-perturbation of $\Phi$ in the set $\Aut^\theta_K(\E,T)$. By \cref{c.open_span}, this perturbation is also spannable, and we can use the same constants $\bar{n}$ and $C_0$.
So the argument above applies verbatim for the perturbed automorphism.
\end{proof}

\subsection{Application: polynomial bounds}\label{ss.polynomial}

Let us give an application of what we have proved so far, namely that under the hypothesis of strong fiber-bunching, relative product boundedness fails at most by a polynomial factor.
The reader anxious to see extremal norms may skip this \lcnamecref{ss.polynomial}.

\begin{theorem}\label{t.polynomial}
Let $T$ be a transitive hyperbolic homeomorphism.
Let $\Phi \colon \E \to \E$ be strongly bunched automorphism covering $T$.
Then there exists an integer $d' \in \ldbrack 0, d-1 \rdbrack$ and $C>0$ such that 
$$
\|\Phi_x^n\| \le C n^{d'} e^{n \beta(\Phi)} \quad \text{for all $x\in X$ and $n \ge 0$.}
$$ 
\end{theorem}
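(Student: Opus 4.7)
My plan is to argue by strong induction on the fiber dimension $d$, with the dichotomy \emph{irreducible vs.\ reducible} driving the step.

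\textbf{Base case ($d=1$) and irreducible case.} For $d=1$ the only subbundles are trivial, so $\Phi$ is vacuously irreducible. In general, whenever $\Phi$ is strongly bunched and irreducible and $T$ is a transitive hyperbolic homeomorphism, \cref{t.irr_to_span} yields that $\Phi$ is spannable, and then \cref{p.bounded} (or more precisely \cref{l.key}) gives relative product boundedness: $\sup_n e^{-n\beta(\Phi)}\sup_{x}\|\Phi_x^n\|<\infty$. Thus the statement holds with $d'=0$.

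\textbf{Inductive step, reducible case.} Suppose the theorem holds for all dimensions below $d$, and assume $\Phi$ is reducible: by definition, there is a nontrivial $\theta$-H\"older $\Phi$-invariant subbundle $\F\subseteq\E$ of some dimension $p\in\llbracket 1,d-1\rrbracket$. Choose a $\theta$-H\"older adapted Riemannian norm on $\E$ and consider the orthogonal splitting $\E=\F\oplus\F^\perp$ (note $\F^\perp$ is continuous but not necessarily $\Phi$-invariant). In this splitting, $\Phi$ has upper-triangular block form
\[
\Phi_x=\begin{pmatrix} A_x & B_x\\ 0 & D_x\end{pmatrix},
\]
where $A_x\coloneqq\Phi_x|_{\F_x}$ and $D_x$ is the composition of $\Phi_x|_{\F_x^\perp}$ with orthogonal projection onto $\F_{Tx}^\perp$; the latter is conjugate to the induced automorphism $\bar\Phi$ on the quotient bundle $\E/\F$. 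The crucial point is that both $A$ and $\bar\Phi$ inherit the hypotheses: they are $\theta$-H\"older automorphisms covering the same transitive hyperbolic $T$, and the standard inequalities $\|A_x^{\pm 1}\|\le\|\Phi_x^{\pm 1}\|$ and $\|\bar\Phi_x^{\pm 1}\|\le\|\Phi_x^{\pm 1}\|$ (the latter with quotient norm) give
\[
\bol(A_x)\le\bol(\Phi_x),\qquad \bol(\bar\Phi_x)\le\bol(\Phi_x),
\]
so $A$ and $\bar\Phi$ remain strongly bunched. Applying the induction hypothesis yields integers $d_1'\le p-1$ and $d_2'\le d-p-1$ and a constant $C'>0$ with
\[
\|A_x^{(n)}\|\le C'n^{d_1'}e^{n\beta(A)}\le C'n^{d_1'}e^{n\beta(\Phi)},\qquad \|D_x^{(n)}\|\le C'n^{d_2'}e^{n\beta(\Phi)},
\]
using $\beta(\Phi)=\max\{\beta(A),\beta(\bar\Phi)\}$.

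\textbf{Controlling the off-diagonal block.} Iterating the block formula gives
\[
\Phi_x^n=\begin{pmatrix} A_x^{(n)} & B_x^{(n)}\\ 0 & D_x^{(n)}\end{pmatrix},\qquad B_x^{(n)}=\sum_{k=0}^{n-1}A_{T^{k+1}x}^{(n-1-k)}\,B_{T^kx}\,D_x^{(k)}.
\]
Since $\sup_x\|B_x\|<\infty$ by compactness, combining the inductive bounds yields
\[
\|B_x^{(n)}\|\le C''e^{(n-1)\beta(\Phi)}\sum_{k=0}^{n-1}(n-1-k)^{d_1'}k^{d_2'}\le C'''\,n^{d_1'+d_2'+1}\,e^{n\beta(\Phi)}.
\]
Setting $d'\coloneqq d_1'+d_2'+1\le(p-1)+(d-p-1)+1=d-1$ and adding the three block bounds closes the induction. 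The main obstacle is really the bookkeeping in this reducible step: verifying that $\F$ being a $\theta$-H\"older $\Phi$-invariant subbundle forces both the restriction $\Phi|_\F$ and the quotient $\bar\Phi$ to be $\theta$-H\"older strongly bunched automorphisms on their respective $\theta$-H\"older bundles, so that the induction hypothesis genuinely applies; once this is in place, the convolution estimate on $B^{(n)}$ is routine.
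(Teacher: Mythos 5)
Your proof is correct and follows essentially the same route as the paper's: induction on fiber dimension, using \cref{t.irr_to_span} and \cref{p.bounded} to get relative product boundedness (hence $d'=0$) in the irreducible case, and the block upper-triangular iteration formula to convert bounds on the restriction $\Phi|_\F$ and the quotient into a bound on $\Phi^n$ with polynomial degree $d_1'+d_2'+1\le d-1$ in the reducible case. One cosmetic remark: you invoke the equality $\beta(\Phi)=\max\{\beta(A),\beta(\bar\Phi)\}$, but for the estimate you only need the trivial inequality $\max\{\beta(A),\beta(\bar\Phi)\}\le\beta(\Phi)$ (as the paper notes, the equality is actually a consequence of the block formula, not an input to it).
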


For related results, see \cite[Theorem 3.10]{KalSad}, \cite[\S3.5--3.6]{Jungers}.

Before proving this \lcnamecref{t.polynomial}, let us fix some terminology.
Suppose $\E$ is a $\theta$-H\"older vector bundle over $X$, with a fixed $\theta$-H\"older Riemannian norm,
and that $\F \subseteq \E$ is a $\theta$-H\"older subbundle.
Let $\F^\perp \subseteq \E$ be the orthogonal complement subbundle, which is also $\theta$-H\"older.
Then the orthogonal projections 
\begin{equation}\label{e.projections}
P \colon \E \to \F \quad \text{and} \quad Q \colon \E \to \F^\perp
\end{equation}
are $\theta$-H\"older endomorphisms covering $\id_X$.
Now suppose $\Phi \colon \E \to \E$ is $\theta$-H\"older automorphism covering $T$ and that $\F$ is $\Phi$-invariant.
Then there are two induced $\theta$-H\"older automorphisms, both covering $T$, namely the obvious \emph{restricted automorphism} $\Phi|_\F \colon \F \to \F$, and the \emph{quotient automorphism} $\nicefrac{\Phi}{\F} \colon \F^\perp \to \F^\perp$ defined by $\nicefrac{\Phi}{\F} \coloneqq (Q \circ \Phi )|_{\F^\perp}$.
If the automorphism $\Phi$ is fiber-bunched (or strongly bunched) then so are $\Phi|_\F$ and $\nicefrac{\Phi}{\F}$.

\begin{proof}[Proof of \cref{t.polynomial}]
Let $\Phi \colon \E \to \E$ be a strongly bunched automorphism.
If $\Phi$ is irreducible then by \cref{t.irr_to_span} it is spannable, and by \cref{p.bounded} it is relatively product bounded, hence our claim holds with $d'=0$.
In particular, the \lcnamecref{t.polynomial} holds when $d=1$.
Now suppose $\Phi$ is reducible, that is, there exists a $\theta$-H\"older $\Phi$-invariant nontrivial subbundle $\F \subset \E$.
By induction on dimension, we can assume that the \lcnamecref{t.polynomial} holds for the restricted automorphism $\Phi|_{\F}$ and the quotient automorphism $\nicefrac{\Phi}{\F}$, that is, there are nonnegative integers $d_1 < \dim \F$ and $d_2 < d - \dim \F$ such that:
\begin{equation}\label{e.trouxa1}
\| (\Phi|_\F)_x^n \| = O \left(n^{d_1} e^{n \beta(\Phi|_\F)} \right)
\quad \text{and} \quad 
\| (\nicefrac{\Phi}{\F})_x^n \| = O \left(n^{d_2} e^{n \beta(\nicefrac{\Phi}{\F})} \right) \, .
\end{equation}
Note that, by the definitions of the automorphisms $\Phi|_\F$ and $\nicefrac{\Phi}{\F}$,
$$
\max \big\{ \| (\Phi|_\F)_x^n \| , \  \| (\nicefrac{\Phi}{\F})_x^n \| \big\} \le \| \Phi_x^n \|
\quad \text{for all $x\in X$ and $n \ge 0$,}
$$
and therefore 
\begin{equation}\label{e.trouxa2}
\max \big\{ \beta(\Phi|_\F) , \ \beta(\nicefrac{\Phi}{\F}) \big\} \le \beta(\Phi) \, .
\end{equation}
Letting $P$ and $Q$ be the orthogonal projections \eqref{e.projections}, note the identity:
$$
\Phi_x = (\Phi|_\F)_x \circ P_x + P_{Tx} \circ \Phi_x \circ Q_x + (\nicefrac{\Phi}{\F})_x \circ Q_x \, .
$$
More generally, for every $n \ge 1$, we have: 
\begin{equation}\label{e.trouxa3}
\Phi^n_x = (\Phi|_\F)^n_x \circ P_x + 
\left[ \sum_{j=0}^{n-1} (\Phi|_\F)^{n-j-1}_{T^{j+1}x} \circ P_{T^{j+1} x} \circ \Phi_{T^j x} \circ (\nicefrac{\Phi}{\F})^j_x \circ Q_x \right] + 
(\nicefrac{\Phi}{\F})^n_x \circ Q_x  \, ,
\end{equation}
which can be checked by induction.
Using the bounds \eqref{e.trouxa1}, it follows that:
$$
\| \Phi_x^n \| = O \left(n^{d_1+d_2+1} \, e^{n \max \{ \beta(\Phi|_\F) , \beta(\nicefrac{\Phi}{\F})\} }\right) \, .
$$
Noting that $d_1 + d_2 + 1 < d$ and recalling \eqref{e.trouxa2}, we obtain the desired polynomial bound. 
\end{proof}

Incidentally, note that \eqref{e.trouxa3} implies that \eqref{e.trouxa2} is an equality, that is:
\begin{equation}\label{e.split_beta}
\beta(\Phi)  = \max \big\{ \beta(\Phi|_\F) , \ \beta(\nicefrac{\Phi}{\F}) \big\} \, .
\end{equation}
Actually, a more general fact holds: for any $T$-invariant ergodic probability measure $\mu$,
\begin{equation}\label{e.split_chi}
\chi_1(\Phi, \mu) = \max \big\{ \chi_1(\Phi|_{\F}, \mu) , \chi_1(\nicefrac{\Phi}{\F}, \mu)  \big\} \, .
\end{equation}
We will use this fact  in \cref{ss.subordination}.
We were not able to find a precise reference for it, but it follows easily from the identity \eqref{e.trouxa3} together with an estimate such as \cite[Lemma~12]{B_Oseledets}.

\section{Construction of extremal norms}\label{s.norms}

\subsection{Extremal norms for spannable automorphisms}\label{ss.extremal_norms}

In this \lcnamecref{ss.extremal_norms} we state and prove the central result of this paper, \cref{t.extremal} below. 
Let us present a simple consequence first:

\begin{corollary}\label{c.extremal}
Let $T$ be a transitive hyperbolic homeomorphism.
Let $\Phi$ be a strongly bunched irreducible automorphism covering $T$.
Then $\Phi$ admits an extremal norm. 
\end{corollary}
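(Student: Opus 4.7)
The plan is to reduce \cref{c.extremal} to the central construction of this section, namely the statement that every spannable automorphism admits an extremal norm. Granting that, the corollary is immediate: the hypotheses are precisely those of \cref{t.irr_to_span}, which delivers spannability of $\Phi$, after which the general construction supplies the norm.

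For the construction itself I would first rescale $\Phi$ by $e^{-\beta(\Phi)}$ so that $\beta(\Phi)=0$, reducing matters to producing a continuous Finsler norm with $\sup_x\|\Phi_x\|\le 1$. The natural candidate is
\[
  \tribar{u} \coloneqq \sup_{n\ge 0}\,\sup_{v \in \W^\uu_{\epsilon_1}(u)} \|\Phi^n(v)\|,
\]
in which the inner supremum over the local unstable leaf through $u$ is a smoothing device intended to rule out the continuity failure of the naive $\sup_n\|\Phi^n(u)\|$ flagged in the footnote of \cref{ss.RPB}. Finiteness comes from \cref{p.bounded} together with the uniform local-holonomy bound \eqref{e.bound_H}, giving $\tribar{u}\le KC_1\|u\|$; positive definiteness from $\tribar{u}\ge\|u\|$ (take $n=0$, $v=u$); homogeneity and subadditivity are inherited from the linearity of $\Phi^n$ and of the stable and unstable holonomies. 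The extremal inequality $\tribar{\Phi u}\le\tribar{u}$ is a one-line index shift: the hyperbolic contraction \eqref{e.W_contraction} gives $\W^\uu_{\epsilon_1}(\Phi u)\subseteq \Phi(\W^\uu_{\epsilon_1}(u))$, so each pair $(n,v)$ defining $\tribar{\Phi u}$ is matched by a pair $(n+1,w)$ defining $\tribar{u}$.

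The main obstacle, as in every Ma\~n\'e-type construction, is the \emph{continuity} of $\tribar{\cdot}\colon\E\to\R$. Within a single fiber it is Lipschitz for free, from subadditivity plus the uniform bound. Across fibers I would combine three ingredients: the joint continuity of the unstable holonomies from \cref{p.holonomies}, which controls how $\W^\uu_{\epsilon_1}(u)$ varies with $u$; a bracket argument pairing nearby base points $x$, $x'$ via $[x,x']$ and bridging their local unstable leaves by a short stable-holonomy leg, with the stable leg controlled by the transverse regularity in \cref{p.regularity_above}; and relative product boundedness, which permits truncating the sup over $n$ at a uniform cutoff $N$ with error $o(1)$, thereby reducing upper semicontinuity to that of finitely many manifestly continuous maps. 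If the plain $\sup_n$ proves too rigid for this, the same extremal-property computation goes through with $\limsup_{n\to\infty}$ in its place, which is the more usual choice in ergodic-optimization arguments; either way, it is precisely this continuity step that forces the combined use of strong bunching, irreducibility, and hyperbolicity.
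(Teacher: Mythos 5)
Your reduction is exactly the paper's: invoke \cref{t.irr_to_span} to convert (strong bunching + irreducibility + transitivity) into spannability, then construct an extremal norm for spannable automorphisms (\cref{t.extremal}). That part is correct, and the norm formula you propose is recognizably the skeleton of the paper's formula \eqref{e.formula_extremal}.

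The gap is in the construction itself, and it is not cosmetic. Your candidate $\tribar{u} = \sup_{n\ge 0}\sup_{v\in\W^\uu_{\epsilon_1}(u)}\|\Phi^n(v)\|$ omits the H\"older bump factor $\zeta(\pi(u),\pi(v))$ from \cref{l.bump}, and that factor is precisely what handles the boundary effect that your continuity sketch never confronts. When $x'$ is near $x$ (say $x'\in W^\uu_{\epsilon_0}(x)$ and $u' = H^\uu_{x'\gets x}(u)$), the local leaves $W^\uu_{\epsilon_1}(x)$ and $W^\uu_{\epsilon_1}(x')$ are different subsets of the same long unstable leaf, and their symmetric difference is a non-negligible neighborhood of the boundary sphere $\{y: \dd(x,y)=\epsilon_1\}$. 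A point $v'\in\W^\uu_{\epsilon_1}(u')$ near that boundary may lie just outside $\W^\uu_{\epsilon_1}(u)$, and nothing in product boundedness forbids $\|\Phi^n(v')\|$ from being the dominant term in the supremum; if it is, $\tribar{u'}$ can jump relative to $\tribar{u}$, so the function is not even upper semicontinuous. Joint continuity of holonomies and the bracket/transverse-regularity argument compare $v$ and $v'$ at \emph{matching} leaf parameters, but cannot say anything about a $v'$ that has no counterpart $v$. Your third ingredient, truncating the sup over $n$ at a uniform $N$, does not attack this either: the problematic term is a single $(n,v')$ pair for a bounded $n$. The paper resolves this by inserting $\zeta$, deriving the cutoff identity \eqref{e.cutoff} (only $\W^\uu_{b\epsilon_1}(u)$ with $b<1$ contributes), and then noting $\W^\uu_{b\epsilon_1}(u')\subseteq\W^\uu_{\epsilon_1}(u)$ for $x'$ close to $x$ — so every competitor for $\tribar{u'}$ is already a competitor for $\tribar{u}$, up to a controlled change in the weight $\zeta$. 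Without $\zeta$, there is no clean inclusion and no way to close the estimate. (It is exactly because subshifts have clopen local leaves — hence no boundary — that \cref{t.Barabanov} can drop $\zeta$; for a general hyperbolic homeomorphism you cannot.)

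Two smaller points. First, the paper uses $\limsup_{n\to\infty}$, not $\sup_{n\ge 0}$; you flag this as a fallback, but note the tradeoff: $\sup$ gives you $\tribar{u}\ge\|u\|$ for free, whereas $\limsup$ makes the lower bound \eqref{e.comparison_below} nontrivial and requires \cref{l.key} (spannability again), while $\limsup$ is what makes the time-shift argument and the cutoff identity clean. Second, your extremality step $\W^\uu_{\epsilon_1}(\Phi u)\subseteq\Phi(\W^\uu_{\epsilon_1}(u))$ is correct and is the same contraction \eqref{e.W_contraction} the paper uses; but with $\zeta$ present, the paper needs $\zeta\equiv 1$ on the smaller scale $a\epsilon_1$ to run the argument, which is another role the bump function plays.
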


Here is the full statement of our result on extremal norms.
Let $\theta_\uu$ be the exponent provided by applying \cref{p.regularity_above} to $\Phi^{-1}$.

\begin{theorem}\label{t.extremal}
Every spannable automorphism $\Phi \in \Aut^\theta_K(\E,T)$ admits an extremal norm $\tribar{\mathord{\cdot}}$,
which has the following additional properties: 
\begin{enumerate}

\item\label{i.norm_eccentricity} 
there exists $C_4 > 1$ such that for every $u \in \E$, 
\begin{equation}\label{e.eccentricity} 
C_4^{-1} \|u\| \le \tribar{u} \le C_4 \|u\| \, ;
\end{equation}

\item\label{i.norm_Holder}
$\tribar{\mathord{\cdot}}$ is $\theta_\uu$-H\"older, that is, there is a constant $C_5>0$ such that for all $x$, $x'\in X$,
\begin{equation}\label{e.norm_Holder}
\big| \tribar{I_{x' \gets x}} - 1 \big| \le C_5 \dd(x,x')^{\theta_\uu} \, .
\end{equation}

\item\label{i.norm_u_Holder}  
$\tribar{\mathord{\cdot}}$ is $\bar\theta$-H\"older along unstable sets with $\bar\theta \coloneqq \max\{\theta,1\}$, that is, there is a constant $C_6>0$ such that for all $x \in X$ and $x' \in W^\uu_{\epsilon_0}(x)$,
\begin{equation}\label{e.norm_u_Holder} 
\big| \tribar{H^\uu_{x' \gets x}} - 1 \big| \le C_6 \dd(x,x')^{\bar\theta} \, ;
\end{equation}

\end{enumerate}
Furthermore, for every sufficiently $C^0$-small perturbation of the automorphism $\Phi$ in the set $\Aut^\theta_K(\E,T)$, we can find an extremal norm that satisfies the properties above with the same constants $\theta_\uu$, $C_4$, $C_5$, $C_6$.

\end{theorem}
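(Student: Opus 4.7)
The plan is to construct the extremal norm as a supremum over forward iterates along local unstable leaves, and to obtain the H\"older properties from the uniform product bound supplied by spannability together with the commutator estimate of \cref{p.regularity_above}. After normalizing so that $\beta(\Phi)=0$ (by rescaling $\Phi$), I would invoke \cref{p.bounded} in its uniform neighborhood form---\cref{l.key} combined with \cref{c.open_span}---to obtain $M>0$ with $\|\Phi^n_x\|\le M$ for every $x\in X$ and $n\ge 0$, uniformly on a $C^0$-neighborhood of $\Phi$ in $\Aut^\theta_K(\E,T)$. The candidate norm is
\[
\tribar{u}\coloneqq \sup_{n\ge 0}\ \sup_{v\in\W^\uu_{\epsilon_1}(u)}\|\Phi^n(v)\|,
\]
which is fiberwise a seminorm by linearity of $\Phi^n$ and of the unstable holonomies. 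The choice $n=0$, $v=u$ gives $\tribar{u}\ge \|u\|$; the uniform bound $M$ together with \eqref{e.bound_H} gives $\tribar{u}\le C_4\|u\|$, yielding property~(\ref{i.norm_eccentricity}). The extremality $\tribar{\Phi(u)}\le \tribar{u}$ is immediate from the backward invariance $\Phi^{-1}(\W^\uu_{\epsilon_1}(\Phi u))\subseteq \W^\uu_{\epsilon_1}(u)$, a consequence of \eqref{e.W_contraction} combined with the equivariance part~(\ref{i.equivariance}) of \cref{p.holonomies}; together with~\eqref{e.starting_point} this gives the extremal property.

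The H\"older estimates constitute the heart of the proof. For regularity along unstable sets~(\ref{i.norm_u_Holder}), the sup-sets $\W^\uu_{\epsilon_1}(u)$ and $\W^\uu_{\epsilon_1}(u')$ for $u$, $u'$ on a common local unstable leaf coincide on their intersection and differ only in a thin ``rim'' at the boundary of $W^\uu_{\epsilon_1}(\pi u)$. Pulling this rim backward in time through $\Phi$ and using the geometric contraction \eqref{e.W_contraction} together with the uniform bound $M$ on forward iterates, one makes the contribution of the rim to the sup-difference summable in a geometric series, whose sum yields the $\bar\theta$-H\"older bound. The exponent $\bar\theta=\max\{\theta,1\}$ arises because $\theta$-H\"older coordinate data can be summed along geometrically contracting scales to an improved Lipschitz behavior when $\theta<1$, while nothing better than $\theta$ is available in the totally disconnected case $\theta\ge 1$.

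The transverse H\"older estimate~(\ref{i.norm_Holder}) is the main obstacle. For $x'\in W^\ss_{\epsilon_0}(x)$ and $u'=H^\ss_{x'\gets x}(u)$, each $v\in\W^\uu_{\epsilon_1}(u')$ admits, via a bracket construction, a companion $w\in\W^\uu_{\epsilon_1}(u)$ related by a stable holonomy up to an error $O(\dd(x,x')^{\theta_\uu})$; this is exactly the commutator estimate of \cref{p.regularity_above} applied to $\Phi^{-1}$ (which swaps the roles of stable and unstable, producing the exponent $\theta_\uu$). Because stable holonomies intertwine with forward iterates $\Phi^n$ by equivariance and remain uniformly bounded by \eqref{e.bound_H}, this error survives the application of $\Phi^n$ \emph{uniformly in $n\ge 0$}, and taking suprema yields $|\tribar{u'}-\tribar{u}|=O(\dd(x,x')^{\theta_\uu})\|u\|$. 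For an arbitrary nearby pair $(x,x')$ I factor through the bracket $[x,x']$ using the detour bound~\eqref{e.bounded_angles} and combine the two one-directional estimates, obtaining the global $\theta_\uu$-H\"older bound. Continuity, and hence the Finsler-norm status of $\tribar{\mathord\cdot}$, follows a fortiori.

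Stability of all constants under $C^0$-perturbation in $\Aut^\theta_K(\E,T)$ is automatic from the uniformity clauses in \cref{l.key}, part~(\ref{i.holonomy_cont}) of \cref{p.holonomies}, \cref{p.regularity_above}, and \cref{c.open_span}; thus a single choice of $\theta_\uu$, $C_4$, $C_5$, $C_6$ serves for a whole $C^0$-neighborhood. The hard step I anticipate is the transverse H\"older estimate: ensuring that the commutator error $O(\dd(x,x')^{\theta_\uu})$ propagates uniformly through all forward iterates $\Phi^n$ requires a delicate interplay between the equivariance of stable holonomies, their uniform boundedness, and the geometric convergence of nearby orbits along stable leaves.
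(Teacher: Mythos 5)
Your overall architecture matches the paper's: normalize $\beta(\Phi)=0$, define the norm by maximizing $\|\Phi^n(v)\|$ over $v$ in a local unstable plaque $\W^\uu_{\epsilon_1}(u)$ and over $n$, obtain finiteness and the two-sided comparison \eqref{e.eccentricity} from \cref{l.input} and \cref{l.key}, obtain extremality from the backward inclusion $\Phi^{-1}(\W^\uu_{\epsilon_1}(\Phi u))\subseteq\W^\uu_{\epsilon_1}(u)$, and obtain the transverse regularity from \cref{p.regularity_above} applied to $\Phi^{-1}$. Part~(\ref{i.norm_eccentricity}), extremality, and the outline of part~(\ref{i.norm_Holder}) are essentially right, as is the uniformity over a $C^0$-neighborhood.

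The gap is in how you treat the boundary of the plaque, and it affects both regularity statements. Your candidate norm is a bare supremum over $\W^\uu_{\epsilon_1}(u)$, and when the basepoint moves from $x$ to a nearby $x'$ the domains $W^\uu_{\epsilon_1}(x)$ and $W^\uu_{\epsilon_1}(x')$ differ by a ``rim'' of width comparable to $\dd(x,x')$. A supremum does not see how thin the set it ranges over is: a single rim point $v$ can have $\|\Phi^n(v)\|$ exceeding $\sup_{w\in\W^\uu_{\epsilon_1}(u)}\|\Phi^n(w)\|$ by a factor bounded away from $1$ (the only a priori control is the two-sided comparison of part~(\ref{i.norm_eccentricity}), which permits a discrepancy of order $C_4^2$), so the thinness of the rim yields nothing by itself. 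Your proposed repair---pull the rim backward until it is reabsorbed into a local plaque---also fails quantitatively: the number of backward iterates required is an integer, hence equal to $1$ for all small $\dd(x,x')$, and each backward iterate costs a multiplicative factor like $K$ (extremality bounds $\tribar{\Phi^{-k}(u)}$ from \emph{below} by $\tribar{u}$, not from above); no geometric series appears. The paper's device for exactly this problem is the $\bar\theta$-H\"older bump function $\zeta$ of \cref{l.bump} inserted into the definition \eqref{e.formula_extremal}: it tapers the weight of $v$ to zero at the edge of the plaque, and the a priori bound \eqref{e.nice3} then shows (see \eqref{e.cutoff}) that the supremum is effectively attained on the smaller plaque $\W^\uu_{b\epsilon_1}(u)$, which sits safely inside $\W^\uu_{\epsilon_1}(u')$. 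This is also where the exponent $\bar\theta=\max\{\theta,1\}$ in part~(\ref{i.norm_u_Holder}) actually comes from---it is the regularity of $\zeta$---not from any summation over contracting scales as you suggest. Without the cutoff (or some substitute for it), the H\"older estimates, and indeed the continuity of $\tribar{\mathord{\cdot}}$, are not established. Using $\sup_n$ instead of the paper's $\limsup_n$ is a lesser issue: the $o(1)$ error terms in the paper's proof of part~(\ref{i.norm_Holder}) can be made uniform in $n$ via the stable contraction, but you would need to verify this explicitly.
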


Combining the \lcnamecref{t.extremal} above with \cref{t.irr_to_span} we immediately obtain \cref{c.extremal}.

Note that part (\ref{i.norm_Holder}) of the statement of \cref{t.extremal} is compatible with the characterization of H\"olderness of a norm given by \cref{p.norm_Holder}.
In summary, our extremal norm is H\"older, but perhaps with a smaller H\"older exponent than the original $\Phi$.\footnote{A similar loss of exponent also appears in the first version of Ma\~n\'e Lemma for Anosov diffeomorphisms, obtained by Lopes and Thieullen \cite{LopesT}. Later, Bousch \cite{Bousch_amphi} obtained a stronger  Ma\~n\'e Lemma without loss of exponent. However, it is unclear whether Bousch's strategy can be applied in our setting.}
Nevertheless, part (\ref{i.norm_u_Holder}) says that the norm is more regular along unstable sets: there is no loss of exponent, and if $\theta<1$ there is a gain.

Concerning the final part of the statement of \cref{t.extremal}, recall from \cref{c.open_span} that the set of spannable automorphisms is a $C^0$-open subset of the set $\Aut^\theta_K(\E,T)$.
So the \lcnamecref{t.extremal} also says that our extremal norms vary in a bounded way if the automorphism is perturbed; this is useful to certain applications (see \cref{ss.Wirth}).

\medskip

Before commencing the actual proof, let us establish an auxiliary fact:

\begin{lemma}\label{l.bump}
Let $0<a<1$.
Let $\bar\theta \coloneqq \max\{\theta,1\}$.
Then there exists a $\bar{\theta}$-H\"older function $\zeta \colon X \times X \to [0,1]$ such that:
\begin{alignat*}{3}
\zeta(x,y) &= 1 &\quad &\text{if} &\quad \dd(x,y) &\le a\epsilon_1 \, ; \\
\zeta(x,y) &= 0 &\quad &\text{if} &\quad \dd(x,y) &\ge \epsilon_1 \, .
\end{alignat*}
\end{lemma}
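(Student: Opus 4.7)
The plan is to split into two cases according to whether $\theta \le 1$ or $\theta > 1$, since the construction is genuinely different in each regime.

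First, observe that the sets $A \coloneqq \{(x,y) \in X\times X \st \dd(x,y) \le a\epsilon_1\}$ and $B \coloneqq \{(x,y) \in X\times X \st \dd(x,y) \ge \epsilon_1\}$ are disjoint closed (hence compact) subsets of $X\times X$, so the problem reduces to separating them by a $\bar\theta$-H\"older function. In the case $\theta \le 1$, we have $\bar\theta = 1$, and the natural construction works: let $\psi \colon [0,\infty) \to [0,1]$ be the piecewise linear function equal to $1$ on $[0,a\epsilon_1]$, equal to $0$ on $[\epsilon_1,\infty)$, and linear on the intermediate interval. Then $\psi$ is Lipschitz with constant $1/((1-a)\epsilon_1)$, and since the distance function $\dd \colon X\times X \to [0,\infty)$ is $1$-Lipschitz with respect to the sup-metric on $X\times X$, the composition $\zeta(x,y) \coloneqq \psi(\dd(x,y))$ is Lipschitz and therefore $\bar\theta$-H\"older.

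In the case $\theta > 1$ we have $\bar\theta = \theta$, and the previous construction no longer works since $\dd$ itself is only Lipschitz, not $\theta$-H\"older. Instead, I would appeal directly to the normality assumption on $X$, after lifting it to $X \times X$. The key elementary fact is that if $f,g \colon X \to [0,1]$ are $\theta$-H\"older with constant $C$, then $(x,y) \mapsto f(x)g(y)$ is $\theta$-H\"older on $(X\times X,\dd_\times)$ with constant $2C$, by the standard estimate
$$
|f(x)g(y) - f(x')g(y')| \le |g(y)|\,|f(x)-f(x')| + |f(x')|\,|g(y)-g(y')| \le 2C \dd_\times((x,y),(x',y'))^\theta .
$$
Consequently, tensor products of $\theta$-H\"older partitions of unity on $X$ yield $\theta$-H\"older partitions of unity on $X\times X$, so the $\theta$-H\"older algebra on $X\times X$ is also normal. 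Applying normality to the disjoint compacts $A$ and $B$ produces the required function $\zeta$.

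The main (albeit modest) obstacle is the verification in the second case that normality on $X$ upgrades to normality on $X\times X$; the computation above shows how this works, and completes the proof. No estimates beyond this are needed, since the constants in the lemma statement are not required to be explicit.
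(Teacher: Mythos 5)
Your proof is correct and follows essentially the same route as the paper's: for $\theta\le 1$ compose a cutoff function with the distance, and for $\theta>1$ invoke normality of the $\theta$-H\"older algebra on $X\times X$, which the paper establishes (its Lemma on normality of $X\times X$) by exactly your tensor-product-of-partitions-of-unity computation. The only cosmetic differences are a piecewise linear rather than smooth cutoff and a Lipschitz constant of $2$ rather than $1$ for $\dd$ in the sup-metric, neither of which matters.
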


\begin{proof}
If $\theta \le 1$, let $f \colon [0,+\infty) \to [0,1]$ be a non-increasing smooth function such that $f(a\epsilon_1)=1$ and $f(\epsilon_1)=0$. Then the function $\zeta(x,y) \coloneqq f(\dd(x,y))$ meets our requirements.

If $\theta > 1$ then the existence of $\zeta$ is an immediate consequence of the fact that the algebra of $\theta$-H\"older functions on $X \times X$ is normal (\cref{l.square}).
\end{proof}

\begin{proof}[Proof of \cref{t.extremal}]
As in \eqref{e.hyp_rate}, let $a \coloneqq \exp(-\min \lambda_\uu) \in (0,1)$.
Let $\zeta$ be given by \cref{l.bump}.
For each $u \in \E$, let
\begin{equation}\label{e.formula_extremal}
\tribar{u} \coloneqq \limsup_{n \to \infty} e^{-\beta(\Phi) n} \sup_{v \in \W^\uu_{\epsilon_1}(u)} \zeta(\pi(u),\pi(v))  \,  \| \Phi^n (v) \| \, .
\end{equation}
We will check that formula \eqref{e.formula_extremal} defines an extremal norm with the additional properties stated in \cref{t.extremal}.
To simplify writing, we assume from now on that $\beta(\Phi) = 0$.

Since $0 \le \zeta \le 1$ and $\Phi$ is relatively product bounded (thanks to \cref{p.bounded}), the quantity \eqref{e.formula_extremal} is always finite, and therefore defines a seminorm on each fiber of $\E$.

Take arbitrary nonzero $u \in \E$.
Since $\zeta(x,y) = 1$ whenever $y \in W^\uu_{a \epsilon_1}(x)$, we have:
$$
\limsup_{n \to \infty} \sup_{v \in \W^\uu_{a\epsilon_1}(u)} \| \Phi^n (v) \| \le \tribar{u} \, .
$$
Recalling the hyperbolicity property \eqref{e.W_contraction}, we have $\W^\uu_{a \epsilon_1}(u) \supseteq \Phi^{-1}\big(\W^\uu_{\epsilon_1}(\Phi(u))\big)$, and so:
\begin{equation}\label{e.nice1}
\limsup_{n \to \infty} \sup_{v \in \W^\uu_{\epsilon_1}(\Phi(u))} \| \Phi^n (v) \| \le \tribar{u} \, .
\end{equation}
So, letting $\tilde u \coloneqq \Phi(u)$, we have:
$$
\frac{1}{\|\tilde u\|}
\limsup_{n \to \infty} \sup_{v \in \W^\uu_{\epsilon_1}(\tilde u)} \| \Phi^n (v) \| \le \frac{\tribar{u}}{\|\tilde u\|} \le K \frac{\tribar{u}}{\|u\|} \, ,
$$
using the bound \eqref{e.bound_Phi}.
This allows us to apply \cref{l.key} to $\tilde u$ and conclude that, for some constant $C_3>1$ that only depends on $\Phi$, 
\begin{equation}\label{e.nice2}
\limsup_{n \to \infty}  \sup_{y \in X} \|\Phi^n_y \| \le C_3 K \frac{\tribar{u}}{\|u\|} \, ,  \quad\text{for all } u \in \E^\times \, .
\end{equation}
The left-hand side is at least $1$; indeed by \eqref{e.beta_other}, for every $n>0$ there exists $y \in X$ such that  $\|\Phi^n_y \| \ge e^{n\beta(\Phi)} = 1$.
Therefore:
\begin{equation}\label{e.comparison_below}
\tribar{u} \ge K^{-1} C_3^{-1} \|u\| \, , \quad\text{for all } u \in \E \, .
\end{equation}
In particular, the seminorm $\tribar{\mathord{\cdot}}$ is actually a norm.

Since $0 \le \zeta \le 1$ and $\zeta(x,y) = 0$ whenever $y \in W^\uu_{\epsilon_1}(x)$, inequality \eqref{e.nice1} implies:
$$
\tribar{\Phi(u)} \le \tribar{u}\, , \quad\text{for all } u \in \E \, ,
$$
that is, $\tribar{\mathord{\cdot}}$ is an extremal norm.

Now consider the vector $u_* \in \E^\times$ given by \cref{l.input}.
It satisfies $\tribar{u_*} \le C_2 \|u_*\|$, where $C_2>1$ is a constant depending only on $\Phi$.
Applying \eqref{e.nice2} to this vector we obtain:
$$
\limsup_{n \to \infty}  \sup_{y \in X} \|\Phi^n_y \| \le K C_2 C_3 \, .
$$
Therefore, for all $u \in \E$,
$$
\tribar{u} \le \limsup_{n \to \infty} \sup_{v \in \W^\uu_{\epsilon_1}(u)} \| \Phi^n (v) \|
\le K C_2 C_3 \sup_{v \in \W^\uu_{\epsilon_1}(u)} \| v \|
\le K C_1 C_2 C_3  \|u\| \, ,
$$
where $C_1 > 1$ is the constant from \eqref{e.bound_H}. 
So, letting $C_4 \coloneq K C_1 C_2 C_3$ and recalling the lower bound \eqref{e.comparison_below}, we obtain \eqref{e.eccentricity}: the extremal norm is uniformly comparable to the original norm by a factor $C_4$ that works not only for $\Phi$ but also for its $C^0$ perturbations in $\Aut^\theta_K(\E,T)$.

\medskip

Before proving regularity properties of the extremal norm, let us establish a few auxiliary facts.
For all $u \in \E$, $v \in \W^\uu_{\epsilon_1}(u)$, and $n\ge 0$, using \eqref{e.eccentricity}, extremality, and \eqref{e.bound_H}, we obtain:
\begin{equation}\label{e.nice3}
\| \Phi^n (v) \| \le C_4 \tribar{\Phi^n(v)} \le C_4 \tribar{v} \le C_4^2 \| v\| \le C_1 C_4^2 \|u\| \le C_1 C_4^3 \tribar{u} \, . 
\end{equation}
Fix a constant $b<1$ sufficiently close to $1$ so that:
$$
\dd(x,y) \ge b\epsilon_1  \quad \Rightarrow \quad
\zeta(x,y) < \tfrac{1}{2} C_1^{-1} C_4^{-3} \, .
$$
Then:
$$
v \in \W^\uu_{\epsilon_1}(u) \setminus \W^\uu_{b\epsilon_1}(u) \quad \Rightarrow \quad
\zeta(\pi(u),\pi(v)) \| \Phi^n (v) \| \le \tfrac{1}{2}\tribar{u} \, .
$$
So vectors $v$ outside $\W^\uu_{b\epsilon_1}(u)$ do not contribute in formula \eqref{e.formula_extremal}, which therefore can be rewritten as:
\begin{equation}\label{e.cutoff}
\tribar{u} \coloneqq \limsup_{n \to \infty} \sup_{v \in \W^\uu_{b\epsilon_1}(u)} \zeta(\pi(u),\pi(v)) \, \| \Phi^n (v) \| \, .	
\end{equation}

\medskip

We will prove property (\ref{i.norm_u_Holder}) first, and use it later in the proof of property (\ref{i.norm_Holder}).
In order to simplify writing, let us use the $O$ notation to denote constants that depend only on $\Phi$ and can be taken uniform on a $C^0$-neighborhood of $\Phi$ in $\Aut^\theta_K(\E,T)$.
In order to prove property (\ref{i.norm_u_Holder}), we need to show:
$$
x \in X, \ x' \in W^\uu_{\epsilon_0}(x), \ u \in \E_x, \  \|u\| = 1 \quad \Rightarrow \quad
\big| \tribar{H^\uu_{x' \gets x}(u)} - \tribar{u} \big| = O \big( \dd(x,x')^{\bar \theta}  \big)\, .
$$
It is sufficient to consider $x'$ very close to $x$, so assume $\dd(x',x)\le (1-b)\epsilon_1$.
Fix a unit vector $u \in \E_x$ and let $u' \coloneqq H^\uu_{x' \gets x}(u)$.
For all $v \in \W^\uu_{\epsilon_1}(u)$, and $n\ge 0$, using \eqref{e.nice3} and the fact that
$\zeta$ is $\bar\theta$-H\"older, we estimate:
$$
\Big| \zeta(x', \pi(v)) \|\Phi^n(v)\| - \zeta(x,\pi(v)) \|\Phi^n(v)\| \Big| = O \big( \dd(x,x')^{\bar \theta}  \big)\, .
$$
Noting that $\W^\uu_{b\epsilon_1}(u') \subseteq \W^\uu_{\epsilon_1}(u)$,  we have:
$$
\sup_{v \in \W^\uu_{b\epsilon_1}(u')} \zeta(x', \pi(v)) \|\Phi^n(v)\| \le 
\sup_{v \in \W^\uu_{ \epsilon_1}(u )} \zeta(x,  \pi(v)) \|\Phi^n(v)\| + O \big( \dd(x,x')^{\bar\theta} \big) \tribar{u}  \, .
$$
Using \eqref{e.cutoff} and \eqref{e.formula_extremal}, we obtain:
$$
\tribar{u'} \le \tribar{u} + O \big( \dd(x,x')^{\bar \theta}  \big)\, .
$$
On the other hand, using $\W^\uu_{b\epsilon_1}(u) \subseteq \W^\uu_{\epsilon_1}(u')$, a similar argument shows that:
$$
\tribar{u} \le \tribar{u'} + O \big( \dd(x,x')^{\bar \theta}  \big)\, .
$$
This completes the proof of property (\ref{i.norm_u_Holder}).

\medskip

We are left to check $\theta_\uu$-H\"olderness of the norm, that is, property (\ref{i.norm_Holder}). 
In fact, it is sufficient to prove $\theta_\uu$-H\"olderness along stable sets, that is: 
\begin{equation}\label{e.last_thing}
x \in X, \ x' \in W^\ss_{\epsilon_0}(x), \ u \in \E_x, \  \|u\|=1 \ \Rightarrow \ 
\big| \tribar{H^\ss_{x' \gets x}(u) } - \tribar{u} \big| = O \big( \dd(x,x')^{\theta_\uu}  \big)\, .
\end{equation}
Since we have already proven $\bar\theta$-H\"olderness of the norm along unstable sets, and $\theta_\uu \le \bar\theta$, 
property (\ref{i.norm_Holder}) will follow from \eqref{e.last_thing}: just mimic the proof of (\ref{i.irred2})~$\Rightarrow$~(\ref{i.irred1}) in \cref{c.irred}.

In order to prove \eqref{e.last_thing}, it is sufficient to consider $x'$ very close to $x$. 
Fix a unit vector $u \in \E_x$ and let $u' \coloneqq H^\ss_{x' \gets x}(u)$.
Consider arbitrary $v \in \W^\uu_{\epsilon_1}(u)$, and write $y \coloneqq \pi(v)$.
Since $\dd(x',y) \le \epsilon_1 + \dd(x,x') < 2\epsilon_1$, the bracket $[x',y] \eqcolon y'$ is well-defined. 
Let also $w \coloneqq H^\ss_{y' \gets y}(v)$, and $v' \coloneqq H^\uu_{y' \gets x'}(u')$: see \cref{f.coffee}.

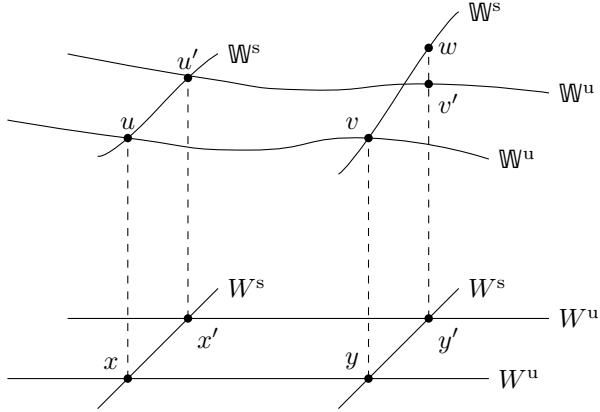
\begin{figure}[hbt]
	\begin{center}
		\begin{tikzpicture}[scale=.8]
			\draw(0,0)--(8,0) node[right]{$W^\uu$};
			\fill(2,0) circle[radius=2pt] node[above left]{$x$};
			\fill(6,0) circle[radius=2pt] node[above left]{$y$};
			\draw plot [smooth, tension=1] coordinates { (0,4.3) (2,4) (4,3.8) (6,4) (8,3.65)} node[right]{$\W^\uu$};
			\draw[dashed] (2,0)--(2,4);
			\fill(2,4) circle[radius=2pt] node[above]{$u$};
			\fill(6,4) circle[radius=2pt] node[above left]{$v$};
			\draw[dashed] (6,0)--(6,4);

			\draw(1,1)--(9,1) node[right]{$W^\uu$};
			\fill(3,1) circle[radius=2pt] node[below right]{$x'$};
			\fill(7,1) circle[radius=2pt] node[below right]{$y'$};
			\draw plot [smooth, tension=1] coordinates { (1,5.4) (3.25,5) (5,4.8) (7,4.9) (9,4.75)} node[right]{$\W^\uu$};
			\draw[dashed] (3,1)--(3,5);
			\fill(3,5) circle[radius=2pt] node[above]{$u'$};

			\draw(1.5,-.5)--(3.5,1.5) node[right]{$W^\ss$};
			\draw plot [smooth, tension=1] coordinates { (1.5,3.7) (2,4) (3,5) (3.5,5.4)} node[right]{$\W^\ss$};
			\draw(5.5,-.5)--(7.5,1.5) node[right]{$W^\ss$};
			\draw plot [smooth, tension=1] coordinates { (5.5,3.4) (6,4) (7,5.5) (7.5,6.1)} node[right]{$\W^\ss$};

			\fill(7,4.9) circle[radius=2pt] node[below right]{$v'$};
			\fill(7,5.5) circle[radius=2pt] node[right]{$w$};
			\draw[dashed] (7,1)--(7,5.5);
		\end{tikzpicture}
	\caption{Proof of property (\ref{i.norm_Holder}).}\label{f.coffee}
	\end{center}
\end{figure}

Then for each $n \ge 0$ we estimate:
\begin{gather*}
\big| \zeta(x,y) \|\Phi^n (v)\| - \zeta(x',y') \|\Phi^n (v')\| \big| \le
\circled{1} + \circled{2} + \circled{3} \, , 
\qquad \text{where}
\\
\begin{aligned}
\circled{1} &\coloneqq | \zeta(x,y) - \zeta(x',y')| \,  \|\Phi^n (v)\|  \, , \\
\circled{2} &\coloneqq \big|  \|\Phi^n (v)\| - \|\Phi^n (w)\| \big| \, , \\
\circled{3} &\coloneqq \| \Phi^n(w - v') \| \, .
\end{aligned}
\end{gather*}

In order to estimate $\circled{1}$, recall that by \eqref{e.nice3}, $\|\Phi^n(v)\| = O(1)$.
On the other hand, by H\"older-continuity of $\zeta$,
$$
| \zeta(x,y) - \zeta(x',y')| = O \big(\max\big\{\dd(x,x')^{\bar \theta} , \dd(y,y')^{\bar \theta} \big\} \big)\, .
$$
Using \cref{p.regularity_base} for $T^{-1}$, we have $\dd(y,y') = O(\dd(x,x')^{\kappa_\uu})$, where the exponent $\kappa_\uu$ is at most $1$.
So:
$$
| \zeta(x,y) - \zeta(x',y')| = O(\dd(x,x')^{\kappa_\uu \bar\theta} ) \, .
$$
Note that $\kappa_\uu \bar{\theta} \ge \kappa_\uu \theta \ge \theta_\uu$, so the weaker estimate 
$\circled{1} = O(\dd(x,x')^{\theta_\uu})$ holds.

The next term is estimated as follows:
\begin{align*}
\circled{2}
&=   \left|  \|  H^s_{T^n y' \gets T^n y} \Phi^n (v) \| - \| \Phi^n (v) \| \right | \\
&\le \left| \| H^s_{T^n y' \gets T^n y} - I_{T^n y' \gets T^n y}\| + \|I_{T^n y' \gets T^n y} \| - 1 \right| \|\Phi^n(v)\| \, . 
\end{align*}
Since $\dd(T^n y', T^n y) = o(1)$ (i.e., it tends to $0$ as $n \to \infty$), using regularity of holonomies \eqref{e.holonomy_Holder} and of the transport maps (\cref{p.norm_Holder}) together with product boundedness \eqref{e.nice3}, we conclude that $\circled{2} = o(1)$.

In order to estimate the last term, we use \cref{p.regularity_above} applied to $ T^{-1} $:
$$
\circled{3} = O(\| v' - w \|)
= O\big( \| H^\uu_{y'\gets x'} \circ H^\ss_{x' \gets x} - H^\ss_{y' \gets y} \circ H^\uu_{y \gets x} \|  \big)
= O\big( \dd(x,x')^{\theta_\uu} \big) \, ,
$$

Summing the three estimates, 
\begin{equation}\label{e.3parts}
\big| \zeta(x,y) \|\Phi^n (v)\| - \zeta(x',y') \|\Phi^n (v')\| \big| =
O \big( \dd(x,x')^{\theta_\uu}  \big) + o(1) \, .
\end{equation}

As in the proof of the previous property (\ref{i.norm_u_Holder}), we need to use the cutoff property \eqref{e.cutoff} to conclude. 
If we are careful enough to take $\dd(x,x')$ sufficiently small then $\dd(y,y') = O(\dd(x,x')^{\kappa_\uu})$ is also small and therefore the following two implications are correct: 
\begin{align*}
	\dd(x,y) \le b \epsilon_1 \ &\Rightarrow \  \dd(x',y') \le \epsilon_1 \, , \\
	\dd(x',y') \le b \epsilon_1 \  &\Rightarrow \  \dd(x,y) \le \epsilon_1 \, .
\end{align*}
That is,
\begin{align*}
	v \in \W^\uu_{b \epsilon_1}(u) \  &\Rightarrow \  v' \in \W^\uu_{\epsilon_1}(u') \, , \\
	v' \in \W^\uu_{b \epsilon_1}(u') \  &\Rightarrow \  v \in \W^\uu_{\epsilon_1}(u) \, .
\end{align*}
Then, using \eqref{e.3parts}, \eqref{e.cutoff}, and \eqref{e.formula_extremal}, we obtain:
$$
\big| \tribar{u} - \tribar{u'} \big| = O \big( \dd(x,x')^{\theta_\uu}  \big) \, ,
$$
proving \eqref{e.last_thing} and the \lcnamecref{t.extremal}.
\end{proof}

\subsection{Application: Lipschitz continuity of the maximal Lyapunov exponent}\label{ss.Wirth}

As a simple application of \cref{t.extremal}, let us establish a local regularity result for the maximal Lyapunov exponent. A similar property for the joint spectral radius (under the assumption of irreducibility) was established by Wirth \cite[Corol.~4.2]{Wirth}, also using extremal norms; see also \cite{Kozyakin} for a more precise result.

Let $\cS_K$ denote the set of spannable automorphisms in $\Aut^\theta_K(\E,T)$, which by \cref{c.open_span} is relatively $C^0$-open. 

\begin{proposition}\label{p.Wirth}
The maximal Lyapunov exponent $\beta(\mathord{\cdot})$ is a locally Lipschitz function on the set $\cS_K$, with respect to the $C^0$-norm \eqref{e.C0_norm}.
\end{proposition}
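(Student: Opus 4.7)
The plan is to use the extremal norms from \cref{t.extremal} as an adapted ``coordinate system'' in which the action of $\Phi$ is tightly controlled: a small $C^0$-perturbation then produces only a small change in the one-step operator norm, and hence, by submultiplicativity and the characterization \eqref{e.beta_other}, only a small change in $\beta$.

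Fix $\Phi_0 \in \cS_K$. By the final assertion of \cref{t.extremal}, there is a $C^0$-neighborhood $\cU \subseteq \cS_K$ of $\Phi_0$ and a single constant $C_4 > 1$ such that every $\Phi \in \cU$ admits an extremal norm $\tribar{\mathord{\cdot}}_\Phi$ satisfying $C_4^{-1}\|u\| \le \tribar{u}_\Phi \le C_4 \|u\|$. For $\Phi, \Psi \in \cU$ I would write $\Psi_x = \Phi_x + (\Psi_x - \Phi_x)$ and estimate in the norm $\tribar{\mathord{\cdot}}_\Phi$:
\begin{equation*}
\tribar{\Psi_x u}_\Phi \le \tribar{\Phi_x u}_\Phi + \tribar{(\Psi_x - \Phi_x) u}_\Phi \le \bigl(e^{\beta(\Phi)} + C_4^2\,\|\Psi - \Phi\|_0\bigr)\,\tribar{u}_\Phi,
\end{equation*}
using extremality of $\tribar{\mathord{\cdot}}_\Phi$ for the first summand and the bilateral comparison twice for the second. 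Submultiplicativity of the induced operator norm then yields the same exponential bound for $\tribar{\Psi^n_x}_\Phi$, and translating back to $\|\mathord{\cdot}\|$ (which costs only another factor of $C_4^2$) and invoking \eqref{e.beta_other} gives
\begin{equation*}
e^{\beta(\Psi)} \le e^{\beta(\Phi)} + C_4^2\,\|\Psi - \Phi\|_0.
\end{equation*}

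Next I would observe that $\beta$ is uniformly bounded on $\Aut^\theta_K(\E,T)$: from above by $\log K$ via \eqref{e.starting_point}, and from below by $-\log K$ because $\|\Phi^n_x\| \ge \|\Phi^{-n}_{T^n x}\|^{-1} \ge K^{-n}$. Taking logarithms in the displayed inequality therefore converts it into a linear bound $\beta(\Psi) - \beta(\Phi) \le L\,\|\Psi - \Phi\|_0$, with $L$ depending only on $C_4$ and $K$. The key point is that since the same $C_4$ serves the whole neighborhood $\cU$, running the identical argument with $\tribar{\mathord{\cdot}}_\Psi$ in place of $\tribar{\mathord{\cdot}}_\Phi$ gives the reverse inequality with the same constant $L$. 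Combining the two directions establishes Lipschitz continuity on $\cU$, and since $\Phi_0 \in \cS_K$ was arbitrary, on all of $\cS_K$.

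The main substantive content has already been absorbed into \cref{t.extremal}, particularly the uniformity of the distortion constant $C_4$ across a $C^0$-neighborhood; the remainder is a short triangle-inequality computation. The only subtle point to verify carefully is that the Lipschitz constant one extracts really is symmetric in $\Phi$ and $\Psi$, so that the two one-sided estimates combine cleanly---and this is precisely what the uniformity clause of \cref{t.extremal} is designed to guarantee.
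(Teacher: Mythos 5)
Your proposal is correct and follows essentially the same route as the paper: both use the $C^0$-uniformity of the bounds on the extremal norms from \cref{t.extremal} to get $|e^{\beta(\Phi)}-e^{\beta(\Psi)}| \lesssim \|\Phi-\Psi\|_0$ via a triangle-inequality estimate, then pass to logs using uniform boundedness of $\beta$, and obtain the reverse inequality by symmetry. The only cosmetic difference is that the paper invokes \eqref{e.starting_point} directly in place of your submultiplicativity-plus-\eqref{e.beta_other} step (the two are equivalent), and your careful distortion constant $C_4^2$ is in fact the correct one.
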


\begin{proof}
Let $\Phi \in \cS_K$. 
Let $\cU \subset \cS_K$ be a $C^0$-neighborhood of $\Phi$ where \cref{t.extremal} applies with uniform constants.
Take any two automorphisms $\Phi_1$ and $\Phi_2$ in $\cU$, and let $\tribar{\mathord{\cdot}}_1$ and $\tribar{\mathord{\cdot}}_2$ be the corresponding extremal norms provided by \cref{t.extremal}. 
Then, using the bound \eqref{e.eccentricity}, we obtain:
\begin{multline*}
e^{\beta(\Phi_2)}	\le \sup_x \tribar{\Phi_{2x}}_1 
					\le \sup_x \tribar{\Phi_{1x}}_1 + \sup_x \tribar{\Phi_{1x} - \Phi_{2x}}_1 \\
					\le e^{\beta(\Phi_1)} + C_4 \sup_x \| \Phi_{1x} - \Phi_{2x} \| 
					=   e^{\beta(\Phi_1)} + C_4 \|\Phi_1 - \Phi_2\|_0 \, ,
\end{multline*}
where $\| \mathord{\cdot} \|_0$ is the $C^0$-norm \eqref{e.C0_norm}.
By symmetry, we obtain $|e^{\beta(\Phi_1)} - e^{\beta(\Phi_1)}| \le C_4 \|\Phi_1 - \Phi_2\|_0$.
This shows that the function $e^{\beta(\mathord{\cdot})}$ is Lipschitz on the neighborhood $\cU$, with respect to the $C^0$-norm.
Since the function $\beta(\mathord{\cdot})$ is uniformly bounded on $\cU$ (and in the whole set $\Aut^\theta_K(\E,T)$, in fact), it is Lipschitz as well.
\end{proof}

\begin{remark}\label{r.regularity_beta}
For reducible automorphisms, it is clear that $\beta$ is not locally Lipschitz: see e.g.\ \cite[p.~27]{Wirth}.
Nevertheless, $\beta$ is continuous on the whole space of $\theta$-H\"older automorphisms: indeed, upper semicontinuity is automatic from \eqref{e.beta_other}, while lower semicontinuity follows by a theorem of Kalinin \cite[Theorem~1.4]{Kalinin} that allows one to approximate $\beta$ by the Lyapunov exponents of periodic orbits. Let us also remark that if $T$ is no longer hyperbolic, then $\beta$ becomes discontinuous with respect to the $C^0$ topology. 
For example, the cocycle from \cref{ex.Herman} can be $C^0$-perturbed so that $\beta$ drops to $0$, as it follows e.g.\ from the the result of \cite{AB}.
\end{remark}

\subsection{Barabanov-like norms for linear cocycles over shifts}\label{ss.Barabanov}

Let us consider subshifts of finite type, that is, $X$ is the set of two-sided sequences $(x_n)_{n\in \Z}$ in an alphabet $\{0,1,\dots, N-1\}$ whose neighboring pairs are those allowed by a fixed 0-1 matrix, and $T \colon X \to X$ is the (left) shift map. 
As usual, we consider on $X$ the (ultra)metric
\begin{equation}\label{e.ultrametric}
\dd(x,y) \coloneqq e^{-\lambda k}\, \quad \text{where } k = \min \{|n| \st x_n \neq y_n \},
\end{equation}
and $\lambda>0$ is a fixed parameter.
Then $T$ is a hyperbolic homeomorphism.
Indeed letting $\epsilon_0 \coloneqq e^{-\lambda}$, the corresponding local unstable and stable sets at $x=(x_n) \in X$ are:
\begin{align*}
W_\mathrm{loc}^\uu (x) &\coloneqq W_{\epsilon_0}^\uu (x) = \big\{ (y_n) \in X \st y_n = x_n \text{ for all } n\le 0\big\} \, , \\
W_\mathrm{loc}^\ss (x) &\coloneqq W_{\epsilon_0}^\ss (x) = \big\{ (y_n) \in X \st y_n = x_n \text{ for all } n\ge 0\big\} \, .
\end{align*}
and so hyperbolicity property (\ref{i.lambdas}) holds with $\lambda_\uu = \lambda_\ss = \lambda$,
property (\ref{i.bracket}) holds with $2\epsilon_1 = \epsilon_0$,
and property (\ref{i.bounded_angles}) holds with $C=1$.
Also note that \cref{p.regularity_base} holds with $\kappa_\ss = 1 = C$.

We will consider $\theta$-H\"older automorphisms covering the subshift $T \colon X \to X$.
Since $X$ is a Cantor set, every $\theta$-H\"older vector bundle is trivial, i.e., $\theta$-H\"older isomorphic to the product bundle.
So we are actually dealing with $\theta$-H\"older linear cocycles; nevertheless, we will keep using the vector bundle terminology.

\begin{example}\label{ex.one-step}
As mentioned in \cref{ss.known},
the \emph{one-step cocycle} determined by a $N$-tuple of matrices $(A_0,\dots,A_{N-1}) \in \GL(d,\R)^N$ is the pair $(T,F)$ where $T$ is the full shift on $N$ symbols and $F \colon X \to \GL(d,\R)$ is given by $F(x) \coloneqq A_{x_0}$.
Let $\Phi$ the associated automorphism \eqref{e.cocycle}. 
Then $e^{\beta(\Phi)}$ is joint spectral radius of the set $\{A_0,\dots,A_{N-1}\}$.
Since $F$ is locally constant, it is $\theta$-H\"older for any $\theta \in (0,+\infty)$.
Choosing $\theta$ large enough, the automorphism $\Phi$ becomes fiber-bunched.
(Alternatively, we can take $\theta=1$, say, and then take the parameter $\lambda$ large enough.)
The holonomies are locally trivial:
\begin{equation}\label{e.trivial_holonomies}
\star \in \{\uu, \ss\}, \ 
y \in W^\star_\mathrm{loc}(x) \quad \Rightarrow \quad H^\star_{y \gets x} = \id \, .
\end{equation}

A useful generalization of one-step cocycles are the \emph{sofic cocycles} from \cite[\S~5.1]{BPS}; the same concept appears in \cite{PEDJ} under the terminology \emph{constrained switching systems}.
\end{example}

Let us present an improved version of \cref{t.extremal} for subshifts of finite type. 
We obtain an extremal norm with an additional Barabanov-like property: 
given any vector $u \in \E $, there always exists a vector in its local unstable set $\W^\uu_\mathrm{loc}(u) \coloneqq \W^\uu_{\epsilon_0}(u)$ whose expansion factor in a single
iterate equals the maximum asymptotic expansion rate $ e^{\beta(\Phi)}  $.
Furthermore, the norm is invariant under local unstable holonomies.
Therefore, for the case of one-step cocycles, we reobtain the Barabanov property \eqref{e.Barabanov}.

\begin{theorem}\label{t.Barabanov}
Let $ T $ be a two-sided subshift of finite type.
Let $ \E $ be a $d$-dimensional $\theta$-H\"older vector bundle.
Let $ \Phi $ be a spannable automorphism of $ \E $ covering $ T $.
Then
\begin{equation}\label{e.formula_Barabanov}
\tribar{u} \coloneqq \limsup_{n \to \infty} e^{-\beta(\Phi) n} \sup_{v \in \W^{\uu}_{\mathrm{loc}}(u)} \| \Phi^n (v) \| 
\end{equation}
is a well-defined \emph{Barabanov norm} on $ \E $, namely, an extremal norm satisfying, for all  $ u \in \E$,
\begin{enumerate}[label={\upshape(\arabic*)},ref=\arabic*]
\item\label{i.Barabanov_invariance} 
\emph{local $H^{\uu}$-invariance:} $ \tribar{u} = \tribar{v}$ for all $ v \in \W^{\uu}_{\mathrm{loc}}(u) $; 
\item\label{i.Barabanov_calibration} 
\emph{calibration:} there exists $ v \in \W^{\uu}_{\mathrm{loc}}(u) $ such that 
$ \tribar{\Phi(v)} = e^{\beta(\Phi)} \tribar{v}$.
\end{enumerate}
Furthermore, $\tribar{\mathord{\cdot}}$ satisfies the other properties stated in \cref{t.extremal}.
\end{theorem}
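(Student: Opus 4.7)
The plan is to exploit a clean algebraic feature of SFTs with the ultrametric \eqref{e.ultrametric} that is absent in the general setting: the local unstable sets form a clopen equivalence relation on $X$, so $W^\uu_\mathrm{loc}(y) = W^\uu_\mathrm{loc}(x)$ whenever $y \in W^\uu_\mathrm{loc}(x)$; moreover $T(W^\uu_\mathrm{loc}(x))$ decomposes as a finite disjoint union $\bigsqcup_a W^\uu_\mathrm{loc}(y^a)$ indexed by the symbols $a$ admissible after $x_1$. By the equivariance of unstable holonomies this lifts to the identity
$$
\Phi(\W^\uu_\mathrm{loc}(u)) = \bigsqcup_a \W^\uu_\mathrm{loc}(\Phi v^a),
$$
where $\{v^a\}$ is a finite set of representatives in $\W^\uu_\mathrm{loc}(u)$. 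This identity is the new ingredient, and it explains why one can dispense with the bump function $\zeta$ used in \cref{t.extremal}.

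Well-definedness of \eqref{e.formula_Barabanov} follows from \cref{p.bounded}, applicable because $\Phi$ is spannable, and the formula clearly defines a seminorm on each fiber. Local $H^\uu$-invariance is then immediate: for $v \in \W^\uu_\mathrm{loc}(u)$ we have $\W^\uu_\mathrm{loc}(v) = \W^\uu_\mathrm{loc}(u)$, so $\tribar{v} = \tribar{u}$ directly from the definition. To deduce calibration and extremality simultaneously, set $a_n(u) \coloneqq \sup_{v \in \W^\uu_\mathrm{loc}(u)} \|\Phi^n v\|$; the partition identity gives $a_{n+1}(u) = \max_a a_n(\Phi v^a)$. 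Since $\limsup_n$ commutes with maxima over any fixed finite index set, this yields
$$
\tribar{u} = e^{-\beta(\Phi)} \max_a \tribar{\Phi v^a}.
$$
Picking $a^*$ attaining the maximum and invoking local $H^\uu$-invariance gives $\tribar{\Phi v^{a^*}} = e^{\beta(\Phi)} \tribar{v^{a^*}}$, which is calibration. For extremality, any $u \in \E$ has $\Phi u$ lying in exactly one piece $\W^\uu_\mathrm{loc}(\Phi v^{a_0})$ of the analogous partition of $\Phi(\W^\uu_\mathrm{loc}(u))$, so $\tribar{\Phi u} = \tribar{\Phi v^{a_0}} \le e^{\beta(\Phi)} \tribar{u}$.

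The remaining properties from \cref{t.extremal} will be handled by adapting its proof, with simplifications. The upper bound $\tribar{u} \le C_4 \|u\|$ is immediate from relative product boundedness; the matching lower bound and positive-definiteness follow verbatim from the argument using \cref{l.key}, now applied to the cutoff-free formula \eqref{e.formula_Barabanov}. H\"olderness along unstable sets is trivial, as local $H^\uu$-invariance gives exact constancy along leaves. I expect the only genuinely technical point to be $\theta_\uu$-H\"olderness along stable sets: here the bracket-based argument from the proof of \cref{t.extremal} goes through and is in fact simpler, since the $\zeta$-dependent contribution $\circled{1}$ in \eqref{e.3parts} disappears; the ultrametric geometry ensures that the bracket construction matches $\W^\uu_\mathrm{loc}(u)$ with $\W^\uu_\mathrm{loc}(u')$ bijectively for nearby points $x, x'$ on a common local stable leaf, so the estimates close without any truncation.
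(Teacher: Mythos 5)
Your argument is correct, and for the key point --- calibration --- it follows a genuinely different and cleaner route than the paper. The paper's own proof establishes calibration by a compactness argument: it extracts sequences $n_i \nearrow \infty$ and $v_i \in \W^\uu_\mathrm{loc}(u)$ achieving the outer $\limsup$, passes to a limit point $v$, arranges that $\Phi(v_i)$ eventually lies in $\W^\uu_\mathrm{loc}(\Phi v)$, derives the one-sided inequality $\tribar{\Phi(v)} \ge e^{\beta(\Phi)}\tribar{v}$, and then invokes extremality (which it has established separately by adapting the proof of the general theorem) to upgrade this to equality. You instead observe that the clopen partition $T(W^\uu_\mathrm{loc}(x)) = \bigsqcup_a W^\uu_\mathrm{loc}(y^a)$, lifted by equivariance of holonomies to $\Phi(\W^\uu_\mathrm{loc}(u)) = \bigsqcup_a \W^\uu_\mathrm{loc}(\Phi v^a)$, yields the recursion $a_{n+1}(u) = \max_a a_n(\Phi v^a)$, whence the identity $\tribar{u} = e^{-\beta(\Phi)} \max_a \tribar{\Phi v^a}$; this single identity delivers calibration (take the maximizing $a^*$) and extremality (the piece containing $\Phi u$ is one of the finitely many terms) simultaneously and purely algebraically, with no recourse to compactness. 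What the paper's route buys is that it reuses the scaffolding of the general theorem with minimal new ideas; what your route buys is a closed-form functional equation for the Barabanov norm that explains, rather than merely exploits, why the SFT case is cleaner. Your treatment of the remaining regularity properties matches the paper's (which defers them to ``straightforward'' simplifications of the general proof), and your observation that property (c) of the general theorem is subsumed by local $H^\uu$-invariance agrees with the paper's remark. One small slip: the symbols $a$ indexing the partition should be those admissible after $x_0$ (not $x_1$), since $(y^a)_{-1} = (Tx)_{-1} = x_0$; this is a typo with no effect on the argument.
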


Let us comment on the hypotheses.
Given $\theta>0$, \cref{l.needed_strength} holds with the value $\eta_0 = \theta/3$; indeed, this follows from formula~\eqref{e.needed_strength}, recalling that $\lambda_\uu = \lambda_\ss = \lambda$, and noting that we can also take $\Lambda_\uu = \lambda$ in \eqref{e.Lambda_u}.
Therefore $\theta$-H\"older automorphism $\Phi \colon \E \to \E$ covering $T$ is strongly bunched 
if $\Phi$ is fiber-bunched and the fibers of $\E$ have dimension $d \le 2$, or $\Phi$ is a $(\theta/3,\theta)$-bunched.
In that case, it follows from \cref{t.irr_to_span} that $\Phi$ is spannable, provided it is irreducible and $T$ is transitive.

\begin{proof}[Proof of \cref{t.Barabanov}]
Since $X$ is a Cantor set, we can simplify the construction in \cref{t.extremal} and dispense with the bump function $\zeta$ from \cref{l.bump}. Ultimately, we can replace the definition \eqref{e.formula_extremal} of the extremal norm by the simpler formula \eqref{e.formula_Barabanov}. 
Note that in the latter formula we maximize over $\W^{\uu}_{\mathrm{loc}}(u) \coloneqq \W^\uu_{\epsilon_0}(u)$ instead of $\W^\uu_{\epsilon_1}(u) = \W^\uu_{\epsilon_0/2}(u)$; this is possible because the metric on $X$ is an ultrametric. 
It is straightforward to check that the proof in \cref{t.extremal} applies, with simplifications.
It is immediate from its definition that the norm satisfies local $H^{\uu}$-invariance, that is property (\ref{i.Barabanov_invariance}), which of course subsumes property (\ref{i.norm_u_Holder}) from \cref{t.extremal}.
		
We only left to check the calibration property (\ref{i.Barabanov_calibration}). 
Given $ u \in \E$, by definition, there exist sequences $n_i \nearrow \infty$ and $ v_i \in \W^{\uu}_\mathrm{loc}(u) $
such that
$$
\tribar{u} = \lim_{i\to\infty} e^{-\beta(\Phi) n_i} \| \Phi^{n_i}v_i \| \, .
$$
Denote $ y_i = \pi(v_i) $. By compactness, we may suppose that $ y_i \to y \in W^{\uu}_{\mathrm{loc}}(x) $ and $ v_i \to v \in \W^{\uu}_\mathrm{loc}(u) $. 
For $ i $ large enough, $y'_i \coloneqq T(y_i) \in W^{\uu}_{\mathrm{loc}}(T(y))$. We can assume that this property is true for all $i$.
Let $ v'_i \coloneqq \Phi(v_i) $. Thus, we have:
\begin{align*}
\tribar{\Phi(v)}
&= \limsup_{n\to \infty}  e^{- \beta(\Phi) n} \sup_{v' \in  \W^{\uu}_{\mathrm{loc}}(\Phi(v))} \| \Phi^n v' \| \\
&\ge \limsup_{i \to \infty}  e^{-(n_i-1) \beta(\Phi) } \| \Phi^{n_i-1}v'_i \| \\
&=   \lim_{i \to \infty}  e^{-(n_i-1) \beta(\Phi) } \| \Phi^{n_i}v_i \| \, = \, e^{\beta(\Phi)} \tribar{u} \, = \, e^{\beta(\Phi)} \tribar{v} \, .
\end{align*}
By extremality, the inequality is actually an equality.
This proves calibration.
\end{proof}

\section{Mather sets} \label{s.Mather}

In traditional ergodic optimization, that is, the optimization of Birkhoff averages (see
\cite{Jenkinson_survey,Jenkinson_survey_new,Garibaldi_book}), a \emph{maximizing set} is a closed subset such that an invariant probability is maximizing if and only if its support lies on this subset. The existence of such sets is guaranteed in any context where a Ma\~n\'e Lemma holds.
The \emph{Mather set} is the smallest maximizing set: it is defined as the union of the supports of all maximizing measures. The nomenclature is borrowed from Lagrangian dynamics, where  the concept of minimizing measures proved to be a useful generalization of the notion of action minimizing orbits: see \cite{Mat}. There are other canonical maximizing sets, such as the Aubry set: see e.g.\ \cite{Garibaldi_book}. Some of these concepts have been already considered in the optimization of the top Lyapunov exponent: see \cite{Morris_Mather,GG}.

In this \lcnamecref{s.Mather}, we are going to study some notions of Mather sets for  continuous vector bundle automorphisms, not necessarily with any H\"older or hyperbolicity structures. Our approach was profoundly influenced by some works of Morris \cite{Morris_rapidly,Morris_Mather}.

\medskip

Throughout the \lcnamecref{s.Mather}, we assume that $X$ is a compact metric space, $T \colon X \to X$ is a homeomorphism, $\E$ is a $d$-dimensional vector bundle over $X$, and $\Phi$ is an automorphism covering $T$.

\subsection{The first Mather set}\label{ss.first_Mather}

By a \emph{Lyapunov maximizing measure} we mean any $T$-invariant probability $\mu$ whose upper Lyapunov exponent $ \chi_1(\Phi, \mu)$ equals $\beta(\Phi) $. 
Following Morris \cite{Morris_Mather}, we define the \emph{(first) Mather set} $M(\Phi) \subseteq X$ as the union of the supports of all Lyapunov maximizing measures.

\begin{proposition}\label{p.Mather_support}
The Mather set $M(\Phi)$ is the support of some Lyapunov maximizing measure and, a fortiori, it is a nonempty, compact, and $T$-invariant set.
\end{proposition}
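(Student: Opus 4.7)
The plan is to exhibit a single Lyapunov maximizing measure whose support is exactly $M(\Phi)$, from which all the stated properties follow immediately.

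First I would collect the basic structural properties of the set $\cM^*(\Phi) \subseteq \cM_T$ of Lyapunov maximizing measures. It is nonempty because, as already noted in the introduction, the upper semicontinuity of $\chi_1(\Phi,\mathord{\cdot})$ on the compact convex set $\cM_T$ forces the supremum in \eqref{e.def_beta} to be attained. The same upper semicontinuity shows that $\cM^*(\Phi) = \{\mu \in \cM_T \st \chi_1(\Phi,\mu) \ge \beta(\Phi)\}$ is closed, hence compact. Moreover $\cM^*(\Phi)$ is convex, since the map $\mu \mapsto \chi_1(\Phi,\mu) = \int \chi_1(\Phi,\mathord{\cdot})\,d\mu$ is affine in $\mu$.

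Next, since $X$ is compact metric, the space $\cM_T$ is weak-$\ast$ metrizable and separable, so its closed subset $\cM^*(\Phi)$ admits a countable dense sequence $(\mu_n)_{n\ge 1}$. Define
\[
\mu_* \coloneqq \sum_{n \ge 1} 2^{-n}\,\mu_n.
\]
The partial sums (after normalization) are finite convex combinations in $\cM^*(\Phi)$ and converge weakly to $\mu_*$; closedness of $\cM^*(\Phi)$ then gives $\mu_* \in \cM^*(\Phi)$. Alternatively one verifies $\chi_1(\Phi,\mu_*) = \sum_n 2^{-n}\chi_1(\Phi,\mu_n) = \beta(\Phi)$ directly, using that the pointwise Lyapunov exponent is bounded and that $\mu \mapsto \chi_1(\Phi,\mu)$ is affine and $\sigma$-additive for countable convex combinations.

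Finally I would prove $\supp(\mu_*) = M(\Phi)$ by two inclusions. The inclusion $\supp(\mu_*) \subseteq M(\Phi)$ is immediate from $\mu_* \in \cM^*(\Phi)$ and the definition of the Mather set. For the reverse inclusion, fix any $\nu \in \cM^*(\Phi)$ and any $x \in \supp(\nu)$; for every open neighborhood $U$ of $x$ we have $\nu(U) > 0$, and weak-$\ast$ density of $(\mu_n)$ in $\cM^*(\Phi)$ combined with the Portmanteau theorem yields $\mu_n(U) > 0$ for some $n$, so $\supp(\mu_n) \cap U \ne \emptyset$. Hence $x \in \overline{\bigcup_n \supp(\mu_n)} = \supp(\mu_*)$, where the last equality is the standard fact that the support of a countable convex combination with strictly positive weights is the closure of the union of the individual supports. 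Thus $M(\Phi) = \supp(\mu_*)$, which is automatically nonempty, compact, and $T$-invariant (as the support of a $T$-invariant probability measure).

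The only step requiring any care is the verification that $\mu_* \in \cM^*(\Phi)$; this is not really an obstacle, since either closedness and convexity of $\cM^*(\Phi)$ do the job, or one invokes countable additivity of the integral. No serious difficulty is expected, and crucially the argument uses nothing more than compactness and metrizability of $\cM_T$ together with upper semicontinuity and affinity of $\chi_1(\Phi,\mathord{\cdot})$, matching the generality declared at the start of the section.
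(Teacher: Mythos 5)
Your proof is correct and follows essentially the same strategy as the paper's: construct a single maximizing measure as a countable convex combination whose support exhausts $M(\Phi)$. The only cosmetic difference is the indexing device: the paper picks one maximizing measure per basic open set of $X$ meeting $M(\Phi)$, whereas you take a weak-$\ast$ dense sequence in the set $\cM^*(\Phi)$ of maximizing measures and invoke Portmanteau; both routes reduce to the same observation that the support of $\sum_n \alpha_n \mu_n$ (with $\alpha_n>0$) is $\overline{\bigcup_n \supp\mu_n}$. Your version does require the additional (easy but not strictly needed) facts that $\cM^*(\Phi)$ is closed and convex; the paper's version sidesteps these by working directly with the topology of $X$.
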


\begin{proof}
The argument is quite standard, but we add it for completeness.
For simplicity, write $M = M(\Phi)$.
As explained at the introduction, at least one Lyapunov maximizing measure exists, so $M \neq \emptyset$.
Given a countable basis $\{B_j\}_{j\in\N}$ for the topology of $X$, consider the subset of indices $ J \coloneqq \{ j \in \N \st B_j \cap M \neq \emptyset  \} $.
For each $ j \in J $, we assign a Lyapunov maximizing measure $\mu_j$ such that $B_j \cap \supp \mu_j \neq \emptyset $, which means $\mu_j(B_j)>0$.
Define then $ \mu\coloneqq\sum_{j \in J} \alpha_j \mu_j$, where $ \alpha_j>0$ and $ \sum_{j \in J} \alpha_j = 1 $. As a convex combination of Lyapunov maximizing measures, $ \mu$ is also  Lyapunov maximizing. 
Now consider an arbitrary $k \in \N$ such that $B_k \cap \supp  \mu = \emptyset $.
Then for all $ j \in J $ we have $B_k \cap \supp \mu_j = \emptyset $ and therefore  $\mu_j(B_k)=0$.
This implies that $k\not\in J$, and so $B_k \cap M = \emptyset$.
We have shown that $ X \setminus \supp  \mu \subseteq X \setminus M$, which yields $M(\Phi) = \supp  \mu$.
\end{proof}

\begin{proposition}\label{p.Mather_calibration}
If $\Phi$ admits an extremal norm $ \tribar{\cdot}$ then
\begin{equation*} 
\tribar{\Phi_x^n} = e^{n \beta(\Phi)}, \qquad  \forall \, x \in M(\Phi) \, , \forall \, n \ge 1 \, ,
\end{equation*}
and, in particular, every $T$-invariant probability measure whose support is contained in $M(\Phi)$ is Lyapunov-maximizing.
\end{proposition}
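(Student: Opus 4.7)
My plan is to combine the pointwise upper bound coming from extremality with the fact that any Lyapunov-maximizing measure saturates this bound in mean, and then use continuity together with $\supp\mu = M(\Phi)$ to upgrade the equality from ``$\mu$-almost everywhere'' to ``everywhere on $M(\Phi)$''.

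First I would set up the pointwise bound. Extremality of $\tribar{\cdot}$ states that $\tribar{\Phi_y} \le e^{\beta(\Phi)}$ for every $y \in X$, so submultiplicativity of operator norms along the composition $\Phi_x^n = \Phi_{T^{n-1}x} \circ \cdots \circ \Phi_x$ yields
\[
a_n(x) \coloneqq \log \tribar{\Phi_x^n} \le n\beta(\Phi) \qquad \text{for all } x \in X,\ n \ge 1.
\]
Continuity of the Finsler norm and of $\Phi$ makes each $a_n$ a continuous function on $X$, a fact I will use crucially at the end.

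Next, I invoke \cref{p.Mather_support} to fix a Lyapunov-maximizing measure $\mu$ with $\supp \mu = M(\Phi)$. The sequence $(a_n)$ is subadditive with respect to $T$ (since $\tribar{\Phi_x^{n+m}} \le \tribar{\Phi_{T^n x}^m}\,\tribar{\Phi_x^n}$), so by $T$-invariance of $\mu$ the numerical sequence $b_n \coloneqq \int a_n\, d\mu$ is subadditive. Kingman's subadditive ergodic theorem then gives
\[
\lim_{n \to \infty} \frac{b_n}{n} = \inf_{n \ge 1} \frac{b_n}{n} = \chi_1(\Phi,\mu) = \beta(\Phi),
\]
and in particular $b_n \ge n\beta(\Phi)$ for every $n$. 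Integrating the pointwise upper bound $a_n \le n\beta(\Phi)$ yields the reverse inequality $b_n \le n\beta(\Phi)$, so $b_n = n\beta(\Phi)$. Since an integrand bounded above by the mean value of its integral must equal that bound almost everywhere, $a_n(x) = n\beta(\Phi)$ for $\mu$-a.e.\ $x$; continuity of $a_n$ and the identity $\supp\mu = M(\Phi)$ then propagate this equality to every point of $M(\Phi)$, which is the first claim.

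For the ``in particular'' clause, take any $T$-invariant Borel probability $\nu$ with $\supp\nu \subseteq M(\Phi)$. By the first part, $a_n \equiv n\beta(\Phi)$ on $\supp\nu$, so $\int a_n\,d\nu = n\beta(\Phi)$, and Kingman's theorem applied to $\nu$ gives $\chi_1(\Phi,\nu) = \lim_n \frac{1}{n}\int a_n\,d\nu = \beta(\Phi)$, i.e.\ $\nu$ is Lyapunov-maximizing. I do not anticipate any real obstacle; the only mildly delicate point is remembering that Kingman's theorem identifies the limit with the \emph{infimum}, which is precisely what forces the inequality $b_n \ge n\beta(\Phi)$ for every finite $n$ and not merely in the limit.
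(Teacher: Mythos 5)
Your proof is correct and follows essentially the same route as the paper: extremality gives the pointwise upper bound $\log\tribar{\Phi_x^n}\le n\beta(\Phi)$, Kingman's theorem applied to the maximizing measure $\mu$ of full support in $M(\Phi)$ (from \cref{p.Mather_support}) forces equality of the integrals for every finite $n$, and continuity of $x\mapsto\tribar{\Phi_x^n}$ upgrades the $\mu$-a.e.\ equality to all of $\supp\mu=M(\Phi)$. Your explicit treatment of the ``in particular'' clause via Kingman for an arbitrary $\nu$ supported in $M(\Phi)$ is exactly the intended (implicit) conclusion.
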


\begin{proof}
By extremality,
$$ 
f_n(x) \coloneqq \frac{1}{n}\log \tribar{\Phi_x^n} \le \beta(\Phi), \qquad \forall \, x \in X, \forall \, n \ge 1. 
$$
On the other hand, if $\mu$ is the Lyapunov maximizing measure with $\supp \mu = M(\Phi)$ constructed in \cref{p.Mather_support} then, by Kingman's subadditive theorem,
$ \inf_n \frac{1}{n} \int f_n \, d\mu = \beta(\Phi) $.
It follows that $\frac{1}{n} \int f_n \, d\mu = \beta(\Phi)$ for each $n \ge 1$.
Since the functions $f_n$ are continuous, they must be identically equal to $\beta(\Phi)$ over $\supp  \mu = M(\Phi)$,
as we wanted to show. 
\end{proof}

\subsection{Mather sets of higher index} \label{ss.other_Mathers}

Up to here we have only considered the first Lyapunov exponent $\chi_1$, but now we will need to consider the full Lyapunov spectrum. Let us recall the definitions and main properties, referring to \cite{Arnold} for details.

If $\mu$ is a $T$-invariant probability measure then the \emph{Lyapunov exponents} of the automorphism $\Phi$ with respect to $\mu$ are the numbers 
\begin{equation}\label{e.Lyapunov_spectrum}
\chi_1(\Phi,\mu) \ge \chi_2(\Phi,\mu) \ge \cdots \ge \chi_d(\Phi,\mu) 
\end{equation}
uniquely defined by the following equations: for every $p \in \ldbrack 1,d \rdbrack$,
\begin{equation}\label{e.Lyapunov_wedge}
\sum_{i=1}^p \chi_i(\Phi,\mu) = 
\chi_1(\WEDGE^p \Phi, \mu) \, ,
\end{equation}
where the automorphism $\WEDGE^p \Phi \colon \WEDGE^p \E \to \WEDGE^p \E$ is the $p$-fold exterior power of the automorphism ${\Phi \colon \E \to \E}$.

Suppose $\mu$ is ergodic, and that $\lambda$ is a Lyapunov exponent with respect to $\mu$ of \emph{multiplicity} $k$, in the sense that it appears $k$ times in the list \eqref{e.Lyapunov_spectrum}.
Then Oseledets' theorem says that for $\mu$-a.e.\ $x\in X$, there exists a $k$-dimensional subspace $\mathbb{O}_x(\lambda)$ of the fiber $\E_x$, called a \emph{Oseledets space}, such that:
$$
u \in \mathbb{O}_x(\lambda) \setminus \{0\} \quad \text{if and only if} \quad
\lim_{n \to \pm \infty} \frac{1}{n} \log \|\Phi^n(u)\| = \lambda \, .
$$
Moreover, Oseledets spaces form a splitting of $\E_x$, depend measurably on the point $x$, and are $\Phi$-equivariant.

\medskip

We now consider other Mather sets that take multiplicity into account.
Define a chain of sets
\begin{equation}\label{e.chain_Mathers}
M(\Phi) = M_1(\Phi) \supseteq M_2(\Phi) \supseteq \cdots \supseteq M_d(\Phi)
\end{equation}
as follows: the \emph{$p$-th Mather set} $M_p(\Phi)$ is the union of the supports of all $T$-invariant probabilities $ \mu $ whose $p$ first Lyapunov exponents are all maximal, that is,
$$
\chi_1(\Phi,\mu) = \chi_2(\Phi,\mu) = \cdots = \chi_p(\Phi,\mu) = \beta(\Phi) \, .
$$
Repeating the argument of the proof of \cref{p.Mather_support}, we see that if the set $M_p(\Phi)$ is nonempty then there exists a measure $\mu$ with $p$ maximal Lyapunov exponents and whose support is exactly $M_p(\Phi)$; in particular, $M_p(\Phi)$ is compact and $T$-invariant.

The following properties follow immediately from the definition of Mather sets and relations \eqref{e.Lyapunov_spectrum} and \eqref{e.Lyapunov_wedge}:

\begin{proposition}\label{p.Mathers}
For any $p \in \ldbrack 1,d \rdbrack$, we have:
$$
\beta(\WEDGE^p \Phi) \le p \beta(\Phi)
\quad \text{and} \quad
M_p(\Phi)\subseteq M_1(\WEDGE^p \Phi) \, .
$$
Furthermore, these two relations become equalities if and only if $M_p(\Phi) \neq \emptyset$.
\end{proposition}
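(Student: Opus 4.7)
The plan is to combine identity \eqref{e.Lyapunov_wedge} with the trivial bound $\chi_i(\Phi,\mu)\le\chi_1(\Phi,\mu)\le\beta(\Phi)$, valid for every $i$ and every $T$-invariant probability $\mu$, and then chase the equality cases.

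For the inequality $\beta(\WEDGE^p\Phi)\le p\beta(\Phi)$: fix any $\mu$ and invoke \eqref{e.Lyapunov_wedge} to write
$$\chi_1(\WEDGE^p\Phi,\mu) \,=\, \sum_{i=1}^p \chi_i(\Phi,\mu) \,\le\, p\,\chi_1(\Phi,\mu) \,\le\, p\,\beta(\Phi),$$
and then take the supremum over $\mu$. Both inequalities in the display are simultaneously saturated for a given $\mu$ iff $\chi_1(\Phi,\mu)=\cdots=\chi_p(\Phi,\mu)=\beta(\Phi)$, i.e., iff $\supp\mu\subseteq M_p(\Phi)$; since the supremum defining $\beta(\WEDGE^p\Phi)$ is attained (by an ergodic measure), this immediately gives $\beta(\WEDGE^p\Phi)=p\beta(\Phi) \iff M_p(\Phi)\ne\emptyset$.

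For the inclusion $M_p(\Phi)\subseteq M_1(\WEDGE^p\Phi)$: if $M_p(\Phi)=\emptyset$ the inclusion is vacuous but is not an equality since $M_1(\WEDGE^p\Phi)\ne\emptyset$, so assume $M_p(\Phi)\ne\emptyset$. The variant of \cref{p.Mather_support} noted immediately after the definition of $M_p(\Phi)$ produces a $\mu$ with $\supp\mu=M_p(\Phi)$ and $p$ maximal Lyapunov exponents, for which the chain above gives $\chi_1(\WEDGE^p\Phi,\mu)=p\beta(\Phi)=\beta(\WEDGE^p\Phi)$. Hence $\mu$ is maximizing for $\WEDGE^p\Phi$ and $M_p(\Phi)=\supp\mu\subseteq M_1(\WEDGE^p\Phi)$.

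For the reverse inclusion $M_1(\WEDGE^p\Phi)\subseteq M_p(\Phi)$ under the assumption $M_p(\Phi)\ne\emptyset$ (so $\beta(\WEDGE^p\Phi)=p\beta(\Phi)$): I would take any invariant probability $\mu$ maximizing for $\WEDGE^p\Phi$ and apply the ergodic decomposition $\mu=\int\nu\,d\hat\mu(\nu)$. The affinity $\chi_1(\WEDGE^p\Phi,\mu)=\int\chi_1(\WEDGE^p\Phi,\nu)\,d\hat\mu(\nu)$ combined with the pointwise bound $\chi_1(\WEDGE^p\Phi,\nu)\le p\beta(\Phi)$ forces $\chi_1(\WEDGE^p\Phi,\nu)=p\beta(\Phi)$ for $\hat\mu$-a.e.\ ergodic $\nu$. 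For each such $\nu$ the chain in the second paragraph is saturated, so $\chi_i(\Phi,\nu)=\beta(\Phi)$ for $1\le i\le p$ and hence $\supp\nu\subseteq M_p(\Phi)$. Closedness of $M_p(\Phi)$ together with $\mu(X\setminus M_p(\Phi))=\int\nu(X\setminus M_p(\Phi))\,d\hat\mu(\nu)=0$ then yields $\supp\mu\subseteq M_p(\Phi)$, which completes the proof.

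The only step that does not reduce to unwinding definitions is the affinity of $\chi_1(\WEDGE^p\Phi,\cdot)$ with respect to the ergodic decomposition, used in the last paragraph; this is a standard consequence of Kingman's subadditive ergodic theorem (already invoked in the introduction) and presents no substantive obstacle.
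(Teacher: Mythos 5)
Your proof is correct and, since the paper dismisses this proposition as following ``immediately from the definition,'' it amounts to a careful spelling-out of exactly the intended argument: compare the two inequalities in the chain $\chi_1(\WEDGE^p\Phi,\mu)=\sum_{i\le p}\chi_i(\Phi,\mu)\le p\chi_1(\Phi,\mu)\le p\beta(\Phi)$, chase the equality cases via an ergodic maximizing measure for $\WEDGE^p\Phi$, and for the reverse inclusion decompose an arbitrary $\WEDGE^p\Phi$-maximizing measure into ergodic components. One small language slip worth flagging: the parenthetical ``i.e.,\ iff $\supp\mu\subseteq M_p(\Phi)$'' is an over-claim -- $\supp\mu\subseteq M_p(\Phi)$ does \emph{not} in general imply that $\mu$ itself has $p$ maximal exponents (that is precisely the subordination principle, \cref{t.subordination}, which needs the strong bunching hypotheses). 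What is true and what you actually use is only the implication ``saturated $\Rightarrow$ $\supp\mu\subseteq M_p(\Phi)$,'' which is immediate from the definition of $M_p(\Phi)$ as a union of supports; since the false converse never enters the logic of your argument, the proof stands as written, but the ``iff'' should be replaced by a one-way implication to avoid suggesting a fact that fails without extra hypotheses.
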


Let us extend the chain of sets \eqref{e.chain_Mathers}.
Let $M_{d+1}(\Phi) \coloneqq \emptyset$
and let $M_0(\Phi)$ be defined as the union of the supports of all $T$-invariant probability measures (which is also the support of one of them). So $M_0(\Phi)$ only depends on $T$, and is in fact the classical \emph{minimal center of attraction} of $T$: see \cite{Sigmund}, \cite[p.~164]{Akin}.

\subsection{Dominated splittings over the Mather sets}\label{ss.Mather_dom}

We want to provide more information about the action of $\Phi$ on the fibers above the Mather sets, assuming the existence of a extremal norm. 
We will use the notion of dominated splitting, which is very useful in Differentiable Dynamics (see \cite{BDV_book}).
It appears in the celebrated book \cite{HPS} as \emph{relative pseudo hyperbolicity}.
It also appears in ODE and Control Theory under the terminology \emph{exponentially separated splitting} (see \cite{CK_book}), and is intimately related to the concept of Anosov representations in Geometric Group Theory (see \cite{BPS}).

\medskip

Let $Y \subseteq X$ be a nonempty $T$-invariant compact set, and let $\E_Y \coloneqq \pi^{-1}(Y)$ be the restricted vector bundle. (Recall that $\pi \colon \E \to X$ denotes the bundle projection.)
Let $\| \mathord{\cdot} \|$ be a Finsler norm on $\E$.
Suppose that the bundle $\E_Y$ splits as a direct sum $\F \oplus \G$ of two (continuous) subbundles whose fibers $\F_x$, $\G_x$ have constant dimensions, and are equivariant in the sense that $\Phi_x(\F_x) = \F_{Tx}$, $\Phi_x(\G_x) = \G_{Tx}$. 
We say that $\F \oplus \G$ is a \emph{dominated splitting} with \emph{dominating} bundle $\F$ and \emph{dominated} bundle $\G$ if there are positive constants $c$ and $\tau$ such that for each point $x \in Y$, if $u \in \F_x$, $v \in \G_x$ are unit vectors then 
\begin{equation}\label{e.def_DS}
\| \Phi_x^n(v) \| \le c e^{-\tau n} \| \Phi_x^n(u) \| \quad \text{for all } n \ge 0.
\end{equation}

An equivalent definition is to say that there exists an \emph{adapted norm} $\| \mathord{\cdot} \|$ for which relation \eqref{e.def_DS} holds with $c=1$ (and therefore only needs to be checked for $n=1$): see \cite{Gourmelon}. Dominated splittings are unique given the dimensions: see \cite[Prop.~2.2]{CroPo}. Continuity of the subbundles actually follows from the uniform estimates \eqref{e.def_DS} and therefore could be removed from the definition: see \cite[Prop.~2.5]{CroPo}. Actually, if $\Phi$ is H\"older then the bundles of a dominated splitting are always H\"older (with a smaller exponent): see \cite[Thrm.~4.11]{CroPo}. 
Domination can be characterized in terms of existence of invariant cone fields: see \cite[Thrm.~2.6]{CroPo}.
This implies strong robustness properties: see \cite[Corol.~2.8]{CroPo}.

\medskip

We will use another criterion for the existence of dominated splittings, expressed in terms of singular values.
Recall that if $L \colon E \to F$ is a linear map between $d$-dimensional inner product spaces, then the \emph{singular values}  $\sigma_1 (L) \ge \cdots \ge \sigma_d (L)$ are the eigenvalues of the symmetric operator $(L^* L)^{1/2}$. So $\sigma_1(L)$ coincides with the Euclidean operator norm $\|L\|$.
Endowing the exterior power spaces with the induced inner products, the exterior powers of $L$ have norm:
\begin{equation}\label{e.singular_wedge}
\| \WEDGE^p L \| = \sigma_1(L) \sigma_2(L) \cdots \sigma_p(L) \, ;
\end{equation}
see e.g.\ \cite[p.~120]{Arnold}.
Another useful characterization of the singular values is:
\begin{equation}\label{e.maxmin}
\sigma_p(L) = \max_{V \in \cG_p(E)} \min_{u \in V} \frac{\|L u\|}{\|u\|} \, ;
\end{equation}
see e.g.\ \cite[p.~68]{Stewart}.

\medskip

A theorem from \cite{BG} 
says that the domination is equivalent to a uniform exponential gap between singular values of the powers of $\Phi$ (computed with respect to a Riemannian norm fixed a priori).
More precisely:

\begin{theorem}[Bochi--Gourmelon]\label{t.BG}
The bundle $\E_Y$ admits a dominated splitting with a dominating bundle of dimension $p$ if and only if there exist positive constants $c$ and $\tau$ such that
$$
\sigma_{p+1}(\Phi_x^n) \le c e^{-\tau n} \sigma_p(\Phi_x^n)  \quad \text{for all $x \in Y$ and $n \ge 0$.} 
$$
\end{theorem}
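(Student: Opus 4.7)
The easy direction is immediate from the Courant--Fischer characterisation of singular values. Applying \eqref{e.maxmin} with $V = \F_x$ yields $\sigma_p(\Phi^n_x) \ge \min\{\|\Phi^n_x u\| \st u \in \F_x,\, \|u\|=1\}$, while the dual min-max identity $\sigma_{p+1}(L) = \min_{\dim V = d-p}\max_{u \in V\setminus\{0\}} \|Lu\|/\|u\|$ applied with $V = \G_x$ gives $\sigma_{p+1}(\Phi^n_x) \le \max\{\|\Phi^n_x v\| \st v \in \G_x,\, \|v\|=1\}$. The domination inequality \eqref{e.def_DS} then directly produces $\sigma_{p+1}(\Phi^n_x) \le c e^{-\tau n}\sigma_p(\Phi^n_x)$ with the same constants $c$ and $\tau$.

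For the converse, the plan is to construct the subbundles as Grassmannian limits built from the singular value data of the iterates. Fix the Riemannian norm used to compute singular values. Let $E^+_n(y) \in \Gr_p(\E_y)$ be the span of the top $p$ right singular vectors of $\Phi^n_y$ and $E^-_n(y) \in \Gr_{d-p}(\E_y)$ the span of the bottom $d - p$ right singular vectors. Define
$$
\F_n(x) \coloneqq \Phi^n_{T^{-n}x}\bigl(E^+_n(T^{-n}x)\bigr) \in \Gr_p(\E_x), \qquad \G_n(x) \coloneqq E^-_n(x) \in \Gr_{d-p}(\E_x),
$$
and set $\F_x \coloneqq \lim_n \F_n(x)$ and $\G_x \coloneqq \lim_n \G_n(x)$ once convergence is established.

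The heart of the argument, and the principal obstacle, is to show that these sequences are Cauchy with a uniform exponential rate. This reduces to a quantitative matrix perturbation statement of Wedin $\sin\Theta$ type: if a linear isomorphism $L$ satisfies $\sigma_p(L)/\sigma_{p+1}(L) \ge K$ and $M$ is an isomorphism whose norm and inverse norm are bounded by a fixed constant, then the top $p$ left singular space of $LM$ differs from that of $L$ by $O(1/K)$ in the Grassmannian distance; symmetrically, the bottom $d - p$ right singular space of $NL$ differs from that of $L$ by $O(1/K)$ when $N$ is bounded. Applied to the one-step factorisations $\Phi^{n+1}_{T^{-n-1}x} = \Phi^n_{T^{-n}x} \circ \Phi_{T^{-n-1}x}$ and $\Phi^{n+1}_x = \Phi_{T^n x} \circ \Phi^n_x$, and using the hypothesised gap $\sigma_{p+1}(\Phi^n_\cdot)/\sigma_p(\Phi^n_\cdot) \le c e^{-\tau n}$, this yields incremental Grassmannian distances $\dd(\F_{n+1}(x), \F_n(x))$ and $\dd(\G_{n+1}(x), \G_n(x))$ of order $e^{-\tau n}$, uniformly in $x \in Y$; summing the geometric series produces the limits. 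The delicate point is that the perturbation lemma requires the bounded factors to satisfy a transversality condition relative to the bottom singular space at each stage; this must be bootstrapped from the uniform gap together with the compactness of $Y$.

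The remaining properties then follow in standard fashion. Equivariance $\Phi_x(\F_x) = \F_{Tx}$ is verified by comparing $\Phi_x(\F_n(x)) = \Phi^{n+1}_{T^{-n}x}(E^+_n(T^{-n}x))$ with $\F_{n+1}(Tx) = \Phi^{n+1}_{T^{-n}x}(E^+_{n+1}(T^{-n}x))$, which the same perturbation lemma shows agree up to $O(e^{-\tau n})$ and hence share a limit; the analogous check works for $\G$. Continuity of both subbundles is built into the uniform convergence rate. Since for every $n$ the subspaces $E^+_n(x)$ and $E^-_n(x)$ are orthogonal, the limits $\F_x$ and $\G_x$ meet at a uniformly bounded angle and therefore form a direct sum $\F_x \oplus \G_x = \E_x$. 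Finally, a unit vector $u \in \F_x$ lies within $O(e^{-\tau n})$ of the top singular direction of $\Phi^n_x$, so $\|\Phi^n_x u\|$ is comparable to $\sigma_p(\Phi^n_x)$, while a unit $v \in \G_x$ lies within $O(e^{-\tau n})$ of the bottom singular direction, so $\|\Phi^n_x v\|$ is comparable to $\sigma_{p+1}(\Phi^n_x)$; dividing and invoking the singular value gap hypothesis recovers \eqref{e.def_DS} with a slightly larger constant and the same exponential rate $\tau$.
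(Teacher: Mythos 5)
The paper does not prove this theorem: it is quoted verbatim from the cited reference \cite{BG}, so there is no in-text argument to compare yours against. Your forward direction is complete and correct: taking $V=\F_x$ in \eqref{e.maxmin} and $V=\G_x$ in the dual Courant--Fischer identity, the domination inequality \eqref{e.def_DS} gives the singular-value gap with the same $c$ and $\tau$ (up to the harmless constant coming from replacing the Finsler norm of \eqref{e.def_DS} by the fixed Riemannian one). Your converse follows the same strategy as the original proof in \cite{BG} (see also the appendix of \cite{BPS}): define $\F_n(x)$ as the pushed-forward top singular space and $\G_n(x)$ as the bottom right-singular space, prove exponential Cauchyness via a Wedin-type perturbation estimate, then read off equivariance, transversality, and \eqref{e.def_DS} from the uniform convergence rate. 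However, as written the converse is an outline rather than a proof: the perturbation lemma that carries the entire technical weight is only stated, and the ``transversality condition \dots must be bootstrapped'' remark defers exactly the point where the work lies. Concretely, for $L=\Phi^n_{T^{-n}x}$ and $M=\Phi_{T^{-n-1}x}$ one must show that a unit vector $v$ in $M(E^+_{n+1}(T^{-n-1}x))$ satisfies $\|Lv\|\ge \sigma_p(L)/\bol(M)$ (via $\sigma_p(LM)\ge\sigma_p(L)/\|M^{-1}\|$), decompose $v$ along $E^\pm_n(T^{-n}x)$, and bound the sine of the angle between $Lv$ and $\F_n(x)$ by $\bol(M)\,\sigma_{p+1}(L)/\sigma_p(L)$; this is what turns the hypothesised gap into the $O(e^{-\tau n})$ increments you assert, and without it the Cauchy estimate, the equivariance comparison, and the final verification of \eqref{e.def_DS} are all unsupported. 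If you supply that lemma (and the elementary passage from ``every vector of the image is angle-close to the limit space'' to closeness in the Grassmannian metric of \cref{ss.Grass}), the argument closes.
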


We now come back to the Mather sets:

\begin{theorem}\label{t.dom}
Suppose $\Phi$ admits an extremal norm $\tribar{\cdot}$.
Let $p \in \ldbrack 1, d \rdbrack$.
Suppose $Y$ is a nonempty compact $T$-invariant set contained in $M_p(\Phi) \setminus M_{p+1}(\Phi)$.
Then the restricted bundle $\E_Y$ admits a dominated splitting $\F \oplus \G$ where the dominating bundle $\F$ has fibers of dimension $p$ and is calibrated in the sense that $\tribar{\Phi(u)} = e^{\beta(\Phi)} \tribar{u}$ for every $u \in \F$.
\end{theorem}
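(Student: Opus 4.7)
The plan is to apply the Bochi--Gourmelon criterion \cref{t.BG} on $\E_Y$ by exhibiting a uniform exponential gap between the $p$-th and $(p+1)$-st singular values of $\Phi^n_x$ for $x\in Y$, and then to derive calibration from an induced extremal norm on $\WEDGE^p\E$. To that end, I would first introduce on $\WEDGE^p\E$ the Finsler \emph{mass norm}
\[
\tribar{\omega}_p \coloneqq \inf\!\Bigl\{\sum_i\tribar{u_1^{(i)}}\cdots\tribar{u_p^{(i)}} \st \omega=\sum_i u_1^{(i)}\wedge\cdots\wedge u_p^{(i)}\Bigr\}.
\]
Extremality of $\tribar{\cdot}$ yields $\tribar{\WEDGE^p\Phi_x}_p\le e^{p\beta(\Phi)}$, and the assumption $Y\neq\emptyset$ together with $Y\subseteq M_p(\Phi)$ forces $M_p(\Phi)\neq\emptyset$, so \cref{p.Mathers} gives $\beta(\WEDGE^p\Phi)=p\beta(\Phi)$: the norm $\tribar{\cdot}_p$ is itself extremal for $\WEDGE^p\Phi$. \Cref{p.Mather_calibration} then produces $\tribar{\WEDGE^p\Phi^n_x}_p=e^{np\beta(\Phi)}$ everywhere on $M_1(\WEDGE^p\Phi)=M_p(\Phi)\supseteq Y$, and equivalence with the Riemannian norm upgrades this to a uniform lower bound $\|\WEDGE^p\Phi^n_x\|\ge c_1 e^{np\beta(\Phi)}$ for $x\in Y$.

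Next I would establish the matching upper bound with a gap. Since $M_{p+1}(\Phi)=M_1(\WEDGE^{p+1}\Phi)$ (again \cref{p.Mathers}) and $Y\cap M_{p+1}(\Phi)=\emptyset$, no $T$-invariant probability on $Y$ is Lyapunov maximizing for $\WEDGE^{p+1}\Phi$; upper semicontinuity of $\chi_1(\WEDGE^{p+1}\Phi,\mathord{\cdot})$ and weak-$*$ compactness of the set of $T$-invariant probabilities on $Y$ then yield a strict inequality $\sup_\nu\chi_1(\WEDGE^{p+1}\Phi,\nu) < (p+1)\beta(\Phi)$, which the formula \eqref{e.beta_other} applied to $\WEDGE^{p+1}\Phi$ restricted to $Y$ converts into $\|\WEDGE^{p+1}\Phi^n_x\|\le c_2 e^{n((p+1)\beta(\Phi)-\tau)}$ for some $\tau>0$ and all $x\in Y$, $n$ large. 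Combined with the previous step and the trivial extremal bound $\|\WEDGE^{p-1}\Phi^n_x\|\le c_3 e^{n(p-1)\beta(\Phi)}$, the identity
\[
\frac{\sigma_{p+1}(\Phi^n_x)}{\sigma_p(\Phi^n_x)} = \frac{\|\WEDGE^{p+1}\Phi^n_x\|\,\|\WEDGE^{p-1}\Phi^n_x\|}{\|\WEDGE^p\Phi^n_x\|^2}
\]
shows this ratio is $O(e^{-\tau n})$ uniformly on $Y$. \Cref{t.BG} then supplies the dominated splitting $\E_Y=\F\oplus\G$ with $\dim\F=p$.

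The main obstacle is calibration. The dominated splitting on $\E_Y$ induces one on $\WEDGE^p\E_Y$ whose dominating line is $\WEDGE^p\F$, and the equality $\tribar{\WEDGE^p\Phi^n_x}_p=e^{np\beta(\Phi)}$ together with domination imply that on the one-dimensional $\WEDGE^p\F_x$ the one-step expansion factor of $\WEDGE^p\Phi$ is exactly $e^{p\beta(\Phi)}$ at every $x\in Y$. I would then propagate this volume equality to calibration of each vector in $\F$ via the chain
\[
e^{p\beta(\Phi)}\tribar{u_1\wedge\cdots\wedge u_p}_p = \tribar{\WEDGE^p\Phi(u_1\wedge\cdots\wedge u_p)}_p \le \tribar{\Phi u_1}\cdots\tribar{\Phi u_p} \le e^{p\beta(\Phi)}\tribar{u_1}\cdots\tribar{u_p},
\]
valid for any basis $u_1,\dots,u_p$ of $\F_x$, combined with the pointwise extremal bounds $\tribar{\Phi u_i}\le e^{\beta(\Phi)}\tribar{u_i}$: equality must hold throughout provided the first and last terms coincide, i.e.\ provided the basis is ``orthogonal'' in the sense that $\tribar{u_1\wedge\cdots\wedge u_p}_p=\tribar{u_1}\cdots\tribar{u_p}$. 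Producing such a basis at every $x\in Y$ is the subtle step; I expect it either to follow from a geometric choice of basis adapted to the dominated splitting, or via an auxiliary construction defining a calibrated Finsler norm on $\F$ by the limit $\lim_n e^{-n\beta(\Phi)}\tribar{\Phi^n(\mathord{\cdot})}$ (which exists and is nonincreasing by iterated extremality) and then identifying it with $\tribar{\cdot}|_\F$ via the uniform lower bound from the first step.
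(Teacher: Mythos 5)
Your construction of the dominated splitting is correct, and it is only a mild variant of the paper's: where you bound $\sigma_p(\Phi_x^n)$ from below by observing that the mass norm makes $\WEDGE^p\Phi$ extremal and then invoking \cref{p.Mathers} and \cref{p.Mather_calibration}, the paper obtains the same singular-value information from \cref{c.dimension} (every fiber over $M_p(\Phi)$ contains a $p$-dimensional subspace of calibrated vectors). Both routes then derive $\sigma_{p+1}(\Phi_x^n)/\sigma_p(\Phi_x^n)=O(e^{-\tau n})$ on $Y$ from $\beta\bigl(\WEDGE^{p+1}(\Phi|_{\E_Y})\bigr)<(p+1)\beta(\Phi)$ and conclude via \cref{t.BG}. (You should still treat $p=d$ separately, since $\WEDGE^{p+1}$ is unavailable there; the paper does so with the trivial splitting.)

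The calibration step, however, has a genuine gap, and neither of your proposed repairs closes it. First, the claim that $\tribar{\WEDGE^p\Phi_x^n}_p=e^{np\beta(\Phi)}$ plus domination forces the one-step expansion on the line $\WEDGE^p\F_x$ to equal $e^{p\beta(\Phi)}$ at \emph{every} $x\in Y$ is unjustified: a one-step deficit $e^{p\beta(\Phi)}(1-\delta)$ at a single point of $Y$ is compatible with all of your uniform bounds, since it enters each Birkhoff product only finitely often; a Birkhoff-sum argument kills the deficit only on supports of invariant measures carried by $Y$, not on all of $Y$ (consider a homoclinic orbit inside $Y$). Second, even granting the volume calibration, passing to $\tribar{\Phi(u)}=e^{\beta(\Phi)}\tribar{u}$ for every $u\in\F_x$ needs, for each such $u$, a basis of $\F_x$ containing a multiple of $u$ with $\tribar{u_1\wedge\cdots\wedge u_p}_p=\tribar{u_1}\cdots\tribar{u_p}$; for a general Finsler norm the existence of such a basis is not established (Auerbach bases do not obviously work: Hadamard's bound only gives comass at most $p^{p/2}$ for the wedge of the dual functionals). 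Third, the fallback norm $N(u)\coloneqq\lim_n e^{-n\beta(\Phi)}\tribar{\Phi^n(u)}$ is calibrated, but your lower bound only yields $c\,\tribar{u}\le N(u)\le\tribar{u}$, which does not identify $N$ with $\tribar{\cdot}$. The paper's mechanism is different and is what you are missing: \cref{p.Oseledets} shows by a Jacobian argument (the nonpositive function $\psi=\log\bigl(\vol\Phi_x(\cB_x)/\vol\cB_{Tx}\bigr)$ integrates to zero against every Lyapunov-maximizing measure, hence vanishes a.e.) that $\Phi$ preserves $\tribar{\cdot}$ on almost every Oseledets space; \cref{c.dimension} then uses the closedness of the set $\K$ of bi-infinitely calibrated vectors \eqref{e.calibrated} to upgrade this to a $p$-dimensional calibrated subspace at \emph{every} point of $M_p(\Phi)$; and finally $\K_x=\F_x$ because domination plus product boundedness force every vector outside $\F_x$ to be expanded in the past. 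Some such passage from ``almost everywhere'' to ``everywhere'' is unavoidable here.
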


In particular, if exactly one of the sets $M_p(\Phi) \setminus M_{p+1}(\Phi)$ is nonempty then we obtain a dominated splitting over the whole Mather set $M(\Phi)$.

Related results were previously obtained by Morris: \cite[Theorem~2.1]{Morris_rapidly} produces a dominated splitting under the weaker assumption of relative product boundedness, but with the strong hypothesis that the set $Y$ is minimal (i.e., all orbits in $Y$ are dense). Assuming existence of an extremal norm, Morris also proves the calibration property of the dominating bundle in his Theorem~2.2. 

For a complement to \cref{t.dom}, see \cref{p.Riem_weak} in \cref{ss.Riemann}.

\subsection{Proof of \texorpdfstring{\cref{t.dom}}{the domination theorem}}

Consider the set of vectors whose bi-infinite orbits under $\Phi$ are \emph{calibrated} with respect to the extremal norm $\tribar{\mathord{\cdot}}$:
\begin{equation}\label{e.calibrated}
\K \coloneqq \big\{ u \in \E \st \tribar{\Phi^n(u)} = e^{n \beta(\Phi)}\tribar{u} \text{ for all } n \in \Z \big\}.
\end{equation}
This is a closed, $\Phi$-invariant subset of $\E$.
Denote its fibers by $\K_x \coloneqq \E_x \cap \K$.

\begin{proposition}\label{p.Oseledets}
There exists a $T$-invariant Borel set $R \subseteq M(\Phi)$ such that:
\begin{itemize}
\item $\mu ( M(\Phi) \setminus R ) = 0$ for every $T$-invariant probability measure $\mu$;
\item for all $x \in R$, the Oseledets space corresponding to the Lyapunov exponent $\beta(\Phi)$ exists and coincides with~$\K_x$.
\end{itemize}
\end{proposition}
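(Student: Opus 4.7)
I take $R$ to be the set of points $x \in M(\Phi)$ that are Lyapunov-regular for $\Phi$ and at which $\K_x = \mathbb{O}_x(\beta)$. Both $\{\mathbb{O}_x(\beta)\}_x$ and $\K$ are $\Phi$-equivariant, so $R$ is Borel and $T$-invariant. Lyapunov regularity holds on a set of full measure for every invariant probability by Oseledets' theorem, so the required conclusion $\mu(M(\Phi)\setminus R) = 0$ reduces, via ergodic decomposition, to showing $\K_x = \mathbb{O}_x(\beta)$ at $\mu$-almost every $x$ for every ergodic invariant probability $\mu$ supported on $M(\Phi)$. Fix such a $\mu$; by \cref{p.Mather_calibration} it is Lyapunov-maximizing, so $\beta$ is its top Lyapunov exponent, with some multiplicity $k \ge 1$.

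The inclusion $\K_x \subseteq \mathbb{O}_x(\beta)$ is immediate: for $u \in \K_x \setminus \{0\}$, the calibration identity $\tribar{\Phi^n u} = e^{n\beta}\tribar{u}$ combined with the uniform equivalence of $\tribar{\mathord{\cdot}}$ and the Riemannian norm (part~(\ref{i.norm_eccentricity}) of \cref{t.extremal}) yields $\lim_{n \to \pm\infty}\tfrac{1}{n}\log\|\Phi^n u\| = \beta$, which characterises membership in the top Oseledets subspace at a Lyapunov-regular point.

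The reverse inclusion $\mathbb{O}_x(\beta) \subseteq \K_x$ is the substantive point, and my plan is to derive it from \cref{t.dom}. Provided that one can produce a nonempty compact $T$-invariant set $Y \subseteq M_k(\Phi) \setminus M_{k+1}(\Phi)$ of positive $\mu$-measure, \cref{t.dom} supplies a $\Phi$-invariant subbundle $\F$ of dimension $k$ over $Y$ with the calibration property $\tribar{\Phi u} = e^\beta\tribar{u}$ for every $u \in \F$; iterating this identity together with the $\Phi$-invariance of $\F$ yields $\F \subseteq \K$. Combining this with the already established $\K \subseteq \mathbb{O}(\beta)$ and the dimensional equality $\dim \F_x = k = \dim \mathbb{O}_x(\beta)$ at $\mu$-a.e. $x \in Y$ forces $\F_x = \mathbb{O}_x(\beta) \subseteq \K_x$ on a positive-$\mu$-measure subset of $M(\Phi)$, and then ergodicity together with the $\Phi$-equivariance of both $\mathbb{O}(\beta)$ and $\K$ propagates this to $\mu$-a.e. $x \in M(\Phi)$.

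The main obstacle is the existence of such a $Y$. One has to establish that $\supp \mu$ is disjoint from $M_{k+1}$ — otherwise, since $T$ is merely bi-Lipschitz, the natural closed sets $\supp \mu \cap \{\dd(\mathord{\cdot},M_{k+1}) \ge \epsilon\}$ fail to be $T$-invariant, and by Poincar\'e recurrence their $T$-invariant hulls have $\mu$-measure zero. To rule out the alternative $\supp \mu \cap M_{k+1} \ne \emptyset$, one applies \cref{t.dom} to compact $T$-invariant subsets of higher strata $M_p\setminus M_{p+1}$ (with $p > k$): such a subset would carry a $p$-dimensional dominating and calibrated subbundle contained in $\K \subseteq \mathbb{O}(\beta)$, contradicting $\dim \mathbb{O}_x(\beta) = k < p$ at any $\mu$-typical point contained in it. Once $\supp \mu \cap M_{k+1} = \emptyset$ is secured, one can simply take $Y = \supp \mu$ itself.
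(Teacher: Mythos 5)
Your proposal has a fatal circularity. You invoke \cref{t.dom} (the dominated splitting theorem over Mather sets) to establish the inclusion $\mathbb{O}_x(\beta) \subseteq \K_x$. But in the paper \cref{t.dom} is proved \emph{after} this proposition and \emph{depends} on it: the proof of \cref{t.dom} rests on \cref{c.dimension}, whose proof in turn cites \cref{p.Oseledets} to produce points $x_i \to x$ where $\mathbb{O}_{x_i}(\beta) = \K_{x_i}$. So you are using \cref{p.Oseledets} to prove \cref{p.Oseledets}. Unless you have an independent proof of \cref{t.dom} (or at least of \cref{c.dimension}) that bypasses the proposition at hand, the argument does not close.

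Even setting circularity aside, the step producing the set $Y$ is flawed. You want $\supp\mu$ to be disjoint from $M_{k+1}(\Phi)$ so that you may take $Y = \supp\mu \subseteq M_k \setminus M_{k+1}$, and you propose to rule out $\supp\mu \cap M_{k+1} \neq \emptyset$ by deriving a contradiction from \cref{t.dom} applied on higher strata. But that intersection can well be nonempty yet $\mu$-null, in which case no contradiction arises (there are no ``$\mu$-typical points'' in the higher-stratum set to test). Concretely, in \cref{ex.non_space} over a transitive base, a full-support ergodic measure $\mu$ has $\supp\mu = X = M_1(\Phi)$ and multiplicity $k=1$, while $M_2(\Phi) = \{x_0\}$ is a $\mu$-null singleton inside $\supp\mu$. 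So $\supp\mu$ is \emph{not} contained in $M_k\setminus M_{k+1}$, and the ``Poincar\'e recurrence'' aside does not repair this; it actually points to the obstruction (you cannot carve a compact invariant $Y$ of positive measure that avoids $M_{k+1}$ by metric thickening).

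The paper's proof is of a completely different nature and avoids both issues: fixing a Lyapunov-maximizing ergodic $\mu$, one looks at the non-positive function $\psi(x) = \log\bigl(\vol(\Phi_x(\cB_x))/\vol(\cB_{Tx})\bigr)$, where $\cB_x$ is the $\tribar{\mathord{\cdot}}$-unit ball inside $\mathbb{O}_x(\beta)$. Extremality gives $\psi \le 0$; $\psi$ is cohomologous to $\log\det\bigl(\Phi_x|_{\mathbb{O}_x(\beta)}\bigr)$, whose $\mu$-integral is zero by Oseledets; hence $\psi = 0$ $\mu$-a.e., which means $\Phi_x$ preserves $\tribar{\mathord{\cdot}}$ on $\mathbb{O}_x(\beta)$ for $\mu$-a.e.\ $x$, giving $\mathbb{O}_x(\beta) \subseteq \K_x$ directly and without any appeal to dominated splittings. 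You should adopt this volume-contraction argument, or supply an independent route to \cref{c.dimension}.
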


\begin{proof}
Without loss of generality, assume that $\beta(\Phi)=0$.
Let $R_0 \subseteq X$ be the Borel set of 
points that satisfy the conclusions of Oseledets theorem. 
For each $x \in R_0 \cap M(\Phi)$, the Oseledets space $\mathbb{O}_x = \mathbb{O}_x(0) \subseteq \E_x$ is well-defined and has positive dimension, say $p(x)$.
Calibrated vectors have zero Lyapunov exponent, so $\K_x \subseteq \mathbb{O}_x$.
Consider the unit balls on these Oseledets spaces, i.e., $\cB_x \coloneqq \{ u \in \mathbb{O}_x \st \tribar{u} \le 1\}$.
Then $\Phi_x(\cB_{x}) \subseteq \cB_{Tx}$, and therefore the following function is non-positive:
$$
\psi(x) \coloneqq \log \frac{\vol(\Phi_x(\cB_{x}))}{\vol(\cB_{Tx})} \, .
$$
Here $\vol$ means $p(x)$-dimensional volume with respect to a fixed Riemannian norm on the bundle $\E$; of course, the choice of this metric does not affect the function $\psi$.
Then $\psi$ is cohomologous to the function
$\tilde \psi(x) \coloneqq \log \det \Phi(x)|_{\mathbb{O}_x}$, where $\det$ denotes the signless determinant induced by the Riemannian metric (see \cite[p.~213]{Arnold}); indeed $\psi = \tilde \psi + \phi - \phi\circ T$ where $\phi(x) \coloneqq \log \vol(\cB_{x})$.
All these functions are Borel measurable and bounded.
Let $\mu$ be any $T$-invariant probability measure supported on $M(\Phi)$, that is, any Lyapunov maximizing measure.
As a consequence of Oseledets theorem, we have $\int \tilde\psi \, d\mu = 0$ (see \cite[p.~214]{Arnold}). Since $\psi$ is cohomologous to $\tilde \psi$, its integral is zero as well. But $\psi \le 0$, so $\psi = 0$ $\mu$-a.e.
Let $R_1 \coloneqq \{ x \in R_0 \cap M(\Phi) \st \psi(x)=0 \}$ and $R \coloneqq \bigcap_{n \in \Z} T^{-n}(R_1)$; then $\mu (R) = 1$.
Noting that $\psi(x) = 0$ if and only if $\Phi(x)|_{\mathbb{O}_x}$ preserves $\tribar{\mathord{\cdot}}$, we see that if $x \in R$ then $\mathbb{O}_x \subseteq \K_x$.
As remarked before, the reverse inclusion is automatic, so $\mathbb{O}_x = \K_x$ for every $x \in R$.
Since set $R$ has full measure with respect to any Lyapunov maximizing measure, the set $M(\Phi) \setminus R$ has zero measure with respect to any $T$-invariant probability measure, as we wanted to show.
\end{proof}

\begin{corollary}\label{c.dimension}
Suppose $\Phi$ admits an extremal norm $ \tribar{\cdot}$.
For each $p \in \ldbrack 1, d \rdbrack$ and $x \in M_p(\Phi)$, the set $\K_x$ contains a vector space of dimension $p$.
\end{corollary}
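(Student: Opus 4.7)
The plan is to produce an invariant probability $\mu$ supported on all of $M_p(\Phi)$ whose first $p$ Lyapunov exponents equal $\beta(\Phi)$, then combine Oseledets' theorem with \cref{p.Oseledets} to deduce that $\K_x$ contains a $p$-dimensional vector subspace on a $\mu$-conull subset of $M_p(\Phi)$, and finally upgrade ``dense'' to ``everywhere'' using that $\K \subseteq \E$ is closed.

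For the first step, I would apply \cref{p.Mather_support} to the exterior power automorphism $\WEDGE^p \Phi$, obtaining an invariant probability $\mu$ with $\supp \mu = M(\WEDGE^p \Phi)$ and $\chi_1(\WEDGE^p \Phi, \mu) = \beta(\WEDGE^p \Phi)$. Since $M_p(\Phi) \ne \emptyset$ by hypothesis, \cref{p.Mathers} gives $M(\WEDGE^p \Phi) = M_p(\Phi)$ and $\beta(\WEDGE^p \Phi) = p\beta(\Phi)$. Combining the identity $\chi_1(\WEDGE^p\Phi,\mu) = \sum_{i=1}^p \chi_i(\Phi,\mu)$ from \eqref{e.Lyapunov_wedge} with the bound $\chi_i(\Phi,\mu) \le \beta(\Phi)$ forces $\chi_i(\Phi, \mu) = \beta(\Phi)$ for every $i \in \ldbrack 1,p \rdbrack$. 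Decomposing $\mu$ into ergodic components $\mu = \int \mu_\omega\, dP(\omega)$ and using $\chi_1(\WEDGE^p\Phi,\mu) = \int \chi_1(\WEDGE^p\Phi,\mu_\omega)\, dP(\omega)$ together with the same saturation argument, the same equalities $\chi_1(\Phi,\mu_\omega) = \cdots = \chi_p(\Phi,\mu_\omega) = \beta(\Phi)$ hold for $P$-a.e.\ ergodic~$\mu_\omega$.

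Now, for each such ergodic $\mu_\omega$, Oseledets' theorem gives, on a $\mu_\omega$-conull set, the Oseledets subspace $\mathbb{O}_x(\beta(\Phi)) \subseteq \E_x$ of dimension equal to the multiplicity of $\beta(\Phi)$, which is at least $p$. Intersecting with the set $R$ from \cref{p.Oseledets}, which is $\mu_\omega$-conull since $\supp \mu_\omega \subseteq M_p(\Phi) \subseteq M(\Phi)$, we get $\K_x = \mathbb{O}_x(\beta(\Phi))$ on this intersection. Hence the set
$$
Y_p \coloneqq \bigl\{ x \in M_p(\Phi) \st \K_x \text{ contains a vector subspace of dimension } p \bigr\}
$$
is $\mu_\omega$-conull for $P$-a.e.~$\omega$, so $\mu$-conull, and therefore dense in $\supp\mu = M_p(\Phi)$.

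Finally, I claim that $Y_p$ is closed, which combined with denseness finishes the proof. Suppose $x_n \to x$ in $M_p(\Phi)$ with each $\K_{x_n}$ containing a $p$-dimensional subspace $V_n$. By compactness of the Grassmann bundle $\Gr_p(\E)$, a subsequence of $V_n$ converges to some $p$-dimensional $V \subseteq \E_x$. Since $\K$ is closed in $\E$ (immediate from its defining condition \eqref{e.calibrated} together with the continuity of the $\Phi^n$ and of $\tribar{\mathord{\cdot}}$), every limit of vectors in $\K$ lies in $\K$, so $V \subseteq \K_x$ and $x \in Y_p$. The only real subtlety in the plan is bookkeeping around non-ergodic measures, for which the individual Lyapunov exponents $\chi_i(\Phi, \mu)$ are defined only implicitly through \eqref{e.Lyapunov_wedge} rather than by a direct pointwise ergodic average; passing through $\WEDGE^p \Phi$ keeps this step entirely clean by reducing everything to a top-exponent statement already handled by \cref{p.Mather_support}.
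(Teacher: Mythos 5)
Your proof is correct and follows essentially the same route as the paper: fix an invariant measure $\mu$ supported on all of $M_p(\Phi)$ whose first $p$ Lyapunov exponents equal $\beta(\Phi)$, use \cref{p.Oseledets} to identify $\K_x$ with an Oseledets space of dimension $\ge p$ on a $\mu$-conull set, and take a Grassmannian limit using that $\K$ is closed. The extra care you take---constructing $\mu$ by applying \cref{p.Mather_support} to $\WEDGE^p\Phi$ and invoking \cref{p.Mathers}, and verifying the a.e.\ multiplicity via ergodic decomposition---merely fills in details the paper leaves implicit; the underlying argument is the same.
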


\begin{proof}
Let $\mu$ be a measure whose $p$ first Lyapunov exponents equal $\beta(\Phi)$ and whose support equals $M_p(\Phi)$.
Given $x \in M_p(\Phi)$, take a sequence of neighborhoods $U_i$ converging to $x$.
Since $\mu(U_i)>0$, by \cref{p.Oseledets} we can find $x_i \in U_i$ such that the Oseledets space corresponding to the Lyapunov exponent $\beta(\Phi)$ exists and coincides with~$\K_{x_i}$. Moreover, these spaces have dimensions at least $p$.
Passing to subsequences, we can assume that these dimensions are constant equal to some $q \ge p$, and that $\K_{x_i}$ converges to some $q$-dimensional space $V$. As $\K$ is a closed subset of $\E$, we conclude that $V \subseteq \K_x$, completing the proof.
\end{proof}

\begin{remark}\label{r.exceptional_fibers}
It is not necessarily the case that $\K_x$ is a subspace: see \cref{ex.non_space} in \cref{ss.examples_calibrated}.
On the other hand, if $\K_x$ is a subspace, then by \cref{c.dimension} its dimension is at least the number $p$ such that $x \in M_p(\Phi) \setminus M_{p+1}(\Phi)$. 
However, it is not necessarily true that $\dim \K_x  = p$: see \cref{ex.bad_dim} in \cref{ss.examples_calibrated}.
\end{remark}

\begin{proof}[Proof of \cref{t.dom}]
As usual, it is sufficient to consider $\beta(\Phi) = 0$.

In the case $p=d$, we have $Y \subseteq M_d(\Phi)$ and so by \cref{c.dimension} the extremal norm is preserved along the bundle~$\E_Y$. So the trivial splitting $\E_Y \oplus 0$ has the required properties.
So let us suppose that $p<d$.

Fix a Riemannian norm $\| \mathord{\cdot} \|$ on $\E$.
For each $x \in Y$, by \cref{c.dimension} the fiber $\E_x$ contains a $p$-dimensional subspace formed by vectors $u$ such that for every $n \ge 0$, we have $\tribar{\Phi^n(u)} = \tribar{u}$ and therefore 
$c_1^{-1} \|u\| \le \| \Phi^n(u) \| \le c_1 \|u\|$, for some constant $c_1 \ge 1$.
Recalling the maxmin characterization of singular values \eqref{e.maxmin}, we conclude that:
\begin{equation}\label{e.almost_calibration}
c_1^{-1} \le \sigma_p(\Phi^n_x) \le \cdots \le \sigma_1(\Phi^n_x) \le c_1 \quad \text{for all $x \in Y$ and $n \ge 0$.} 
\end{equation}
On the other hand, note that the set $M_{p+1} (\Phi|_{\E_Y})$ is contained in $Y \cap M_{p+1}(\Phi)$ and therefore is empty.
So \cref{p.Mathers} yields
$$
\beta \left( \WEDGE^{p+1} (\Phi|_{\E_Y}) \right) < (p+1) \beta(\Phi|_{\E_Y}) = 0 \, .
$$
Recalling that $\beta(\mathord{\cdot})$ can also be characterized by \eqref{e.beta_other}, we conclude that there exist positive constants $c_2$ and $\tau$ such that for all $x \in Y$ and $n \ge 0$,
$$
\| \WEDGE^{p+1} \Phi^n_x \| \le c_2 e^{-\tau n} \, .
$$
So, using \eqref{e.singular_wedge} and \eqref{e.almost_calibration}, we have:
$$
\sigma_{p+1} (\Phi^n_x) = \frac{\| \WEDGE^{p+1} \Phi^n_x \|}{\| \WEDGE^p \Phi^n_x \|} \le c_1^p c_2 e^{-\tau n}
$$
and $\sigma_p (\Phi^n_x) \ge c_1^{-1}$. 
So we have a uniform exponential gap between the $p$-th and $p+1$-th singular values.
By \cref{t.BG}, the bundle $\E_Y$ admits a dominated splitting $\F \oplus \G$ with a dominating bundle $\F$ of dimension $p$.

To conclude, we need to check that $\Phi$ preserves the extremal norm along the bundle~$\F$.
We will actually show that, in terms of notation \eqref{e.calibrated}, $\K_x = \F_x$ for every $x \in Y$.
Since $\Phi$ is product bounded, domination implies that vectors in $\G$ are uniformly contracted in the future, and therefore uniformly expanded in the past.
Furthermore, any vector in $u \in \E_x \setminus \F_x$ is uniformly expanded in the past, since we can write $u = v + w$ with $v \in \F_x$, $w \in \G_x \setminus \{0\}$ and then
$$
\tribar{\Phi^{-n}(u)} \ge \tribar{\Phi^{-n}(w)} \left( 1 - \frac{\tribar{\Phi^{-n}(v)}}{\tribar{\Phi^{-n}(w)}} \right) \to \infty \quad \text{as $n \to +\infty$ . }
$$
In particular, vectors in $\E_x \setminus \F_x$ cannot be calibrated; that is, $\K_x \subseteq \F_x$. This inclusion cannot be strict, thanks to \cref{c.dimension}.
So $\K_x = \F_x$, as claimed.
\end{proof}

\section{Further applications of extremal norms and Mather sets}\label{s.app}

\subsection{Subordination}\label{ss.subordination}

By definition, the Mather set $M(\Phi)$ contains the support of every Lyapunov maximizing measure. Let us see that the converse holds under the hypothesis of strong fiber-bunching, regardless of reducibility:

\begin{theorem}\label{t.subordination}
Let $T$ be a transitive hyperbolic homeomorphism.
Let $\Phi$ be strongly bunched automorphism covering $T$.
Then every $T$-invariant probability measure whose support is contained in the Mather set $M(\Phi)$ is Lyapunov maximizing.
\end{theorem}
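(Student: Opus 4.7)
My plan is to induct on the fiber dimension $d=\dim\E$. The case $d=1$ is immediate because a line bundle has no nontrivial subbundles, so $\Phi$ is vacuously irreducible; then \cref{c.extremal} (applicable since strong bunching coincides with fiber-bunching when $d\le 2$) delivers an extremal norm, and \cref{p.Mather_calibration} forces every invariant measure supported on $M(\Phi)$ to be Lyapunov maximizing. The same argument closes the inductive step whenever $\Phi$ is irreducible, so only the reducible case needs work: there I fix a proper $\theta$-H\"older $\Phi$-invariant subbundle $\F\subset\E$ and pass to the restricted and quotient automorphisms $\Phi|_\F$ and $\nicefrac{\Phi}{\F}$, which are strongly bunched (as noted just before \cref{t.polynomial}) and act on bundles of strictly smaller dimension, so the inductive hypothesis applies to each.

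The crux is the set-theoretic inclusion
\[
M(\Phi)\;\subseteq\;M(\Phi|_\F)\,\cup\,M(\nicefrac{\Phi}{\F}).
\]
To establish it, I take an ergodic Lyapunov-maximizing measure $\nu$ for $\Phi$ and combine the identity \eqref{e.split_chi}, $\chi_1(\Phi,\nu)=\max\{\chi_1(\Phi|_\F,\nu),\chi_1(\nicefrac{\Phi}{\F},\nu)\}$, with \eqref{e.split_beta}, $\beta(\Phi)=\max\{\beta(\Phi|_\F),\beta(\nicefrac{\Phi}{\F})\}$: matching the two maxima forces at least one of $\chi_1(\Phi|_\F,\nu)=\beta(\Phi|_\F)$ or $\chi_1(\nicefrac{\Phi}{\F},\nu)=\beta(\nicefrac{\Phi}{\F})$ to hold, so $\supp\nu$ is contained in one of the two subsystem Mather sets. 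A refinement falls out for free: if $\beta(\Phi|_\F)<\beta(\Phi)$, every ergodic $\Phi$-maximizer has its $\F$-exponent strictly below $\beta(\Phi)$, hence its quotient exponent must attain the full $\beta(\Phi)$, so in fact $M(\Phi)\subseteq M(\nicefrac{\Phi}{\F})$; symmetrically for $\beta(\nicefrac{\Phi}{\F})<\beta(\Phi)$.

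To finish, let $\mu$ be invariant with $\supp\mu\subseteq M(\Phi)$. By ergodic decomposition and the integral representation of $\chi_1(\Phi,\mu)$ in terms of the almost-everywhere-defined pointwise Lyapunov exponent, I may assume $\mu$ ergodic. If one of $\beta(\Phi|_\F)$, $\beta(\nicefrac{\Phi}{\F})$ is strictly smaller than $\beta(\Phi)$, the sharpened inclusion embeds $\supp\mu$ in the Mather set of whichever subsystem attains $\beta(\Phi)$, the inductive hypothesis yields $\chi_1=\beta(\Phi)$ there, and \eqref{e.split_chi} propagates the equality to $\Phi$. Otherwise $\beta(\Phi|_\F)=\beta(\nicefrac{\Phi}{\F})=\beta(\Phi)$, and ergodicity forces $\mu$ to give full measure to at least one of the two closed invariant sets $M(\Phi|_\F)$, $M(\nicefrac{\Phi}{\F})$; applying induction to the corresponding subsystem again delivers Lyapunov-maximization for $\mu$.

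The main point requiring care is the legitimacy of the two splitting identities \eqref{e.split_beta} and \eqref{e.split_chi}, which the paper records with only a pointer to \cite{B_Oseledets}; once these are in hand, the rest is pure combinatorial bookkeeping on Mather sets, with no new analytic input needed.
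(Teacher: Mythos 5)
Your proof is correct and follows essentially the same route as the paper's: induction on fiber dimension, the irreducible case handled by producing an extremal norm (\cref{c.extremal}) and invoking \cref{p.Mather_calibration}, and the reducible step carried out by applying the inductive hypothesis to $\Phi|_\F$ and $\nicefrac{\Phi}{\F}$ together with the splitting identities \eqref{e.split_beta}, \eqref{e.split_chi} and the ergodicity argument. The only cosmetic difference is that you first record the uniform inclusion $M(\Phi)\subseteq M(\Phi|_\F)\cup M(\nicefrac{\Phi}{\F})$ and its refinement, whereas the paper runs the same reasoning directly through three sub-cases on $\beta(\Phi|_\F)$ versus $\beta(\nicefrac{\Phi}{\F})$.
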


\begin{proof}
Let $\Phi \colon \E \to \E$ be a strongly bunched automorphism and let $\nu$ be a $T$-invariant probability measure whose support is contained in $M(\Phi)$; we want to prove that $\chi_1(\Phi, \nu) = \beta(\Phi)$.
By ergodic decomposition, it is sufficient to consider the case of ergodic $\nu$.

If $\Phi$ is irreducible (which is certainly the case if $d=1$) then $\Phi$ is spannable by \cref{t.irr_to_span}, and so $\Phi$ admits an extremal norm by \cref{t.extremal}. Then \cref{p.Mather_calibration} yields the desired conclusion.

From now on, assume that $\Phi$ is reducible, that is, there exists a $\theta$-H\"older $\Phi$-invariant nontrivial subbundle $\F \subset \E$.
By induction on dimension, we can assume that the \lcnamecref{t.subordination} holds for the restricted automorphism $\Phi|_{\F}$ and the quotient automorphism $\nicefrac{\Phi}{\F}$.
Recall from \eqref{e.split_beta} that $\beta(\Phi) = \max\{ \beta(\Phi|_{\F}) , \beta(\nicefrac{\Phi}{\F}) \}$.
 
As a first case, suppose that $\beta(\Phi) = \beta(\Phi|_{\F}) > \beta(\nicefrac{\Phi}{\F})$.
Then it follows from \eqref{e.split_chi} that an ergodic measure $\mu$ is Lyapunov maximizing for $\Phi$ if and only if it is Lyapunov maximizing for $\Phi|_{\F}$.
Therefore the Mather sets coincide: $M(\Phi) = M(\Phi|_{\F})$.
The measure $\nu$ fixed at the beginning is supported on this set; so, by the induction hypothesis, it is Lyapunov maximizing for $\Phi|_{\F}$, that is, it is Lyapunov maximizing for $\Phi$, as we wanted to show.

The second case where $\beta(\Phi) = \beta(\nicefrac{\Phi}{\F}) > \beta(\Phi|_{\F})$ is entirely analogous.

In the last case, we have  $\beta(\Phi) = \beta(\Phi|_{\F}) = \beta(\nicefrac{\Phi}{\F})$.
Then it follows from \eqref{e.split_chi} that an ergodic measure $\mu$ is Lyapunov maximizing for $\Phi$ if and only if it is Lyapunov maximizing for $\Phi|_{\F}$ or for $\nicefrac{\Phi}{\F}$.
Therefore $M(\Phi) = M(\Phi|_{\F}) \cup M(\nicefrac{\Phi}{\F})$.
So the measure $\nu$ fixed at the beginning has a support contained in the union of the two closed $T$-invariant sets 
$M(\Phi|_{\F})$ and $M(\nicefrac{\Phi}{\F})$.
By ergodicity, 
this support must be contained in one of the two sets. 
By the induction hypothesis, $\nu$ is Lyapunov maximizing for $\Phi|_{\F}$ or for $\nicefrac{\Phi}{\F}$.
In either case, it is Lyapunov maximizing for $\Phi$, as we wanted to show.
\end{proof}

\subsection{Lyapunov almost-maximizing periodic orbits of low period}

Let $\Phi$ be a $\theta$-H\"older automorphism covering a hyperbolic homeomorphism.
For each integer $n \ge 1$, let
$$
\beta_n(\Phi) \coloneqq \max \big\{\chi_1(\Phi, \mu) \st \text{$\mu$ is supported on a periodic orbit of period $\le n$} \big\} \, .
$$
This is a bounded non-decreasing sequence, 
and so it is convergent.
Actually, the limit is:
\begin{equation}\label{e.BergerWang}
\lim_{n \to \infty} \beta_n(\Phi)  = \beta(\Phi) \, .
\end{equation}
Indeed, this follows from a much more general result of Kalinin \cite[Theorem 1.4]{Kalinin} on the approximation of Lyapunov exponents using measures supported on periodic orbits.
In the case of one-step cocycles, formula \eqref{e.BergerWang} is known as the \emph{Berger--Wang theorem}, and it was first proved in \cite{BWang}. 
For other extensions of Berger--Wang theorem, see \cite{Oregon,BreuF}.

It is quite possible that the limit \eqref{e.BergerWang} is attained for some finite $n$ (and indeed this is expected to be the typical situation).
On the other hand, in the worst-case scenario, what can we say about the speed of the approximation in formula \eqref{e.BergerWang}?
A result of Morris \cite{Morris_rapidly} says that for one-step cocycles, this speed is always superpolynomial.
Here we show that the same is true for strongly bunched automorphisms:

\begin{theorem}\label{t.super_pol}
If $\Phi$ is a strongly bunched automorphism then for every $\tau>0$, 
$$
\beta(\Phi) - \beta_n(\Phi) = O(n^{-\tau}) \, .
$$
\end{theorem}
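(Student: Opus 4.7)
The overall approach follows the strategy developed by Morris \cite{Morris_rapidly} for one-step cocycles, adapted to the present setting using the tools built in this paper.

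I would first reduce to the irreducible case by induction on the fibre dimension $d$. If $\Phi$ admits a $\theta$-H\"older $\Phi$-invariant nontrivial subbundle $\F \subset \E$, then both $\Phi|_\F$ and $\nicefrac{\Phi}{\F}$ are strongly bunched of strictly smaller rank. By formula \eqref{e.split_chi}, for every ergodic $T$-invariant probability measure (in particular for every measure supported on a periodic orbit) one has $\chi_1(\Phi,\mu) = \max\{\chi_1(\Phi|_\F,\mu), \chi_1(\nicefrac{\Phi}{\F},\mu)\}$, whence $\beta_n(\Phi) = \max\{\beta_n(\Phi|_\F), \beta_n(\nicefrac{\Phi}{\F})\}$; combined with \eqref{e.split_beta}, this transfers the inductive hypothesis to $\Phi$.

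For the irreducible case, \cref{t.irr_to_span} yields spannability and \cref{t.extremal} supplies an extremal norm $\tribar{\cdot}$. Let $p$ be the largest index with $M_p(\Phi) \neq \emptyset$. By \cref{t.dom}, the bundle $\E_{M_p(\Phi)}$ admits a dominated splitting $\F \oplus \G$ whose $p$-dimensional dominating subbundle $\F$ is calibrated: $\tribar{\Phi_x u} = e^{\beta(\Phi)}\tribar{u}$ for $u \in \F_x$ and $x \in M_p(\Phi)$. By robustness of dominated splittings, $\F$ extends continuously to a neighborhood of $M_p(\Phi)$, and H\"older continuity of $\Phi$ and $\tribar{\cdot}$ yields an approximate calibration there, with error of order $\dist(\,\cdot\,, M_p(\Phi))^{\theta'}$ per iterate for an appropriate exponent $\theta' > 0$.

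The heart of the argument produces, via shadowing, periodic orbits of $T$ of period $\le n$ with near-maximal Lyapunov exponent. Fix an ergodic measure $\mu$ with top $p$ Lyapunov exponents equal to $\beta(\Phi)$; its support lies in a single basic set $B$ of Smale's spectral decomposition of $T$, and Bowen's specification property holds on $B$. For any $\delta > 0$, specification produces a periodic orbit $y$ of period at most $n$ that $\delta$-shadows an orbit segment of length $n - O(|\log\delta|)$ in $\supp\mu \subseteq M_p(\Phi)$, and the Anosov closing lemma ensures the shadowing distances decay exponentially away from the endpoints. Working with the one-dimensional bundle $\WEDGE^p \F_y$ and the exterior-power cocycle $\WEDGE^p \Phi$, the exponentially decaying shadowing distances yield a \emph{summable} total H\"older error of $O(\delta^{\theta'})$, independent of $n$, and the polynomial bound \cref{t.polynomial} applied to $\WEDGE^p \Phi$ (combined with $\beta(\WEDGE^p \Phi) = p\beta(\Phi)$ from \cref{p.Mathers}) controls the contribution of the gap portion. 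This yields
\[
|\det (\Phi_y^n|_{\F_y})| = \|\WEDGE^p \Phi_y^n|_{\WEDGE^p \F_y}\| \ge (1 - O(\delta^{\theta'})) \, e^{n p \beta(\Phi)}.
\]
Since the spectral radius of any $p \times p$ linear map dominates the geometric mean of its eigenvalue moduli, $\rho(\Phi_y^n) \ge \rho(\Phi_y^n|_{\F_y}) \ge |\det (\Phi_y^n|_{\F_y})|^{1/p} \ge (1 - O(\delta^{\theta'})) e^{n\beta(\Phi)}$, hence $\chi_1(\Phi,\mu_y) \ge \beta(\Phi) - O(\delta^{\theta'})/n$. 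Taking $\delta = n^{-\tau/\theta'}$ for arbitrary $\tau > 0$ delivers the superpolynomial rate.

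The principal obstacle is the simultaneous control of the shadowing error on the main portion of the orbit and the ``gap'' portion introduced by Bowen's specification, where the periodic orbit is not pinned to $M_p(\Phi)$. A naive estimate using the crude bound $\|\Phi^{-1}\| \le K$ along the gap would contribute an unacceptable $O(|\log\delta|/n)$ correction, limiting the rate to $O(n^{-(1-\epsilon)})$. Two ingredients are crucial to circumvent this: first, passing to the \emph{spectral radius} of $\Phi_y^n|_{\F_y}$ via the exterior-power determinant, rather than to the operator norm of $\Phi_y^n$, so that the exponentially decaying shadowing distances produce a H\"older error summable to $O(\delta^{\theta'})$ \emph{independent of $n$}; and second, using the polynomial bound of \cref{t.polynomial} on $\WEDGE^p \Phi$ on the gap portion so its contribution enters only through sub-leading terms. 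Carefully coupling these with the H\"older extension of the dominating bundle and the calibration is the technical heart of the argument.
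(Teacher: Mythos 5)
Your reduction to the irreducible case and your overall setup (extremal norm via \cref{t.extremal}, dominated splitting over $M_p(\Phi)$ via \cref{t.dom}, H\"older extension to an invariant neighborhood) match the paper's. The crucial divergence is in how you produce good periodic orbits, and this is where your argument breaks down.

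The paper's proof rests on the Bressaud--Quas Closing Lemma (\cref{t.BQ}), which yields, for any $\tau>0$ and large $n$, a periodic orbit of period $\le n$ lying \emph{entirely} in the $n^{-\tau}$-neighborhood of $Y=M_p(\Phi)$. Because the entire orbit stays in the region where the H\"older-perturbed calibration of the dominating bundle holds, one gets a uniform per-iterate lower bound $\tribar{\Phi(u)} \ge e^{\beta(\Phi) - C_0 \dd(x,Y)^\rho}\tribar{u}$ for $u\in\F_x$, and the conclusion follows directly, with no gap to account for. You do not invoke this lemma; instead you use Bowen specification plus the classical Anosov closing lemma, which necessarily introduces a gap segment of length $L=N(\delta)=O(\log(1/\delta))$ on which the periodic orbit wanders away from $M_p(\Phi)$. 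You correctly identify that a naive estimate on this gap limits the rate, but your proposed fix does not work, for two reasons. First, the dominating bundle $\F$ is only defined (and only $\Phi$-invariant) over the maximal invariant subset $Z$ of a neighborhood of $M_p(\Phi)$; the specification-produced periodic orbit typically exits $Z$ during the gap, so $\F_{T^i y}$ is undefined there, $\F_y$ is not $\Phi_y^n$-invariant, and the inequality $\rho(\Phi_y^n)\ge\rho(\Phi_y^n|_{\F_y})$ as well as the whole determinant computation along $\F$ become meaningless. Second, even if $\F$ were somehow available along the gap, \cref{t.polynomial} and the extremality bound give only \emph{upper} estimates $\|\WEDGE^p\Phi_x^m\|=O(m^{d'}e^{mp\beta(\Phi)})$, whereas you need a \emph{lower} bound on $|\det(\Phi_x^m|_{\F_x})|$ over the gap; the lowest singular value there can be as small as $K^{-m}$, and nothing in your argument prevents this. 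The missing ingredient is precisely \cref{t.BQ}: by giving a periodic orbit uniformly close to $Y$ throughout, it dissolves the gap problem at the source rather than trying to patch it at the cocycle level.
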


The first result of superpolynomial approximation was actually obtained in the context of ergodic optimization of Birkhoff averages by Bressaud and Quas \cite{BQuas}, who also showed that this type of bound is essentially sharp. 
The key ingredient is a quantitative version of Anosov Closing Lemma, also due to Bressaud and Quas \cite{BQuas}, which we state as follows:

\begin{theorem}[Bressaud--Quas Closing Lemma]\label{t.BQ}
Let $T \colon X \to X$ be a hyperbolic homeomorphism.
Let $Y \subseteq X$ be a nonempty compact $T$-invariant set.
Then for every $\tau>0$ and every sufficiently large $n$, there exists a periodic orbit of period at most $n$ entirely contained in the $n^{-\tau}$-neighborhood of $Y$.
\end{theorem}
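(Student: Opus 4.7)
My plan is to derive the closing lemma by combining a combinatorial pigeonhole on cylinders with the shadowing inherent to hyperbolicity. The first step is to pass to a symbolic model: since $T$ is a hyperbolic homeomorphism, it admits a Markov partition and hence a finite-to-one, bi-H\"older semiconjugacy $\pi \colon \Sigma \to X$ from a subshift of finite type $\sigma \colon \Sigma \to \Sigma$ equipped with the ultrametric \eqref{e.ultrametric}. The map $\pi$ distorts distances by at most a constant factor on each rectangle and carries periodic orbits to periodic orbits of the same period, so it suffices to establish the conclusion for $\sigma$ acting on the compact $\sigma$-invariant set $Y' \coloneqq \pi^{-1}(Y) \subseteq \Sigma$.

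In the shift setting the ``closing'' step is essentially tautological: if two iterates $\sigma^i y$ and $\sigma^j y$ of a point $y \in Y'$ agree on the central cylinder of length $2k+1$ --- equivalently, $d(\sigma^i y, \sigma^j y) \le e^{-\lambda k}$ --- then the admissible periodic concatenation of the word $y_i y_{i+1} \cdots y_{j-1}$ produces a periodic point $p$ of period $j-i$ whose entire orbit lies within $e^{-\lambda k}$ of the segment $\{\sigma^i y, \ldots, \sigma^{j-1} y\} \subseteq Y'$, hence within that distance of $Y$. The task therefore reduces to locating, along the orbit of some well-chosen $y \in Y'$, a matching pair of iterates with $j - i \le n$ and $k \ge (\tau/\lambda) \log n$.

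The main obstacle is balancing the cylinder count. A crude pigeonhole on $n+1$ consecutive iterates of an arbitrary $y$ controls $k$ only by $\log n$ divided by $h_\mathrm{top}(\sigma|_{Y'})$, and therefore yields a fixed polynomial rate rather than arbitrary $\tau$. The Bressaud--Quas refinement, which I would adapt, is to exploit that compact invariant subsets of hyperbolic symbolic systems carry ergodic measures of arbitrarily small Kolmogorov--Sinai entropy --- obtained for instance by passing to a suitable minimal subsystem of $Y'$, or by averaging low-period approximants provided by the density of periodic orbits in the non-wandering set of $\sigma$. Fixing such a measure $\mu$ with $h_\mu$ much smaller than $1/\tau$ and taking $y$ generic for $\mu$, the Shannon--McMillan--Breiman theorem concentrates the length-$(2k+1)$ cylinder types visited by a polynomially long orbit segment of $y$ onto a set of cardinality roughly $\exp\!\bigl(h_\mu(2k+1) + o(k)\bigr)$; arranging this count to stay below $n$ while keeping $k \ge (\tau/\lambda) \log n$ produces the desired matching pair via the averaged pigeonhole. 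Lifting the resulting periodic orbit of $\sigma$ back through $\pi$ then yields a periodic orbit of $T$ of period at most $n$ contained in the $n^{-\tau}$-neighborhood of $Y$, as required.
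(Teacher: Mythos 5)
The weak point is the key counting step. Your reduction to a subshift via Markov partitions is legitimate (the H\"older distortion of the coding map only rescales $\tau$, which is harmless since $\tau$ is arbitrary), and the observation that closing is tautological for admissible words is correct. But the step you rely on to beat the ``fixed polynomial rate'' --- that every nonempty compact $\sigma$-invariant set $Y'$ carries ergodic measures of arbitrarily small entropy, e.g.\ by passing to a minimal subsystem --- is false. Minimal subshifts can have positive (indeed arbitrarily large) topological entropy, and there exist \emph{strictly ergodic} subshifts of full shifts with positive entropy (Grillenberger, Hahn--Katznelson); for such a $Y'$ the only invariant measure has $h_\mu = h_{\mathrm{top}}(Y')>0$ and every nonempty closed invariant subset is $Y'$ itself, so there is nothing to pass to. Your alternative suggestion (averaging periodic approximants dense in the non-wandering set of $\sigma$) produces measures that need not be supported anywhere near $Y'$, and is in any case circular: controlling the location of whole periodic orbits near $Y'$ is exactly what is being proved. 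With $h_\mu$ bounded below, your Shannon--McMillan--Breiman pigeonhole only yields $k \lesssim \log n/(2h_\mu)$, i.e.\ a fixed polynomial rate $n^{-\lambda/(2h_\mu)}$, not arbitrary $\tau$.

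The paper's proof (\cref{l.BQ_estimate} and \cref{p.subexp}) circumvents this entirely and requires no smallness of entropy: it runs the pigeonhole at two scales simultaneously. An orbit segment of $f^n$ of length $m$, measured in the metric $\dd_{n,f}$ at scale $\epsilon$, which has no $\epsilon$-near return determines an $m$-element \emph{subset} of a fixed $(\tfrac{\epsilon}{2},\dd_{n,f})$-separated set, and distinct $(\epsilon,\dd_{nm,f})$-separated points determine distinct subsets; this gives $S(\epsilon,\dd_{nm,f}) \le \binom{S(\epsilon/2,\dd_{n,f})}{m}$. Taking logarithms and dividing by $nm$, both separated-set counts contribute $h_{\mathrm{top}}(f)+o(1)$ by expansivity and these terms \emph{cancel}, forcing $\log m = o(n)$ regardless of the value of the entropy. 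Hyperbolicity then converts the resulting periodic $(\epsilon,\dd_{n,f})$-pseudoorbit into a periodic $(Ce^{-\lambda n}\epsilon,\dd)$-pseudoorbit in $Y$, and Lipschitz shadowing finishes the proof. If you want to keep your symbolic framework, you must replace the small-entropy-measure step by this two-scale cancellation argument (or restrict the theorem to sets $Y$ supporting low-entropy measures, which is not the statement being proved).
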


This result is proved in \cite{BQuas} for the one-sided full shift; as remarked in that paper, one can use standard techniques to reduce to that case. Alternatively, one can prove \cref{t.BQ} directly, and we do so in \cref{ss.BQ}.

\begin{proof}[Proof of \cref{t.super_pol}]
Recall from \eqref{e.split_beta} that if $\Phi$ is reducible then we can replace it by either a restricted or a quotient automorphism with the same maximal Lyapunov exponent.
Repeating this procedure a finite number of times, we eventually find a irreducible automorphism with the same maximal Lyapunov exponent; this induced automorphism will also be strongly bunched.
So, without loss of generality, we assume that $\Phi$ is irreducible.
By \cref{t.irr_to_span}, $\Phi$ is spannable, and by \cref{t.extremal}, $\Phi$ admits a H\"older extremal norm  $\tribar{\mathord{\cdot}}$. 
Let $p \in \ldbrack 1, d \rdbrack$ be maximal such that the $p$-th Mather set $M_p(\Phi) \eqcolon Y$ is nonempty.
By \cref{t.dom}, the restricted bundle $\E_Y$ admits a dominated splitting $\F \oplus \G$ where the dominating bundle $\F$ has fibers of dimension $p$ and is calibrated in the sense that for every $x \in Y$ and $u \in \F_x$, we have
$\tribar{\Phi(u)} = e^{\beta(\Phi)} \tribar{u}$. 
	
By robustness of dominated splittings \cite[Corol.~2.8]{CroPo}, there exists a closed neighborhood $U$ of $Y$ such that 
if $Z \coloneqq \bigcap_{k \in \Z} T^{-k}(U) \supseteq Y$ is the maximal invariant set in this neighborhood, 
then the restricted bundle $\E_Z$ over the compact invariant set  admits a dominated splitting $\F \oplus \G$, extending the previously found dominated splitting on $\E_Y$.
Recall that the bundles of a dominated splitting are H\"older-continuous \cite[Thrm.~4.11]{CroPo}.
Furthermore, the extremal norm is also H\"older-continuous.
It follows that there exist $\rho>0$ and $C_0>0$ such that for every $x \in Z$ and every $u \in \F_x$,
$$
e^{\beta(\Phi) - C_0 \dd(x,Y)^\rho} \tribar{u}  \le 
\tribar{\Phi(u)}  \le e^{\beta(\Phi)} \tribar{u} \, .
$$

Fix $\tau>0$.
Let $C$ be given by \cref{t.BQ}; so for all sufficiently large $n$, there exists a periodic orbit of period at most $n$ supported on the $C n^{-\tau}$-neighborhood of $Y$, and so contained in $Z$.
Let $\nu_n$ be the invariant probability measure supported on that orbit.
The bound obtained before implies: 
$$
\chi_1(\Phi,\nu) \ge \beta(\Phi) - C_0 (C n^{-\tau})^\rho \, .
$$
So $\beta(\Phi) - \beta_n(\Phi) = O(n^{-\rho \tau})$.
Since $\tau>0$ is arbitrary, the \lcnamecref{t.BQ} is proved.
\end{proof}

\appendix

\section{Appendix: Proof of some technical results}
\label{s.technical}

\subsection{Basic constructions on \texorpdfstring{$\theta$}{theta}-H\"older bundles}\label{ss.basic}

Recall our assumption from \cref{ss.theta} that the algebra of $\theta$-H\"older functions on $X$ is normal. Let us metrize the product $X \times X$ by $\dd\big( (x,y), (x',y') \big) \coloneq \max\big\{ \dd(x,y), \dd(x',y') \big\}$.

\begin{lemma}\label{l.square}
The algebra of $\theta$-H\"older functions on $X \times X$ is normal.
\end{lemma}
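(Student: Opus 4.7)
The plan is to transfer normality from $X$ to $X \times X$ via a covering-and-bump-function construction, exploiting the fact that the $\theta$-H\"older functions form an algebra closed under the operations of $\max$ and pointwise products of bounded functions.

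First, I would fix disjoint compact subsets $A, B \subset X \times X$. For every point $a = (a_1, a_2) \in A$, the compactness of $B$ and the disjointness from $a$ yield open neighborhoods $U_1^a \ni a_1$ and $U_2^a \ni a_2$ in $X$ such that the product $\overline{U_1^a} \times \overline{U_2^a}$ misses $B$; by shrinking, I pick open $V_j^a$ with $a_j \in V_j^a \subseteq \overline{V_j^a} \subseteq U_j^a$.

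Second, I would apply normality of the $\theta$-H\"older algebra on $X$ to the disjoint compact pairs $\overline{V_j^a}$ and $X \setminus U_j^a$, obtaining $\theta$-H\"older functions $\phi_j^a \colon X \to [0,1]$ equal to $1$ on $\overline{V_j^a}$ and to $0$ off $U_j^a$. Using the product metric $\dd((x,y),(x',y')) = \max\{\dd(x,x'), \dd(y,y')\}$, each $\phi_j^a \circ \pi_j$ is $\theta$-H\"older on $X \times X$, so the product $\psi^a(x_1,x_2) \coloneqq \phi_1^a(x_1)\,\phi_2^a(x_2)$ is a $[0,1]$-valued $\theta$-H\"older function (boundedness plus the standard product rule for H\"older functions), equal to $1$ on $V_1^a \times V_2^a$ and vanishing on $B$.

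Third, by compactness of $A$, finitely many of the open rectangles $V_1^{a_i} \times V_2^{a_i}$, $i=1,\dots,N$, cover $A$. I would then set
$$
\Psi \coloneqq \max_{i=1,\dots,N} \psi^{a_i}.
$$
The elementary inequality $|\max(f,g)(x) - \max(f,g)(y)| \le \max\bigl\{|f(x)-f(y)|,|g(x)-g(y)|\bigr\}$ shows $\Psi$ is $\theta$-H\"older; by construction $\Psi \equiv 1$ on $A$ and $\Psi \equiv 0$ on $B$, so $\Psi$ is the desired separating function.

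There is no real obstacle here: the only thing to verify is closure of the $\theta$-H\"older algebra under the operations used (products of bounded H\"older functions, pullback under the Lipschitz projections $\pi_j$, and pointwise maxima of finite families), all of which are routine. The conceptual heart is simply that normality lifts along finite products because $\theta$-H\"older bumps on factors combine multiplicatively into rectangular bumps.
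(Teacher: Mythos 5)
Your proof is correct. It rests on the same core fact as the paper's argument --- a product $\phi_1(x)\,\phi_2(y)$ of bounded $\theta$-H\"older functions of each factor is $\theta$-H\"older on $X\times X$ with the max product metric --- but the two constructions package this differently. The paper takes one finite cover of $X$ by sets of diameter smaller than $\dd(K_0,K_1)$, a $\theta$-H\"older partition of unity $\{\rho_i\}$ subordinate to it, and defines the separating function in a single formula as $\sum \rho_i(x)\rho_j(y)$ over the pairs $(i,j)$ whose rectangle meets $K_1$; the values $0$ on $K_0$ and $1$ on $K_1$ then fall out of the partition-of-unity identity $\sum_i\rho_i(x)\sum_j\rho_j(y)=1$. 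You instead build, for each point of $A$, a rectangular bump supported away from $B$ by applying normality on $X$ twice, and combine finitely many of them with a pointwise maximum. Your route is marginally more self-contained, since it invokes only the normality hypothesis on $X$ rather than the existence of $\theta$-H\"older partitions of unity (which the paper takes as a known consequence of normality, with a reference); the paper's route is shorter and produces the function as a sub-sum of a product partition of unity on $X\times X$, which is the form actually reused in the construction of the transport maps. All the closure properties you rely on (pullback under the $1$-Lipschitz projections, products of bounded H\"older functions, finite maxima) are indeed routine, so there is no gap.
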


\begin{proof}
Let $K_0$, $K_1 \subset X \times X$ be two disjoint nonempty compact sets.
Let $\epsilon>0$ be a lower bound for the distance between a point in $K_0$ and a point in $K_1$. Let $\{B_i\}$ be a finite cover of $X$ by open sets of diameter less than $\epsilon$. Let $\{\rho_i\}$ be a partition of unity subordinated to this cover and formed by $\theta$-H\"older functions. Define a function $f \colon X \times X \to \R$ by:
$$
f(x,y) \coloneq \sum_{\substack{(i,j) \text{ such that} \\ (B_i \times B_j)\cap K_1 \neq \emptyset}} \rho_i(x) \rho_j(y) \, .
$$
Then $f$ is $\theta$-H\"older, takes values in the interval $[0,1]$, equals~$0$ on $K_0$, and equals~$1$ on $K_1$. This proves normality.
\end{proof}

\begin{proof}[Proof of \cref{p.transport} (existence of transport maps $I_{y \gets x}$)]\footnote{A different construction that provides the additional property $(I_{y \gets x})^{-1} = I_{x \gets y}$ (for sufficiently close $x$, $y$) can be found in \cite[p.~169]{KalSad}; however, we will not need that property.}
Consider the finite cover of $X \times X$ formed by the following open sets:
$$
V_{k,\ell} \coloneqq
\begin{cases}
	U_k \times U_k &\text{if $k=\ell$;} \\
	U_k \times U_\ell \setminus \Delta  &\text{if $k\neq\ell$,} 
\end{cases}
$$
where $\Delta \subseteq X \times X$ is the diagonal.
Consider a partition of unity subordinate to this cover, composed of $\theta$-H\"older functions $\rho_{k,\ell}$; its existence is a consequence of \cref{l.square}. 
Given any pair of points $x$, $y \in X$, define a linear map from $\E_x$ to $\E_y$ by: 
$$
I_{y \gets x} \coloneqq \sum_{(k, \ell)} \rho_{k,\ell}(x,y)  \, h_\ell(y) \circ [h_k(x)]^{-1}  \, ,
$$
where the sum is taken over the indices $(k, \ell)$ such that $V_{k,\ell} \ni (x,y)$.
If $(x,y) \in U_i \times U_j$ then the matrix:
$$
[h_j(y)]^{-1} \circ I_{y \gets x} \circ h_i(x) =
 \sum_{(k, \ell)} \rho_{k,\ell}(x,y)  \, g_{j \gets \ell}(y) \circ g_{k \gets i}(x)
$$
is $\theta$-H\"older continuous as a function of $(x,y)$,
and equals the identity when $x = y$.
\end{proof}

For the following proofs, it is convenient to fix another open cover $\{V_i\}$ of $X$ such that $\overline{V_i} \subset U_i$ for each $k$.
Note that for any Finsler norm $\| \mathord{\cdot} \|$ on $\E$, we have:
\begin{equation}\label{e.h_i_bounded}
\max_i \sup_{x \in V_i} \max \big\{ \|h_i(x)\| , \|[h_i(x)]^{-1}\| \big\} < \infty \, ,
\end{equation}
where these operators norms are relative to the norm $\| \mathord{\cdot} \|_x$ on $\E_x$ and the Euclidean norm  $\| \mathord{\cdot} \|_\mathrm{eucl}$ on $\R^d$.

\begin{proof}[Proof of \cref{p.transport_groupoid} (composition of transport maps)]
In order to prove the assertion, 
it is sufficient to consider triples of points $x$, $y$, $z$ that are close enough so that they belong to a common coordinate neighborhood $V_i$. Consider the matrix 
\begin{equation}\label{e.tilde_I}
\tilde I_{y \gets x} \coloneqq [h_i(y)]^{-1} \circ I_{y \gets x} \circ h_i(x) \, , 
\end{equation}
which by \cref{p.transport} is $O(\dd(x,y)^\theta)$-close to the identity matrix. 
Using a similar notation for the other points, we have:
$$
\| \tilde I_{z \gets x} - \mathrm{Id} \|_\mathrm{eucl} = O(\dd(x,z)^\theta) \quad \text{and} \quad 
\| \tilde I_{y \gets z} - \mathrm{Id} \|_\mathrm{eucl} = O(\dd(y,z)^\theta)  \, .
$$
Therefore: 
$$
\| \tilde I_{y \gets z} \circ \tilde I_{z \gets x} - \tilde I_{y \gets x} \|_\mathrm{eucl} = O \big( \max\{ \dd(x,z)^\theta, \dd(y,z)^\theta \} \big) \, .
$$
Since $I_{y \gets z} \circ I_{z \gets x} - I_{y \gets x}  = h_i(y) \circ \big( \tilde I_{y \gets z} \circ \tilde I_{z \gets x} - \tilde I_{y \gets x} \big) \circ [h_i(x)]^{-1}$,  using the boundedness property \eqref{e.h_i_bounded} we obtain 
$$
\| I_{y \gets z} \circ I_{z \gets x} - I_{y \gets x} \|_\mathrm{eucl} = O \big( \max\{ \dd(x,z)^\theta, \dd(y,z)^\theta \} \big) \, ,
$$
as we wanted to show.
\end{proof}

\begin{proof}[Proof of \cref{p.norm_Holder} (characterization of $\theta$-H\"older norms)]
Let $\|\mathord{\cdot}\|$ be a $\theta$-H\"older Finsler norm.
In order to prove the desired estimate, it is sufficient to consider pairs of points $x$, $y$ that are close enough so that they belong to a same set $V_i$.
By definition, for every $u \in \R^d$, the map $x \in V_i \mapsto \|h_i(x) u \|$ is $\theta$-H\"older, and so there is a constant $C > 0$ such that, for all $x$, $y \in V_i$,
\begin{equation}\label{i.intermediate_Holder}
\big| \| h_i(y) u \|  - \| h_i(x) u \|  \big| \le C \|u\|_\mathrm{eucl} \, \dd(x,y)^\theta \, .
\end{equation}
Using the boundedness property \eqref{e.h_i_bounded} and compactness of the unit sphere, we can find a uniform $C$ so that the estimate above holds for every $u \in \R^d$.

Recall that that the matrix defined in \eqref{e.tilde_I} satisfies $\| \tilde I_{y \gets x} -\mathrm{Id}\|_\mathrm{eucl} = O(\dd(x,y)^\theta)$.
Now, given $v \in \E_x$, consider $u \coloneqq [h_i(x)]^{-1} v$.
Then:
\begin{align*}
\big| \| I_{y \gets x} v\|  - \| v \|  \big| 
&= 
\big| \| h_i(y) \tilde I_{y \gets x} u \|  - \| h_i(x) u \|  \big| \\
&\le
\big| \| h_i(y) \tilde I_{y \gets x} u \|  - \| h_i(y) u \|  \big|  + 
\big| \| h_i(y) u\|  - \| h_i(x) u \|  \big| 
\end{align*}
Using \eqref{e.h_i_bounded} and \eqref{i.intermediate_Holder}, we conclude that $\big| \| I_{y \gets x} v\|  - \| v \|  \big| = O(\|v\| \dd(x,y)^\theta)$, that is, $\big| \| I_{y \gets x} \|  - 1  \big| = O(\dd(x,y)^\theta)$, as claimed.

The proof of the converse is entirely analogous. 
\end{proof}

\begin{proof}[Proof of \cref{p.endo_Holder} (characterization of $\theta$-H\"older endomorphisms)]
Suppose $\Phi$ is $\theta$-H\"older.
In order to prove the desired estimate, it is sufficient to consider pairs of points $x$, $y$ that are close enough so that they belong to a same set $V_i \cap T^{-1}(V_j)$. 
Let $\tilde \Phi_x \coloneqq [h_j(Tx)]^{-1} \circ \Phi_x \circ h_i(x)$ and similarly define $\tilde \Phi_y$.
Let $\tilde I_{y \gets x}$ be defined by \eqref{e.tilde_I}, and similarly define $\tilde I_{Tx \gets Ty}$;
by \cref{p.transport} these matrix-valued maps are $\theta$-H\"older as functions of $(x,y)$.
So the map $(x,y) \mapsto \tilde I_{Ty \gets Tx} \circ  \tilde\Phi_x - \tilde \Phi_y \circ \tilde I_{y \gets x}$
is also $\theta$-H\"older, and since it vanishes on $(x,x)$ we conclude that:
\begin{equation}\label{e.tildes}
\big\| \tilde I_{Ty \gets Tx} \circ \tilde \Phi_x - \tilde \Phi_y \circ \tilde I_{y \gets x} \big\| = O(\dd(x,y)^\theta ) \, .
\end{equation}
Using the boundedness property \eqref{e.h_i_bounded} we obtain:
$$
\big\| I_{Ty \gets Tx} \circ \Phi_x - \Phi_y \circ I_{y \gets x} \big\| = O(\dd(x,y)^\theta) \, ,
$$
as desired.

Conversely, assume that such an estimate holds; then \eqref{e.tildes} follows from \eqref{e.h_i_bounded}.
By \cref{p.transport}, the matrices $\tilde I_{y \gets x}$ and  $\tilde I_{Ty \gets Tx}$ are $O(\dd(x,y)^\theta)$-close to the identity.
It follows that the matrices $\tilde \Phi_x$ and $\tilde \Phi_y$ are $O(\dd(x,y)^\theta)$-close.
This means that  $\Phi$ is $\theta$-H\"older.
\end{proof}


\subsection{Existence of holonomies}\label{ss.holonomies}

We begin with a straightforward estimate:

\begin{lemma}\label{l.bol}
Let $\Phi \in \Aut^\theta_K(\E,T)$.
For every $x$, $y \in X$ and $n \ge 0$ we have:
$$
\| (\Phi^n_y)^{-1} \| \, \| \Phi^n_x \| \le \prod_{j=0}^{n-1} e^{K_1 \dd(T^j x, T^j y)^\theta} \bol(\Phi_{T^j y}) \, ,
$$
where $K_1$ depends only on $K$. 
\end{lemma}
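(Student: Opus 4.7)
The plan is to first establish the single-iterate estimate
\[
\|\Phi_x\| \le e^{K_1 \dd(x,y)^\theta}\,\|\Phi_y\| \qquad (x,y \in X)
\]
and then chain it along orbits. Granting this, by submultiplicativity of the operator norm,
\[
\|\Phi_x^n\|\cdot\|(\Phi_y^n)^{-1}\|
\,\le\, \Bigl(\prod_{j=0}^{n-1}\|\Phi_{T^j x}\|\Bigr)\Bigl(\prod_{j=0}^{n-1}\|\Phi_{T^j y}^{-1}\|\Bigr)
\,\le\, \prod_{j=0}^{n-1} e^{K_1 \dd(T^jx, T^jy)^\theta}\,\|\Phi_{T^jy}\|\,\|\Phi_{T^jy}^{-1}\|,
\]
which is exactly the stated inequality, since $\|\Phi_{T^jy}\|\,\|\Phi_{T^jy}^{-1}\| = \bol(\Phi_{T^jy})$.

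For the single-iterate bound I would combine three ingredients already available. First, \cref{p.endo_Holder} applied to $\Phi \in \Aut^\theta_K(\E,T)$ yields
\[
\|I_{Ty\gets Tx}\circ \Phi_x - \Phi_y \circ I_{y \gets x}\| \le K\,\dd(x,y)^\theta.
\]
Second, the fixed Riemannian norm is $\theta$-H\"older, so \cref{p.norm_Holder} gives a constant $C$, depending only on the bundle and norm, with $\bigl|\|I_{y \gets x}\|-1\bigr|\le C\dd(x,y)^\theta$ for close $x,y$; since the operator norms of transport maps and their inverses are continuous on the compact set $X\times X$, one can enlarge $C$ to get the uniform bound $\|I_{y\gets x}\|,\,\|I_{y\gets x}^{-1}\| \le e^{C\dd(x,y)^\theta}$ on all of $X\times X$, with $C$ absorbing the Lipschitz constant of $T$ applied at $(Tx,Ty)$. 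Third, $\|\Phi_y^{-1}\|\le K$ forces $\|\Phi_y\| \ge K^{-1}$, because $\|v\| \le \|\Phi_y^{-1}\|\,\|\Phi_y v\|$ for every $v$.

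Combining, for any $u \in \E_x$ write
\[
\Phi_x u = I_{Ty\gets Tx}^{-1}\bigl(\Phi_y\circ I_{y\gets x}\,u + E_{x,y}\,u\bigr),\qquad \|E_{x,y}\|\le K\,\dd(x,y)^\theta,
\]
whence
\[
\|\Phi_x\| \le \|I_{Ty\gets Tx}^{-1}\|\,\bigl(\|I_{y\gets x}\|\,\|\Phi_y\| + K\,\dd(x,y)^\theta\bigr).
\]
The lower bound $\|\Phi_y\|\ge K^{-1}$ converts the additive error into a multiplicative one via $K\,\dd(x,y)^\theta \le K^2 \,\dd(x,y)^\theta\,\|\Phi_y\|$, and bounding both transport-map norms by exponentials gives $\|\Phi_x\| \le e^{K_1\dd(x,y)^\theta}\|\Phi_y\|$ for a suitable $K_1$ depending only on $K$ (and the fixed bundle data). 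The computation is entirely routine; I do not anticipate any serious obstacle, only bookkeeping to verify that the H\"older slop from the comparison formula and from the non-isometricity of the transport maps can be absorbed into a single exponential factor.
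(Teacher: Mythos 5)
Your proof is correct and follows the same route as the paper's: apply submultiplicativity of operator norms to reduce to $n=1$, rewrite $\|\Phi_{T^jy}^{-1}\|$ as $\|\Phi_{T^jy}\|^{-1}\bol(\Phi_{T^jy})$, and control the ratio $\|\Phi_{T^jx}\|/\|\Phi_{T^jy}\|$ by the $\theta$-H\"older continuity of $\log\|\Phi_{\cdot}\|$ — the paper simply cites that $K_1$ is the $\theta$-H\"older constant of $\log\|\Phi\|$ (estimated via \cref{p.norm_Holder} and the bounds $K^{-1}\le\|\Phi_x\|\le K$), whereas you unwind that constant explicitly via \cref{p.endo_Holder}; these are the same ingredients presented at different levels of detail.
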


\begin{proof}
By submultiplicativity of norms and the definition of bolicity, we have:
$$
\| (\Phi^n_y)^{-1} \| \, \| \Phi^n_x \|
\le \prod_{j=0}^{n-1} \| (\Phi_{T^j y})^{-1} \| \, \| \Phi_{T^j x} \|
= \prod_{j=0}^{n-1} \| \Phi_{T^j y} \|^{-1} \, \| \Phi_{T^j x} \| \, \bol(\Phi_{T^j y}) \, ,
$$
and so the claimed inequality holds with $K_1$ being the $\theta$-H\"older constant of $\log \|\Phi\|$.
This constant can be estimated in terms of $K$, using \eqref{p.norm_Holder}.
\end{proof}

\begin{proof}[Proof of \cref{p.holonomies} (existence of holonomies)]
By symmetry, it is sufficient to consider $\star = \ss$.
The stable holonomy is defined as:
\begin{equation}\label{e.def_holonomy}
H^\ss_{y \gets x} \coloneqq \lim_{n \to + \infty} \underbrace{(\Phi_y^n)^{-1} \circ I_{T^n y \gets T^n x} \circ \Phi_x^n}_{H_n} \, ,
\end{equation}
where $x$ and $y$ are in a same stable set.
Let us establish convergence.
Assume first that $y \in W^\ss_{\epsilon_0}(x)$.
We have: 
$$
H_{n+1} - H_n = (\Phi_y^{n+1})^{-1} \circ \underbrace{\big( I_{T^{n+1} y \gets T^{n+1} x} \circ \Phi_{T^n x} - \Phi_{T^n y} \circ I_{T^n y \gets T^n x} \big)}_{\Delta_n}  \circ \Phi_x^n  \, ,
$$
and so, using the definition \eqref{e.bounded_set} of the set $\Aut^\theta_K(\E,T)$ and \cref{l.bol},
\begin{align*}
\|H_{n+1} - H_n\| 
&\le K \|\Delta_n\| \,  \| (\Phi_y^n)^{-1}\| \,  \|\Phi_x^n\| \\ 
&\le K^2 \, \dd(T^n x, T^n y)^\theta \, \prod_{j=0}^{n-1} e^{K_1 \dd(T^j x, T^j y)^\theta} \bol(\Phi_{T^j y}) \, .
\end{align*}
By property \eqref{e.lambda_s} in the definition of hyperbolicity, for every $j \ge 0$ we have
$$
\dd(T^j x, T^j y) \le e^{-\lambda_\ss^{(j)}(y)} \dd(x,y) \, , \quad \text{where} \quad
\lambda_\ss^{(j)}(y) \coloneqq \sum_{i=0}^{j-1}\lambda_\ss(T^i y) \, . 
$$
Since $\lambda_\ss$ is strictly positive, the series $\sum_{j=0}^\infty \dd(T^j x, T^j y)^\theta$ is convergent.
Therefore
$$
\|H_{n+1} - H_n\| 
\le K_2 \, e^{-\theta \lambda_\ss^{(n)}(y)} \left( \prod_{j=0}^{n-1} \bol(\Phi_{T^j y}) \right) \dd(x,y)^\theta \, ,
$$
where $K_2>0$ is another constant.
Take a small constant $\eta>0$ such that the fiber-bunching condition \eqref{e.def_fiber_bunched} still holds if the right hand side is multiplied by $1-\eta$.
In particular, $\bol(\Phi_{T^j y})  < e^{(1-\eta)\theta\lambda_\ss(T^j y)}$ and so
\begin{equation}\label{e.exp_convergence}
\|H_{n+1} - H_n\| 
\le K_2 \, e^{-\eta \theta \lambda_\ss^{(n)}(y)} \dd(x,y)^\theta \, ,
\end{equation}
This establishes uniform exponential convergence in formula \eqref{e.def_holonomy} when $y \in W^\ss_{\epsilon_0}(x)$.
Using \eqref{e.longW} we see that convergence holds whenever $y \in W^\ss(x)$.
The groupoid properties (\ref{i.groupoid_1}) and (\ref{i.groupoid_2}) are the equivariance property (\ref{i.equivariance}) are automatic from the definition.
The H\"olderness property (\ref{i.holonomy_Holder}) follows by summing \eqref{e.exp_convergence} for $n=0$ to $\infty$, and noting that $H_0 = I_{y\gets x}$.
The joint continuity property (\ref{i.holonomy_cont}) also follows from the uniformity of our estimates.
Finally, if we consider a small $C^0$ perturbation of $\Phi$ in the set $\Aut^\theta_K(\E,T)$, then we can use the same constants $K_2$ and $\eta$ in \eqref{e.exp_convergence}, and so the remaining assertions of the \lcnamecref{p.holonomies} follow.
\end{proof}

\subsection{Regularity estimates}\label{ss.regularity}

In this \lcnamecref{ss.regularity}, we prove \cref{p.regularity_base,p.regularity_above}.
Before going into the proofs, let us state our estimates for H\"older exponents.

Since $T$ is Lipschitz, we can find $\epsilon_1 \in (0,\epsilon_0)$ and a continuous strictly positive function $\Lambda_\uu$ such that for all $x$, $x'$, $x''\in X$,
\begin{equation}\label{e.Lambda_u}
x',x''\in W^\uu_{\epsilon_1}(x) \quad \Rightarrow \quad
\left\{
\begin{array}{l}
	Tx', Tx'' \in W^\uu_{\epsilon_0}(Tx)  \, , \\
	\dd(Tx', Tx'') \le e^{\Lambda_\uu(x)} \dd(x', x'') \, .
\end{array}
\right.
\end{equation}
We will show that the conclusion of \cref{p.regularity_base} holds for any $\kappa_\ss$ in the range:
\begin{equation}\label{e.regularity_estimate_base}
0 < \kappa_{\ss} < \inf_{X} \frac{\lambda_\ss+\lambda_\uu}{\lambda_\ss+\Lambda_\uu} \, .
\end{equation}

Let
\begin{equation}\label{e.def_phi}
\phi(x) \coloneqq \log \bol(\Phi_x);
\end{equation}
We will show that the conclusion of \cref{p.regularity_above} holds for any $\kappa_\ss$ in the range:
\begin{equation}\label{e.regularity_estimate_above}
\theta_{\ss} < \inf_{X} \frac{\theta \lambda_\ss - \phi}{\lambda_\ss+\Lambda_\uu} \, .
\end{equation}
(Note that the numerator is positive by fiber-bunching.)

\medskip

The idea of the proof of \cref{p.regularity_base} is roughly as follows.
Without loss of generality, we can assume that the ``quadrilateral'' in \cref{f.rectangle} has a ``base'' $\dd(x,x')$ much smaller than the two ``legs'' $\dd(x,x')$, $\dd(x,y)$.
We take $N \ge 0$ as big as possible for which we can guarantee that the $T^N$-image of that quadrilateral has a base smaller than the legs.
We estimate the ``summit'' $\dd(T^N y, T^N y')$ using the triangle inequality, and finally we iterate backwards to obtain the desired estimate for $\dd(y, y')$.
The proof of \cref{p.regularity_above} uses the same ``there and back again'' idea.
Formal proofs follow.

Let us denote the  Birkhoff sums of a function $f \colon X \to \R$ as:
$$
f^{(n)} \coloneqq f + f \circ T + \cdots + f \circ T^{n-1} \, ,
\qquad f^{(0)} \coloneqq 0.
$$

\begin{lemma}\label{l.loss}
For any strictly positive continuous function $f$ on $X$ and any $a \in (0,1)$, there exists $b(f,a)>0$ such that for any $z \in X$ and any $n \ge 0$,
$$
z' \in W^\ss_{\epsilon_0}(z) \cup T^{-n}(W^\uu_{\epsilon_0}(T^n z)) \quad \Rightarrow \quad
f^{(n)}(z') \ge a f^{(n)}(z) - b(f,a) \, .
$$
\end{lemma}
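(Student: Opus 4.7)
The plan is to exploit uniform hyperbolicity of $T$ so that, in each of the two alternatives of the hypothesis, the orbits of $z$ and $z'$ contract to each other exponentially at a uniform rate along one end: along the forward iterates in the stable case, and along the backward iterates from $T^n z$ in the unstable case. Hence once a tolerance $\delta>0$ is fixed, all but a bounded number $N_0 = N_0(\delta)$ of the indices $j \in \{0,\dots,n-1\}$ will satisfy $\dd(T^j z, T^j z') \le \delta$. Uniform continuity of $f$ on the compact space $X$ then ensures that $|f(T^j z) - f(T^j z')|$ is small on these ``good'' indices, while the remaining $N_0$ indices contribute a bounded total error independent of $n$.

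Concretely, I first set $m \coloneqq \min_X f > 0$ and $M \coloneqq \max_X f$, together with $\lambda_0 \coloneqq \min_X \min\{\lambda_\uu, \lambda_\ss\} > 0$. Given $a \in (0,1)$, I calibrate the tolerance by taking $\eta \coloneqq (1-a) m$, then selecting $\delta \in (0, \epsilon_0]$ by uniform continuity of $f$ so that $\dd(x,y) \le \delta$ implies $|f(x) - f(y)| \le \eta$, and finally letting $N_0 \coloneqq \lceil \lambda_0^{-1} \log(\epsilon_0/\delta) \rceil$. Iterating the hyperbolicity inequalities \eqref{e.lambda_u}--\eqref{e.lambda_s} yields $\dd(T^j z, T^j z') \le e^{-j\lambda_0} \epsilon_0$ if $z' \in W^\ss_{\epsilon_0}(z)$, and $\dd(T^j z, T^j z') \le e^{-(n-j)\lambda_0} \epsilon_0$ if $T^n z' \in W^\uu_{\epsilon_0}(T^n z)$; in both cases $\dd(T^j z, T^j z') \le \delta$ except on at most $N_0$ values of $j$.

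Summing the pointwise comparisons then gives
\begin{equation*}
f^{(n)}(z) - f^{(n)}(z') \le 2 M N_0 + n \eta \le 2 M N_0 + (1-a) n m \le 2 M N_0 + (1-a) f^{(n)}(z),
\end{equation*}
which rearranges to the desired estimate with $b(f,a) \coloneqq 2 M N_0$, depending only on $f$ and $a$. The only delicate point is the calibration $\eta = (1-a) m$: it is precisely what lets the linear-in-$n$ tail $n\eta$ be absorbed into the $(1-a) f^{(n)}(z)$ slack provided by strict positivity of $f$, and without strict positivity the argument would not close. No hyperbolicity-specific input beyond iteration of \eqref{e.lambda_u}--\eqref{e.lambda_s} and compactness of $X$ is needed.
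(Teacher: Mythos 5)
Your argument is correct and uses essentially the same approach as the paper: both proofs choose a threshold (your $N_0$, the paper's $k$) beyond which uniform hyperbolicity forces $\dd(T^j z, T^j z')$ below a tolerance calibrated so that, via uniform continuity of $f$ and strict positivity (the slack $m = \min f$), the tail terms absorb the $(1-a)$ loss, while the bounded head contributes only a constant. The paper phrases the tail comparison multiplicatively ($f(T^jz')\ge a f(T^jz)$ for $j\ge k$, then drops the head outright), whereas you phrase it additively with the explicit calibration $\eta=(1-a)m$ and convert back via $nm\le f^{(n)}(z)$, but the underlying mechanism is identical.
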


\begin{proof}
It is sufficient to consider the case $z' \in W^\ss_{\epsilon_0}(z)$, since the case $z' \in T^{-n}(W^\uu_{\epsilon_0}(T^n z))$ follows by reversing the time.
By uniform continuity of $f$ and uniform contraction on local stable sets, we can find an integer $k = k(f,a) \ge 0$ such that if $z' \in W^\ss_{\epsilon_0}(z)$ then $f(T^j z') \ge a f(T^j z)$ for every $j \ge k$.
Letting $b(f,a) \coloneqq ak \sup_X f$, we obtain the desired conclusion.
\end{proof}

We start the proofs of \cref{p.regularity_base,p.regularity_above} with some estimates that are common to them.
Fix $\kappa_\ss$ and $\theta_\ss$ satisfying \eqref{e.regularity_estimate_base} and \eqref{e.regularity_estimate_above}, respectively.
Fix a number $a\in(0,1)$ sufficiently close to $1$ such that:
\begin{align}
\kappa_{\ss} &< \inf_{X} \frac{a (\lambda_\ss + \lambda_\uu)}{a \lambda_\ss + \Lambda_\uu} \, ,
\label{e.folga_base}
\\
\theta_{\ss} &< \inf_{X} \frac{a (\theta \lambda_\ss - \phi)}{a \lambda_\ss + \Lambda_\uu} \, .
\label{e.folga_above}
\end{align}

Fix four points $x$, $x'$, $y$, $y'$ satisfying \eqref{e.rectangle}.
Let $\delta \coloneqq \dd(x,x')$.
Note that to prove \cref{p.regularity_base,p.regularity_above}, it is sufficient to consider $\delta$ smaller than a fixed positive constant, say $\epsilon_1$ from \eqref{e.Lambda_u}.
Let $N$ be the largest nonnegative integer such that:
\begin{equation}\label{e.optimal}
a \lambda_\ss^{(N)}(x) + \Lambda_\uu^{(N)}(x) <  \log(\epsilon_1/\delta) \, ;
\end{equation}
Then:
\begin{equation}\label{e.optimal_other_side}
a \lambda_\ss^{(N)}(x) + \Lambda_\uu^{(N)}(x) \ge \log(\epsilon_1/\delta) - c \, ,
\end{equation}
for some constant $c$, namely $c \coloneqq \sup_X (a \lambda_\ss + \Lambda_\uu)$.
In particular, assuming that $\delta$ is small enough, $N$ will be large and so the following inequality will hold:
\begin{equation}\label{e.WLOG}
e^{a \lambda_\ss^{(N)}(x)} > 2 + e^{b(\lambda_\ss,a)}
\end{equation}
(where $b$ comes from \cref{l.loss}).

Using \eqref{e.Lambda_u} and \eqref{e.optimal}, one checks by induction that
the following chain of inequalities hold for each $n \in \ldbrack 0, N \rdbrack$:
\begin{equation}\label{e.base}
\dd(T^n x, T^n x')
\le \delta e^{\Lambda_\uu^{(n)}(x)}
\le \epsilon_1 e^{- a \lambda_\ss^{(n)}(x)}
\le \epsilon_1 \, .
\end{equation}
This gives estimates for the base of the ``quadrilateral'' obtained as the $T^n$-image of that of \cref{f.rectangle}.
Let us estimate the other sides; the ``legs'' are:
\begin{align}
\dd(T^n x , T^n y)  &\le \epsilon_0 e^{- \lambda_\ss^{(n)}(x)} \, ,  \label{e.leg1} \\
\dd(T^n x', T^n y') &\le \epsilon_0 e^{- \lambda_\ss^{(n)}(x')}
                   \le \epsilon_0 e^{- a\lambda_\ss^{(n)}(x) + b(\lambda_\ss,a)}  \, , \label{e.leg2}
\end{align}
where in the last inequality we used \cref{l.loss}.
Therefore the ``summit'' is: 
\begin{align}
\dd(T^n y, T^n y')
&\le \dd(T^n x, T^n y) + \dd(T^n x, T^n x') + \dd(T^n x', T^n y') \\
&\le (2 + e^{b(\lambda_\ss,a)}) \epsilon_0 e^{-a\lambda_\ss^{(n)}(x)} \label{e.summit1} \\
&\le \epsilon_0 \, , \label{e.summit2}
\end{align}
where in the last step we used assumption \eqref{e.WLOG}.

We estimate the base of the original quadrilateral by iterating backwards:
\begin{alignat*}{2}
\dd(y,y')
&\le e^{-\lambda_\uu^{(N)}(y)} \dd(T^N y, T^N y')
&\qquad&\text{(by \eqref{e.summit2})} \\
&= O \big( e^{-a \lambda_\uu^{(N)}(x)} \dd(T^N y, T^N y') \big)
&\qquad&\text{(by \cref{l.loss})} \\
&= O \big( e^{-a [\lambda_\ss^{(N)}(x) + \lambda_\uu^{(N)}(x)]} \big)
&\qquad&\text{(by \eqref{e.summit1})}  \\
&= O \big( e^{- \kappa_\ss [a \lambda_\ss^{(N)}(x) + \Lambda_\uu^{(N)}(x)]} \big)
&\qquad&\text{(by \eqref{e.folga_base})}  \\
&= O \big( \delta^{\kappa_\ss} \big)
&\quad&\text{(by \eqref{e.optimal_other_side})} \, . 
\end{alignat*}
This proves \cref{p.regularity_base}.

\medskip

We proceed to the proof of \cref{p.regularity_above}.
In what follows, the constants implicit in $O$ can be taken uniform on a a $C^0$-neighborhood of $\Phi$ in $\Aut^\theta_K(\E,T)$.
For $n \ge 0$, define
$$
\Gamma_n \coloneqq H^\uu_{T^n y' \gets T^n y} \circ H^\ss_{T^n y \gets T^n x} - H^\ss_{T^n y' \gets T^n x'} \circ H^\uu_{T^n x' \gets T^n x} \, .
$$
Let us estimate the norm of these linear maps.
First,
$$
\| \Gamma_0 \| \le \left\| H^\uu_{y' \gets y} \circ H^\ss_{y \gets x} - I_{y' \gets x} \right\|
+ \left\| H^\ss_{y' \gets x'} \circ H^\uu_{x'\gets x} - I_{y' \gets x} \right\|
\eqcolon \circled{1} + \circled{2} \, .
$$
We estimate the first term:
$$
\circled{1} \le
\left\| H^\uu_{y' \gets y} \circ (H^\ss_{y\gets x} - I_{y \gets x}) \right\| +
\left\| (H^\uu_{y' \gets y} - I_{y' \gets y}) \circ I_{y \gets x} \right\| +
\left\| I_{y' \gets y} \circ I_{y \gets x} - I_{y' \gets x} \right\| \, .
$$
Using \eqref{e.holonomy_Holder} and \cref{p.transport_groupoid}, we conclude that
$$
\circled{1} = O \big( \max \big\{ \dd(x,y)^\theta, \dd(y,y')^\theta \big\} \big) \, .
$$
An analogous reasoning yields:
$$
\circled{2} = O \big(  \max \big\{ \dd(x,x')^\theta, \dd(x',y')^\theta \big\} \big) \, .
$$
So we obtain:
$$
\|\Gamma_0 \| = O \big( \max \big\{ \dd(x,x')^\theta, \dd(y,y')^\theta, \dd(x,y)^\theta, \dd(x',y')^\theta \big\} \big) \, .
$$
Any of these four distances, say $\dd(y,y')$, is less than the sum of the other three; so:
$$
\|\Gamma_0 \| = O \big(  \max \big\{ \dd(x,x')^\theta, \dd(x,y)^\theta, \dd(x',y')^\theta \big\} \big) \, .
$$
Now if $n \in \ldbrack 0, N \rdbrack$, the corresponding quadrilateral has sides are bounded by $\epsilon_0$ (estimates \eqref{e.base}--\eqref{e.summit2}), and
the exact same argument yields:
$$
\|\Gamma_n \| = O \big( \max \big\{ \dd(T^n x,T^n x')^\theta, \dd(T^n x,T^n y)^\theta, \dd(T^n x',T^n y')^\theta \big\} \big) \, .
$$
Then, using estimates \eqref{e.base}, \eqref{e.leg1}, and \eqref{e.leg2} we obtain:
\begin{equation}\label{e.Delta}
\|\Gamma_n \| = O \big( e^{-\theta a \lambda_\ss^{(n)}(x)} \big) \, .
\end{equation}

As a consequence of estimates  \eqref{e.base} and \eqref{e.leg2}, if $j \in \ldbrack 0, N \rdbrack$ then $\dd(T^j y' , T^j x) = O \big( e^{-a\lambda_\ss^{(j)}(x)} \big)$. 
So it follows from \cref{l.bol} that
\begin{equation}\label{e.new_bol}
\| (\Phi^n_{y'})^{-1} \| \, \| \Phi^n_x \| = O\big(e^{\phi^{(n)}(y')}\big) , \quad \text{for } n \in \ldbrack 0, N \rdbrack \, .
\end{equation}

Now we want to iterate backwards to obtain a finer estimate for $\| \Gamma_0 \|$.
By the groupoid properties of holonomies,
$\Gamma_0 = (\Phi^n_{y'})^{-1} \circ \Gamma_n \circ \Phi^n_x$.
Therefore:
\begin{alignat*}{2}
\|\Gamma_0\|
&= O \big( e^{\phi^{(N)}(y')} \, \| \Gamma_N \| \big)
&\qquad&\text{(by \eqref{e.new_bol})} \\
&= O \big( e^{-\theta a \lambda_\ss^{(N)}(x) + a \phi^{(N)}(x)} \big)
&\qquad&\text{(by \eqref{e.Delta} and \cref{l.loss})} \\
&= O \big( e^{-\theta_\ss [a \lambda_\ss^{(N)}(x) + \Lambda_\uu^{(N)}(x)]} \big)
&\qquad&\text{(by \eqref{e.folga_above})} \\
&= O \big( \delta^{\theta_\ss} \big)
&\qquad&\text{(by \eqref{e.optimal_other_side})} \, .
\end{alignat*}
\cref{p.regularity_above} is proved.

\begin{proof}[Proof of \cref{l.needed_strength} (the strong bunching constant)]
Fix any positive
\begin{equation}\label{e.needed_strength}
\eta_0 \le \inf_X \frac{\theta \lambda_\ss}{\lambda_\ss + \lambda_\uu + \Lambda_\uu} \, .
\end{equation}
Suppose $\Phi$ is a $(\eta_0,\theta)$-bunched automorphism.
This means that the function $\phi$ defined by \eqref{e.def_phi} is less than $ \eta_0 \lambda_\uu$.
Therefore we have pointwise inequalities:
$$
\frac{\theta \lambda_\ss - \phi}{\lambda_\ss+\Lambda_\uu} > \frac{\theta \lambda_\ss - \eta_0 \lambda_\uu}{\lambda_\ss+\Lambda_\uu} \ge \eta_0. 
$$
So there exists $\theta_\ss \ge \eta_0$ that satisfies \eqref{e.regularity_estimate_above}.
\end{proof}

\subsection{The metric on the Grassmannian}\label{ss.Grass}

Let $E$ be an inner product space of dimension $d$.
If $V_1$, $V_2 \subseteq E$ are subspaces of the same dimension $p>0$, we define:
\begin{equation}\label{e.def_metric}
\dd(V_1,V_2) \coloneqq \inf_{F_1, F_2} \| F_1 - F_2 \| \, ,
\end{equation}
where each $F_i$ runs over all linear isomorphisms $F_i \colon \R^p \to V_i$ such that $\|F_i^{-1}\| \le 1$.
(We consider $\R^p$ endowed with the canonical inner product, and $\| \mathord{\cdot} \|$ always denotes the operator norm.)  

\begin{proposition}\label{p.metric}
$\dd$ is a metric on the Grassmannian $\Gr_p(E)$.
\end{proposition}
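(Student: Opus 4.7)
The plan is to verify the four metric axioms: non-negativity, identity of indiscernibles, symmetry, and the triangle inequality.

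Symmetry and non-negativity of $\dd$ are immediate from the definition, and $\dd(V, V) = 0$ by taking $F_1 = F_2$ equal to any isometric embedding $\R^p \to V$. For the converse direction, suppose $V_1 \neq V_2$; being distinct subspaces of the same dimension $p$, there is a unit vector $u \in V_1 \setminus V_2$ with $\alpha \coloneqq \mathrm{dist}(u, V_2) > 0$. For any admissible pair $F_1, F_2$, letting $v \coloneqq F_1^{-1}(u)$ we have $\|v\| \le \|F_1^{-1}\| \, \|u\| \le 1$ and $F_2(v) \in V_2$, so
\[
\|F_1(v) - F_2(v)\| = \|u - F_2(v)\| \ge \alpha,
\]
which gives $\|F_1 - F_2\| \ge \alpha$ and hence $\dd(V_1, V_2) \ge \alpha > 0$.

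The triangle inequality is the main step, and my strategy is to reduce it to the special case of isometric embeddings. Introduce the auxiliary quantity
\[
\dd_{\mathrm{iso}}(V_1, V_2) \coloneqq \inf \{ \|U_1 - U_2\| : U_i \colon \R^p \to V_i \text{ isometric embedding}\},
\]
obtained by restricting the infimum in \eqref{e.def_metric}. For $\dd_{\mathrm{iso}}$ the triangle inequality follows by a common reparametrization of $\R^p$: given near-optimal isometries $U_1, U_2$ for $(V_1, V_2)$ and $U_2', U_3'$ for $(V_2, V_3)$, the map $R \coloneqq U_2^{*} \circ U_2' \colon \R^p \to \R^p$ is orthogonal, so $U_1 R$ is an isometric embedding $\R^p \to V_1$ and $U_2 R = U_2'$, whence
\[
\|U_1 R - U_3'\| \le \|U_1 R - U_2 R\| + \|U_2' - U_3'\| = \|U_1 - U_2\| + \|U_2' - U_3'\|,
\]
using that pre- or postcomposing a linear map with an orthogonal map leaves the operator norm unchanged. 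Taking infima yields $\dd_{\mathrm{iso}}(V_1, V_3) \le \dd_{\mathrm{iso}}(V_1, V_2) + \dd_{\mathrm{iso}}(V_2, V_3)$.

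The main obstacle is thus the identity $\dd = \dd_{\mathrm{iso}}$. The inequality $\dd \le \dd_{\mathrm{iso}}$ is immediate since isometric embeddings are admissible. For the reverse, given admissible $F_1, F_2$, I would use the polar decompositions $F_i = U_i P_i$, where $U_i \colon \R^p \to V_i$ is an isometric embedding and $P_i = (F_i^* F_i)^{1/2}$ is positive symmetric on $\R^p$; the constraint $\|F_i^{-1}\| \le 1$ is equivalent to $P_i \succeq \mathrm{id}$, equivalently $\sigma_{\min}(F_i) \ge 1$. The crucial estimate $\|U_1 - U_2\| \le \|F_1 - F_2\|$ is then a special case of the classical perturbation bound $\|U_1 - U_2\| \le \frac{2}{\sigma_{\min}(F_1) + \sigma_{\min}(F_2)} \|F_1 - F_2\|$ for the orthogonal factor of the polar decomposition. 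In the one-dimensional case $p=1$ this reduces to the elementary inequality $c_1^2 + c_2^2 - 2 c_1 c_2 t \ge 2 - 2t$ for $c_1, c_2 \ge 1$ and $t \in [-1,1]$, which follows from $c_1^2 + c_2^2 \ge 2 c_1 c_2$ and $c_1 c_2 \ge 1$; the higher-dimensional case is delicate because $P_1, P_2$ need not commute, and it is here that careful use of the polar decomposition stability (or of a suitable diagonalization exploiting the spectral decompositions of the $P_i$) is required.
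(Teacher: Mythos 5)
Your treatment of non-negativity, symmetry, and non-degeneracy is correct, and the non-degeneracy argument is the same as the paper's (which simply asserts that symmetry and the triangle inequality are ``trivially satisfied'' and then only writes out non-degeneracy). You were right to be suspicious of that dismissal: $\dd(V_1,V_2)$ is a two-sided infimum over admissible parametrizations of the two subspaces, and such ``set distances'' do not satisfy the triangle inequality automatically, so some argument is genuinely required.

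Your strategy --- prove the triangle inequality for the restricted quantity $\dd_{\mathrm{iso}}$ and then show $\dd = \dd_{\mathrm{iso}}$ --- is sensible, and the triangle inequality for $\dd_{\mathrm{iso}}$ via the orthogonal reparametrization $R = U_2^* U_2'$ is correctly carried out (one checks $R^*R = (U_2')^*P_{V_2}U_2' = \mathrm{id}$ and $U_2R = U_2'$). The gap is in the identity $\dd = \dd_{\mathrm{iso}}$, which is precisely the step that makes the triangle inequality non-trivial. You reduce it to $\|U_1 - U_2\| \le \|F_1 - F_2\|$ for polar factors $F_i = U_iP_i$ with $P_i \succeq \mathrm{id}$, verify this only for $p=1$, and then appeal to a ``classical'' polar-factor perturbation bound while yourself acknowledging the case $p\ge2$ is delicate and not resolved in your write-up. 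That caution is warranted: the bound $\|U_1-U_2\|\le \tfrac{2}{\sigma_{\min}(F_1)+\sigma_{\min}(F_2)}\|F_1-F_2\|$ is stated in the literature (Li, \emph{SIAM J.\ Matrix Anal.\ Appl.}\ 16 (1995), 327--332) for the Frobenius norm and for unitarily invariant norms, whereas \eqref{e.def_metric} uses the operator norm, and Sylvester-type estimates in the operator norm are more delicate; one would need either a precise reference for the spectral-norm version with the claimed constant, or a proof. As it stands, the crucial inequality $\dd_{\mathrm{iso}} \le \dd$ is not established for $p \ge 2$, and therefore the triangle inequality is not actually proved.
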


\begin{proof}
Symmetry and the triangular inequality are trivially satisfied, so let us check non-degeneracy.
Suppose $V_1 \neq V_2 \in \Gr_p(E)$.
Take a unit vector $v_1$ in $V_1$ but not in $V_2$.
Then there exists $\delta>0$ such that $\|v_1-v_2\| \ge \delta$ for every $v_2 \in V_2$.
For each $i \in \{1,2\}$, let $F_i \colon \R^p \to V_i$ be a linear isomorphism such that $\|F_i^{-1}\| \le 1$.
Then:
$$
\|F_1 - F_2\| \ge \frac{\|v_1 - F_2(F_1^{-1}(v_1))\|}{\|F_1^{-1}(v_1))\|} \ge \|v_1 - F_2(F_1^{-1}(v_1))\| \ge \delta \, .
$$
This shows that $\dd(V_1,V_2) \ge \delta > 0$.
\end{proof}

\begin{proof}[Proof of \cref{p.Lip_bol} (linearly induced maps are Lispchitz)]
Consider a linear isomorphism $L \colon E \to F$ between $d$-dimensional inner product spaces.
For each $i \in \{1,2\}$, let $V_i \in \Gr_p(E)$, and let $W_i \coloneqq L(V_i)$.
Let $F_i \colon \R^p \to V_i$ be a linear isomorphism such that $\|F_i^{-1}\| \le 1$.
Define  $G_i \colon \R^p \to W_i$ by $G_i \coloneqq \|L^{-1}\| \, L \circ F_i$.
Then $G_i$ is a linear isomorphism and $\|G_i^{-1}\| \le 1$.
So 
$$
\dd(W_1,W_2) \le \| G_1 - G_2 \| =  \|L^{-1}\| \,  \|L \circ F_1 - L \circ F_2 \| \le \bol(L)  \| F_1 - F_2 \|  \, .
$$
Taking infimum over the $F_i$'s, we obtain $\dd(W_1,W_2) \le \bol(L) \dd(V_1,V_2)$.
This proves that the map induced by $L$ has Lipschitz constant $\bol(L)$.
\end{proof}

\begin{proof}[Proof of \cref{p.close_id} (maps close to the identity)]
Suppose $L \colon E \to \E$ satisfies $\|L - \id \|\le \delta \le \tfrac{1}{2}$.
Note that $\|L^{-1}\| \le (1-\delta)^{-1}$.
Fix an arbitrary $V \in \Gr_p(E)$. 
Let $F_1 \colon \R^p \to V$ be an isometry, and let $F_2 \coloneqq (1-\delta)^{-1} L \circ F_1$.
Then $\|F_1^{-1}\|=1$, $\|F_2^{-1}\| \le 1$, and so
\begin{equation*}
\dd(V, LV) \le \|F_1 - F_2\| = \| \id - (1-\delta)^{-1} L\| \le \| \id - L\| +  \frac{\delta}{1-\delta} \|L\| \le \frac{2\delta}{1-\delta} \le 4 \delta.
\qedhere
\end{equation*}
\end{proof}

\begin{proof}[Proof of \cref{p.span_Lip} (span is locally Lipschitz)]
This is an easy consequence of the definition \eqref{e.def_metric}, and details are left to the reader.
\end{proof}

\begin{remark}
It can be shown that our metric \eqref{e.def_metric} coincides with the metric used in \cite[\S~A.1]{BPS}.
\end{remark}

\subsection{Typical fiber-bunched automorphisms are irreducible}\label{ss.irred_typical}

Recall from \cref{ss.auto} that $\End^\theta(\E,T)$ denotes the vector space of $\theta$-H\"older endomorphisms, which becomes a Banach space with the  $\theta$-H\"older norm \eqref{e.Holder_norm}. 
The set $\Aut^\theta(\E,T)$ of $\theta$-H\"older automorphisms 
and the subset $\cB \subset \Aut^\theta(\E,T)$ of fiber-bunched automorphisms 
are both open subsets of $\End^\theta(\E,T)$ (actually they are $C^0$-open).

A subset of a Banach space is said to be of \emph{infinite codimension} if it is locally contained in the union of finitely many closed submanifolds of arbitrarily large codimension.

\begin{proposition}\label{p.irred_typical}
Suppose $X$ is infinite, $T \colon X \to X$ is a transitive hyperbolic homeomorphism, and $\E$ is a $\theta$-H\"older vector bundle over $X$.
Then there exists an open and dense subset $\cI$ of the set $\cB \subset \Aut^\theta(\E,T)$ of fiber-bunched automorphisms such that every $\Phi \in \cI$ is irreducible.
Furthermore, the set $\cB \setminus \cI$ has infinite codimension.
\end{proposition}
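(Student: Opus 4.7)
The plan has three parts: show openness of $\cI$ in $\cB$, show infinite codimension of the complement in $\End^\theta(\E,T)$, and note that density is automatic from infinite codimension.

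\medskip

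\textbf{Openness.} I would argue that $\cB \setminus \cI$ is closed in $\cB$ by a compactness/rigidity argument. Take $\Phi_n \to \Phi$ in $\cB$ with each $\Phi_n$ reducible, and extract a subsequence so that the invariant $\theta$-H\"older subbundle $\F_n$ of $\Phi_n$ has constant dimension $p$. By \cref{c.irred}, each $\F_n$ is invariant under both holonomies of $\Phi_n$. The $\eta = \theta$ instance of \cref{p.rigidity} then yields uniform $\theta$-H\"older estimates for $\F_n$ along unstable sets (and, by symmetry, along stable sets), the uniformity stemming from a uniform bunching slack for $\Phi_n$ close to $\Phi$ together with uniform Hölder bounds on $\Phi_n^{\pm1}$; combining with the bracket step used in the proof of \cref{c.irred} upgrades this to uniform $\theta$-H\"olderness of $\F_n$ as sections of $\Gr_p(\E)$. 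Arzel\`a--Ascoli produces a $\theta$-H\"older limit section $\F$, and continuity of holonomies with respect to the automorphism (last part of \cref{p.holonomies}) makes $\F$ invariant under $\Phi$, $H^\uu$, and $H^\ss$, so $\Phi$ is reducible.

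\medskip

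\textbf{Infinite codimension.} Decompose $\cB \setminus \cI = \bigcup_{p=1}^{d-1} \cB_p^{\mathrm{red}}$ by the dimension of the invariant subbundle, and handle each $p$ separately. Fix $\Phi_0 \in \cB$, a neighborhood $\cU$ of $\Phi_0$ in $\End^\theta(\E,T)$ with uniform bunching, and $N \in \N$; I want to cover $\cB_p^{\mathrm{red}} \cap \cU$ by finitely many closed submanifolds of codimension $\ge N$. Since $X$ is infinite and $T$ is transitive hyperbolic, spectral decomposition (or direct shadowing) provides infinitely many periodic orbits, so I can pick $k + 1$ pairwise disjoint periodic orbits $\cO_0, \cO_1, \ldots, \cO_k$ with $k \coloneqq \lceil N / (p(d-p)) \rceil$, representatives $x_i \in \cO_i$ of period $n_i$. \cref{l.transitive} furnishes a heteroclinic chain from $x_0$ to each $x_i$, chosen once and for all, whose trajectory is disjoint from all orbits $\cO_j$; this chain produces a linear isomorphism $H_i(\Phi) \colon \E_{x_0} \to \E_{x_i}$ that is a prescribed finite composition of holonomies and iterates of $\Phi$, and that depends continuously on $\Phi \in \cU$ (\cref{p.holonomies}). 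If $\Phi \in \cB_p^{\mathrm{red}} \cap \cU$ carries the invariant subbundle $\F$, then by \cref{c.irred} we have $\F_{x_i} = H_i(\Phi)(V)$ for $V \coloneqq \F_{x_0} \in \Gr_p(\E_{x_0})$, and each $\Phi^{n_i}_{x_i}$ fixes $H_i(\Phi)(V)$. Hence $\cB_p^{\mathrm{red}} \cap \cU$ lies in the projection to $\cU$ of
$$
\cZ \coloneqq \left\{ (\Phi, V) \in \cU \times \Gr_p(\E_{x_0}) \st \Phi^{n_i}_{x_i}(H_i(\Phi)(V)) = H_i(\Phi)(V), \ i = 0, 1, \ldots, k \right\}.
$$
Each of the $k+1$ defining equations is a codimension-$p(d-p)$ algebraic condition in the corresponding Grassmannian fiber. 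To obtain the expected total codimension, I would verify transversality by inserting $\theta$-H\"older bump functions (\cref{l.bump}) supported in small neighborhoods of individual orbit points of $\cO_i$ chosen to be disjoint from the heteroclinic chains and from the other orbits $\cO_j$; such perturbations can tune $\Phi^{n_i}_{x_i}$ in any prescribed direction in $\GL(\E_{x_i})$ without altering $H_j(\Phi)$ or $\Phi^{n_j}_{x_j}$ for $j \ne i$. Hence locally $\cZ$ is cut out by transverse real-analytic equations; using compactness of $\Gr_p(\E_{x_0})$ and a finite cover by coordinate charts, it decomposes into a finite union of codim-$(k+1)p(d-p)$ analytic submanifolds. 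Projecting to $\cU$ loses at most $p(d-p) = \dim \Gr_p(\E_{x_0})$ dimensions, so $\cB_p^{\mathrm{red}} \cap \cU$ is covered by a finite union of closed submanifolds of codimension $\ge k\,p(d-p) \ge N$.

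\medskip

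\textbf{Density.} Infinite codimension implies that $\cB \setminus \cI$ has empty interior in $\End^\theta(\E,T)$, hence empty interior in $\cB$, so $\cI$ is dense in $\cB$. Combined with Part 1, $\cI$ is open and dense in $\cB$.

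\medskip

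\textbf{Main obstacle.} The technical heart is the transversality/codimension count for $\cZ$: one must produce H\"older perturbations of $\Phi$ that shift each $\Phi^{n_i}_{x_i}$ in a prescribed direction while leaving both $\Phi^{n_j}_{x_j}$ and the heteroclinic links $H_j(\Phi)$ essentially unchanged for $j \ne i$, and then confirm that the resulting zero set is semi-analytic so that it decomposes into finitely many (rather than continuum many) submanifolds.
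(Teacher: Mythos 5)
Your proposal is a genuinely different route from the paper's, and it has one concrete flaw in the codimension step, so let me address both.

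\paragraph{What you do differently.} The paper uses a single periodic point $p$ (say of period $k$) together with infinitely many homoclinic orbits through points $q_\ell \in W^\uu(p)\cap W^\ss(p)$; each homoclinic loop produces a linear map $\Psi_\ell \coloneqq H^\ss_{p\gets q_\ell}\circ H^\uu_{q_\ell\gets p}$ of $\E_p$, and reducibility forces $\Phi^k_p$ and all the $\Psi_\ell$ to share a common invariant subspace. The crucial point is that the perturbation is made exactly at the homoclinic point $q_\ell$, and since the forward orbit of $T q_\ell$, the backward orbit of $q_\ell$, and the orbit of $p$ are all disjoint from a small neighborhood of $q_\ell$, this touches \emph{only} $H^\ss_{p\gets q_\ell}$: it leaves $\Phi^k_p$, $H^\uu_{q_\ell\gets p}$, and the other $\Psi_j$ untouched. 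That is what makes each condition an independent positive-codimension constraint. Your openness argument via closedness of the reducible set and Arzelà--Ascoli with uniform Hölder bounds (coming from the uniform constants in \cref{p.holonomies} applied through \cref{c.irred}) is a sound addition, arguably more explicit than what the paper writes, and I have no objection to that part.

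\paragraph{The gap.} In your transversality step you propose bump-function perturbations ``supported in small neighborhoods of individual orbit points of $\cO_i$ chosen to be disjoint from the heteroclinic chains.'' This choice is not available. The stable holonomy $H^\ss_{x_i\gets q_i}$ that enters $H_i(\Phi)$ is a limit over the \emph{entire} forward orbit of $q_i$, and that forward orbit accumulates on all of $\cO_i$; hence every neighborhood of every point $z\in\cO_i$ meets $\{T^n q_i : n\ge 0\}$ infinitely often, so no bump near $z$ is disjoint from the data defining $H_i(\Phi)$. Concretely, your perturbation changes $\Phi^{n_i}_{x_i}$ \emph{and} $H_i(\Phi)$ simultaneously, and then the $i$-th constraint $\Phi^{n_i}_{x_i}\bigl(H_i(\Phi)(V)\bigr)=H_i(\Phi)(V)$ involves a combined motion whose surjectivity onto $T\Gr_p(\E_{x_i})$ you have not verified; it is not obvious and is precisely the cancellation danger that the paper's choice of perturbation location avoids. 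The repair is exactly the paper's trick: perturb in a small neighborhood of the heteroclinic point $q_i$ itself (whose nontrivial iterates leave that neighborhood forever, and which sits off all the periodic orbits). This changes only the outgoing half $H^\ss_{x_i\gets q_i}$ and realizes arbitrary small post-compositions of $H_i(\Phi)$, while $\Phi^{n_0}_{x_0},\dots,\Phi^{n_k}_{x_k}$ and the other $H_j(\Phi)$ stay fixed; transversality of the $i$-th condition is then immediate. With that change (and your frank acknowledgment that the finite decomposition into submanifolds needs care --- a point the paper also leaves to \cite{Viana}), the argument is sound.
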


The proof is an obvious adaptation of arguments from \cite{BGV,Viana}, so we will make it concise.

\begin{proof}
As a consequence of shadowing and expansivity, the hyperbolic homeomorphism $T$ has infinitely many periodic points (see e.g.\ \cite[p.~228]{Akin}). 
Select one of these, say a point $p$ of period $k$, and a homoclinic point $q$ associated to $p$. 
If $\Phi$ is reducible then it admits a non-trivial $\theta$-H\"older $\Phi$-invariant subbundle $\F$ which by \cref{c.irred} is both $H^\uu$- and $H^\ss$-invariant.
Then the subspace $\F_p \subseteq \E_p$ is invariant under two different linear maps, namely
$\Phi^k_p$ and $H^\uu_{p \gets q} \circ H^\uu_{q \gets p}$.
On the other hand, we claim that the property that these two maps admit a common nontrivial invariant subspace is \emph{atypical} in the space $\Aut^\theta(\E,T)$; more precisely, it has positive codimension and, a fortiori, empty interior.

First note that the property that an element of $\GL(d,\R)$ admits infinitely many invariant subspaces is atypical (because it implies the existence of a complex eigenvalue of geometric multiplicity bigger than $1$). So for typical $\Phi \in \Aut^\theta(\E,T)$, the collection of $\Phi^k_p$-invariant subspaces is finite.

On the other hand, choose a closed neighborhood $U$ of $q$ that is disjoint from the future and past iterates of $q$.
If we perturb the automorphism in this neighborhood (or rather in $\pi^{-1}(U)$) then the maps $\Phi^k_p$ and  $H^\uu_{q \gets p}$ are unaffected, but $H^\ss_{p \gets q}$ changes, and actually any small perturbation of $H^\ss_{p \gets q}$ can be realized with a perturbation of $\Phi$ supported in $U$.
In particular, with a well-chosen perturbation, the composition of holonomies sends each of the (finitely many) $\Phi^k_p$-invariant nontrivial subspaces of $\E_p$ into something transverse to it. Such an automorphism $\Phi$ cannot be reducible.
This shows that irreducibility has dense interior in $\Aut^\theta(\E,T)$. 

The argument actually shows that if $\Phi$ is reducible then it must satisfy infinitely many independent conditions of positive codimension, at least one for each homoclinic orbit.
Therefore reducibility has infinite codimension. See \cite[{\S}4]{Viana} for full details. 
\end{proof}

\subsection{Bressaud--Quas Closing Lemma}\label{ss.BQ}

Here we will prove \cref{t.BQ}.
Though our formulation is different, the key ideas come from \cite{BQuas}.

Let $f \colon Y \to Y$ be \emph{any} homeomorphism of a compact metric space $(Y,\dd)$.
For $\epsilon>0$, an \emph{$(\epsilon, \dd, f)$-pseudoorbit} is a string of points $(x_0,x_1, \dots, x_{n-1})$ such that
$\dd(f(x_i), x_{i+1})< \epsilon$ for every $i \in \ldbrack 0, n-2\rdbrack$.
If additionally $\dd(f (x_{n-1}), x_0)< \epsilon$ then we say that the pseudo-orbit is \emph{periodic}, with \emph{period} $n$; in that case indices can be taken as integers mod $n$ instead. 
Let $R(\epsilon, \dd, f)$ denote the minimal period of a periodic $(\epsilon, \dd, f)$-pseudoorbit.
Note that:
\begin{equation}\label{e.R_power}
R(\epsilon, \dd, f) \le n R(\epsilon, \dd, f^n) \quad \text{for every $n \ge 1$.}
\end{equation}

A set $E \subseteq Y$ is called \emph{$(\epsilon, \dd)$-separated} if $\dd(x,y)\ge \epsilon$ for every pair of distinct points $x$, $y\in E$.
Let $S(\epsilon,\dd)$ be the maximal cardinality of a $(\epsilon, \dd)$-separated set.

Define a sequence of metrics by:
$$
\dd_{n,f}(x,y) \coloneqq \max_{i \in \ldbrack 0, n-1\rdbrack} \dd(f^i(x), f^i(y)) \, .
$$

\begin{lemma}\label{l.BQ_estimate}
Let $\epsilon>0$.
Suppose that $R(\epsilon,\dd,f) > m > 0$.
Then:
$$
\log m \le \log S(\tfrac{\epsilon}{2}, \dd)  - \tfrac{1}{m} \log S(\epsilon, \dd_{m,f})  + 1 \, .
$$
\end{lemma}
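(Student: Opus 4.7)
The plan is to encode the hypothesis $R(\epsilon,\dd,f)>m$ as a girth condition on a discrete transition graph, and then count walks. Fix a maximal $(\epsilon, \dd_{m,f})$-separated set $E\subseteq Y$ with $N \coloneqq |E| = S(\epsilon, \dd_{m,f})$, a maximal $(\epsilon/2, \dd)$-separated set $F\subseteq Y$ with $M \coloneqq |F| = S(\epsilon/2, \dd)$, and a map $\phi\colon Y\to F$ with $\dd(y, \phi(y)) < \epsilon/2$ (which exists by maximality of $F$). Define a directed graph $G$ on the vertex set $F$ by inserting an edge $\phi(f^i(x))\to\phi(f^{i+1}(x))$ for each $x \in E$ and $i \in \ldbrack 0, m-2\rdbrack$. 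Each $x \in E$ thus traces out an $m$-vertex walk
$$
w(x) \;\coloneqq\; \bigl(\phi(x), \phi(f(x)), \ldots, \phi(f^{m-1}(x))\bigr)
$$
in $G$, and two distinct elements of $E$ produce distinct walks---otherwise applying the triangle inequality at each coordinate would give $\dd_{m,f}(x,x')<\epsilon$, contradicting separation.

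The central claim is that the hypothesis forces $G$ to have no closed walk of length $\le m$. Indeed, consider such a closed walk $v_0 \to v_1 \to \cdots \to v_{\ell-1}\to v_0$ with $\ell\le m$. Each edge $v_j \to v_{j+1}$ was inserted by virtue of some pair $(x_j, i_j)$, so the lifted points $y_j \coloneqq f^{i_j}(x_j)$ satisfy $\dd(y_j, v_j)<\epsilon/2$ and $\dd(f(y_j), v_{j+1})<\epsilon/2$; by the triangle inequality, $(y_0, y_1, \ldots, y_{\ell-1})$ is a periodic $(\epsilon,\dd,f)$-pseudo-orbit of period $\ell\le m$, contradicting the hypothesis on $R$. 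A fortiori, every walk of length $\le m$ in $G$ has pairwise distinct vertices (otherwise a short closed walk would arise), so each $w(x)$ is a \emph{simple} path.

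It then suffices to count simple $m$-vertex paths in $G$. For each $m$-element subset $S\subseteq F$, the induced subgraph $G[S]$ inherits the girth bound, and since $|S|=m$ it must be acyclic. A standard observation is that a DAG on $m$ vertices admits at most one Hamiltonian path: two distinct Hamiltonian paths would determine two distinct topological orderings, and at their first point of disagreement the two traversals could be concatenated to form a directed cycle. Hence
$$
N \;\le\; \binom{M}{m} \;\le\; \frac{M^m}{m!} \;\le\; \left(\frac{eM}{m}\right)^{\!m} ,
$$
the last inequality being the elementary bound $m! \ge (m/e)^m$ (which follows from $e^m = \sum_{k\ge 0}m^k/k! \ge m^m/m!$). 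Taking logarithms and dividing by $m$ yields $\tfrac{1}{m}\log N \le 1 + \log M - \log m$, which rearranges to the stated inequality.

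The only real obstacle is this combinatorial refinement in the counting step: the naive pigeonhole bound $N\le M^m$ uses no dynamical information and yields nothing, so the saving must come from exploiting acyclicity of \emph{every} induced $m$-vertex subgraph to replace $M^m$ by $\binom{M}{m}$, which supplies the factor of $1/m!$ responsible for the decisive $-\log m$ term on the right-hand side.
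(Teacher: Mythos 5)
Your proof is correct and follows essentially the same route as the paper's: both assign to each point of the $(\epsilon,\dd_{m,f})$-separated set an $m$-tuple drawn from the $(\epsilon/2,\dd)$-separated set, use the absence of periodic $(\epsilon,\dd,f)$-pseudoorbits of period $\le m$ to show that the tuple is determined by (and injective on) its underlying $m$-element subset, and conclude with $\binom{M}{m}\le (eM/m)^m$. Your graph-theoretic packaging --- no closed walks of length $\le m$, hence every induced $m$-vertex subgraph is acyclic and carries at most one Hamiltonian path --- is a clean restatement of the paper's permutation-descent argument, not a different method.
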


\begin{proof}	
Let $E$ be a $(\dd, \tfrac{\epsilon}{2})$-separated set of maximal cardinality. 
Note that the $\dd$-balls of radius $\tfrac{\epsilon}{2}$ and centers at the points of $E$ cover $Y$, because otherwise we could enlarge $E$ by adding any point not covered. 

Let $F$ be a $(\dd_{m,f}, \epsilon)$-separated set of maximal cardinality. 
For each $y \in F$, choose a $m$-tuple $(x_0, \dots, x_{m-1})$ of points in $E$ such that $\dd(x_j, f^j(y)) < \tfrac{\epsilon}{2}$ for each $j \in \ldbrack 0, m-1 \rdbrack$.
First, we claim that these $x_j$'s are all distinct. Indeed, if $x_j = x_k$ with $j < k$, then $\dd(f^j(y), f^k(y)) < \epsilon$, so $\big( f^i(y) \big)_{i \in \ldbrack j, k-1 \rdbrack}$ is a periodic $(\epsilon,\dd)$-pseudoorbit of period $k-j \le m-1 < R(\epsilon,\dd)$, contradiction.

Second, we claim that if $y \neq y' \in F$ then the corresponding $m$-tuples $(x_0, \dots, x_{m-1})$ and $(x'_0, \dots, x'_{m-1})$ are distinct.
Indeed, if the two $m$-tuples coincide then for each $j \in \ldbrack 0, m-1\rdbrack$ we have $\dd(f^j(y), f^j(y')) < \epsilon$. This means that $\dd_{m,f}(y,y') < \epsilon$. Since the set $F$ is $(\dd_{m,f}, \epsilon)$-separated, we conclude that $y=y'$.

Third, we claim that if $y \neq y' \in F$ then the sets $\{x_0, \dots, x_{m-1}\}$ and $\{x'_0, \dots, x'_{m-1}\}$ are distinct.
Indeed, if the two sets coincide then $x'_i = x_{\sigma(i)}$ for some permutation $\sigma$ of $\ldbrack 0, m-1\rdbrack$.
By the previous claim, this permutation is not the identity; therefore there exists $\ell \in \ldbrack 0, m-2\rdbrack$ such that $k \coloneqq \sigma(\ell) > \sigma(\ell+1) \eqcolon j$. Then $\dd(f^{\ell}(y'), f^{k}(y)) < \epsilon$ and $\dd(f^{\ell+1}(y'), f^{j}(y)) < \epsilon$.
Therefore $\big(f^{j}(y), f^{j+1}(y), \dots, f^{k-1}(y), f^{\ell}(y') \big)$
is a periodic $(\epsilon,\dd)$-pseudoorbit of period $k-j+1 \le m < R(\epsilon,\dd)$, contradiction.

We conclude that the number of elements of the set $F$ cannot exceed the number of subsets of the set $E$ with exactly $m$ elements, that is,
$$
|F| \le \binom{|E|}{m} \le \frac{|E|^m}{m!} \le \left( \frac{e |E|}{m} \right)^m \, .
$$
Taking $\log$'s, recalling that $|E| = S(\dd, \tfrac{\epsilon}{2})$ and $|F| = S(\dd_{m,f}, \tfrac{\epsilon}{2})$, and rearranging, we obtain the inequality stated in the \lcnamecref{l.BQ_estimate}.
\end{proof}

\medskip

Recall that a homeomorphism $f \colon Y \to Y$ is called \emph{expansive} if there is a uniform separation between every pair of distinct orbits. In that case, the topological entropy $h_\mathrm{top}(f)$ is finite; furthermore, for every sufficiently small $\epsilon>0$, the limit
\begin{equation}\label{e.htop}
\lim_{n \to \infty} \frac{\log S(\epsilon,\dd_{n,f})}{n} \quad \text{exists and equals } h_\mathrm{top}(f); \nopagebreak 
\end{equation}
see \cite[p.~174, 177]{Walters}.

\begin{lemma}\label{p.subexp}
If $f$ is expansive then for every sufficiently small $\epsilon>0$ we have:
$$
\lim_{n \to \infty} \frac{\log R(\epsilon,\dd_{n,f},f)}{n} = 0 \, .
$$
\end{lemma}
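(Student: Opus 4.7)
The plan is to apply \cref{l.BQ_estimate} not to $f$ itself but to the iterate $f^n$ measured in the Bowen metric $\dd_{n,f}$, and to exploit a cancellation between two entropy quantities that becomes available only in this setup. A naive application of the lemma (to $f$ in the metric $\dd_{n,f}$, with $m = R(\epsilon,\dd_{n,f},f) - 1$) yields only the essentially tautological bound $\log R(\epsilon,\dd_{n,f},f) \le h_\mathrm{top}(f)\cdot n + o(n)$; bypassing this weak estimate is the main conceptual obstacle.

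The reduction step uses \eqref{e.R_power}, whose proof is valid for any base metric in place of $\dd$: setting $r_n \coloneqq R(\epsilon, \dd_{n,f}, f^n)$, we obtain $R(\epsilon, \dd_{n,f}, f) \le n \cdot r_n$, so it suffices to prove $\log r_n = o(n)$.

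The crucial technical ingredient is the identity
$$
(\dd_{n,f})_{m, f^n}(x,y) = \max_{0 \le i < m,\, 0 \le j < n} \dd(f^{in+j}x, f^{in+j}y) = \dd_{mn, f}(x,y),
$$
which holds because the exponents $in+j$ sweep $\{0, 1, \ldots, mn-1\}$ without repetition. I would then apply \cref{l.BQ_estimate} with base metric $\dd_{n,f}$, map $f^n$, and $m = r_n - 1$ (after disposing of the trivial case $r_n = 1$), obtaining
$$
\log(r_n - 1) \le \log S(\tfrac{\epsilon}{2}, \dd_{n,f}) + 1 - \frac{\log S(\epsilon, \dd_{(r_n-1)n,\, f})}{r_n - 1}.
$$

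To conclude, fix $\alpha > 0$ and invoke \eqref{e.htop}: for all $N$ beyond a threshold $N_0$, the inequality $|\log S(\epsilon', \dd_{N, f}) - h_\mathrm{top}(f)\cdot N| \le \alpha N$ holds for both $\epsilon' = \epsilon$ and $\epsilon' = \epsilon/2$. Substituting into the displayed inequality (for $n \ge N_0$, which forces $(r_n - 1)n \ge N_0$ as well), the leading contributions $h_\mathrm{top}(f)\cdot n$ cancel exactly, leaving $\log(r_n - 1) \le 2\alpha n + O(1)$. Since $\alpha > 0$ was arbitrary, $\log r_n = o(n)$, whence $\log R(\epsilon, \dd_{n,f}, f) \le \log n + \log r_n = o(n)$. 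The role of the metric identity is precisely to multiply the lower-entropy term in \cref{l.BQ_estimate} by a factor of $n$, which is what annihilates the leading $h_\mathrm{top}(f)\cdot n$ contribution from $\log S(\epsilon/2, \dd_{n,f})$.
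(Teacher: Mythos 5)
Your proof is correct and follows essentially the same route as the paper's: both pass to the iterate $f^n$ via \eqref{e.R_power}, exploit the metric identity $(\dd_{n,f})_{m,f^n} = \dd_{mn,f}$ to apply \cref{l.BQ_estimate}, and then use \eqref{e.htop} to cancel the two $h_{\mathrm{top}}(f)$ contributions. The only cosmetic difference is that you take $m = r_n - 1$ with $r_n = R(\epsilon,\dd_{n,f},f^n)$ directly, whereas the paper works with $m_n = \lfloor (R_n-1)/n\rfloor$ and verifies $m_n < r_n$ from \eqref{e.R_power}; the bookkeeping is slightly different but the argument is the same.
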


\begin{proof}
Fix a small $\epsilon>0$ and a large integer $n$.
Write $R_n \coloneqq R(\epsilon,\dd_{n,f},f)$ and  $m_n \coloneqq \lfloor (R_n-1)/n \rfloor$.
Assume that $m_n>0$, otherwise $(\log R_n)/n$ is already small.
Using \eqref{e.R_power}, 
we have $R(\epsilon,\dd_{n,f},f^n) > m_n$.
Applying \cref{l.BQ_estimate}, 
we obtain that 
$$
\frac{\log m_n}{n} \le \frac{\log S(\epsilon/2, \dd_{n,f})}{n} - \frac{\log S(\epsilon, \dd_{nm_n,f})}{nm_n} + \frac{1}{n} \, .
$$
By \eqref{e.htop}, the right-hand side is small: the first two terms essentially cancel each other.
It follows that $(\log R_n)/n$ is small.
\end{proof}

\begin{proof}[Proof of \cref{t.BQ}]
Given the hyperbolic homeomorphism $T$ and the compact $T$-invariant set $Y \neq \emptyset$, let 
$f$ be the restriction of $T$ to $Y$.
Hyperbolic homeomorphisms are expansive (recall \cref{r.hyperb}), so $f$ is expansive as well.

Fix $\epsilon>0$ small enough so that \cref{p.subexp} applies.
Note that if $(x_i)_{i \in \Z/k\Z}$ is a periodic $(\epsilon, \dd_n,f)$-pseudoorbit then, letting $y_i \coloneqq f^{\lceil n/2 \rceil}(x_i)$, we have, for all $i \in \Z/k\Z$,
$$
\max_{j \in \left\ldbrack -\lceil n/2 \rceil, \lceil n/2 \rceil -1 \right\rdbrack}
\dd ( f^{j+1}(y_i), f^j(y_{i+1}) ) < \epsilon \, .
$$
Hyperbolicity implies that 
$\dd ( f(y_i), y_{i+1}) ) < C e^{-\lambda n} \epsilon \eqcolon \epsilon_n$,
where $C$ and $\lambda$ are positive constants.
That is, $(y_i)_{i \in \Z/k\Z}$ is a periodic $(\epsilon_n, \dd, f)$-pseudoorbit. 
So $R(\epsilon, \dd_{f,n}, f) \ge R(\epsilon_n, \dd, f) \eqcolon N_n$.
In particular, 
$$
\frac{\log N_n}{n} \quad\text{and}\quad
\frac{\log N_n}{\log \epsilon_n^{-1}}
\quad\text{also tend to $0$ as $n \to \infty$.}
$$
Therefore, for any given $\tau>0$, if $n$ is large enough then $\epsilon_n < N_n^{-\tau}$.
By definition, there exists a periodic $(\epsilon_n, \dd, T)$-pseudoorbit of period $N_n$ in the set $Y$.
By the Lipschitz shadowing lemma \cite[Thrm.~2]{Sakai}, there exist a periodic orbit for $T$ of period $N_n$ within distance $O(\epsilon_n) = O(N_n^{-\tau})$.
This proves the \lcnamecref{t.BQ}.
\end{proof}

\section{Appendix: Examples}\label{s.examples}

Here we present examples that show some of the limits of our results. 

\subsection{Examples of sets of calibrated vectors with exceptional behavior}\label{ss.examples_calibrated}

The following two examples show that the set $\K$ of calibrated vectors defined by \eqref{e.calibrated} can have exceptional fibers, justifying \cref{r.exceptional_fibers}.

\begin{example}\label{ex.non_space}
Let $T \colon X \to X$ be a hyperbolic homeomorphism having a fixed point $x_0$.
Let $f$ be a non-negative H\"older function vanishing only at $x_0$.
Consider the cocycle
$$
A(x) \coloneqq \begin{pmatrix} 1 & 0 \\ 0 & e^{-f(x)} \end{pmatrix} \, .
$$
Then the corresponding automorphism $\Phi$ on the trivial bundle $\E \coloneqq X \times \R^2$ has $\beta(\Phi) = 0$, and its Mather sets are $M_1(\Phi) = X$ and $M_2(\Phi) = \{x_0\}$.
The max norm $\tribar{(u_1,u_2)} \coloneqq \max \{|u_1|,|u_2|\}$ is extremal.
Consider the corresponding set $\K$ of calibrated vectors, defined by \eqref{e.calibrated}.
If $x \in W^\uu(x_0) \setminus \{x_0\}$ then the fiber
$$
\K_x = \big\{ (u_1,u_2) \in \R^2 \st |u_2| \le e^{\sum_{n=1}^\infty f(T^{-n} x)} |u_1| \big\}
$$
is not a subspace.
\end{example}

\begin{example}\label{ex.bad_dim}
Suppose $T \colon X \to X$ is a homeomorphism admitting two nonempty compact invariant sets $X_1$, $X_2$ such that:
\begin{itemize}
	\item each $X_i$ equals the support of some $T$-invariant probability measure $\mu_i$;
	\item $X_1 \cup X_2 = X$;
	\item $X_1 \cap X_2 = \{x_0\} \cup \{T^n y_0 \st n \in \Z\}$ where $x_0$ is a fixed point and $y_0 \neq x_0$ is an homoclinic point.
\end{itemize}
Let $f$ be a non-negative continuous function vanishing only at $X_1 \cap X_2$.
Consider the cocycle
$$
A(x) \coloneqq \begin{pmatrix} 1 & 0 \\ 0 & e^{-f(x)} \end{pmatrix} \quad \text{if $x \in X_1$,} \qquad
A(x) \coloneqq \begin{pmatrix} e^{-f(x)} & 0 \\ 0 & 1 \end{pmatrix} \quad \text{if $x \in X_2$.} 
$$
Then the corresponding automorphism $\Phi$ on the trivial bundle $\E \coloneqq X \times \R^2$ has $\beta(\Phi) = 0$, and its Mather sets are $M_1(\Phi) = X$ and $M_2(\Phi) = \{x_0\}$.
The Euclidean norm is extremal. 
Consider the corresponding set $\K$ of calibrated vectors, defined by \eqref{e.calibrated}.
Then $\K_{y_0} = \R^2$ despite the fact that $y_0 \in M_1(\Phi) \setminus M_2(\Phi)$.
\end{example}

\begin{remark}
One may contend that ``correctly'' defined Mather sets  should not lie in the base $X$, but instead in the bundle $\E$, or in its 
projectivization $\hat \E$.  
Fix a norm $\| \mathord{\cdot} \|$ on $\E$ and define a function $f \colon \hat\E \to \R$ by $f([u]) \coloneqq \log(\|\Phi u\| / \|u\|)$. Let $\hat \M$ be the union of the supports of all probability measures on $\hat \E$ that are invariant under the automorphism $\hat{\Phi}$ and that maximize the integral of the function $f$. Let $\M \coloneqq \{u \in \E \st u=0 \text{ or } [u] \in \hat{\M} \}$. This is a closed subset of $\E$ that projects down on the Mather set $M(\Phi) \subseteq X$. Given an extremal norm, it is clear that the fibers of $\M$ are calibrated, i.e.\ $\M_x \subseteq \K_x$ for every $x \in M(\Phi)$. A stronger property actually holds: $\mathrm{span}(\M_x) \subseteq \K_x$ for every $x \in M(\Phi)$; we omit the proof. However, $\M_x$ may fail to be a subspace. Indeed, in \cref{ex.bad_dim} the set $\M_{y_0}$ is a union of two lines.
\end{remark}

\subsection{On Riemannian extremal norms}\label{ss.Riemann}

After having established the existence of extremal \emph{Finsler} norms (under appropriate hypotheses), one naturally wonders about the existence of extremal \emph{Riemannian} norms.
Let us begin with a weak positive result:

\begin{proposition}\label{p.Riem_weak}
In the situation of \cref{t.dom}, there exists a Riemannian norm $\| \mathord{\cdot} \|'$ such that for all $x \in Y$, the spaces $\F_x$ and $\G_x$ are orthogonal, and 
$$
\| \Phi(v) \|' = e^{\beta(\Phi)} > \|\Phi(w)\|'
\quad \text{for all unit vectors $v \in \F_x$, $w \in \G_x$.}
$$
\end{proposition}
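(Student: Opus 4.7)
By \cref{t.dom}, the restricted bundle splits as $\E_Y = \F \oplus \G$ with $\F$ calibrated: $\tribar{\Phi(v)} = e^{\beta(\Phi)}\tribar{v}$ for every $v \in \F$. The strategy is to construct Riemannian metrics on $\F$ and on $\G$ separately and then to assemble them on $\E_Y$ by requiring $\F_x \perp \G_x$ at every $x \in Y$; the resulting Riemannian norm can be extended to $\E$ using a partition of unity.

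For $\F$, let $\Psi \coloneqq e^{-\beta(\Phi)}\Phi|_\F$. Calibration means that $\Psi_x$ sends the Finsler unit ball $B_x \coloneqq \{v \in \F_x \st \tribar{v} \le 1\}$ bijectively onto $B_{Tx}$. For each $x \in Y$, let $E_x \subseteq \F_x$ denote the John ellipsoid of $B_x$, i.e.\ the unique ellipsoid of maximum volume (with respect to any auxiliary volume form on $\F_x$) inscribed in $B_x$. The uniqueness of the John ellipsoid forces $\Psi_x(E_x) = E_{Tx}$. Letting $\|\mathord{\cdot}\|^\F_x$ be the Euclidean norm whose unit ball is $E_x$ defines a Riemannian metric on $\F$; it is continuous because the John ellipsoid depends Hausdorff-continuously on its enclosing convex body, and it satisfies $\|\Phi(v)\|^\F = e^{\beta(\Phi)}\|v\|^\F$ for every $v \in \F$.

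For $\G$, the existence of the extremal norm combined with the domination of $\G$ by $\F$ forces $\Psi_\G \coloneqq e^{-\beta(\Phi)}\Phi|_\G$ to be uniformly exponentially contracting in the reference Riemannian norm $\|\mathord{\cdot}\|$ on $\E$: there exist $C,\tau > 0$ with $\|\Psi_\G^n(w)\| \le C e^{-\tau n}\|w\|$ for every $w \in \G$ and $n \ge 0$. Fix $r \in (1, e^{2\tau})$ and define a new inner product on $\G$ by
$$
\langle w_1, w_2 \rangle^\G \coloneqq \sum_{n=0}^\infty r^n \langle \Psi_\G^n(w_1), \Psi_\G^n(w_2) \rangle.
$$
The sum converges absolutely and depends continuously on the basepoint. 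A direct computation yields $(\|\Psi_\G(w)\|^\G)^2 = r^{-1}\bigl((\|w\|^\G)^2 - \|w\|^2\bigr) \le r^{-1}(\|w\|^\G)^2$, hence $\|\Phi(w)\|^\G \le r^{-1/2} e^{\beta(\Phi)} \|w\|^\G < e^{\beta(\Phi)} \|w\|^\G$ for every nonzero $w \in \G$.

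The only delicate step is the continuous dependence of the John ellipsoid on its host convex body, a classical fact from convex geometry. Otherwise no genuinely new difficulty arises: the bulk of the work---producing the dominated splitting and the calibration of $\F$---was already done in \cref{t.dom}.
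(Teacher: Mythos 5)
Your proof is correct and follows essentially the same strategy as the paper: the John ellipsoid applied to the Finsler unit balls gives the equivariant Riemannian norm on $\F$, a weighted-sum Lyapunov inner product gives the strictly contracted Riemannian norm on $\G$ (your $r \in (1,e^{2\tau})$ is the paper's $e^{2\epsilon}$ for small $\epsilon>0$), and the two are glued by declaring $\F_x \perp \G_x$. The paper omits the (trivial) final extension from $\E_Y$ to $\E$ that you mention; otherwise the arguments coincide line for line.
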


\begin{proof}
As usual, assume $\beta(\Phi) = 0$.
For each $x \in Y$, consider the restriction of the extremal norm to the space $\F_x$, and let $\cB_x \subseteq \F_x$ be the unit ball.
Let $\cE_x$ be the John ellipsoid of $\cB_x$, namely the unique ellipsoid of maximal volume contained in $\cB_x$ (see e.g.\ \cite{Ball}). 
This field of ellipsoids is continuous, since finding the John ellipsoid is a continuous operation (as a consequence of its uniqueness).
Consider the Riemannian norm on the bundle $\F$ whose unit balls are the $\cE_x$'s. Since $\Phi_x (\cB_x) = \cB_{Tx}$ and the John ellipsoid is equivariant with respect to linear isomorphisms, we obtain $\Phi_x (\cE_x) = \cE_{Tx}$. This means that the Riemannian norm just constructed on the bundle $\F$ is preserved by $\Phi$.\footnote{Incidentally, note that if the John ellipsoid were monotonic with respect to set inclusion, then we could use it to ``Riemannize'' any given Finsler extremal norm. However, monotonicity fails: consider for instance a pair of rectangles as in \cref{f.MO}.}

In the bundle $\G$, we use the standard construction of Lyapunov norms (see e.g.\ \cite[p.~667]{KH}).
Fix a small positive $\epsilon$, and for each $x \in M_p$ and $w \in \G_x$, let:
$$
\| w \|'_x \coloneqq \left( \sum_{n=0}^\infty e^{2 \epsilon n} \|\Phi_x^n (v) \|^2 \right)^{1/2} \, .
$$
As a consequence of domination, the series converges exponentially, so the formula yields a well-defined continuous Riemannian norm on the bundle $\G$.
It is immediate that $\|\Phi_x(w)\|' \le e^{-\epsilon} \|w\|'$, so the norm along $\G$ is uniformly contracted.
Finally, we extend the Riemannian norm to the fibers $\E_x$ for $x \in Y$ by declaring $\F_x$ and $\G_x$ to be orthogonal.
This completes the construction. 
\end{proof}

The Riemannian norm $\| \mathord{\cdot} \|'$ provided by \cref{p.Riem_weak} is extremal over the restricted subbundle $\E_Y = \pi^{-1}(Y)$. 
Can one extend this Riemannian norm to the whole bundle, keeping it extremal?
The answer is no, as we will see next.

\medskip

We will present an example of an irreducible fiber-bunched automorphism in dimension $2$ that admits no \emph{Riemannian} extremal norm.

Consider the following two matrices: 
\begin{equation}\label{e.two_matrices}
A_0 \coloneqq \begin{pmatrix} 0 & -1 \\ 1 & 0 \end{pmatrix} , \qquad
A_1 \coloneqq \begin{pmatrix} 0.8 & -0.1 \\ 0.8 & 0.1 \end{pmatrix} .
\end{equation}
Let $(T,F)$ be the corresponding one-step cocycle (see \cref{ex.one-step}),
and let $\Phi$ be the corresponding automorphism of the trivial vector bundle $\E = X \times \R^2$. 
Consider a H\"older exponent $\theta = 1$, and take the parameter $\lambda$ in the metric \eqref{e.ultrametric} large enough so that $\Phi$ becomes fiber-bunched. 
Consider the fixed point $p=(p_n)$ where each $p_n \coloneqq 0$.
Since $F(p) = A_0$ has non-real eigenvalues, the automorphism $\Phi$ is irreducible: there can be no nontrivial $\Phi$-invariant subbundle.
Then \cref{t.extremal} yields the existence of an extremal norm.
Actually, the max norm in $\R^2$, defined by 
$\tribar{(u_1,u_2)} \coloneqq \max\{|u_1|,|u_2|\}$, is an extremal norm.
Indeed, the operator norms of our two matrices are: 
$$
\tribar{A_0} = 1 \, , \quad \tribar{A_1} = 0.9 \, .
$$
Since the spectral radius of $A_0$ is $1$, it follows that $\beta(\Phi) = 0$, and so $\tribar{\mathord{\cdot}}$ is a (constant) extremal norm, as claimed. Also note that $\delta_p$ is the unique Lyapunov-maximizing measure.

\begin{proposition}\label{p.no_Riemann}
The automorphism $\Phi$ admits no Riemannian extremal norm.
\end{proposition}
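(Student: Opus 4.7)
The plan is to suppose for contradiction that a Riemannian extremal norm exists, given by a continuous field $G(x)$ of positive definite symmetric $2\times 2$ matrices (so that the norm squared is $u \mapsto u^T G(x) u$), and to extract a contradiction by propagating the extremality inequality along the orbit of a homoclinic point that approaches the fixed point $p = (\ldots 0\,0\,0 \ldots)$.

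The first step will be to pin down $G(p)$. Since $F(p) = A_0$ and $\beta(\Phi)=0$, extremality gives $A_0^T G(p) A_0 \leq G(p)$. Iterating this inequality four times along the fixed orbit and using $A_0^4 = I$ forces equality, $A_0^T G(p) A_0 = G(p)$. A direct calculation with $G(p) = \left(\begin{smallmatrix} a & b \\ b & c\end{smallmatrix}\right)$ shows that conjugation by the quarter-turn $A_0$ sends it to $\left(\begin{smallmatrix} c & -b \\ -b & a\end{smallmatrix}\right)$, so invariance forces $a = c$ and $b = 0$. After rescaling, I may thus assume $G(p) = I$. This rigidity is the key observation of the argument.

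Next, I will introduce the homoclinic point $y^* \in X$ whose symbolic sequence has entry $1$ at index $0$ and $0$ elsewhere, so that $T^n y^* \to p$ as $|n| \to \infty$. Along its $T$-orbit the cocycle generator is $A_0$ at every iterate except at $y^*$ itself, where it is $A_1$. Hence, for every $n \geq 1$ and $m \geq 0$,
\begin{equation*}
\Phi^{n+m}_{T^{-m}y^*} \;=\; A_0^{\,n-1}\, A_1\, A_0^{\,m},
\end{equation*}
and extremality reads
\begin{equation*}
\bigl(A_0^{n-1} A_1 A_0^m\bigr)^T \, G(T^n y^*) \, \bigl(A_0^{n-1} A_1 A_0^m\bigr) \;\leq\; G(T^{-m}y^*).
\end{equation*}

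Finally, I will specialize to $n = 4k+1$ and $m = 4\ell$ with $k,\ell \to \infty$. Using $A_0^4 = I$, the matrix $A_0^{n-1} A_1 A_0^m$ equals exactly $A_1$. Both base points $T^n y^*$ and $T^{-m} y^*$ tend to $p$, so by continuity of $G$ both quadratic forms tend to $u \mapsto u^T u$, and the inequality passes in the limit to $A_1^T A_1 \leq I$, i.e.\ $\|A_1\|_2 \leq 1$ in the Euclidean operator norm. A direct calculation, however, gives $A_1^T A_1 = \mathrm{diag}(1.28,\,0.02)$, so $\|A_1\|_2 = \sqrt{1.28} > 1$, yielding the desired contradiction. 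The only real obstacle is the first step, the rigidity at $p$: one must observe that the closure $A_0^4 = I$ combined with noncontractiveness of the extremality inequality over this loop (since $\beta(\Phi)=0$) forces $G(p)$ to be $A_0$-invariant and hence a scalar; everything after that is a straightforward limit argument exploiting the homoclinicity of $y^*$.
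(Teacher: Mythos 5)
Your proof is correct, and it takes a recognizably different route from the paper's. The shared skeleton is: (i) use the fact that $A_0$ is a quarter-turn to force the Riemannian norm at the fixed point $p$ to be Euclidean after rescaling, and (ii) use the homoclinic orbit and extremality to show $\|A_1\|_{\mathrm{eucl}}\le 1$, a contradiction. Where you diverge is in the execution of (ii): the paper passes through the stable/unstable holonomy maps $H^\ss$, $H^\uu$ and the loop identity $H^\ss_{p\gets q}\circ H^\uu_{q\gets p} = A_0^{-1}A_1$ (eq.~\eqref{e.loop_identity}), then lets $k\to\infty$ using continuity of the holonomies to conclude $\|A_0^{-1}A_1\|_{\mathrm{eucl}}\le 1$. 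You dispense with holonomies altogether: you write the cocycle product along the homoclinic orbit explicitly as $A_0^{n-1}A_1A_0^m$, pick exponents $n-1\equiv m\equiv 0\pmod 4$ so that this product is literally $A_1$, and then pass to the limit using only the continuity of the Gram-matrix field $G$ and the convergence $T^{\pm j}y^*\to p$. This is more elementary and self-contained, since it needs none of the machinery of \cref{p.holonomies}; the holonomy version, on the other hand, sits more naturally in the framework the paper has built, where $H^\ss_{p\gets q}\circ H^\uu_{q\gets p}$ is the canonical obstruction associated to a homoclinic loop and the argument could be repeated verbatim for non-locally-constant cocycles. (A small observation: your congruence trick is not even needed — since $A_0$ is orthogonal you could let $n,m\to\infty$ unrestricted and use $\|A_0^{n-1}A_1A_0^m\|_{\mathrm{eucl}}=\|A_1\|_{\mathrm{eucl}}$ — but the way you wrote it works just as well.)
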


\begin{proof}
Assume for a contradiction that $\Phi$ admits a Riemannian extremal norm $\{\| \mathrm{\cdot} \|_x\}_{x \in X}$.
Since $e^{\beta(\Phi)} = 1$ is the spectral radius of $F(p) = A_0$, we must have:
$$
\| A_0 \|_{p \gets p} \le 1 \, ,
$$
in the operator norm notation \eqref{e.def_operator_norm}.
This means that if $D\subset \R^2$ denotes the unit ball in the norm $\| \mathord{\cdot} \|_p$, we have $A_0(D) \subseteq D$.
Since the norm is assumed to be Riemannian, $D$ is a (filled) ellipse, and since $A_0$ is a rotation, this ellipse must be a disk. Rescaling the norm if necessary, we can assume that $D$ is the unit disk. Equivalently, $\| \mathord{\cdot} \|_{p \gets p}$ is the usual Euclidean operator norm, which for emphasis we will write $\| \mathord{\cdot} \|_{\mathrm{eucl}}$.

Consider the homoclinic point $q \coloneq (\dots 0,0,\underdot{1},0,0\dots)$, i.e., the sequence that has a unique symbol $1$ at position $0$.
Note that for any $k > 0$ we have the identity:
\begin{equation}\label{e.loop_identity}
H^\ss_{p \gets q} \circ H^\uu_{q \gets p} = \Phi_p^{-k} \circ H^\ss_{p \gets T^k q} \circ \Phi^{2k}_{T^{-k} q} \circ H^\uu_{T^{-k} q \gets p} \circ \Phi_p^{-k} \, .
\end{equation}
In particular, taking $k=1$, by triviality of local holonomies \eqref{e.trivial_holonomies} we obtain:
\begin{align*}
H^\ss_{p \gets q} \circ H^\uu_{q \gets p} &= A_0^{-1} \circ \id \circ A_1A_0 \circ \id \circ A_0^{-1} \\ &= A_0^{-1} A_1 \, .
\end{align*}
Using that $A_0$ preserves Euclidean norm, applying the extremal Riemannian norm to \eqref{e.loop_identity}:
$$
\|A_1 \|_{\mathrm{eucl}} = \|A_0^{-1} A_1\|_{\mathrm{eucl}}  = \|H^\ss_{p \gets q} \circ H^\uu_{q \gets p}\|
\le
\underbrace{\|\Phi_p^{-k}\|}_{=1} \ \underbrace{\|H^\ss_{p \gets T^k q}\|}_{\to 1} \ \underbrace{\|\Phi^{2k}_{T^{-k} q}\|}_{\le 1} \ \underbrace{\| H^\uu_{T^{-k} q \gets p}\|}_{\to 1} \ \underbrace{\|\Phi_p^{-k}\|}_{=1} \, .
$$
Taking $k \to \infty$ we obtain that
$\|A_1 \|_{\mathrm{eucl}} \le 1$.
This is a contradiction: actually $\|A_1 \|_{\mathrm{eucl}} = 0.8 \sqrt{2} > 1$ (see \cref{f.MO}). 
\end{proof}

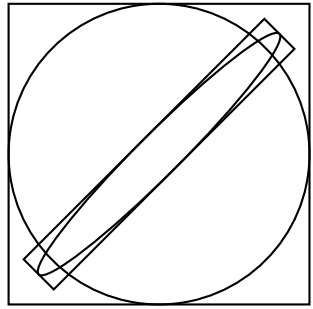
\begin{figure}[hbt]
	\begin{center}
		\begin{tikzpicture}[scale=2]
			\draw[thick] (1,1)--(-1,1)--(-1,-1)--(1,-1)--cycle;
			\draw[thick] (.7,.9)--(-.9,-.7)--(-.7,-.9)--(.9,.7)--cycle;
			\draw[thick] (0,0) circle (1) ;
			\draw[thick, rotate=45] (0,0) ellipse (1.1314 and 0.1414);
		\end{tikzpicture}
		\caption{The unit balls with respect the two norms and their images under $A_1$.}
		\label{f.MO}
	\end{center}
\end{figure}

Let us comment on other properties of our example.
We claim that there are perturbations $\tilde \Phi$ of $\Phi$ for which the measure $\delta_p$ ceases to be Lyapunov-maximizing; so the ``locking property'' (\emph{verrouillage}) is not satisfied.
Indeed, let $k \gg 1$ be an integer, let $m \coloneqq 4k+2$, and let $\tilde A_0$ be rotation matrix of angle $\frac{\pi}{2} - \frac{\pi}{4m}$.
Then:
$$
\tilde{A}_0^{m} A_1 = \begin{pmatrix} -0.8 \sqrt{2} & 0 \\ 0 & -0.1 \sqrt{2}\end{pmatrix} \, .
$$
Consider the associated one-step cocycle $\tilde F$, and the associated automorphism $\tilde \Phi$.
Then the probability measure $\tilde \mu$ supported on the orbit of the periodic point
$$
\tilde p \coloneqq
(\dots, \underdot{1}, \underbrace{0, \dots, 0}_m , 1, \underbrace{0, \dots, 0}_m , \dots) , \quad \text{i.e., $\tilde p_n = 1$ iff $m+1$ divides $n$,}
$$
has Lyapunov exponent
$$
\chi_1(\tilde\Phi, \tilde \mu) = \frac{\log(0.8 \sqrt{2})}{m+1} > 0 = \chi_1(\tilde\Phi, \delta_p) \, ,
$$
showing that $\delta_p$ was ``unlocked''.
Therefore the argument of the proof of \cref{p.no_Riemann} does not apply to the perturbation $\tilde\Phi$, and it is possible that these perturbations $\tilde \Phi$ admit Riemannian extremal norms (though there is no obvious candidate).
So the main property of our example $\Phi$, namely not to possess Riemannian extremal norms, may be fragile.
Going beyond this specific example, we ask:

\begin{question}
Let $T \colon X \to X$ be a hyperbolic automorphism.
Let $\E$ be a $2$-dimensional $\theta$-H\"older vector bundle over $X$.
Let $\cB \subset \Aut^\theta(\E,T)$ be the set of fiber-bunched irreducible automorphisms, endowed with the $\theta$-H\"older topology.
Let $\cR \subset \cB$ be the subset of automorphisms that admit a Riemannian extremal norm.
Is $\cR$ dense in $\cB$?
Is the interior of $\cR$ dense in $\cB$?
\end{question}

\bigskip 
\begin{ack}
We are very much indebted to Rafael Potrie for numerous illuminating and influential conversations. 
We also thank Clark Butler and Kiho Park for interesting discussions, and the referee for corrections and suggestions.
\end{ack}


\bigskip

{\footnotesize

\noindent Jairo Bochi

\noindent Facultad de Matem\'aticas, Pontificia Universidad Cat\'olica de Chile

\noindent \href{http://www.mat.uc.cl/~jairo.bochi}{www.mat.uc.cl/$\sim$jairo.bochi}

\noindent \href{mailto:jairo.bochi@mat.uc.cl}{jairo.bochi@mat.uc.cl}

\medskip

\noindent Eduardo Garibaldi

\noindent IMECC, Unicamp

\noindent \href{http://www.ime.unicamp.br/~garibaldi/}{www.ime.unicamp.br/$\sim$garibaldi}

\noindent \href{mailto:garibaldi@ime.unicamp.br}{garibaldi@ime.unicamp.br}

}
	
\end{document}